\documentclass[11pt]{amsart}

\title[Pairings of combinatorial 1-cocycles]
  {
    Pairings of combinatorial 1-cocycles with loops in knot spaces
  }

\author{Butian Zhang}
\address{Institut de Mathématiques de Toulouse, Université de Toulouse, Toulouse, France}
\email{butian.zhang.math@gmail.com}

\date{21 September 2026}

\subjclass[2020]{Primary 57K10; Secondary 57K16, 55N10}
\keywords{space of knots, combinatorial 1-cocycles, finite type invariants, Gauss diagrams}

\usepackage[T1]{fontenc}
\usepackage{lmodern}

\usepackage{amsmath}
\usepackage{amssymb}
\usepackage{amsthm}
\usepackage{mathtools}
\usepackage{mathrsfs}

\usepackage{graphicx}
\usepackage{subcaption}
\usepackage{xcolor}
\usepackage{enumitem}
\usepackage{tikz-cd}
\usepackage{float}
\usepackage{import}

\usepackage[margin=1.2in]{geometry}
\usepackage{hyperref}
\usepackage[nameinlink,capitalise]{cleveref}

\graphicspath{{figures/}}

\newcommand{\LongKnotSpace}{\mathcal{K}_{3,1}}

\DeclareMathOperator{\lk}{lk}
\DeclareMathOperator{\push}{push}
\DeclareMathOperator{\Rot}{Rot}

\DeclareMathOperator{\Roll}{Roll}
\DeclareMathOperator{\Repar}{Repar}

\DeclareMathOperator{\writhe}{writhe}

\DeclareMathOperator{\Diff}{Diff}
\DeclareMathOperator{\Emb}{Emb}

\newcommand{\csum}{\mathbin{\#}}

\newtheorem{theorem}{Theorem}[section]
\newtheorem{proposition}[theorem]{Proposition}
\newtheorem{lemma}[theorem]{Lemma}
\newtheorem{corollary}[theorem]{Corollary}
\newtheorem{conjecture}[theorem]{Conjecture}
\newtheorem{maintheorem}{Theorem}

\newtheorem{notation}{Notation}[section]
\newtheorem{convention}{Convention}[section]
\newtheorem{question}{Question}[section]

\newtheorem*{claim*}{Claim}

\newtheorem*{obs*}{Observation}

\theoremstyle{definition}
\newtheorem{definition}[theorem]{Definition}
\newtheorem{example}[theorem]{Example}

\theoremstyle{remark}
\newtheorem{remark}[theorem]{Remark}

\crefname{theorem}{theorem}{theorems}
\crefname{proposition}{proposition}{propositions}
\crefname{lemma}{lemma}{lemmas}
\crefname{definition}{definition}{definitions}
\crefname{conjecture}{conjecture}{conjecture}

\begin{document}

\begin{abstract}
We show that the evaluations of combinatorial 1-cocycles defined by Gauss diagrams with a triangle on the canonical loops in the space of long knots are finite type invariants. 
Using Gauss diagrams, we construct two $\mathbb{Z}$-valued combinatorial 1-cocycles $\beta_1, \beta_2$ and a $\mathbb{Z}/2\mathbb{Z}$-valued 1-cocycle $\beta_3$ on the space of long knots and prove their cocyclicity by verifying their invariance under higher Reidemeister moves coming from the codimension-two singularities of plane curves. 
We show that they represent genuinely new 1-cohomology classes and compute their pairings with the rotation, rolling, half rolling, bracket and half bracket loops. 
A key new feature is that $\beta_1$ and $\beta_2$ can pair nontrivially with bracket and half-bracket loops. 
We conjecture that $(\alpha_3^1,\beta_1,\beta_2)$ over $\mathbb{Q}$, and their mod 2 reductions together with $\beta_3$ over $\mathbb{Z}/2\mathbb{Z}$, form bases of degree-one cohomology up to order 4 in the sense of Vassiliev.
In addition, we show that the reparametrization loop is homotopic to the rolling loop concatenated with the rotation loop in $\operatorname{Emb}(S^1, S^3)$.  
Finally, we give the criteria for a 1-cohomology class in the long knot space to descend to 1-cohomology classes in $\operatorname{Emb}(S^1, S^3)$ and $\operatorname{Emb}(S^1, S^3)/\operatorname{Diff}^{+}(S^1)$. 
Using these criteria, we show that $\beta_1$ descends to a nontrivial 1-cohomology class in $\operatorname{Emb}(S^1,S^3)$, 
while $\beta_1 \bmod 2$ and $\beta_3$ descend to linearly independent nontrivial 1-cohomology classes in $\operatorname{Emb}(S^1, S^3)/\operatorname{Diff}^{+}(S^1)$. 
\end{abstract}

\maketitle

\tableofcontents

\section{Introduction}
\label{sec:Introduction}

In knot theory, one studies isotopy classes of knots, equivalently, the path components of some knot space. 
Various knot invariants, equivalently 0-cocycles in the knot space, are important tools to distinguish them. 
However, in this article, we shall care more about the paths in the knot space. 
And our tools are 1-cocycles in the knot space. 

If two knots are isotopic, there is an isotopy connecting them. 
Are any two such isotopies equivalent in some sense? 
This motivates the study of the paths in the knot space. 
Knots are commonly represented by knot diagrams. 
A sequence of Reidemeister moves represents a path in the knot space. 
It may already be difficult to determine the isotopy class represented by a single knot diagram. 
It is also hard to tell whether two paths are equivalent, i.e., path-homotopic. 
Analogously to knot invariants, 1-cocycles can play a great role in distinguishing paths. 

In this paper, we mostly work on the space of long knots $\LongKnotSpace$, which is a particularly convenient setting among the knot spaces considered here. 
The space was studied by Vassiliev via discriminant and by Hatcher and Budney from a homotopy-theoretic point of view. 

There are some canonical loops, such as the rotation loop $\Rot(\cdot)$ and the rolling loop $\Roll(\cdot)$. 
For torus knots, the rotation loop gives a generator for the fundamental group of the component. 
For hyperbolic knots, the rotation loop and the rolling loop form a basis rationally. 
For more complicated knots, there might be additional loops in the component. 
For example, given two long knots $f, g$, we consider their connected sum $f\csum g$. 
Fix a 0-framing on $g$; then shrink $f$ and push it to the right along $g$ as if $f$ were a small bead, and push this bead along the framing of $g$.
Now let $f$ stretch out to its original size. 
Next we exchange the roles of $f$ and $g$, and do the same procedure for $g$. 
This gives a loop called the bracket loop, denoted by $[f,g]_0$.
If $f= g$, we consider only the first half of the whole procedure. The resulting loop is called the half bracket loop, denoted by $\frac{1}{2}[f,f]_0$. 
This bracket loop corresponds to the Browder operation arising from the little 2-cube action on the space of framed long knots \cite{Budney07,Sakai11}. 

There are some known 1-cocycles on the space of long knots. 
The first nontrivial 1-cocycle $v_3^1$ was found by Teiblum and Turchin (see \cite{Turchin05}). 
It is the unique finite type 1-cocycle of order 3 in the sense of Vassiliev \cite{Vassiliev97}. 
Vassiliev later \cite{Vassiliev04} found an explicit description over $\mathbb{Z}/ 2\mathbb{Z}$. 
Sakai \cite{Sakai11}, using graphs and configuration space integrals, constructed an $\mathbb{R}$-valued 1-cocycle $I(\Gamma)$. 
Mortier \cite{Mortier15} found a $\mathbb{Z}$-valued combinatorial 1-cocycle $\alpha_{3}^{1}$. 
The three 1-cocycles are closely related, which can already be seen from their pairing with the loops. 
By \cite{Turchin05,Sakai11,SakiSakai23,Mortier14,Mortier15}, we have 
\[
v_3^1(\Rot(f)) = v_2(f) \mod 2, \quad v_3^1([f_1, f_2]_0) = 0 \mod 2 , v_3^1(\dfrac{1}{2}[f, f]_0) = 0 \mod 2; 
\]
\[
I(\Gamma)(\Rot(f)) = v_2(f), \quad I(\Gamma)([f_1, f_2]_0) = 0, \quad I(\Gamma)(\dfrac{1}{2}[f, f]_0) = 0, \quad I(\Gamma)(\Roll_0(f)) = 6v_3(f); 
\]
\[
\alpha_{3}^{1}(\Rot(f)) = v_2(f), \quad \alpha_{3}^{1}(\Roll_0(f)) = 6v_3(f), 
\]
and 
\[
\alpha_{3}^{1} = v_3^{1} \mod 2. 
\]
This paper focuses on the combinatorial 1-cocycles studied by Fiedler since 2003. 
Fiedler's idea is to study the generic paths in the space of long knots and to assign contributions to each Reidemeister move. 
By analysing the codimension-two singularities of plane curves, one develops higher Reidemeister moves for the invariance of generic paths under path-homotopy. 
A combinatorial 1-cocycle is constructed by assigning a weight to each Reidemeister move and summing them to obtain its value on the path. 
In the paper, we focus on the weights defined by Gauss diagram formulae with a triangle. This kind of Gauss diagram applies to R3 moves. 
For example, Mortier's 1-cocycle $\alpha_{3}^{1}$ \cite{Mortier14} is defined by a Gauss diagram formula with a triangle. See \Cref{sec:combinatorial_1cocycles,sec:Gauss_diagrams} for more details.
\begin{equation*}
\label{eq:Mortier}
  W_{\alpha_{3}^{1}} = 
    -\raisebox{-.4\height}{\includegraphics[width=3cm]{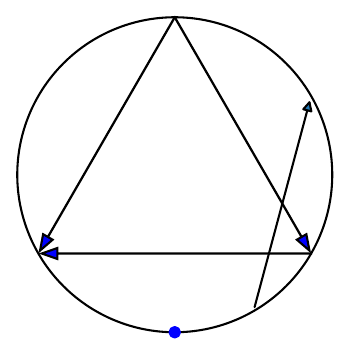}} -
    \raisebox{-.4\height}{\includegraphics[width=3cm]{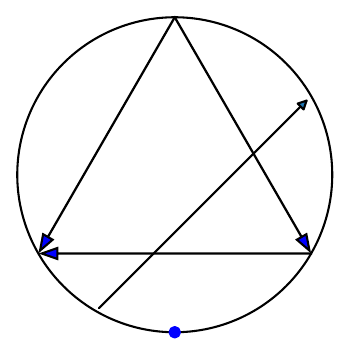}} -
    \raisebox{-.4\height}{\includegraphics[width=3cm]{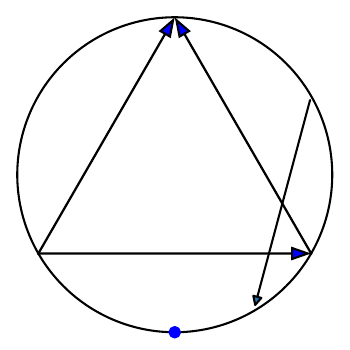}}
\end{equation*}
We use the number of arrows to define the order of a Gauss diagram. 
\begin{definition}\label{def:combinatorial_order}
When the weight function of a combinatorial 1-cocycle is given by Gauss diagrams with a triangle with at most $n$ arrows in each Gauss diagram (an R3 triangle is counted as 2 arrows), we say that the combinatorial 1-cocycle has \textbf{combinatorial order at most $n$}. 
If, moreover, the 1-cohomology class represented by this combinatorial 1-cocycle does not admit a combinatorial 1-cocycle representative having combinatorial order at most $(n-1)$, we say that the combinatorial 1-cocycle has \textbf{combinatorial order $n$}. 
\end{definition}
The combinatorial 1-cocycle $\alpha_{3}^{1}$ has combinatorial order 3. 

In this paper, we show a relation between Vassiliev invariants and the combinatorial 1-cocycles defined by Gauss diagram formulae with a triangle. 
It turns out that the pairings of these combinatorial 1-cocycles with the canonical loops yield Vassiliev invariants. 

\begin{maintheorem}\label{mainthm:pairing}
Let $\phi$ be a combinatorial $1$-cocycle defined by Gauss diagrams
with a triangle of combinatorial order at most $n$, where $n\ge 2$.
Let the coefficients be taken in a field $\mathbb K$.
Then the following hold.

\begin{enumerate}[label=(\roman*)]
    \item For any pair of long knots $(f,g)$,
    \[
        \phi([f,g]_0)
    \]
    is a finite type invariant of total Vassiliev order at most $n$.
    More precisely, if $\{v_{i,a}\}_a$ is a basis of Vassiliev
    invariants of order $i$ over $\mathbb K$, then
    \[
        \phi([f,g]_0)
        =
        \sum_{i+j\le n}\sum_{a,b}
        c_{i,a,j,b}(\phi)\,
        v_{i,a}(f)v_{j,b}(g),
    \]
    for some coefficients
    $c_{i,a,j,b}(\phi)\in\mathbb K$.

    \item For any long knot $f$,
    \[
        \phi(\Rot(f))
    \]
    is a Vassiliev invariant of order at most $(n-1)$.

    \item For any long knot $f$ and any framing number $m$,
    \[
        \phi(\Roll_m(f))
        =
        \phi(\Roll_0(f))
        -
        m\,\phi(\Rot(f)),
    \]
    where $\phi(\Roll_0(f))$ is a finite type invariant of $f$
    of order at most $n$.
\end{enumerate}
\end{maintheorem}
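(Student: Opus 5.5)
Each of the three loops is realised by an explicit generic path of diagrams. Along it, every R3 move contributes a weight that is a Gauss diagram formula with a triangle, with at most $n$ arrows. The idea is to express the sum of these contributions as a Gauss diagram formula in the Polyak--Viro sense evaluated on the diagrams of $f$ and $g$. By the Goussarov--Polyak--Viro theorem, such a formula built from diagrams with at most $k$ arrows is a Vassiliev invariant of order at most $k$. So in each case the task is to count arrows after the triangle has been absorbed by the geometry of the loop.

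\textbf{Bracket loop.} Realise $[f,g]_0$ by a standard generic representative. Shrink $f$ to a small bead and push it along a $0$-framed, blackboard-framed diagram of $g$, then do the same with the roles swapped. The only R3 moves occur when the bead passes a crossing $c$ of $g$. Each strand of the bead then crosses the two branches of $c$, giving triangles formed by two crossings between the bead and $g$ together with $c$ itself. The other arrows of a Gauss diagram configuration are matched either to crossings of $f$, which are preserved inside the bead, or to crossings of $g$. All bead--$g$ crossings are created and destroyed in pairs by R2 moves. The weight of the triangle configurations is therefore determined by the signs of $c$ and of the bead's crossings, together with the positions of the arrow endpoints relative to the bead.

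Summing over all R3 moves, one should get $\sum_{D_1,D_2} \lambda_{D_1,D_2}\langle D_1,f\rangle\langle D_2,g\rangle$. Here $D_2$ contains the arrow corresponding to $c$, and $D_1$ contains the arrows lying inside the bead. The triangle contributes at most two arrows, of which one survives as $c$, so $|D_1|+|D_2|\le n$. Since each $\langle D_i,\cdot\rangle$ is a product-compatible Gauss diagram invariant of order at most $|D_i|$, expanding in the bases $v_{i,a}$ gives (i). The points needing care are that the framing zero makes the framing-pushing contributions cancel, and that the connected-sum position of the bead is accounted for. Over a field, the Vassiliev filtration of bi-invariants of $(f,g)$ is spanned by products, which is why $\mathbb K$ is assumed.

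\textbf{Rotation loop.} For $\Rot(f)$, take the rotation about the long axis and make it generic. The R3 moves then occur in families indexed by a crossing pair plus one extra strand passing. In each triangle, one of the three crossings is a transient crossing created by the rotation, not a crossing of $f$. This drops the count by one, giving order at most $n-1$ for (ii).

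\textbf{Rolling loop and main obstacle.} For (iii), write $\Roll_m(f)$ as $\Roll_0(f)$ composed with $m$ framing twists. In $\pi_1$ of the component, the change of framing by $m$ multiplies by $\Rot(f)^{-m}$, up to the sign convention for $\Rot$. Since $\phi$ is a 1-cocycle, it is additive on loops in $H_1$, which yields the formula in (iii). That $\phi(\Roll_0(f))$ is of order at most $n$ follows as in (i): the rolling is implemented by pushing a small crossing-free kink, which produces no new arrows, so all of the triangle's arrows come from $f$. I expect the main obstacle to be the bookkeeping in (i) and (ii). One has to verify that, summed over all R3 moves of the standard path, the contributions really assemble into Gauss diagram pairings with the claimed arrow count. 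In particular, in (ii) the transient crossings must genuinely not be counted as arrows of $f$. To handle this, I would first try to choose the representative path so that the transient arrows always appear in one fixed canonical position, which can then be deleted.
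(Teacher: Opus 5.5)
Your proposal has a genuine gap at exactly the step you flag as the main obstacle: turning the sum of R3 weights along the path into a Polyak--Viro formula $\sum\lambda_{D_1,D_2}\langle D_1,f\rangle\langle D_2,g\rangle$ and then quoting finiteness. The geometry you give for that step is also wrong. When the bead passes a crossing $c$ of $g$, the other branch of $g$ at $c$ sweeps over (or under) the whole bead. The R3 moves are that branch passing the crossings $x$ of $f$, so each triangle is $x$ together with two branch--bead crossings that are copies of $c$, not ``$c$ plus two bead--$g$ crossings''. The other branch--bead crossings present at that moment can serve as contributing arrows, and they sit at points of $f$ fixed by the planar shape of the diagram of $f$. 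So the weight is not a priori a Gauss diagram formula in $f$. Moreover, single $\langle D_i,\cdot\rangle$ are not invariants, so there is nothing to expand in the $v_{i,a}$. The paper never assembles such a formula; it argues directly with singular diagrams:
\begin{itemize}
\item the 1-parameter family of projections of the diagrammatic path is the same for every resolution;
\item crossings created by pushing inherit the resolution of the crossing they copy, so a triangle involves at most one resolved crossing of $f$ and one of $g$;
\item each of the at most $n-2$ contributing arrows involves at most one more.
\end{itemize}
Hence if $k+l>n$, some singular crossing is free and the alternating sum cancels (\Cref{lem:mixed_order}). Your bead path can be fed into this argument, but you must count the singular crossings a configuration depends on, not the arrows of an assembled formula.

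For (ii), a generic rotation in $\mathbb{R}^3$ does not give a projection family independent of the resolutions. Near a double point the two branches pass through a tangency whose unfolding depends on the resolution. Also, ``one of the three crossings is transient'' does not lower the count: if the other two triangle crossings come from different singular crossings, the configuration still depends on $n$ of them. The missing idea is \Cref{eg:rot_via_push}: realise $\Rot(f)$ by pushing $f$ through a curl. Then every triangle involves at most one resolved crossing of $f$ plus the curl crossing, which resolves nothing, so configurations depend on at most $n-1$ singular crossings.

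For (iii), the rolling move is not a crossing-free kink: a strand sweeps over or under the whole diagram, creating copies of the last crossing. What is actually needed is that a triangle involves at most two resolved crossings, so that with $n+1$ singular crossings one is free (\Cref{lem:FH_finiteness}); ``all triangle arrows come from $f$'' does not give this. More seriously, the diagrammatic rolling loop uses the blackboard framing, whose number $\writhe(K_\epsilon)=w_0+\sum_i(1+\epsilon_i)$ depends on the resolution. So you cannot realise $\Roll_0$ uniformly over resolutions. The paper writes $\phi(\Roll_0(K_\epsilon))=\phi(\Roll(K_\epsilon))+\writhe(K_\epsilon)\,\phi(\Rot(K_\epsilon))$. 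It then kills the second term because it is an affine function of $\epsilon$ times an invariant of order at most $n-1$; this is exactly where the sharp bound of (ii) is used. The same kind of correction, via $\push_m(f,g)=\push_0(f,g)*\Rot(f)^m$, is needed in (i) to pass from blackboard push arcs of a singular $g$ to $[f,g]_0$. Your remark that the zero framing makes framing contributions cancel does not supply it.
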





We give three combinatorial 1-cocycles of combinatorial order 4 representing essentially different cohomology classes and exhibiting genuinely new features. 
A key new feature is that the two integer-valued 1-cocycles $\beta_1$ and $\beta_2$ can pair nontrivially with bracket and half-bracket loops.

\begin{maintheorem}\label{mainthm:beta12}
There exist two linearly independent combinatorial $1$-cocycles
$\beta_1$ and $\beta_2$ with integer coefficients and of combinatorial
order $4$ on the space of long knots. Both represent nontrivial
$1$-cohomology classes.

For any long knots $f$ and $g$, the 1-cocycle pairings are given by



\[
\begin{array}{c|cc}
 & \beta_1 & \beta_2 \\ \hline
\Rot(f)
    & 0
    & -v_3(f)
    \\[2mm]

\Roll_0(f)
    &
    \displaystyle
    2\Bigl(
        \binom{v_2(f)}{2}-v_{4,2}(f)
    \Bigr)
    &
    \displaystyle
    2\Bigl(
        3 \cdot \binom{v_2(f)}{2}  + 2v_2(f) + 5v_3(f)-10v_{4,1}(f)-5v_{4,2}(f)
    \Bigr)
    \\[4mm]

[f,g]_0
    & -2v_2(f)v_2(g)
    & -6v_2(f)v_2(g)
    \\[2mm]

\displaystyle \frac12[f,f]_0
    & -v_2(f)^2
    & -3v_2(f)^2
\end{array}. 
\]

Here $v_2$, $v_3$, $v_{4,1}$, and $v_{4,2}$ denote primitive
Vassiliev invariants of orders $2$, $3$, $4$ and $4$, respectively (see \Cref{sec:GDF}). 
And $\binom{v_2(f)}{2} = \frac{v_2(f)(v_2(f)-1)}{2}$ is a binomial coefficient. 
\end{maintheorem}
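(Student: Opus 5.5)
The proof has three parts: define $\beta_1,\beta_2$ by explicit Gauss diagram weights, prove that they are 1-cocycles, and compute their pairings with the canonical loops. Nontriviality and linear independence then follow from the pairing table.

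\textbf{Construction.} Following Fiedler and Mortier, $\beta_1$ and $\beta_2$ are each given by a weight function $W_{\beta_i}$ on R3 moves. Each weight is an integer linear combination of Gauss diagrams with a triangle: the R3 triangle, counted as two arrows, plus at most two further arrows. This gives combinatorial order at most $4$. The diagrams are signed, and their arrows are placed relative to the distinguished (point at infinity) end of the long knot. The value of $\beta_i$ on a generic path is the sum of the weights of the R3 moves met along it.

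\textbf{Cocyclicity.} I must show that every generic loop bounding a disc in $\LongKnotSpace$ gets value $0$. By the analysis of codimension-two singularities of plane curves (the higher Reidemeister moves recalled in \Cref{sec:combinatorial_1cocycles}), it suffices to check invariance under these moves:
\begin{itemize}
\item the meridian of a quadruple point (the tetrahedron equation);
\item the self-tangency with a transverse branch (R2--R3 interactions);
\item the R3 move degenerating with a cusp, and the moves passing through an R3 configuration with a distinguished point;
\item the commutation of distant moves.
\end{itemize}
For the quadruple point, I sum the weights of the eight R3 moves around the meridian. Arrows away from the four strands cancel pairwise, so the check reduces to a finite identity among the Gauss diagram coefficients. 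I would set up the unknown coefficients of all admissible order-$\le 4$ triangle diagrams and solve the resulting linear system over $\mathbb Z$. This both produces $\beta_1,\beta_2$ and certifies them. The solution space also contains $\alpha_3^1$ and coboundaries of order-$\le4$ Gauss diagram invariants, and $\beta_1,\beta_2$ are chosen as complementary solutions. This verification, with its many local orientation and sign configurations, is the main obstacle. I would organise it by the position of the point at infinity and by the cyclic order of the four branches, which reduces it to a manageable number of cases.

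\textbf{Pairings.} By \Cref{mainthm:pairing} each pairing is a finite type invariant of order at most $4$. For $\Rot$ the order is at most $3$, and for the bracket loops the value is a bilinear form in Vassiliev invariants of total order at most $4$.

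To compute the pairings, I would realise each loop by an explicit generic sequence of moves on a diagram of $f$ (and $g$), and list all R3 moves with their local Gauss data:
\begin{itemize}
\item For $\Rot(f)$, use the standard description by Mortier and Fiedler: the loop pushes strands over the point at infinity.
\item For $\Roll_0(f)$, pass the small curl through the diagram.
\item For $[f,g]_0$, slide one summand through the other, so that the only triangles occur where a strand of $f$ crosses a crossing of $g$.
\end{itemize}
The contributions organise into Gauss diagram counts on $f$ and $g$, and I would identify them with the formulas for $v_2,v_3,v_{4,1},v_{4,2}$ from \Cref{sec:GDF}. Because the answer is known to lie in a finite-dimensional space, it is enough to determine the coefficients by evaluating on finitely many knots: the unknot, trefoils, the figure-eight, and low-crossing knots and pairs thereof. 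This avoids a fully general symbolic computation. The half bracket value $-v_2(f)^2$ follows by running the same computation on half of the loop, and it is consistent with $[f,f]_0$ being twice the half bracket.

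\textbf{Nontriviality and independence.} From the table, $\beta_1([f,g]_0)=-2v_2(f)v_2(g)$ is nonzero for two trefoils, so $[\beta_1]\neq0$. Likewise $\beta_2(\Rot(f))=-v_3(f)\neq0$ for a trefoil, so $[\beta_2]\neq0$. For independence, suppose $a\beta_1+b\beta_2$ is a coboundary. Evaluating on $\Rot$ of a trefoil gives $b=0$, and then evaluating on a bracket loop gives $a=0$.

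Finally, to get combinatorial order exactly $4$, I would argue by contradiction. A representative of order at most $3$ would have $\Rot$-pairing of order at most $2$ by \Cref{mainthm:pairing}, and its bracket pairing would be limited to order-$\le3$ products. For $\beta_2$, the $\Rot$-pairing $-v_3$ rules this out. For $\beta_1$, the degree-$4$ term $v_2(f)v_2(g)$ in the bracket pairing rules it out. This uses the refined degree bounds that come out of the proof of \Cref{mainthm:pairing}.
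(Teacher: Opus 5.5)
Your pairing computations and your nontriviality and independence arguments follow essentially the paper's route. The cocyclicity step, however, has a genuine gap.

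You assert that on the quadruple-point meridian ``arrows away from the four strands cancel pairwise.'' From this you conclude that the tetrahedron equation is a finite local identity, and you propose to find and certify $\beta_1,\beta_2$ by solving that local linear system. For these cocycles this is false. When the tetrahedron equation for the positive quadruple point is checked locally, over all placements of one outside arrow relative to the four branches, $\beta_i$ fails to vanish in some cases. Those configurations are realised only by virtual knots. The failing terms cancel only after summing over all outside arrows and using Gauss diagram identities valid for real knot diagrams. These identities come from computing linking numbers of sub-curves of the diagram in two ways: via the arrows pointing one way between the two pieces, and via the arrows pointing the other way. The cube equation for $\beta_1$ also needs such an identity.

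So the local (virtual) system you describe does not contain $\beta_1,\beta_2$. It is not known whether it contains any representatives of their classes; compare the paper's open question about Mortier's virtual framework. As written, your plan therefore neither produces nor certifies cocycles with the stated pairings. The missing ingredient is to impose these real-knot identities in the tetrahedron and cube equations. You also want Fiedler's reduction of all quadruple-point types to the positive one, modulo commutation, cube and cubic-tangency meridians.

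Two smaller points. First, the diagrammatic push and rolling loops use blackboard framings, so you must correct by $\phi(\Rot)$:
\[
\phi(\Roll_0(K))=\phi(\Roll(K))+\writhe(K)\,\phi(\Rot(K)),\qquad
\phi(\push_0(K,G))=\phi(\push(K,G))-\writhe(G)\,\phi(\Rot(K)).
\]
The second correction is what removes the $-\writhe(G)v_3(K)$ term of $\beta_2$ on push arcs and yields $\beta_2(\tfrac12[f,f]_0)=-3v_2(f)^2$. Second, your loop realisations appear interchanged. Pushing the diagram through a curl realises $\Rot$, while sweeping the last crossing over or under the whole knot (past infinity) gives the rolling moves.

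Your exact-order argument is valid: it uses $\Rot$ for $\beta_2$ (since $v_3$ has order $3>2$) and the bracket for $\beta_1$ (total order $4>3$). It is a legitimate alternative to the paper's argument, which uses the order-$4$ terms of $\Roll_0$.
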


\begin{maintheorem}\label{mainthm:beta3}
There exists a nontrivial combinatorial $1$-cocycle
$\beta_3$ of combinatorial order $4$ over $\mathbb Z/2\mathbb Z$
such that
\[
\alpha_3^1 \bmod 2,\qquad
\beta_1 \bmod 2,\qquad
\beta_2 \bmod 2,\qquad
\beta_3
\]
are linearly independent over $\mathbb Z/2\mathbb Z$.

Moreover, for any long knots $f$ and $g$,
\[
\beta_3(\Rot(f))=0,
\qquad
\beta_3(\Roll_0(f))=0,
\qquad
\beta_3([f,g]_0)=0,
\qquad
\beta_3\!\left(\frac12[f,f]_0\right)=0.
\]
\end{maintheorem}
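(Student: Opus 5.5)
The plan has four parts: write down $\beta_3$ explicitly, verify cocyclicity, prove nontriviality and linear independence, and compute the four pairings.

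\textbf{Construction.} We define $\beta_3$ by an explicit weight $W_{\beta_3}$: a $\mathbb{Z}/2\mathbb{Z}$-linear combination of Gauss diagrams with a triangle and at most four arrows, the R3 triangle counting as two. To find it, we take the general ansatz of all such diagrams of combinatorial order at most $4$ with unknown mod $2$ coefficients and impose the cocycle conditions. The solutions form a vector space over $\mathbb{Z}/2\mathbb{Z}$ containing the reductions of $\alpha_3^1$, $\beta_1$, $\beta_2$, together with coboundaries of Gauss diagram invariants. The point is that mod $2$ many sign-type cancellations become automatic, so the solution space is strictly larger than the reduction of the integral one. We choose $\beta_3$ to be a solution not in the span of these.

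\textbf{Cocyclicity.} We check invariance under the higher Reidemeister moves coming from codimension-two singularities of plane curves: the tetrahedron move, the cube moves, the moves involving R2 or R1 interacting with R3, and the degenerate cusp and tangency strata. We also check the scanning, or commutation, condition for R3 moves far apart. For each meridian loop around a codimension-two stratum, we enumerate the R3 moves that occur and check that the weighted sum of the configurations vanishes mod $2$. This case-by-case enumeration of tetrahedron-move orientations and crossing types is the main obstacle. We first reduce by symmetry (reversal and mirror actions on the Gauss diagrams) and then treat each remaining local type, with the extra arrows placed in all admissible positions relative to the triangle.

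\textbf{Nontriviality and independence.} We exhibit explicit loops whose values separate the four classes, for instance a $4\times4$ matrix of values that is invertible over $\mathbb{Z}/2\mathbb{Z}$. The canonical loops cannot detect $\beta_3$, since all its pairings vanish, so we need a further loop. We take a loop in a suitable component, such as a composite or satellite knot where there is extra $\pi_1$ beyond the canonical loops, or a loop built from a sequence of Reidemeister moves returning to the same diagram. On this loop $\beta_3$ is nonzero, and we compare its values with those of $\alpha_3^1$, $\beta_1$, $\beta_2$ mod $2$. Independence of the other three among themselves follows from the pairing table of \Cref{mainthm:beta12} reduced mod $2$ together with $\alpha_3^1(\Rot(f))=v_2(f)$. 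For example, we evaluate on $\Rot$ for a knot with $v_3$ odd, and on $\Roll_0$ for suitable small knots.

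\textbf{Pairings.} For the vanishing pairings, \Cref{mainthm:pairing} already shows that each pairing is a finite type invariant of bounded order, a polynomial in $v_2,v_3,v_{4,1},v_{4,2}$ and their products. It therefore suffices to compute each pairing on enough test knots (trefoils, figure eight, and their connected sums) to pin down all coefficients. We compute these evaluations on explicit diagrammatic representatives of $\Rot$, $\Roll_0$, $[f,g]_0$ and $\frac12[f,f]_0$, where the R3 moves are those of a strand sweeping across the diagram. Mod $2$ the contributions should cancel in pairs, via the symmetric positions of the swept strand, and so give $0$.
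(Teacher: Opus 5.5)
\textbf{Main gap: nontriviality of $\beta_3$.} You never exhibit a loop on which $\beta_3\neq 0$, and this is the heart of the theorem. Once the canonical pairings are known, independence reduces exactly to it. By \Cref{mainthm:beta12} reduced mod 2, the classes $\alpha_3^1,\beta_1,\beta_2 \bmod 2$ are already separated by $\Rot(3_1^{+})$, $\Rot(4_1)$ and $\tfrac12[4_1,4_1]_0$, and $\beta_3$ vanishes on all three. So any relation $a\alpha_3^1+b\beta_1+c\beta_2+d\beta_3=0$ forces $a=b=c=0$ and leaves only $d\beta_3=0$.

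The place you suggest looking does not help. In a composite component, the loops beyond those of the prime factors are bracket and half-bracket loops, and you yourself show $\beta_3$ kills these. Since $\beta_3$ is connected, its value on a factor's loop equals its value in that factor's own component. What is needed is a non-canonical loop in a prime component, for instance a root of a canonical loop; a mod~2 class vanishing on $\Rot$ and $\Roll_0$ can only be seen on such loops.

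The paper uses half rolling loops. The symmetric diagrams of $4_1$ and $6_1$ return to themselves after $n$ of the $2n$ rolling moves, which gives square roots of the rolling loop. The computation gives $\beta_3(\tfrac12\Roll(6_1))=1$. Together with $\Rot(4_1)$, $\tfrac12\Roll(4_1)$ and $\Rot(6_1)$ on the component of $4_1\csum 6_1$, this yields an invertible $4\times 4$ matrix over $\mathbb{Z}/2\mathbb{Z}$. Your suggested use of $\Roll_0$ to separate the other three mod~2 is useless: $\alpha_3^1$, $\beta_1$ and $\beta_2$ all take even values on $\Roll_0$.

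\textbf{Missing: combinatorial order exactly 4.} The statement includes that $\beta_3$ has combinatorial order exactly $4$, i.e.\ its class has no representative of combinatorial order $\le 3$. Your proposal only gives the upper bound. For $\beta_1,\beta_2$ the lower bound follows from \Cref{mainthm:pairing}(iii) and the genuinely order-4 terms in their $\Roll_0$ pairings. But $\beta_3$ vanishes on every canonical loop, so finite-type bounds say nothing here. The paper instead solves for all triangle-diagram cocycles of combinatorial order $\le 3$ over $\mathbb{Z}/2\mathbb{Z}$ and checks that none agrees with $\beta_3$ on the half rolling loops. This again depends on the loops your proposal lacks.

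\textbf{Smaller points.}
\begin{itemize}
\item \Cref{mainthm:pairing} does not cover $\tfrac12[f,f]_0$. The paper proves $\beta_3(\push(K,G))=0$, using that the push arc is finite type in both arguments. It then uses $\tfrac12[f,f]_0=\push_0(f,f)$, $[f,g]_0=\push_0(f,g)*\push_0(g,f)$ and $\Rot(f)=\push_1(f,u)*\push_0(u,f)$.
\item Your test knots must separate $v_2,v_3,v_{4,1},v_{4,2},\binom{v_2}{2} \bmod 2$. On $3_1^{+}$, $4_1$ and their connected sums one has $v_3=v_{4,1}$ and $v_{4,2}=0$, since these invariants are primitive. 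So a $\Roll_0$ pairing equal to $v_{4,2}\bmod 2$ would go unnoticed. The paper uses $3_1^{+},4_1,5_1,5_2,6_1$.
\item The ``cancel in pairs'' heuristic has no basis: the same sweeping moves give $\alpha_3^1(\Rot(4_1))\equiv 1$.
\item In the tetrahedron equations, some formally enumerated configurations fail. The paper discharges them only with a Gauss diagram identity valid for real, not virtual, knots (a linking number counted in two ways). So a purely formal mod~2 check, or a purely formal ansatz, does not go through as you describe it.
\end{itemize}
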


Here “linearly independent” refers to the represented cohomology classes. 
To the best of author's knowledge, $\beta_1$ and $\beta_2$ are the first 1-cocycles that are nontrivial on the bracket loops and the half bracket loops in general. 
Although $\beta_3$ is trivial on the canonical loops, it is sometimes nontrivial on the half-rolling loops, which exist for some symmetric knots (see \Cref{fig:FH_loop} for an example). 
\begin{table}[h!]
\centering
\[
\begin{array}{c|cccc}
    & \alpha_{3}^{1} & \beta_1 & \beta_2 & \beta_3 \\ \hline
    \frac{1}{2}\Roll_0(4_1) & 0 & 1 & 1 & 0 \\ [4pt]
    \frac{1}{2}\Roll_0(6_1) & 1 & 0 & 1 & 1 
\end{array}
\]
\caption{Values of the 1-cocycles on the half rolling loops over $\mathbb{Z}/2\mathbb{Z}$. }
\end{table}

\begin{maintheorem}
    Let $f \in \Emb(S^1, S^3)$ be a closed knot. Then in $\pi_1(\Emb(S^1, S^3), f)$ we have
    \[[\Repar(f)] = [\Roll_0(f)]*[\Rot(f)] = [\Rot(f)]*[\Roll_0(f)]. \]
\end{maintheorem}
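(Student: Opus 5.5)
The plan is to realize all three loops explicitly as orbits of circle actions on $\Emb(S^1,S^3)$ and compare them through the group $SO(4)$, or more simply through a torus acting on $S^3\subset\mathbb{C}^2$. Fix the closed knot $f$ and put it in a standard position: the long axis is a great circle through $\infty$ and its antipode. The long part of $f$ is a small knotted arc near this axis, closed up by a large unknotted arc through $\infty$. With these choices:
\begin{itemize}
\item $\Rot(f)$ is rotation by angle $\theta$ about the axis circle $C_1=\{z_2=0\}$, i.e. $(z_1,z_2)\mapsto(z_1,e^{i\theta}z_2)$.
\item $\Roll_0(f)$ pushes the knot once along itself. For the long knot this is the rolling loop with $0$-framing. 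In the closed setting it can be realized by first reparametrizing and then undoing the twisting of the normal framing.
\item $\Repar(f)$ is $t\mapsto f(t+s)$ for $s\in[0,1]$.
\end{itemize}

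The first step is to define the closed-knot version of the rolling loop. Following Hatcher's description, I would construct the closed knot so that the whole of $f$ sits in a solid torus neighbourhood $V$ of the core circle $C_2=\{z_1=0\}$, as a $0$-framed satellite pattern. Then the circle action $(z_1,z_2)\mapsto(z_1,e^{is}z_2)$ is a rotation that translates the pattern along the core. This gives $\Roll_0(f)$ up to a framing twist, and by the framing formula in \cref{mainthm:pairing}(iii), the twist is absorbed by multiples of $\Rot$.

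The key geometric step compares the loop $s\mapsto e^{is}\cdot f$ (translation along the core) with the loop $s\mapsto f(\cdot+s)$. Along the core these agree. The only discrepancy is the motion of the knotted pattern inside $V$ as seen from the meridian disk, which is exactly one full rotation about the core. That rotation is homotopic to one full rotation in the orthogonal circle action, which is $\Rot(f)$. To make this rigorous I would use the homotopy $(z_1,z_2)\mapsto(e^{i\lambda s}z_1,e^{is}z_2)$ and the fact that the product torus $T^2\subset SO(4)$ acts. Varying $\lambda$ from $0$ to $1$ interpolates between translation along the core and the diagonal Hopf rotation. The Hopf rotation fixes the image set setwise only up to reparametrization, so it equals $\Repar$ composed with a rotation.

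Commutativity, $[\Roll_0]*[\Rot]=[\Rot]*[\Roll_0]$, then follows because both loops come from a single $T^2$-action, so the map $T^2\to\Emb$ realizes the commutator as null. Alternatively, $\Rot(f)$ is a central-type loop, being the image of $\pi_1 SO(4)$ acting. The main obstacle I expect is the bookkeeping of the framing. One must check that the translation loop along the core is $\Roll_0$ and not $\Roll_{\pm1}$, since an off-by-one would change the identity to $\Repar=\Roll_0*\Rot^{1\pm1}$. I would verify the sign and framing by a computation on the unknot, where $\Repar(U)$ equals the rotation of a round circle. I would also check it with the invariant $\alpha_3^1$ on the trefoil, using $\alpha_3^1(\Rot)=v_2$.
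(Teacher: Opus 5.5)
There is a genuine gap, and it is in what you call the key geometric step. Write $R_s(z_1,z_2)=(z_1,e^{is}z_2)$.

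\textbf{The loops are misidentified.} The loop $s\mapsto R_s\circ f$ is $f$ postcomposed with a single-plane rotation loop in $SO(4)$, that is, with the generator of $\pi_1(SO(4))\cong\mathbb{Z}/2\mathbb{Z}$. So it represents $[\Rot(f)]$ however $f$ is positioned; you used this very action for $\Rot$ in your first bullet. It cannot also be $\Roll_0(f)$ up to powers of $\Rot(f)$. That would put $[\Roll_0(f)]$ in the subgroup generated by $[\Rot(f)]$ in $\pi_1(\Emb(S^1,S^3),f)$. This fails for every hyperbolic knot: there $\Rot$ and $\Roll_0$ are independent in $\pi_1(\LongKnotSpace(f))\cong\mathbb{Z}^2$, and passing to $\Emb(S^1,S^3)$ only kills $[\Rot(f)]^2$.

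The roles are in fact reversed. Take $f$ to be the core $C_2$ plus a small knotted pattern, and $\rho_s(z)=e^{is}z$. The discrepancy between the rigid translation and $\Repar(f)$ is $s\mapsto R_{-s}\circ f\circ\rho_s$. This loop fixes the core pointwise and slides the pattern once around, so it is a rolling loop, not a rotation.

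\textbf{The $\lambda$-interpolation does not help.} For $\lambda\notin\mathbb{Z}$ the path $s\mapsto(e^{i\lambda s}z_1,e^{is}z_2)$, $s\in[0,2\pi]$, is not closed. The loops at $\lambda=0$ and $\lambda=1$ are the generator and the trivial element of $\pi_1(SO(4))$, so they are not homotopic. Also, the Hopf flow preserves Hopf fibres, not the image of a knotted $f$.

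\textbf{The deferred framing bookkeeping is the entire theorem.} Since $[\Rot(f)]^2=1$ in $\pi_1(\Emb(S^1,S^3),f)$, \Cref{prop:roll_framing} shows that $[\Roll_m(f)]$ depends only on $m \bmod 2$. The statement is therefore equivalent to $[\Repar(f)]=[\Roll_1(f)]$. Concretely, $\Repar(f)(t)=Q(t)\circ\Roll_0(f)(t)$ pointwise, with $Q(t)=A(e^{2\pi it})A(1)^{-1}$. By the centrality you correctly invoke for the commutation, $[\Repar(f)]=[t\mapsto Q(t)\circ f]\cdot[\Roll_0(f)]$. The first factor is trivial or $[\Rot(f)]$ according as $[Q]\in\pi_1(SO(4))$ is trivial or not, so one must prove $[Q]\neq 0$. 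The paper does this by extending the frame over a Seifert surface $\Sigma$ and applying Poincar\'e--Hopf, which gives $[Q]\equiv\chi(\Sigma)=1-2g\equiv 1$.

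Your proposed checks cannot replace this computation:
\begin{itemize}
\item $\pi_1(\Emb(S^1,S^3),U)$ is trivial for the unknot $U$, so every candidate identity holds there.
\item $\alpha_3^1$ does not descend to $\Emb(S^1,S^3)$ over $\mathbb{Z}$, since $\alpha_3^1(\Rot(3_1))=v_2(3_1)=1$.
\item Finitely many examples cannot fix a parity for all knots.
\end{itemize}

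\textbf{How to salvage your positioning idea.} Centrality gives $[\Repar(f)]=[\Rot(f)]\cdot[s\mapsto R_{-s}\circ f\circ\rho_s]$. Identifying the last loop with $\Roll_0(f)$, as defined through $A$, reduces to showing that the frame loop of the small $0$-framed pattern is null in $\pi_1(SO(3))$. This follows from three facts. For a diagram with vertical framing, this class is the rotation number mod $2$. Passing to the $0$-framing adds the writhe mod $2$. Finally, rotation number $\equiv$ writhe mod $2$. This is a localized version of the paper's parity argument.
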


Finally, we give the criteria for the 1-cocycles on the space of long knots to descend to those on the parametrized or unparametrized spaces of oriented closed knots in $S^3$. 

\begin{maintheorem}[Descending criterion]\label{mainthm:descending}
    Let $R$ be a commutative ring with unit and $\xi \in H^{1}(\LongKnotSpace; R)$ be a 1-cohomology class. 
    Then we have 
    \begin{enumerate}
        \item $\xi$ descends to a 1-cohomology class in $H^{1}(\Emb(S^1, S^3); R)$ if and only if $2\xi([\Rot(f)]) = 0$ for all $f\in \LongKnotSpace$.
        \item $\xi$ descends to a 1-cohomology class in $H^{1}(\Emb(S^1, S^3)/\Diff^+(S^1); R)$ if and only if $2\xi([\Rot(f)]) = 0$ and $\xi([\Roll_0(f)]) + \xi([\Rot(f)]) = 0$ for all $f\in \LongKnotSpace$.
    \end{enumerate}
\end{maintheorem}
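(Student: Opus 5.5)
The plan is to reduce everything to the known homotopy types of the relevant spaces and to the structure of their fundamental groups. Every relevant component is a $K(\pi,1)$ (Hatcher, Budney), so $H^1(X;R)=\operatorname{Hom}(\pi_1,R)=\operatorname{Hom}(H_1,R)$ componentwise. We use the standard fibration/homotopy equivalences:
\[
\Emb(S^1,S^3)\simeq SO(4)\times_{SO(2)}\LongKnotSpace,
\]
and, for the unparametrized space, the further quotient by the circle of reparametrizations. More concretely, we will use the fibration
\[
\LongKnotSpace \to \Emb(S^1,S^3)\to V,
\]
where $V$ is the unit tangent bundle $ST S^3 \simeq S^3\times S^2$ recording the image and direction of the basepoint. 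Since $V$ is simply connected, the long exact sequence gives that $\pi_1(\LongKnotSpace)\to\pi_1(\Emb(S^1,S^3))$ is surjective. Its kernel is the image of $\pi_2(V)\cong\mathbb Z$.

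\textbf{Step 1: identify that image.} The generator of $\pi_2(S^2)$ pulls back to the loop obtained by rotating the direction at the basepoint through a full sphere. We claim it is $2\Rot(f)$, or at least that its class in $H_1$ is $\pm2[\Rot(f)]$. To see this, take the $S^2$-family and lift it via the action of $SO(3)$ on the direction. The boundary map sends the generator to the image of $\pi_1(SO(2))\to\pi_1(SO(3))=\mathbb Z/2$ lifted. Equivalently, the loop in $SO(3)$ rotating by $2\pi$ about the axis, done twice, is nullhomotopic, which produces a homotopy in $\Emb(S^1,S^3)$ killing $2\Rot(f)$. Here $\Rot(f)$ is exactly the rotation of the long knot about its axis. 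This is the step I expect to be the main obstacle: one must check carefully that the boundary $\partial\colon\pi_2(S^2)\to\pi_1(\LongKnotSpace)$ is precisely $2[\Rot(f)]$, rather than some other multiple or a combination involving framing. I would verify this by explicitly constructing the $\partial$ of the Hopf-type map through the fibration $SO(3)\to S^2$.

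\textbf{Step 2: part (1).} A class $\xi$ descends if and only if it vanishes on the kernel of the surjection on $H_1$, which is generated by $2[\Rot(f)]$ over all components $f$. Since $\pi_1(V)=0$, the quotient is exactly $\pi_1$ of the closed-knot component, modulo the commutator issue, which is fine since we map to $R$. This gives criterion (1).

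\textbf{Step 3: part (2).} Use the fibration
\[
\Diff^+(S^1)\simeq S^1\to \Emb(S^1,S^3)\to \Emb/\Diff^+.
\]
The induced map on $\pi_1$ is surjective, with kernel generated by the image of $\pi_1(S^1)$. That image is the reparametrization loop $\Repar(f)$. By the previous theorem, $[\Repar(f)]=[\Roll_0(f)]*[\Rot(f)]$. Combined with Step 2, $\xi$ descends to $\Emb(S^1,S^3)/\Diff^+(S^1)$ if and only if $2\xi(\Rot(f))=0$ and $\xi(\Roll_0(f))+\xi(\Rot(f))=0$.

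A subtlety here is that we work with a ring $R$ rather than a field. Because $H^1=\operatorname{Hom}(H_1,R)$ in degree one, there is no Ext term, and the universal coefficient theorem causes no trouble. For the converse direction, if the conditions hold then $\xi$ factors through the quotient group, giving the descended homomorphism.
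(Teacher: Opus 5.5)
Your proposal is correct and follows the paper's proof. The ingredients are the same: the fibration with fiber $\LongKnotSpace$ whose boundary map has image $\langle[\Rot(f)]^2\rangle$ (you merge the paper's $S^3$ and $S^2$ bases into $STS^3$), the $\Diff^+(S^1)$-bundle whose kernel is generated by $[\Repar(f)]$, and the identity $[\Repar]=[\Roll_0]*[\Rot]$ transported along $c_*$.

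The step you flag as the obstacle is settled by your own $SO(3)$ idea, made precise by naturality of the exact sequence for $SO(3)\to SO(3)\times_{SO(2)}\LongKnotSpace$, $A\mapsto[A,f]$. The boundary then factors as $\pi_2(S^2)\xrightarrow{\pm2}\pi_1(SO(2))\to\pi_1(\LongKnotSpace)$, which is $\pm2$ because $\pi_2(SO(3))=0$ and $\pi_1(SO(3))=\mathbb Z/2$, with the generator going to $[\Rot(f)]$. The paper instead reads this off the clutching map $(z,g)\mapsto(z,z^2g)$. Your remark that ``twice is nullhomotopic'' only shows $[\Rot(f)]^2\in\ker c_*$, not that it generates the kernel. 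Also, components of $\Emb(S^1,S^3)$ are not $K(\pi,1)$'s, but this is harmless since $H^1=\operatorname{Hom}(\pi_1,R)$ holds regardless.
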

\begin{corollary}
    $\beta_1$ descends to a nontrivial 1-cohomology class in $H^{1}(\Emb(S^1, S^3); \mathbb{Z})$. \\
    $\beta_1 \bmod2$ and $\beta_3$ descend to independent nontrivial 1-cohomology classes in\\ $H^{1}(\Emb(S^1, S^3)/\Diff^+(S^1); \mathbb{Z}/2\mathbb{Z})$. 
\end{corollary}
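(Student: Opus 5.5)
The corollary follows by applying the Descending criterion (\Cref{mainthm:descending}) to the pairings of \Cref{mainthm:beta12} and \Cref{mainthm:beta3}. It then remains to show that the descended classes are nontrivial and, in the mod 2 case, independent.

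\textbf{Descent.} For $\beta_1$ the table gives $\beta_1(\Rot(f))=0$ for every $f$. Hence $2\beta_1([\Rot(f)])=0$, and part (1) of the criterion shows that $\beta_1$ descends to some $\bar\beta_1\in H^1(\Emb(S^1,S^3);\mathbb Z)$.

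For the unparametrized space we work with $\beta_1 \bmod 2$. The rotation condition still holds. The second condition asks that
\[
\beta_1(\Roll_0(f))+\beta_1(\Rot(f)) = 2\Bigl(\tbinom{v_2(f)}{2}-v_{4,2}(f)\Bigr)+0
\]
vanish mod 2, and it does because the whole expression carries a factor $2$. Note that we cannot do this over $\mathbb Z$: $\binom{v_2}{2}-v_{4,2}$ is not identically zero, which is why only the mod 2 reduction descends. For $\beta_3$, \Cref{mainthm:beta3} says both $\beta_3(\Rot(f))$ and $\beta_3(\Roll_0(f))$ vanish, so part (2) applies directly.

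\textbf{Nontriviality and independence.} A descended class $\bar\xi$ pulls back to $\xi$ under the natural map from $\LongKnotSpace$, namely closing up a long knot and then passing to the quotient. So any loop $\gamma$ in the long knot space with $\xi(\gamma)\neq 0$ maps to a loop on which $\bar\xi$ takes the same nonzero value. This gives the following.
\begin{itemize}
\item For $\bar\beta_1$ over $\mathbb Z$, take a bracket loop: $\beta_1([f,g]_0)=-2v_2(f)v_2(g)$, which is nonzero for, e.g., $f=g=$ trefoil.
\item For the mod 2 classes, use the half rolling loops from the table. Here $\frac12\Roll_0(4_1)$ takes the values $(1,0)$ on $(\beta_1,\beta_3)$, and $\frac12\Roll_0(6_1)$ takes the values $(0,1)$.
\end{itemize}
Evaluating a relation $a\,\overline{\beta_1}+b\,\overline{\beta_3}=0$ on these two loops forces $a=b=0$. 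This gives nontriviality and linear independence at once.

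\textbf{Main obstacle.} The delicate step is that the half rolling loops are loops in the long knot space only for the symmetric knots $4_1$ and $6_1$. Their images in the quotient are still closed loops, so the pullback argument applies as long as the map $\LongKnotSpace\to\Emb(S^1,S^3)/\Diff^+(S^1)$ is compatible with the descent in \Cref{mainthm:descending}. I would check this compatibility explicitly from the construction used in the proof of that criterion.
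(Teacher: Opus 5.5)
Your proposal is correct and matches the paper's implicit argument. You apply the descending criterion using $\beta_1(\Rot)=0$, $\beta_1(\Roll_0)\in 2\mathbb{Z}$ and $\beta_3(\Rot)=\beta_3(\Roll_0)=0$, then detect nontriviality and independence on loops coming from $\LongKnotSpace$. The paper places the half-rolling loops inside the single component of $4_1\csum 6_1$; your use of the separate components of $4_1$ and $6_1$ works equally well, since $H^1$ is a product over components.

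The ``main obstacle'' you flag is automatic and needs no check. Descending means exactly that $\xi=(q\circ c)^*\bar\xi$, so $\bar\xi$ takes the same value as $\xi$ on the image of any loop of $\LongKnotSpace$, the half-rolling loops included.
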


It is interesting to compare the results with related work. 
Mortier developed a theory of combinatorial cohomology of the space of long knots \cite{Mortier14}. 
In the proof of cocyclicity of the 1-cocycles here, Gauss diagram identities that hold only for real knots rather than virtual knots are necessary. 
Mortier's theory has a virtual nature. 
However, it remains unclear whether the 1-cohomology classes represented by the 1-cocycles here can be represented within his framework. 
\begin{question}
    Can the 1-cohomology classes represented by the 1-cocycles $\beta_1, \beta_2$ and $\beta_3$ be represented by the 1-cocycles arising from Mortier's construction in \cite{Mortier14}? 
\end{question}
Mortier stated a result similar to parts (ii) and (iii) of \Cref{mainthm:pairing} in the Reidemeister-far setting and briefly indicated an approach to its proof \cite[Thm4.2]{Mortier14}. 
Our argument establishes \Cref{mainthm:pairing} without the Reidemeister-far assumption. 

Mortier also developed a Kontsevich integral for 1-cocycles \cite{Mortier22}. It would be interesting to explore the relationship between the 1-cocycles here and his Kontsevich integral for 1-cocycles. 
\begin{question}
    Do the 1-cocycles in this paper give weight systems for the Kontsevich integral in \cite{Mortier22}?
\end{question}

It would also be interesting to relate our construction to the work of Sakai \cite{Sakai11} and Kanou-Sakai \cite{SakiSakai23}. 
\begin{question}
    Can the new 1-cocycles in this paper be expressed by configuration-space integrals as the 1-cocycle $I(\Gamma)$ in \cite{Sakai11,SakiSakai23}?
\end{question}

Vassiliev defined the order of a 1-cocycle of the space of long knots in \cite{Vassiliev04}. 
Turchin showed that in the $E_1$ page of the Vassiliev spectral sequence, the order-four degree-one homology is $\mathbb{Z}^2$ over $\mathbb{Z}$ (Table 2 with $(i,j)=(4,7)$ in \cite{Turchin07}) and $(\mathbb{Z}/2\mathbb{Z})^3$ over $\mathbb{Z}/2\mathbb{Z}$ (Table 7 with $(i,j)=(4,7)$ in \cite{Turchin07}). 
Moriya proved that the Vassiliev spectral sequence collapses at the $E_1$-page over $\mathbb{Q}$ in a recent preprint \cite{Moriya25}. 
We conjecture that 
\begin{conjecture}
    The 1-cohomology classes represented by $\beta_1$ and $\beta_2$, together with $\alpha_3^1$ form a basis over $\mathbb{Q}$ of the degree-one cohomology up to order 4 in the sense of Vassiliev and 
    $\{\alpha_3^1, \beta_1, \beta_2, \beta_3\} \mod2$ form a basis over $\mathbb{Z}/2\mathbb{Z}$. 
\end{conjecture}
More generally, as in \cite[Conjecture~3.13]{Mortier14}, we have the following conjecture. 
\begin{conjecture}\label{conj:order}
  For the combinatorial 1-cocycles defined by Gauss diagrams with a triangle, 
  the combinatorial order coincides with the finite-type order in the sense of Vassiliev in \cite{Vassiliev04}. 
\end{conjecture}

\subsection{Plan of the paper}

In \Cref{sec:long_knot_space}, we introduce the space of long knots and the canonical loops. 
In \Cref{sec:combinatorial_1cocycles}, we introduce the theory of combinatorial 1-cocycles. 
In \Cref{sec:Gauss_diagrams}, we introduce Gauss diagrams and Gauss diagrams with a triangle, with which we construct combinatorial 1-cocycles. 
We give the explicit formulae for the 1-cocycles $\beta_1, \beta_2$ and $\beta_3$ in \Cref{sec:order4_1cocycles}. 
In \Cref{sec:pairing}, we first give an algorithm to realise the push arcs and the canonical loops.
Then we show that the pairings of the combinatorial 1-cocycles defined by Gauss diagram formulae with a triangle with the canonical loops give finite type invariants. 
In addition, we calculate the pairings for the 1-cocycles $\beta_1, \beta_2$ and $\beta_3$. 
In \Cref{sec:closed_knot_space}, we describe the relationship among the fundamental groups of various knot spaces. 
We prove the relation between the reparametrization loop, the rotation loop and the rolling loop. 
Finally, we give the descending criterion. 

\subsection{AI declaration}
During the preparation of this manuscript, the author used ChatGPT to revise the language and suggest improvements to the presentation. 
In response to the author's prompts, ChatGPT also drafted the proof of \Cref{prop:compactification-pi1}, gave the initial idea of the proof of \Cref{thm:repar}, proposed the formulae for $v_{4,1}$ and $v_{4,2}$ in \Cref{sec:GDF}, and drew the diagrams in that section. 
The author reviewed all AI-assisted material, and takes full responsibility for the manuscript.

\section{The space of long knots}
\label{sec:long_knot_space}

The \textbf{space of long knots} is the set 
\[
  \mathcal{K}_{3, 1} \coloneqq \{ f \in Emb(\mathbb{R}^1, \mathbb{R}^3) \Bigm| \forall |x| \geq 1, \ f(x) = (x, 0, 0) \}
\]
equipped with the Whitney $C^{\infty}$-topology (see \cite{Hirsch76FunctionSpace}).  
We use the notation $\mathcal{K}_{3,1}(f)$ to denote the component of $\mathcal{K}_{3,1}$ containing a long knot $f$.

\begin{proposition}
    Two long knots $f_0$ and $f_1$ are isotopic if and only if they are in the same path component of $\mathcal{K}_{3,1}$. 
\end{proposition}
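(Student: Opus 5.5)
The plan is to make the word ``isotopic'' precise and then compare the two notions directly. I take an isotopy between long knots to be a smooth family $F\colon \mathbb{R}\times[0,1]\to\mathbb{R}^3$ with $F_0=f_0$, $F_1=f_1$, each $F_t$ a smooth embedding, and $F_t(x)=(x,0,0)$ for $|x|\ge 1$ and all $t$. Such a family is the same thing as a path in $\LongKnotSpace$ that is smooth in $t$. One implication then says that a smooth family gives a continuous path in the Whitney $C^\infty$-topology. The other says that every continuous path can be replaced by a smooth family with the same endpoints.

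For ``isotopic $\Rightarrow$ same path component'', all the maps $F_t$ agree with the standard line outside the compact set $[-1,1]$. On maps supported in a fixed compact set, the Whitney $C^\infty$-topology coincides with the compact-open $C^\infty$-topology. So it is enough to check that $t\mapsto F_t$ is continuous into $C^\infty([-1,1],\mathbb{R}^3)$. This holds because every $x$-derivative $\partial_x^k F$ is uniformly continuous on the compact set $[-1,1]\times[0,1]$.

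For the converse, I start from a continuous path $\gamma\colon[0,1]\to\LongKnotSpace$. By the same remark, $\gamma$ is continuous into the Fréchet space $C^\infty_c$ of maps with fixed behaviour outside $[-1,1]$, and $\LongKnotSpace$ is an open subset of the affine space $\{(x,0,0)\}+C^\infty_{[-1,1]}$. I cover the compact image $\gamma([0,1])$ by finitely many convex $C^1$-neighbourhoods contained in the space of embeddings. These exist because being an embedding on a compact interval is a $C^1$-open condition: injectivity together with nonvanishing derivative, controlled through a uniform lower bound on $|f(x)-f(y)|/|x-y|$. I then take a subdivision $0=t_0<\dots<t_N=1$ such that consecutive knots $\gamma(t_i),\gamma(t_{i+1})$ lie in a common convex neighbourhood. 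The straight-line homotopies $(1-s)\gamma(t_i)+s\gamma(t_{i+1})$ stay among long knots. Reparametrizing $s$ by a smooth function that is flat at the endpoints glues these pieces into a smooth isotopy.

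The main obstacle is producing the convex neighbourhoods, that is, a quantitative openness of embeddings. The first thing I would try is to use the bi-Lipschitz constant $c(f)=\inf_{x\ne y}|f(x)-f(y)|/|x-y|$, which is positive for an embedding with the standard ends. The perturbation estimate $c(f+h)\ge c(f)-\|h'\|_\infty$ shows that the $C^1$-ball of radius $c(f)/2$ consists of embeddings, and this ball is convex.
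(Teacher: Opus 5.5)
The paper gives no proof of this proposition. It is stated as standard background right after the definition of $\LongKnotSpace$, so there is no argument in the paper to compare yours with. Your proof is correct and supplies a self-contained justification.

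Both directions check out under your definition of isotopy:
\begin{itemize}
\item For isotopic $\Rightarrow$ same path component: a Whitney neighbourhood of $f$ only constrains $g-f$ on $[-1,1]$, where the control function has a positive minimum. So on $\LongKnotSpace$ the Whitney $C^\infty$-topology is the Fr\'echet topology of the seminorms $\sup_{[-1,1]}|\partial_x^k(\cdot)|$, and joint smoothness of $F$ gives continuity.
\item For the converse, $c(f)>0$ because the difference quotient is exactly or nearly $1$ once one of the two points is far out. Conversely, $c(g)>0$ forces injectivity, $|g'|\ge c(g)$ and a Lipschitz inverse, hence an embedding.
\item So your estimate $c(f+h)\ge c(f)-\|h'\|_\infty$ makes $\LongKnotSpace$ an open subset of the affine Fr\'echet space in which every point has a convex neighbourhood. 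These balls are open in the Whitney topology, so the Lebesgue-number subdivision exists.
\item The flat reparametrization makes the concatenation jointly $C^\infty$, since all $t$-derivatives vanish from both sides at each $t_i$ and the pure $x$-derivatives match there.
\end{itemize}
Compared with simply citing the standard fact, which is usually derived from general smooth-approximation results in spaces of maps, your route needs only quantitative $C^1$-openness of embeddings.

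The one caveat is definitional. You prove the statement for isotopies through long knots with ends fixed on $|x|\ge 1$, which matches the definition of $\LongKnotSpace$. If ``isotopic'' is meant in the ambient sense, as when the paper matches isotopy classes with Reidemeister classes, you would need to add the isotopy extension theorem. You would also need a rescaling $x\mapsto\lambda g(x/\lambda)$, $0<\lambda\le 1$, to bring the support of a compactly supported ambient isotopy back into $[-1,1]$.
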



Hatcher \cite{Hatcher83,Hatcher02} and Budney \cite{Budney05,Budney10} have studied the homotopy type of each component of the space of long knots. 
They have shown that each component is a $K(\pi, 1)$ space and  
\begin{enumerate}
  \item the component of the long unknot is contractible;
  \item the component of a nontrivial long torus knot has the homotopy type $S^{1}$,
  represented by a rotation of $2\pi$ about the axis of the long knot (the Gramain cycle, denoted by $\Rot$);
  \item the component of a long hyperbolic knot has the homotopy type $S^{1} \times S^{1}$,
  with the Gramain cycle and the 0-framed rolling loop (denoted by $\Roll_0$) spanning a cyclic covering of a torus embedded in the component of the space of long knots; 
  \item the homotopy type of the component of a long satellite knot is determined by smaller pieces using splicing. 
\end{enumerate}
\begin{remark}\label{rem:closure_operation}
    When we say a long torus knot, we mean a long knot whose closure (obtained by adding the point $\infty$) is a torus knot in $\mathbb{S}^3$. 
    The same applies to the long unknot, long hyperbolic knots, and long satellite knots. 
    This closure operation gives a one-one correspondence between the isotopy classes of long knots and those of closed knots in $\mathbb{S}^3$.
\end{remark}

\subsection{Loops in the space of long knots}
We introduce some loops in the space of long knots $\mathcal{K}_{3,1}$. 

\subsubsection{Rotation loop}\label{subsec:Gramain_cycle}
The rotation loop, which is often called the \textbf{Gramain cycle}, is given by the rotation along the axis of the long knots (i.e., the $x$-axis), defined by 
\[
\begin{aligned}
\Rot_f : \mathbb{R} \times [0,1] &\to \mathbb{R}^3, \\
(x,t) &\mapsto 
\begin{bmatrix}
  1 & 0 & 0 \\
  0 & \cos(2\pi t) & -\sin(2\pi t) \\
  0 & \sin(2\pi t) &  \cos(2\pi t)
\end{bmatrix} f(x),
\end{aligned}
\]
where $f$ is any long knot. 
Consider an isotopy of long knots $\gamma: s\mapsto f_s, \; s \in [0, 1]$. 
Then the continuous map 
\[
(t, s) \mapsto \Rot_{f_s}(\cdot, t)
\]
induces a path homotopy between the path $\Rot_{f_0}$ and the path $\gamma \ast \Rot_{f_1} \ast \gamma^{-1}$. 
\begin{remark}
    This implies that the value of a 1-cocycle in $\mathcal{K}_{3,1}$ on the Gramain loop is a knot invariant. 
\end{remark}

A diagrammatic interpretation of the Gramain cycle is given in \Cref{fig:rot}.
\begin{remark}
    Gramain \cite{Gramain77} pointed out that 
    $\pi_1(\mathcal{K}_{3,1}(f))$ contains a subgroup isomorphic to $\mathbb{Z}$ generated by the rotation loop if $f$ is nontrivial. 
\end{remark}

\subsubsection{Rolling loop}\label{subsec:Fox-Hatcher_loop} 

Given a framed long knot $(f, N)$, let $K$ be its compactified image in $S^3 = \mathbb{R}^3 \cup \{\infty\}$. 
Choose a small ball centered at $\infty \in S^3$, in which $K$ is just a straight diameter $D$ and $N$ points in a fixed direction $v_{\infty}$. 
We slide $K$ along $N$ through the small ball centered at $\infty$ one round, which, relative to $K$, looks like sliding the small ball along $K$ once. 
During the whole process, after a slight perturbation, we require that in the small ball the knot always coincide with the diameter $D$ and the framing always points in the direction of $v_{\infty}$. 
At each moment $t$, we obtain a long knot $\Roll_{(f, N)}(t) \in \LongKnotSpace$ by restricting to $\mathbb{R}^3 \subset S^3$, which depends on $(f, N)$ and $t$ continuously. 
The loop 
\[
    t \mapsto \Roll_{(f, N)}(t)
\]
is called the \textbf{rolling loop} (or \textbf{Fox-Hatcher loop} \footnote{
    The rolling loop was first introduced by Fox \cite{Fox66} to study knot spinning. 
    It was later formulated and studied by Litherland \cite{Litherland79} for knot spinning. 
    Gramain \cite{Gramain77} and Hatcher \cite{Hatcher02} studied this loop in the context of the space of knots. 
}) of the long knot $f$ with respect to the framing $N$. 
Its homotopy class depends only on the framing number of $(f, N)$. 

\begin{proposition}
    Let $f$ be a long knot with two framings $N$ and $N'$ having the same framing number. 
    Then $\Roll_{(f, N)}$ and $\Roll_{(f, N')}$ are path-homotopic. 
    We use the notation $\Roll_m(f)$ to denote the rolling loop (up to homotopy) of the $m$-framed long knot $f$. 
\end{proposition}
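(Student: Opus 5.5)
The plan is to reduce the statement to two facts: the space of framings with a given framing number is path-connected, and the rolling construction depends continuously on the framing. The loop $\Roll_{(f,N)}$ is built from $(f,N)$ together with the fixed local data at $\infty$, namely the diameter $D$ and the direction $v_\infty$. Since the construction varies continuously with $(f,N)$, a path of framings $N_s$ from $N$ to $N'$, all agreeing with $v_\infty$ in the small ball at $\infty$, will give a continuous family of loops $s\mapsto \Roll_{(f,N_s)}$. All these loops are based at the same long knot $f$, because the knot itself is not moved at $t=0$. Hence the map $(t,s)\mapsto \Roll_{(f,N_s)}(t)$ is a based homotopy, and in particular a path homotopy.

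\textbf{Step 1.} Fix a tubular parametrization of $K=\overline{f(\mathbb R)}\subset S^3$. Then framings of $K$ that equal $v_\infty$ near $\infty$ correspond to maps $\theta:\mathbb R\to S^1$ that are constant (equal to the value of $v_\infty$) outside $[-1,1]$. Concretely, we measure $N$ against a reference framing $N_0$, for instance the framing number $0$ framing normalized near $\infty$, so that $N(x)=e^{i\theta(x)}N_0(x)$. Such a $\theta$ is a loop in $S^1$ relative to the endpoints, and its degree equals the framing number of $N$ minus that of $N_0$. Two framings with the same framing number therefore give maps $\theta,\theta'$ of equal degree. Since $\pi_1(S^1)=\mathbb Z$ is detected by degree, these maps are homotopic rel the complement of $[-1,1]$. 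Such a homotopy yields a path $N_s$ of framings, all coinciding with $v_\infty$ near $\infty$.

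\textbf{Step 2.} I must make precise that the construction of $\Roll_{(f,N)}$ is continuous in $N$, including the perturbation that keeps the ball at $\infty$ aligned with $D$ and $v_\infty$. I would realize the rolling explicitly. Choose a diffeotopy $\Phi_t$ of $S^3$ supported in a tubular neighborhood $\nu K\cong K\times D^2$, where the identification is given by $N$. On $\nu K$, $\Phi_t$ translates along the $K$-direction by $t\cdot\mathrm{length}$, using a bump function in the radial $D^2$ coordinate, and it fixes $K$ setwise. Then $\Roll_{(f,N)}(t)=\Phi_t^{-1}\circ f$, suitably reparametrized so that $f$ is standard outside $[-1,1]$. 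The trivialization of $\nu K$ depends smoothly on $N$, so $\Phi_t$ does too. The condition at $\infty$ is satisfied because in the ball the framing is $v_\infty$ and the translation is along $D$.

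\textbf{Main obstacle.} The delicate point is checking that the reparametrization and the normalization near $\infty$ remain continuous in the Whitney topology uniformly in $s$. I expect this to be the main obstacle. I would handle it by choosing all bump functions and the ball independently of $s$. This is possible because every $N_s$ agrees with $v_\infty$ on the fixed ball, so the only dependence on $s$ occurs in a compact region where smooth dependence is automatic. With this in place, the conclusion follows as described at the start: $\Roll_{(f,N)}$ and $\Roll_{(f,N')}$ are path-homotopic, which justifies the notation $\Roll_m(f)$.
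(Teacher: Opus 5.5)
Your reduction is the right one, and it is all the paper relies on. The paper gives no separate proof: it builds into the definition that $\Roll_{(f,N)}(t)$ depends continuously on $(f,N)$ and $t$. At that level of rigor, your Step~1 (framings with equal framing number are homotopic through framings equal to $v_\infty$ on the ball at $\infty$) already finishes the argument.

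The genuine gap is Step~2, where you try to justify that continuity with an explicit model. That model does not produce the rolling loop. If $\Phi_t$ preserves $K$ setwise, then $\Phi_t^{-1}(K)=K$ for all $t$. After your reparametrization, every knot in the family is therefore a long knot with image $f(\mathbb R)$, i.e.\ of the form $f\circ\varphi_t$ with $\varphi_t$ a diffeomorphism of $\mathbb R$ equal to the identity for $|x|\ge 1$. That space is convex, so your loop is null-homotopic for every $N$. The rolling loop, by contrast, is nontrivial in general: it has infinite order for hyperbolic knots, and $\beta_1(\Roll_0(4_1))=2$. So Step~2 proves continuity in $N$ of the wrong object. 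Your ``main obstacle'' (uniformity in the Whitney topology) is not where the difficulty lies.

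What the model misses is that $\Phi_1\neq\mathrm{id}$. It is the identity on $K$, outside $\nu K$, and on a small ball $B$ at $\infty$ inside $\{\rho=1\}$. On the collar where $0<\rho<1$, however, it is a Dehn twist along a torus parallel to $\partial\nu K$, in the direction of the $N$-push-off. Up to orientation, the rolling loop is the closing-up loop $u\mapsto\Psi_u^{-1}(K)$, where $\Psi_u$ is a path from $\mathrm{id}$ to $\Phi_1$ in $\Diff(S^3\ \mathrm{rel}\ B)$. This is the standard identification of the rolling loop with the longitudinal boundary twist of the knot exterior. It is this closing-up step, not the slide along $K$, that actually moves the long knot.

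With this model your strategy goes through. A homotopy $N_s$ yields a path $s\mapsto\Phi_1^{(s)}$ of such twists, each preserving $K$ and fixing $B$. Hence $\Psi^{(s)}:=\Psi^{(0)}*\bigl(\Phi_1^{(u)}\bigr)_{u\in[0,s]}$ are closing paths depending continuously on $s$. The loops they give are all homotopic to the one for $N_0$, because the appended segment leaves $K$ fixed. What you must still supply is that the loop does not depend on the choice of closing path; this is needed even for $\Roll_{(f,N)}$ to be well defined up to homotopy. It follows from Hatcher's theorem that $\Diff(D^3\ \mathrm{rel}\ \partial)$ is contractible.
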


\begin{proposition}\label{prop:roll_framing}
    Let $f$ be a long knot. 
    Then $\Roll_{m+1}(f)$ is path-homotopic to $\Roll_{m}(f)*\Rot(f)^{-1}.$
\end{proposition}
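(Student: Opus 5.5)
We prove \Cref{prop:roll_framing} by realising the change of framing by one full twist as a loop. Changing the framing number from $m$ to $m+1$ can be done by modifying $N$ only near the ball at $\infty$: on a short arc of $K$ next to the small ball, the new framing $N'$ is obtained from $N$ by one full twist. Equivalently, $N' = R \cdot N$, where $R$ rotates the normal vector once about the tangent direction along a short interval $J\subset K$, with $J$ adjacent to the ball at $\infty$.

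The rolling loop $\Roll_{(f,N')}$ then consists of two parts. First the ball slides along $K$ with the framing $N$; this reproduces $\Roll_{(f,N)}$, up to reparametrisation. Then the ball passes through the region where $N'$ differs from $N$. While it traverses $J$, the ball must turn once about the axis $D$ so as to keep the framing pointing along $v_\infty$.

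Viewed from the long-knot picture, a ball at $\infty$ turning by $2\pi$ about the diameter $D$ is the same as the whole long knot in $\mathbb{R}^3$ turning by $2\pi$ about the $x$-axis, but in the opposite sense. Indeed, fixing the ball and its framing in the new frame forces a compensating rotation of the complement. By the definition in \Cref{subsec:Gramain_cycle}, that rotation is $\Rot(f)^{\pm1}$.

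To make this precise, I would write the rolling loop with framing $N'$ as a concatenation: first the path $\Roll_{(f,N)}$ with $J$ shrunk to a negligible portion of the parameter interval, then a path along which the ball sits at the end and rotates. This gives a homotopy
\[
\Roll_{(f,N')} \simeq \Roll_{(f,N)} * \Rot(f)^{\epsilon}, \qquad \epsilon = \pm 1.
\]
Because the twist sits next to $\infty$, where the ball has effectively returned to its starting position, the rotation part is based at $f$ itself. No conjugation by a path is needed. In any case, by the remark after the definition of $\Rot$, conjugates of $\Rot$ along isotopies are homotopic, so the base point is harmless.

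The main obstacle is the sign $\epsilon$, which has to come out as $-1$. I would pin it down by an explicit local model. Take the straight unknotted long knot, where $\Roll_0$ is null-homotopic. Check that $\Roll_1$, which is the loop of a ball slid along a $1$-framed straight line, equals $\Rot^{-1}$. This uses the orientation conventions for the rotation matrix in \Cref{subsec:Gramain_cycle} and for the framing number (a right-handed twist giving $+1$). Since the twist is localized near $\infty$, the same local model applies for an arbitrary $f$.

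As a final step I would verify that the construction respects the well-definedness up to homotopy for fixed framing number, as in the preceding proposition. Then the conclusion $\Roll_{m+1}(f)\simeq \Roll_m(f)*\Rot(f)^{-1}$ follows for all $m$.
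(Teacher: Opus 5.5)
The paper states this proposition without proof, so there is no argument of the paper to compare with. Your mechanism is the right one: put the extra full twist of the framing on an arc $J$ next to $\infty$, note that the identification of the ball with the standard ball must turn once about $D$ while the ball crosses $J$, and convert that turn into a rotation of the long knot by conjugating with the global rotation about the $x$-axis. This does give $\Roll_{m+1}(f)\simeq\Roll_m(f)*\Rot(f)^{\epsilon}$ with $\epsilon=\pm1$, based at $f$.

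\textbf{The gap is the step meant to fix $\epsilon$}, which is the only content of the statement beyond this. For the straight long unknot $u$, $\Rot(u)$ is the constant loop, since each rotation about the $x$-axis fixes the $x$-axis pointwise. Moreover $\LongKnotSpace(u)$ is contractible. So $\Roll_1(u)\simeq\Rot(u)^{+1}$ and $\Roll_1(u)\simeq\Rot(u)^{-1}$ are both true, and your test cannot distinguish the two signs. Saying that ``the same local model applies for arbitrary $f$'' does not rescue it. What must be transported to $f$ is the direction in which the ball turns, and that is invisible in the homotopy class of a loop in the unknot component.

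To fix the sign, compute with frames instead of with loops of knots. Near $\infty$ everything is explicit in $S^3\subset\mathbb{R}^4$:
\begin{itemize}
\item the compactified axis is the great circle $\{b=c=0\}$;
\item the diffeomorphisms $\Psi_t$ carrying the moving ball to the standard one at $(0,0,0,1)$ can be taken to be rotations in the $(a,d)$-plane;
\item these commute with the rotations $R_\alpha$ in the $(b,c)$-plane, which are the rotations defining $\Rot$.
\end{itemize}
One checks that $R_\alpha$ turns normal vectors along this circle in the right-handed sense about the tangent oriented like the long knot. Hence taking $N'=R_{\theta(s)}N$ on $J$, with $\theta$ increasing from $0$ to $2\pi$ along the orientation, raises the framing number by one.

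The identification adapted to $N'$ is then $\Psi_t\circ R_{-\theta(t)}$, since it sends $N'$ to $v_\infty$. So the long knot at time $t$ is $\Psi_tR_{-\theta(t)}(K)=R_{-\theta(t)}\Psi_t(K)$. The extra segment is therefore homotopic to $t\mapsto R_{-2\pi t}f$, i.e.\ $\Rot(f)^{-1}$.

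This computation uses that the ball travels along the orientation of $K$. With the opposite direction of travel the exponent flips, so you must also state that convention, which the paper leaves implicit.
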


As with the rotation loop, we have 
\begin{proposition}
    The pairing of a 1-cocycle on $\mathcal{K}_{3,1}$ with the 0-framed rolling loop $\Roll_0$ is a knot invariant. 
\end{proposition}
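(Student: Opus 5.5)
The statement to prove is that the value $\xi(\Roll_0(f))$ of a 1-cocycle $\xi$ on the space of long knots depends only on the isotopy class of $f$. The plan is to copy the argument for the rotation loop: a path of long knots gives a free homotopy between the corresponding rolling loops, and a 1-cocycle takes the same value on freely homotopic loops. The new ingredient is that a rolling loop needs a framing, so the framing must be transported along the path and its framing number shown to stay $0$.

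\emph{Step 1 (transport the framing).} Let $\gamma\colon s\mapsto f_s$, $s\in[0,1]$, be an isotopy of long knots. Fix a framing $N_0$ of $f_0$ with framing number $0$ that equals the fixed vector $v_\infty$ outside a compact set. By the isotopy extension theorem there is an ambient isotopy $\Phi_s$ of $\mathbb{R}^3$ with $\Phi_s\circ f_0=f_s$ and $\Phi_s=\mathrm{id}$ outside a compact set. Put $N_s\coloneqq d\Phi_s(N_0)$. This is a continuous family of framings of $f_s$ that stays standard near $\infty$. The framing number is the linking number of $f_s$ with its push-off along $N_s$, which is an integer varying continuously in $s$. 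It is therefore constant, and $N_1$ is a $0$-framing of $f_1$.

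\emph{Step 2 (build the homotopy).} The construction of $\Roll_{(f,N)}(t)$ is continuous in the framed long knot $(f,N)$, as stated in the definition. So
\[
H(t,s)\coloneqq \Roll_{(f_s,N_s)}(t)
\]
is a continuous map $[0,1]^2\to\LongKnotSpace$. Each $H(\cdot,s)$ is a loop based at $f_s$, since the rolling loop starts and ends at the knot itself. Hence $H$ is a free homotopy of loops, which gives the path homotopy
\[
\Roll_{(f_0,N_0)}\simeq \gamma\ast\Roll_{(f_1,N_1)}\ast\gamma^{-1},
\]
exactly as for $\Rot$. By the earlier proposition that the homotopy class depends only on the framing number, the right-hand loop is homotopic to $\gamma\ast\Roll_0(f_1)\ast\gamma^{-1}$.

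\emph{Step 3 (evaluate the cocycle).} A 1-cocycle vanishes on boundaries and is additive under concatenation. The contributions of $\gamma$ and $\gamma^{-1}$ cancel, so
\[
\xi(\Roll_0(f_0))=\xi(\Roll_0(f_1)).
\]

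The main obstacle is the continuity claim in Step 2. The definition of the rolling loop involves a slight perturbation that keeps the knot equal to the diameter $D$, with framing $v_\infty$, inside the small ball. One has to check that this normalization can be made uniformly over the parameter $s\in[0,1]$, so that $H$ is genuinely continuous. My plan is to choose a single small ball around $\infty$ disjoint from the supports of all $\Phi_s$. On that ball the knots and framings are literally standard for every $s$. The sliding can then be modelled by the same flow along $(f_s,N_s)$, conjugated by $\Phi_s$, which depends continuously on $s$.
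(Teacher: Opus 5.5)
Your proposal is correct and follows the paper's route. The paper justifies the statement only by ``as with the rotation loop,'' meaning that the family $(t,s)\mapsto\Roll_{(f_s,N_s)}(t)$ gives the path homotopy $\Roll_0(f_0)\simeq\gamma\ast\Roll_0(f_1)\ast\gamma^{-1}$; your transport of the $0$-framing by the ambient isotopy and your conjugated sliding (which in effect yields the manifestly continuous family $\Phi_s\circ\Roll_{(f_0,N_0)}$) supply details the paper leaves implicit, and they are sound.
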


We have a diagrammatic description of the rolling loop \cite{Fox66, Hatcher02}. 
\begin{convention}
    Given a long knot diagram $K$, we use the notation $\Roll(K)$ to denote 
    the rolling loop with respect to the blackboard framing of $K$. 
\end{convention}

\begin{definition}\label{def:rolling move}
    Given a long knot diagram, the \textbf{rolling move} is 
    to move the last crossing of the long knot diagram to the first crossing by sweeping over or under the whole knot, or the converse. 
\end{definition} 

For a long knot diagram $K$ with $n$ crossings, the rolling loop with the blackboard framing $\Roll(K)$ can be represented by performing $2n$ rolling moves.

\begin{figure}[H] 
    \centering
    \includegraphics[width=0.7\textwidth]{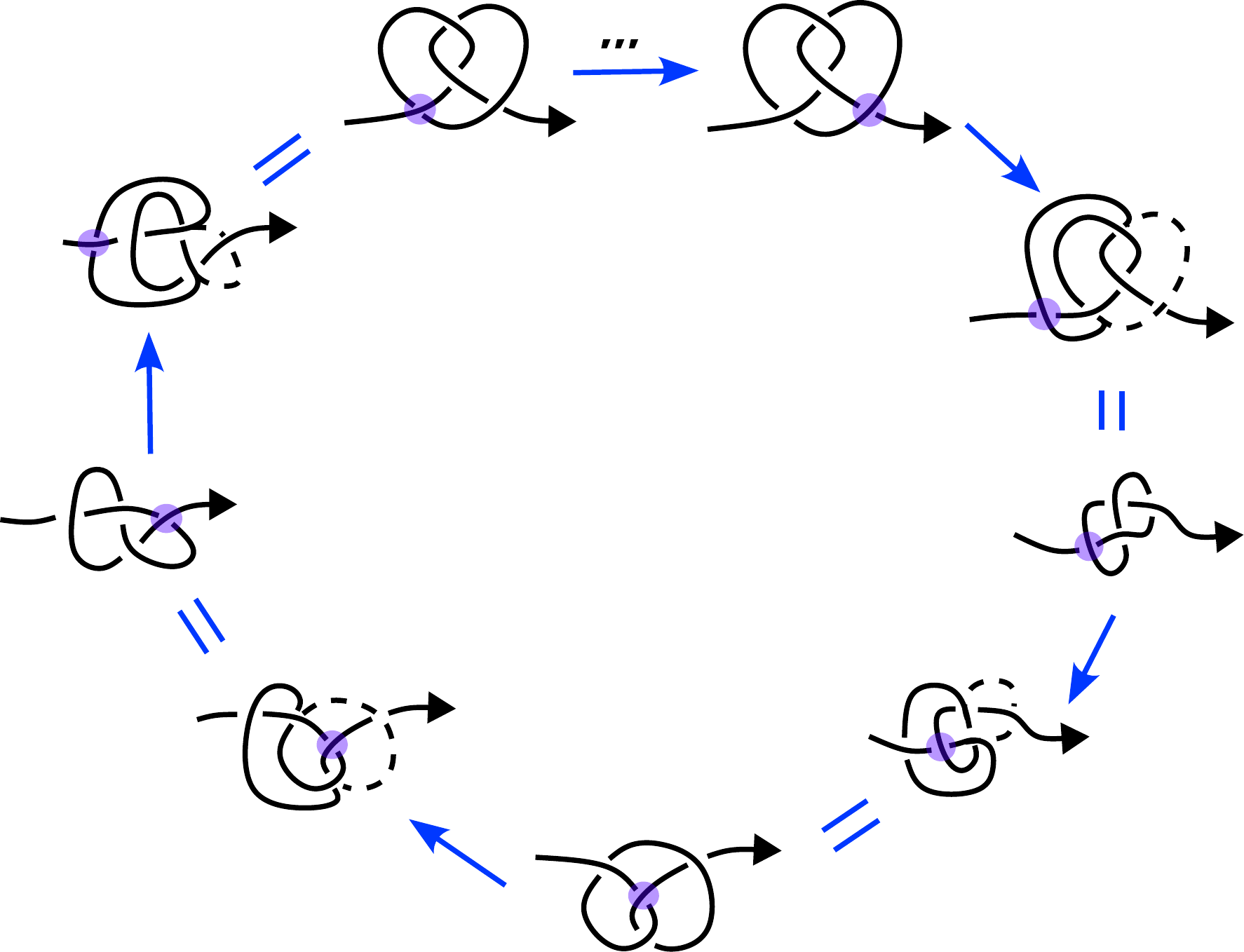}  
    \caption{The reverse of the rolling loop of $4_1$ with the blackboard framing}
    \label{fig:FH_loop}
\end{figure}

\begin{remark}
    In \Cref{fig:FH_loop}, we depict only half of the rolling loop for the knot $4_1$ (four rolling moves).
    At this stage, the traced crossing (the purple one) has not yet returned to its original position. 
    Another four rolling moves are required to complete the rolling loop. 
    However, 
    due to the symmetry of $4_1$, 
    the diagram (forgetting the traced crossing) has already returned to the initial diagram 
    after traversing only half of the whole rolling loop. 
    This shows that the rolling loop of $4_1$ is the square of a shorter loop. 
\end{remark}



\subsection{Push arc, bracket loop and half bracket loop}\label{subsec:push_arc_bracket}
Given a long knot $f$ and a framed long knot $(g, N_g)$, we consider their connected sum $f \csum g$. 
We shrink the part of $f$ in a small ball and push it along the framing $N_g$ of $g$.
We obtain a path of long knots from $f \csum g$ to $g \csum f$, which is called the \textbf{push arc} of $f$ along $(g, N_g)$, denoted by $\push(f, (g, N_g))$ (see \Cref{fig:push_arc}). 
As with the rolling loop, the path-homotopy class of the push arc depends only on the framing number of $g$. 

Suppose that $(f, N_f), (g, N_g)$ are two framed long knots. Then we have the \textbf{bracket loop} $[(f, N_f), (g, N_g)] \coloneqq \push(f, (g, N_g)) * \push(g, (f, N_f))$ (see \Cref{fig:bracket_loop}) and the \textbf{half bracket loop} $\frac{1}{2}[(f, N_f), (f, N_f)] \coloneqq \push(f, (f, N_f))$. 

\begin{notation}
Given two long knots $f, g$, we use the notation $\push_m(f, g)$ to denote 
the push arc with respect to an $m$-framing of $g$, the notation $[f, g]_0$ 
to denote the bracket loop with respect to zero framings of $f$ and $g$, 
and the notation $\frac{1}{2}[f, f]_0$ to denote the half bracket loop 
with respect to a zero-framing of $f$.
\end{notation}

\begin{convention}
Given two long knot diagrams $K, G$, we make the convention that the framings used in $\push(K, G)$, $[K, G]$ and $\frac{1}{2}[K, G]$ are all blackboard framings.
\end{convention}



\begin{figure}[H] 
    \centering
    \includegraphics[width=1.0\textwidth]{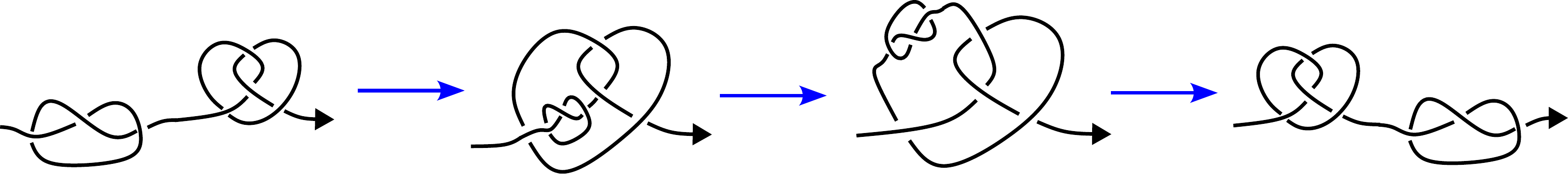}  
    \caption{The push arc $\push(3_1, 4_1)$ obtained by pushing $3_1$ through $4_1$ with the blackboard framing}
    \label{fig:push_arc}
\end{figure}

\begin{figure}[H] 
    \centering
    \includegraphics[width=0.8\textwidth]{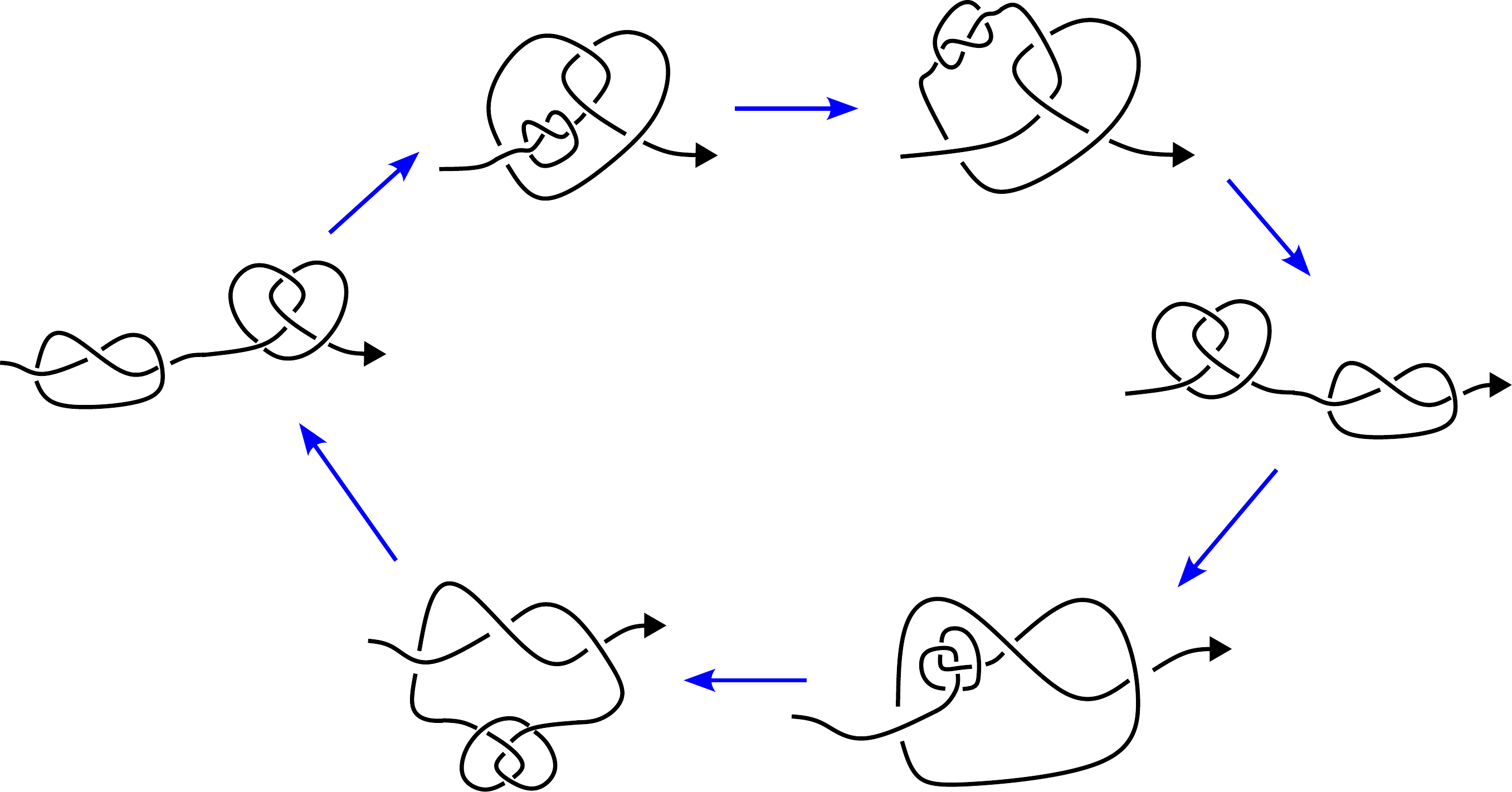}  
    \caption{The bracket loop $[3_1, 4_1]$ with the blackboard framing}
    \label{fig:bracket_loop}
\end{figure}

\begin{example}\label{eg:rot_via_push}
    Let $u$ be the trivial long knot and $f$ be any long knot.
    Then $\push_{1}(f, u) * \push_{0}(u, f)$ is path-homotopic to the rotation loop $\Rot(f)$. 
    As a corollary, we have $\push_{m}(f, g) = \push_{0}(f, g) * \Rot(f)^{m}$. 


    This also yields a diagrammatic interpretation of the rotation loop via pushing through a curl.\footnote{The author learnt this realisation from Fiedler.}   
\end{example}


\begin{figure}[H] 
    \centering
    \def\svgwidth{0.6\textwidth}
\begingroup%
  \makeatletter%
  \providecommand\color[2][]{%
    \errmessage{(Inkscape) Color is used for the text in Inkscape, but the package 'color.sty' is not loaded}%
    \renewcommand\color[2][]{}%
  }%
  \providecommand\transparent[1]{%
    \errmessage{(Inkscape) Transparency is used (non-zero) for the text in Inkscape, but the package 'transparent.sty' is not loaded}%
    \renewcommand\transparent[1]{}%
  }%
  \providecommand\rotatebox[2]{#2}%
  \newcommand*\fsize{\dimexpr\f@size pt\relax}%
  \newcommand*\lineheight[1]{\fontsize{\fsize}{#1\fsize}\selectfont}%
  \ifx\svgwidth\undefined%
    \setlength{\unitlength}{601.08790861bp}%
    \ifx\svgscale\undefined%
      \relax%
    \else%
      \setlength{\unitlength}{\unitlength * \real{\svgscale}}%
    \fi%
  \else%
    \setlength{\unitlength}{\svgwidth}%
  \fi%
  \global\let\svgwidth\undefined%
  \global\let\svgscale\undefined%
  \makeatother%
  \begin{picture}(1,0.70787313)%
    \lineheight{1}%
    \setlength\tabcolsep{0pt}%
    \put(0,0){\includegraphics[width=\unitlength,page=1]{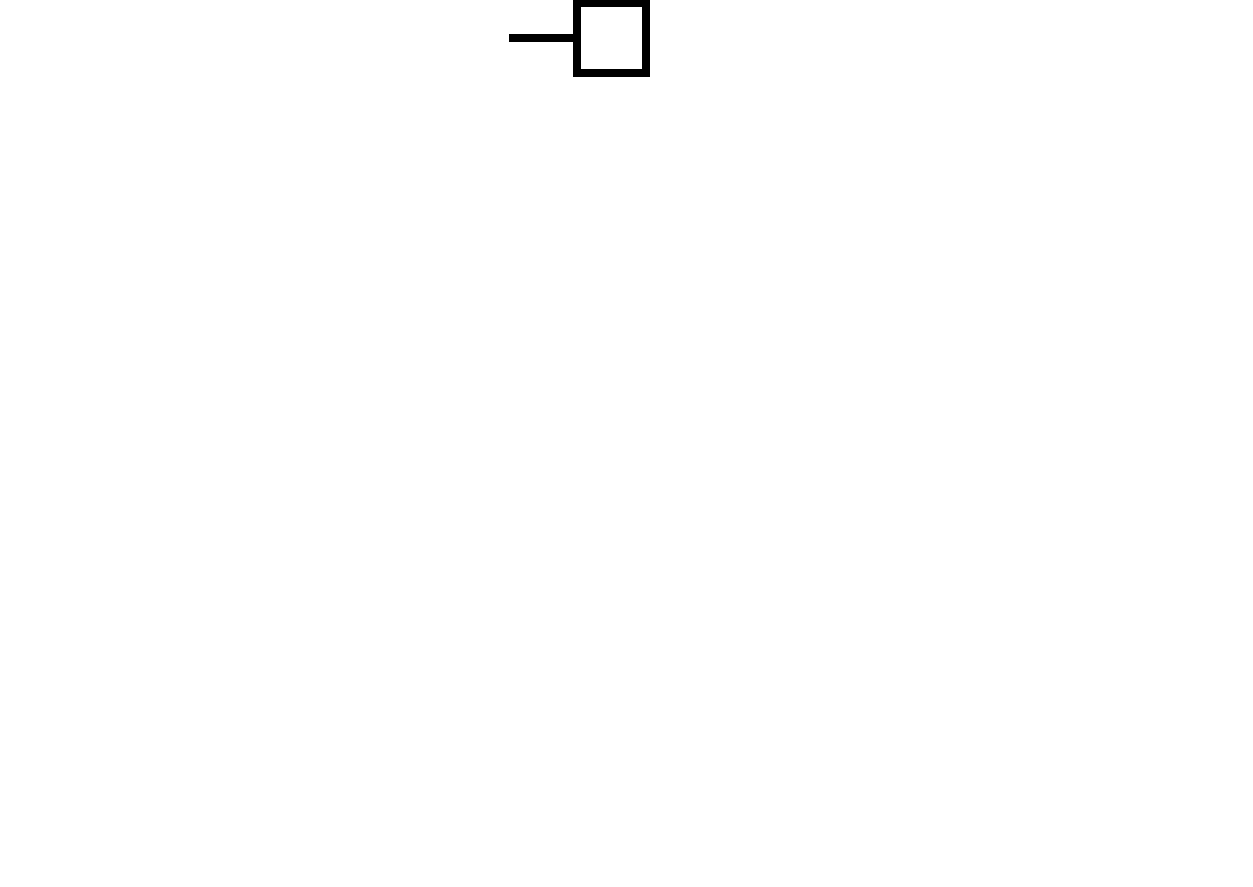}}%
    \put(0.47636724,0.66598681){\color[rgb]{0,0,0}\makebox(0,0)[lt]{\lineheight{0}\smash{\begin{tabular}[t]{l}K\end{tabular}}}}%
    \put(0,0){\includegraphics[width=\unitlength,page=2]{rot.pdf}}%
    \put(0.78321249,0.46304013){\color[rgb]{0,0,0}\makebox(0,0)[lt]{\lineheight{0}\smash{\begin{tabular}[t]{l}K\end{tabular}}}}%
    \put(0,0){\includegraphics[width=\unitlength,page=3]{rot.pdf}}%
    \put(0.69328072,0.09175454){\color[rgb]{0,0,0}\makebox(0,0)[lt]{\lineheight{0}\smash{\begin{tabular}[t]{l}K\end{tabular}}}}%
    \put(0,0){\includegraphics[width=\unitlength,page=4]{rot.pdf}}%
    \put(0.27225658,0.0588001){\color[rgb]{0,0,0}\rotatebox{89.714899}{\makebox(0,0)[lt]{\lineheight{0}\smash{\begin{tabular}[t]{l}K\end{tabular}}}}}%
    \put(0,0){\includegraphics[width=\unitlength,page=5]{rot.pdf}}%
    \put(0.17673484,0.51456109){\color[rgb]{0,0,0}\makebox(0,0)[lt]{\lineheight{0}\smash{\begin{tabular}[t]{l}K\end{tabular}}}}%
    \put(0,0){\includegraphics[width=\unitlength,page=6]{rot.pdf}}%
  \end{picture}%
\endgroup%

    \caption{Rotation loop $\Rot(K)$ via pushing through a curl}
    \label{fig:rot}
\end{figure}


Given a long knot isotopy $\gamma(t) = f_t, \; t\in[0, 1]$ and a long knot $g$, 
the commutative diagram given by
\[
\begin{tikzcd}[column sep=2.6cm, row sep=1.6cm]\label{fig:push_commutative}
f_0 \csum g
  \arrow[r, "{\push_0(f_0,g)}"]
  \arrow[d, "\gamma \# id"']
& g \csum f_0
  \arrow[r, "{\push_0(g,f_0)}"]
  \arrow[d, "id \# \gamma"]
& f_0 \csum g
  \arrow[d, "\gamma \# id"]
\\
f_1 \# g
  \arrow[r, "{\push_0(f_1,g)}"']
& g \csum f_1
  \arrow[r, "{\push_0(g, f_1)}"']
& f_0 \csum g
\end{tikzcd}
\]
indicates that the value of a 1-cocycle on the bracket loop $[f, g]_0$ is invariant of both $f$ and $g$ and that the value on $\dfrac{1}{2}[f, f]_0$ is also an invariant of $f$. 

\section{Combinatorial 1-cocycles}
\label{sec:combinatorial_1cocycles}

Combinatorial 1-cocycles were first introduced by Fiedler (see for example \cite{Fiedler07}). 
They are like discrete integrals along the generic paths. 
\begin{definition}\label{def:generic_path}
  A \textbf{generic path} in the space of long knots is 
  a path that can be described by a finite sequence of Reidemeister moves 
  starting and ending at knot diagrams with respect to the fixed projection $pr: \mathbb{R}^3 = \mathbb{R}^2 \times \mathbb{R} \rightarrow \mathbb{R}^2$. 
  We denote the set of generic paths in the space of long knots by $C_{g}(I, \mathcal{K}_{3,1})$. 
\end{definition}
\begin{definition}\label{def:combinatorial_1cocycle}
  A \textbf{combinatorial 1-cocycle} is a map 
  \[
  \phi: C_{g}(I, \mathcal{K}_{3,1}) \rightarrow G,
  \]
  where $G$ is an abelian group, such that 
  \begin{enumerate}[label=\textbf{Condition~\arabic*}, ref= Condition~\arabic*.]
      \item\label{it:1cocycle_condition1} $\phi(\gamma_0 * \gamma_1) = \phi(\gamma_0) + \phi(\gamma_1)$ for any two paths $\gamma_0, \gamma_1 \in C_{g}(I, \mathcal{K}_{3,1})$ which can be concatenated in order;
      \item\label{it:1cocycle_condition2} If $\gamma_0, \gamma_1 \in C_{g}(I, \mathcal{K}_{3,1})$ are two path-homotopic paths, then $\phi(\gamma_0) = \phi(\gamma_1)$.
  \end{enumerate}
\end{definition}

There is a general form for combinatorial 1-cocycles. 

\begin{equation}\label{eq:general_1cocycle}
\phi(\gamma) = 
\sum_{\text{Reidemeister moves in } \gamma}
\operatorname{sign}(\text{move}) \cdot \operatorname{weight}(\text{move})
\end{equation}
The sign of Reidemeister moves is to be $1$ or $-1$ 
and the weight of a Reidemeister move is determined by the diagram at the instant of the move. 
In this form, \ref{it:1cocycle_condition1} is satisfied automatically. 
\begin{remark}
    In the paper, we only use R3 moves in the formula. 
\end{remark}


A combinatorial 1-cocycle $\phi$ represents a 1-cohomology class in $H^1(\mathcal{K}_{3,1}; G)$. 
Indeed, for each path component $\mathcal{K}_{3,1}(f)$ of $\mathcal{K}_{3,1}$ we choose a base point $f$. For convenience, we require that $f$ is in general position with respect to the projection map. 
We have 
\[ 
H^{1}(\mathcal{K}_{3,1};G) \cong \prod_{\text{knot isotopy types}} H^{1}(\mathcal{K}_{3,1}(f); G)
\]
By \ref{it:1cocycle_condition1} and \ref{it:1cocycle_condition2}, we can see that $\phi$ induces a group homomorphism
\[
\Phi_{f}: \pi_1(\mathcal{K}_{3,1}(f), f) \rightarrow G
\]
for each path component $\mathcal{K}_{3,1}(f)$. 
Recall that $\operatorname{Hom}(\pi_1(\mathcal{K}_{3,1}(f), f), G) \cong H^{1}(\mathcal{K}_{3,1}(f);G)$. 
Therefore, the combinatorial 1-cocycle represents a 1-cohomology class. 

Here we have used the following theorem. 
\begin{theorem}[Propositions 4.3 and 4.4 in \cite{Roseman04}]
    \begin{itemize}
        \item For every $f \in \mathcal{K}_{3,1}$, there exists $\widetilde{f} \in \mathcal{K}_{3,1}(f)$ 
            such that $\widetilde{f}$ is in general position with respect to the projection 
            $\mathbb{R}^{3} = \mathbb{R}^{2} \times \mathbb{R} \to \mathbb{R}^{2}$.
        \item For any path $\gamma$ in the space $\mathcal{K}_{3,1}$ starting at $f_0$ and ending at $f_1$, where $f_0$ and $f_1$ are in general position,
            there exists a generic path $\widetilde{\gamma}$ from $f_0$ to $f_1$ that is path-homotopic to $\gamma$ in $\mathcal{K}_{3,1}$. 
    \end{itemize}
\end{theorem}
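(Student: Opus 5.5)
The statement is a citation of Roseman's Propositions 4.3 and 4.4, so the plan is the standard transversality argument in the Whitney $C^\infty$-topology applied to the projection $pr:\mathbb{R}^3=\mathbb{R}^2\times\mathbb{R}\to\mathbb{R}^2$. The argument has two parts: a codimension-zero statement for the first bullet and a codimension-one (one-parameter) statement for the second.

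For the first bullet, I will define the subset $\Sigma\subset \LongKnotSpace$ of long knots that are \emph{not} in general position, meaning $pr\circ f$ is not an immersion with only transverse double points. The projection $pr\circ f$ can fail in three ways: it has a critical point, it has a triple point, or it has a non-transverse (tangential) double point. By the multijet transversality theorem applied to $pr\circ f$ on the compact part $[-1,1]$ (outside it, $f$ is the standard line, whose projection is an embedded line, so nothing happens there), the complement of $\Sigma$ is open and dense. Embeddings form an open subset, so a small perturbation of $f$ stays in $\LongKnotSpace(f)$. A straight-line homotopy to the perturbation, which is small in $C^1$, is an isotopy. This gives $\widetilde f$.

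For the second bullet, I will first approximate $\gamma$ by a smooth path rel endpoints. I will then stratify $\Sigma$ by codimension: its codimension-one strata are exactly the R1 (cusp), R2 (tangency) and R3 (triple point) singularities, and the rest has codimension $\ge 2$. Applying parametric multijet transversality to the map $I\times\mathbb{R}\to\mathbb{R}^2$, $(t,x)\mapsto pr\circ f_t(x)$, I will perturb $\gamma$ rel endpoints so that it misses the codimension-$\ge2$ part and meets the codimension-one strata transversally at finitely many times. Each transverse crossing is a Reidemeister move, and between crossings the path stays in a single open cell of general-position knots, where the diagram is constant up to planar isotopy. The perturbation is small, so linear interpolation gives the path homotopy inside the open set of embeddings.

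The main obstacle is the non-compactness of $\mathbb{R}$ together with the fixed behaviour for $|x|\ge1$. Perturbations must be supported in $[-1,1]$, and one must check that no singularity escapes to infinity, i.e.\ that the diagram near the ends stays a straight strand separated from the rest. I would handle this by first isotoping everything so that the knotted part lies in a compact region projecting away from the straight ends, then doing the transversality in that compact region. A second delicate point is identifying the codimension-one strata precisely as the three Reidemeister singularities; for this I would cite the classical classification of codimension-one singularities of plane curves, as Roseman does.
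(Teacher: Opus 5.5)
The paper gives no proof of its own here; it only cites Roseman, whose Propositions~4.3 and~4.4 rest on this kind of general-position (multijet transversality) argument. Your main line is the intended route and is sound: smooth $\gamma$ rel endpoints, make the level-preserving family $(t,x)\mapsto pr\circ f_t(x)$ transverse to the multijet strata so that codimension-$\ge 2$ events are avoided and the codimension-one events (cusp, self-tangency, triple point) occur at finitely many times as R1, R2, R3 moves, then use $C^1$-smallness and straight-line interpolation to stay among embeddings.

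The one step that fails as written is your treatment of the fixed ends. First, it is not true that nothing happens on $|x|\ge 1$. The projection of $f((-1,1))$ can cross the rays $\{(x,0):|x|\ge 1\}$, be tangent to them, or form triple points with them. Multijet transversality applied to $pr\circ f|_{[-1,1]}$ alone does not see these configurations.

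Second, your remedy of first isotoping the knotted part away from the ends works for the first bullet, but not for the second. There $f_0$ and $f_1$ are prescribed and may well have such crossings. You would then need generic paths from $f_i$ to their normalised versions, which is an instance of the statement being proved.

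The remedy is also unnecessary; keep the ends in the multijet analysis instead. The map $pr\circ f_t$ sends $\{|x|\ge 1\}$ injectively onto the two rays. Since $f_t(x)-(x,0,0)$ is flat at $x=\pm 1$, it is $C^1$-close to the line on $[1-\delta,1]\cup[-1,-1+\delta]$ uniformly in $t$. Hence $pr\circ f_t$ has no critical points and no double points inside $\{|x|\ge 1-\delta\}$.

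Consequently every degenerate event (cusp, multiple point, tangency, and their codimension-two degenerations) involves at most one parameter value with $|x|\ge 1-\delta$. Its defining conditions can be met by varying the jets at the remaining parameter values, which lie in $(-1+\delta,1-\delta)$, where perturbations supported in $(-1,1)$ are unconstrained. So the parametric multijet evaluation map is still a submersion onto the relevant conditions. Transversality is then achieved rel the ends and rel the endpoints of $\gamma$, with no preliminary isotopy, and the same argument without $t$ repairs the first bullet directly.
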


\subsection{Higher Reidemeister moves}\label{sec:higher_Reidemeister}
Classical Reidemeister moves correspond to the three types of \textbf{codimension 1 singularities of plane curves}: 
\begin{itemize}
    \item Ordinary triple point: the transversal intersection of 3 arcs;
    \item Simple tangency: the intersection locally given by $y = \pm x^2$ at $(0, 0)$;
    \item Ordinary cusp: the singular point of an arc locally given by $y^2 = x^3$ at $(0, 0)$. 
\end{itemize}

To study the behaviour of generic paths under path-homotopies, we need to consider the codimension 2 singularities. 
They are classified by David (Theorem 5.1 in \cite{David83}). 
Kurlin and Fiedler described in detail the bifurcation diagrams of these singularities, 
i.e., the meridians around the singularities, in \cite{FiedlerKurlin10}. 

There are 6 types of \textbf{singularities of codimension 2}:  
\begin{enumerate}
    \item Two singularities of codimension 1. 
    \item Ordinary quadruple point: the transversal intersection of 4 arcs. 
    \item Tangent triple point: the intersection of 3 arcs such that the first two arcs have a simple tangency and that the direction of the tangency is not parallel to the third arc. 
    \item Intersected cusp: the intersection of 2 arcs, where the first arc has an ordinary cusp whose derivative of order 2 does not touch the second arc. 
    \item Cubic tangency: the intersection of 2 arcs locally given by $x = 0$ and $y = x^3$. 
    \item Ramphoidal cusp: the singular point of an arc given locally by $x^2 = y^5$.
\end{enumerate}

\begin{figure}[H]
    \centering
    \begin{subfigure}[t]{0.25\textwidth}
      \centering
      \includegraphics[height=3cm]{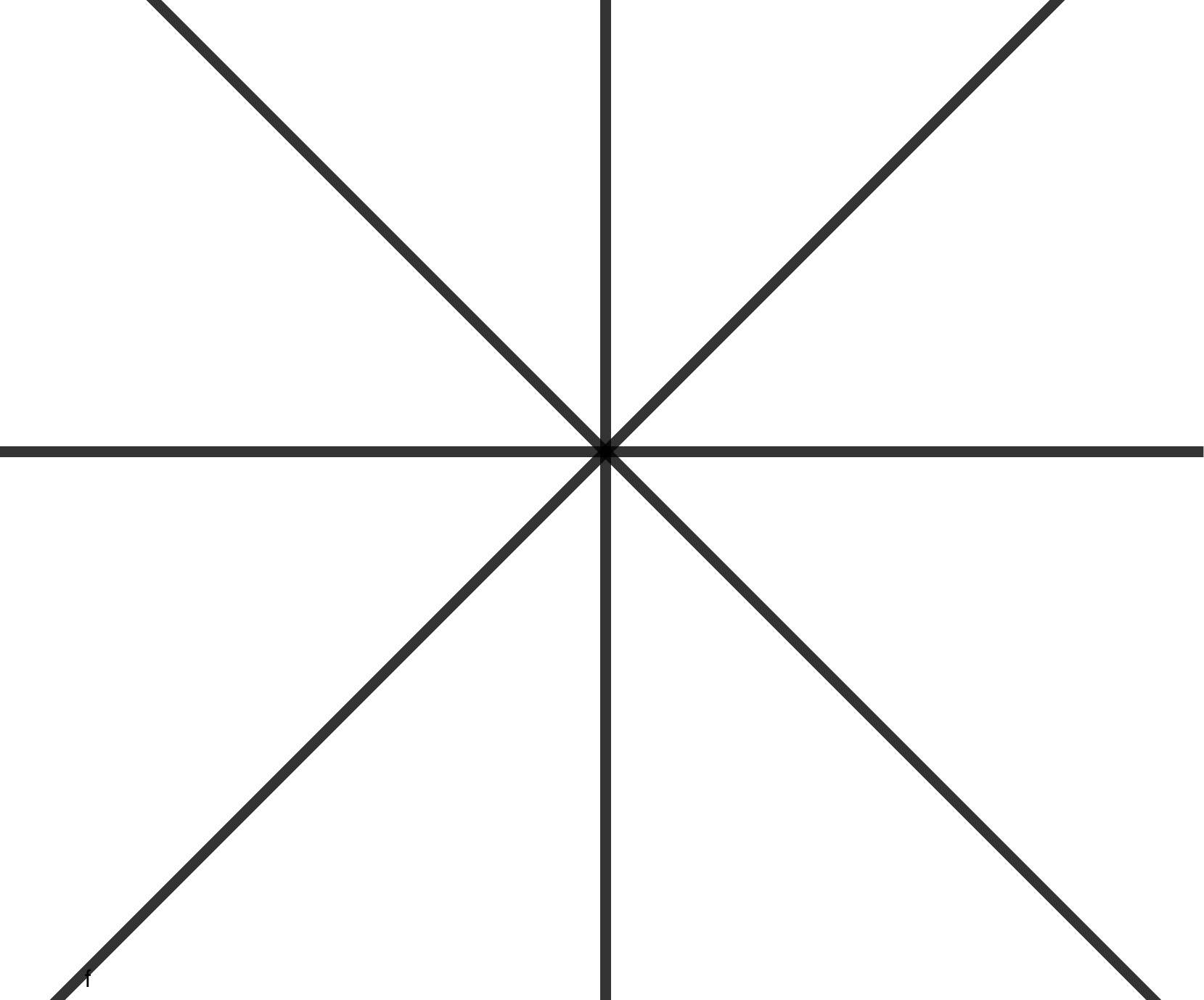}
      \caption{Ordinary quadruple point}
    \end{subfigure}
    \hspace{0.1\textwidth}
    \begin{subfigure}[t]{0.25\textwidth}
      \centering
      \includegraphics[height=3cm]{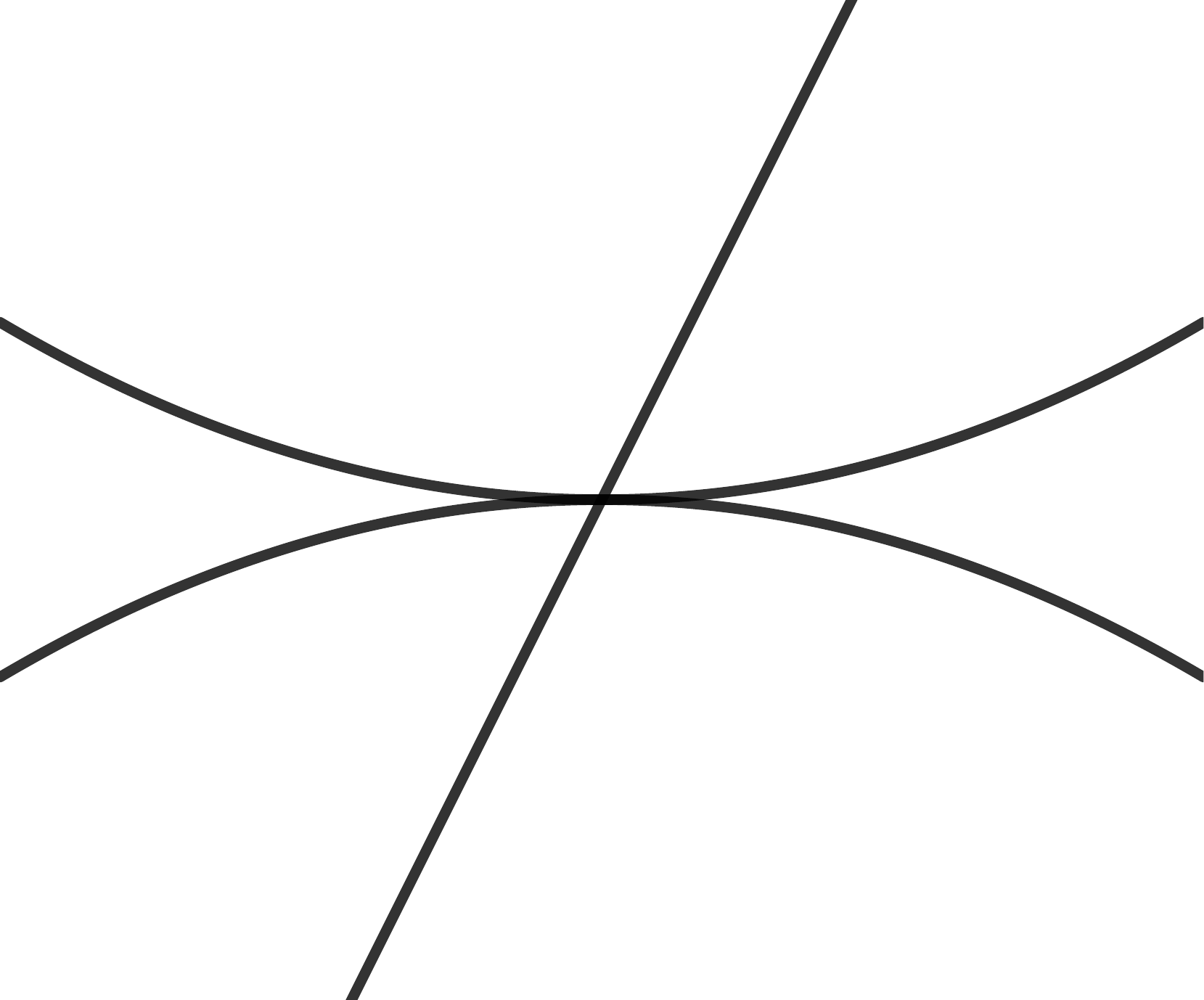}
      \caption{Tangent triple point}
    \end{subfigure}
    \hspace{0.1\textwidth}
    \begin{subfigure}[t]{0.25\textwidth}
      \centering
      \includegraphics[height=3cm]{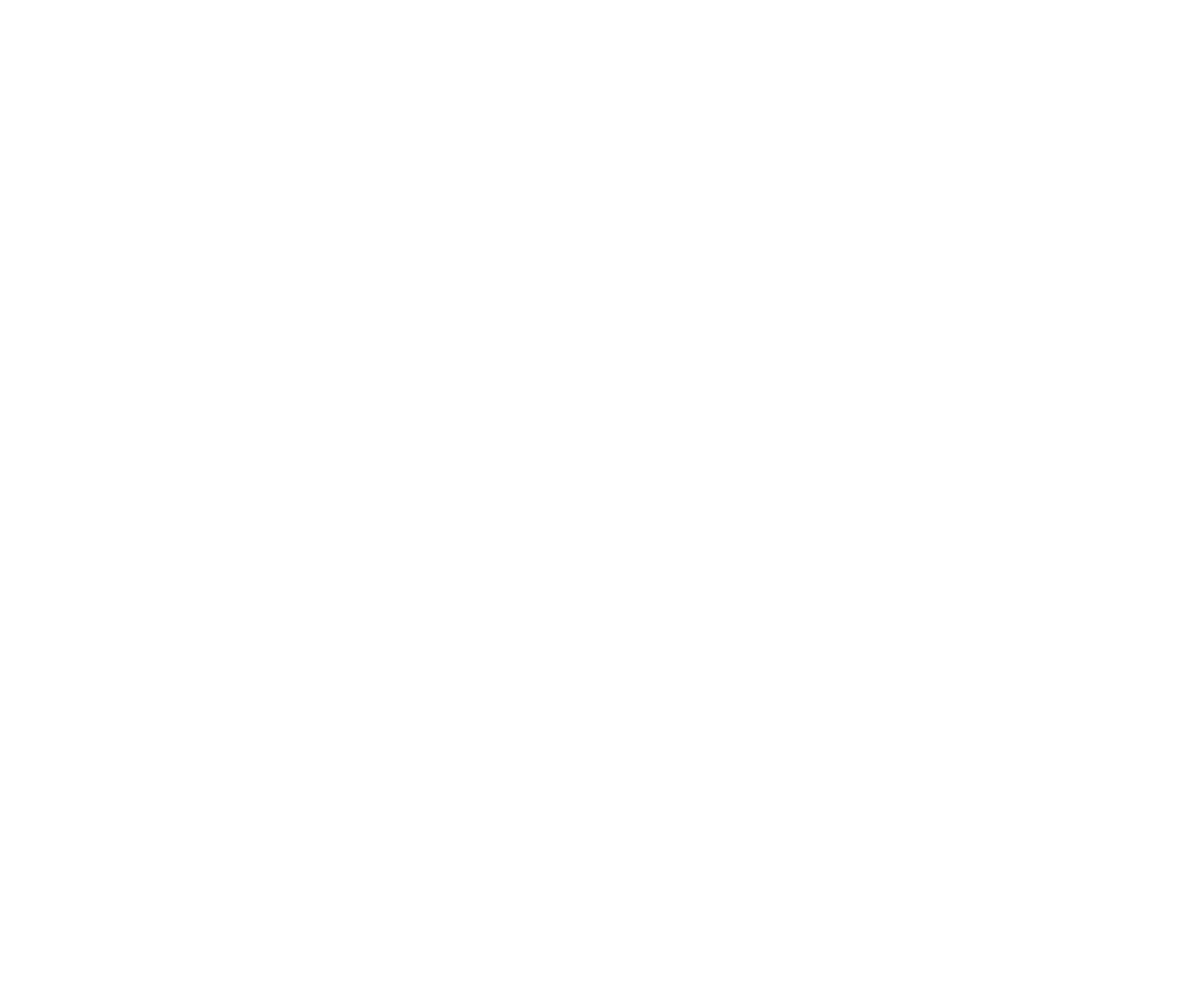}
      \caption{Intersected cusp}
    \end{subfigure}
    \hspace{0.1\textwidth}
    \begin{subfigure}[t]{0.25\textwidth}
      \centering
      \includegraphics[height=3cm]{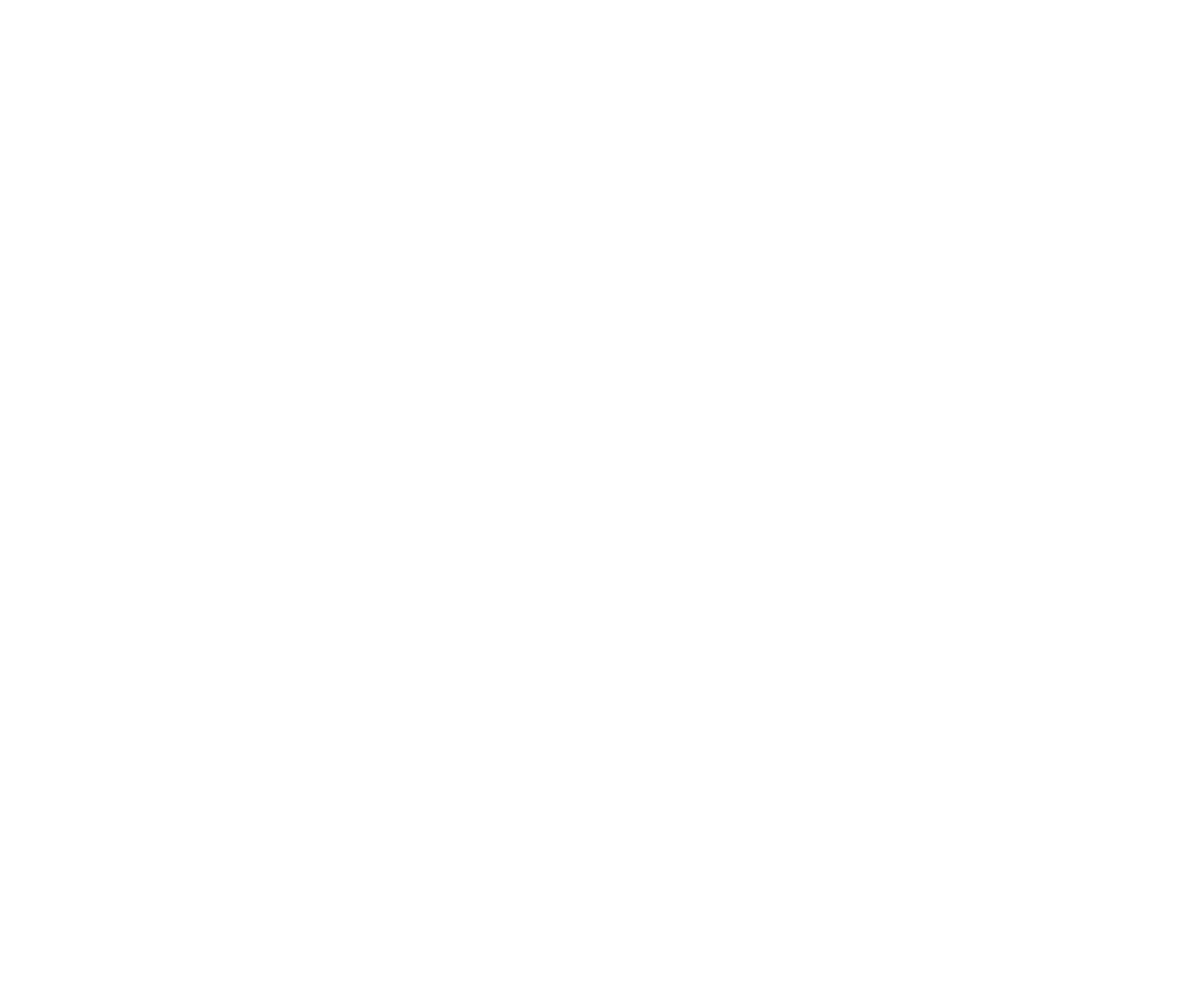}
      \caption{Cubic tangency}
    \end{subfigure}
    \hspace{0.1\textwidth}
    \begin{subfigure}[t]{0.25\textwidth}
      \centering
      \includegraphics[height=3cm]{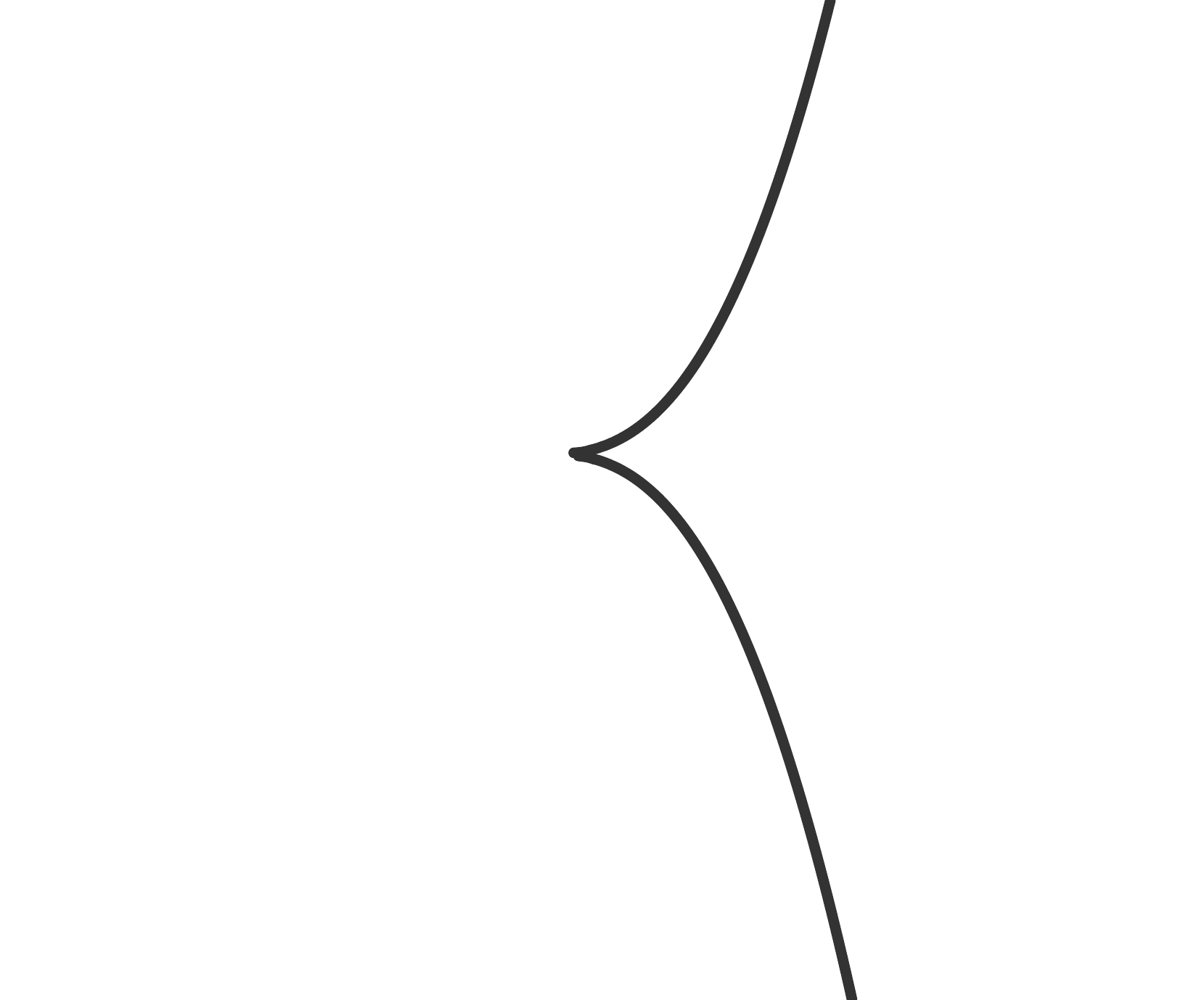}
      \caption{ramphoidal cusp}
    \end{subfigure}
    \hspace{0.1\textwidth}
\end{figure}

There are meridians (a generic loop) around these singularities. 
See \Cref{sec:Cocyclicity} for a list of the meridians of the six types of singularities. 
Given a generic path, the \textbf{higher Reidemeister moves} consist of
replacing a segment of the path which lies on the meridian of the six types of singularities 
by the remaining part of the meridian. (See \Cref{fig:higher_Reidemeister_move}). 
In addition, we identify a constant path with a path segment followed immediately by its reverse. 







\begin{figure}[H]
  \centering
  \begin{tikzcd}[column sep=3.5cm] 
    \includegraphics[height=7cm]{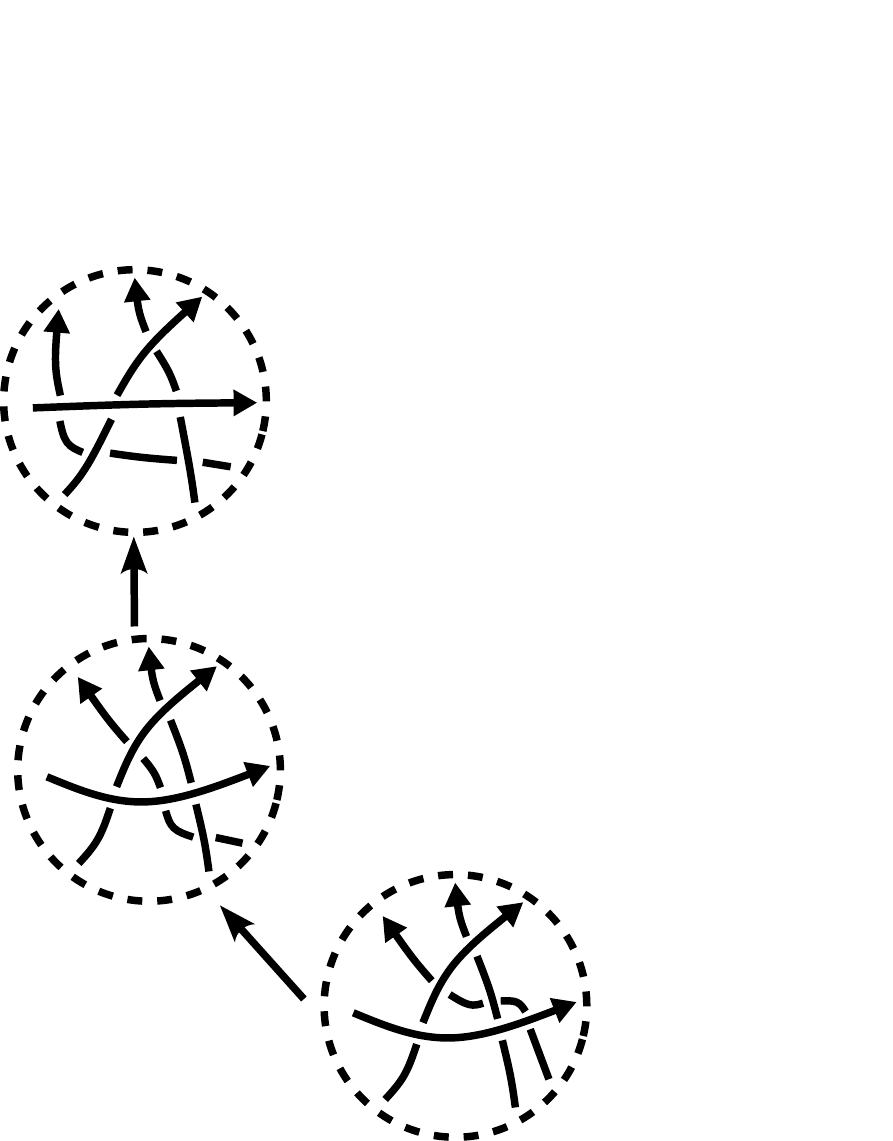}
      \arrow[r, <->, shift left=0.5ex, yshift=12ex, 
      "\text{higher Reidemeister move}" above] 
      & 
    \includegraphics[height=7cm]{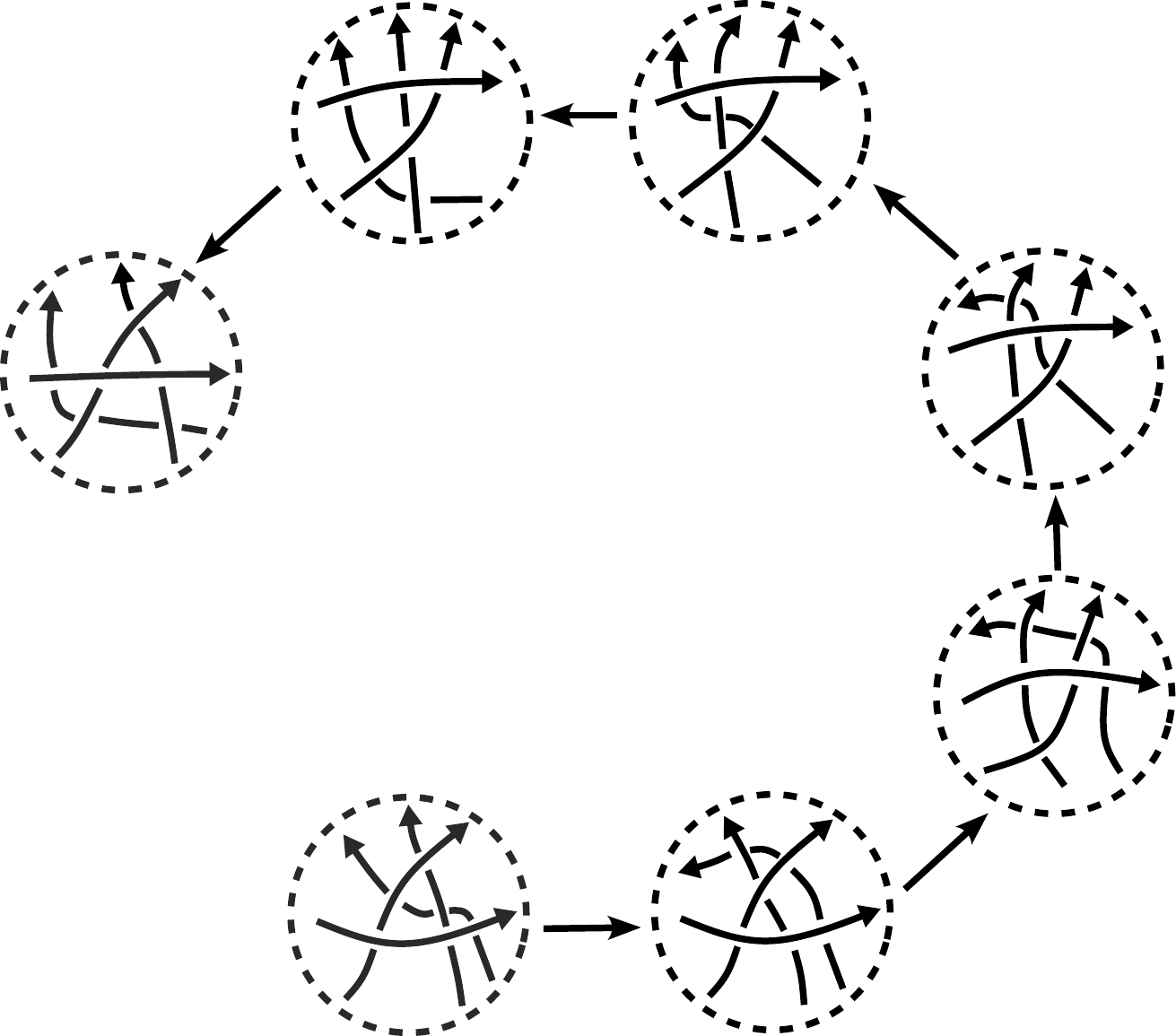} \\
  \end{tikzcd}
  \caption{A higher Reidemeister move for a path around an ordinary quadruple point}
  \label{fig:higher_Reidemeister_move}
\end{figure}

\begin{theorem}[\cite{David83, FiedlerKurlin10}]\label{thm:generic_path_homotopy}
    Two generic paths $\gamma_0$ and $\gamma_1$ are path-homotopic in $\mathcal{K}_{3,1}$ 
    if and only if there is a finite sequence of higher Reidemeister moves changing $\gamma_0$ to $\gamma_1$. 
\end{theorem}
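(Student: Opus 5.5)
The proof has two directions: soundness (each higher Reidemeister move preserves the path-homotopy class) and completeness (any path-homotopy between generic paths can be replaced by a finite sequence of such moves).

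\textbf{Soundness.} By definition, a higher Reidemeister move replaces a segment lying on the meridian of a codimension-two stratum by the complementary arc of that meridian. The meridian bounds a small disk transverse to the stratum in the space of long knots. Pushing the segment across this disk gives an explicit path homotopy. The same holds for cancelling a segment followed by its reverse.

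\textbf{Completeness.} Let $H\colon [0,1]^2 \to \LongKnotSpace$ be a path homotopy between generic paths $\gamma_0$ and $\gamma_1$.

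\begin{enumerate}
\item Approximate $H$ by a smooth homotopy that is transverse to the stratification of the discriminant. Here the discriminant means the set of long knots whose projection is not a generic diagram, and the stratification is by codimension of the plane-curve singularity. The endpoints stay fixed, and since the generic paths are already transverse to the codimension-one strata, this can be done relative to the boundary.

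\item Use the classification of David \cite{David83} (Theorem 5.1 there) to identify the strata. The codimension-one strata are triple points, tangencies and cusps. The codimension-two strata are the six types listed above. Everything of codimension at least three is avoided by a two-parameter family after perturbation.

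\item Conclude that the preimage in $[0,1]^2$ of the codimension-one strata is a graph, with generic transverse crossings. Its vertices are isolated points where the family meets a codimension-two stratum. Two kinds of tangency points also appear: tangencies to codimension-one strata, which create or cancel a move together with its inverse, and ordinary double points of the graph, which correspond to two independent codimension-one singularities.

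\item Sweep a horizontal level $s$ across the square. By a small rotation, assume the special points occur at distinct heights. Between special heights the word of Reidemeister moves along the level is constant up to isotopy. Crossing a special height changes the word either by inserting or removing $\sigma\sigma^{-1}$ (at a fold of the graph) or by a higher Reidemeister move: the local picture near a vertex is exactly the bifurcation diagram in \cite{FiedlerKurlin10}, and the level sets before and after are the two complementary arcs of the meridian.
\end{enumerate}

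The main obstacle is Step 1, the transversality: justifying that a generic two-parameter family in the infinite-dimensional Whitney topology meets only these strata, transversally. I would reduce to finite-dimensional jet transversality (Thom's multijet transversality theorem), applied to the family of plane projections together with height data. The three-dimensional information matters only through the over/under labelling, which is locally constant away from the strata.

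One further check is needed: the quadruple points, tangent triple points and other codimension-two strata must also carry the over/under data, so that each of them gives only the finitely many labelled variants recorded in \Cref{sec:Cocyclicity}.
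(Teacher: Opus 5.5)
Your outline is correct and is the standard generic two-parameter (Cerf-type) argument carried out in the cited works. The paper gives no proof of its own: it imports David's list of codimension-two strata and Fiedler--Kurlin's bifurcation diagrams, which are exactly the inputs of your Steps~2 and~4.

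Apart from the transversality issue you already flag, one point is left implicit both in your sketch and in the paper. Your soundness argument, which pushes across the disk bounded by the meridian, needs generic paths with the same word of Reidemeister moves to be path-homotopic once higher moves are applied to words (as in the paper's usage) rather than to literal arcs of a meridian.
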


By \Cref{thm:generic_path_homotopy}, 
the map $\phi$ defined by \Cref{eq:general_1cocycle} satisfies \ref{it:1cocycle_condition2}
if and only if $\phi(\text{meridian}) = 0$ for all the meridians around the 6 types of singularities. 

\section{Gauss diagrams}
\label{sec:Gauss_diagrams}

\begin{definition}
  A \textbf{Gauss diagram} consists of a counterclockwise oriented circle with a finite number of arrows connecting pairs of distinct points on the circle. 
  Each arrow may carry a sign ($+$ or $-$). 
  And there may also be a base point on the circle. In this paper, all Gauss diagrams are based. 
\end{definition}
\begin{definition}
  A \textbf{subdiagram} $S$ of a Gauss diagram $G$ is a Gauss diagram whose arrows form a subset of the arrows of $G$. 
\end{definition}

Gauss diagrams can be used to represent long knot diagrams. 
The base point on the Gauss diagram corresponds to the infinity point $\infty$ on the long knot. 
The circle represents the knot itself as $S^{1}$ with counterclockwise direction corresponding to the orientation of the knot diagram. 
The arrows represent the crossings of the knot diagram, with the foot at the undercrossing and the head at the overcrossing. 
The label of the arrow is the sign of the crossing. 
\begin{remark}
  \textbf{Warning}: The convention concerning the direction of the arrow is the opposite to that in a large part of the literature such as  \cite{PolyakViro94}. 
\end{remark}

\begin{figure}[htbp]
  \centering
  \begin{subfigure}[t]{0.25\textwidth}
    \centering
    \includegraphics[height=1.5cm]{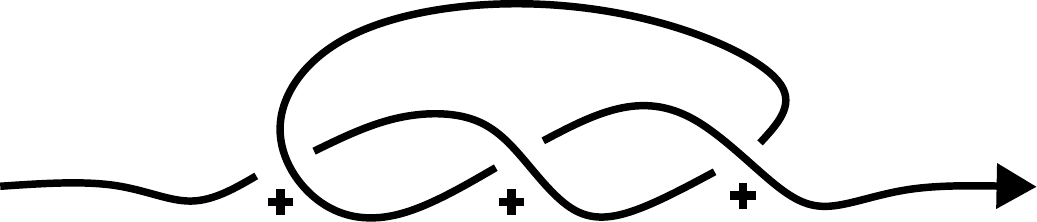}
    \caption{A long trefoil diagram}
    \label{fig:long_trefoil_right}
  \end{subfigure}
  \hspace{0.3\textwidth}
  \begin{subfigure}[t]{0.25\textwidth}
    \centering
    \includegraphics[height = 3cm]{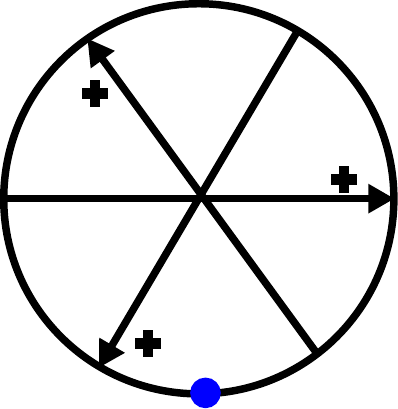}
    \caption{The Gauss diagram of the long knot diagram}
    \label{fig:GD_trefoil_diagram_based}
  \end{subfigure}
  \caption{A long knot diagram and the corresponding Gauss diagram}
  \label{fig:LongKnotD_and_GaussD}
\end{figure}

\begin{remark}
  Not all Gauss diagrams represent a real long knot. 
  Gauss diagrams are exactly the same as \textbf{virtual} long knots up to virtual moves. 
  See \cite[Theorem 1.A.]{GoussarovPolyakViro98}. 
\end{remark}

\subsection{Reidemeister move III}\label{sec:GD_Reidemeister}

Two knot diagrams represent isotopic knots if and only if one diagram can be transformed into the other by a finite sequence of Reidemeister moves.
There are three kinds of Reidemeister moves. We call them \textbf{R1}, \textbf{R2} and \textbf{R3} moves. 
For the 1-cocycles in this paper, we only consider R3 moves. 
An R3 move in a Gauss diagram involves the changes of the positions of three arrows that form the shape of a triangle. See \Cref{fig:eg_R3_GDs} for an example. 

\begin{definition}[Global types of R3 moves]\label{def:R3_global_type}
  The \textbf{global type of an R3 move} in a long knot is defined to be an integer as indicated below:
  \begin{figure}[H]
    \centering
    \begin{subfigure}[t]{0.2\textwidth}
      \centering
      \includegraphics[height=3cm]{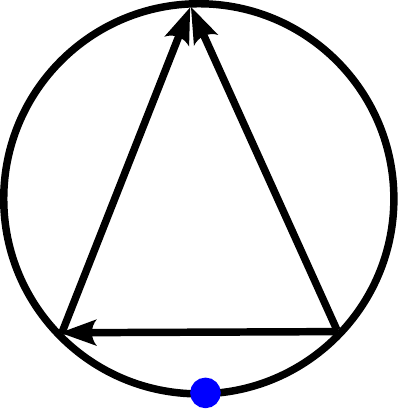}
      \caption{global type 0}
    \end{subfigure}
    \hspace{0.1\textwidth}
    \begin{subfigure}[t]{0.2\textwidth}
      \centering
      \includegraphics[height=3cm]{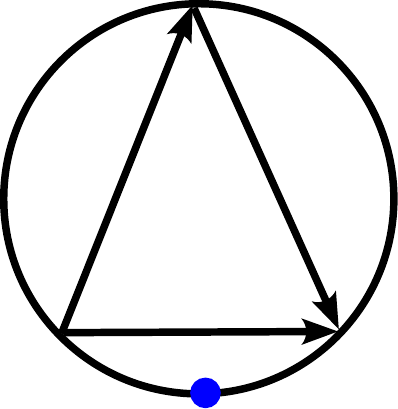}
      \caption{global type 1}
    \end{subfigure}
    \hspace{0.1\textwidth}
    \begin{subfigure}[t]{0.2\textwidth}
      \centering
      \includegraphics[height=3cm]{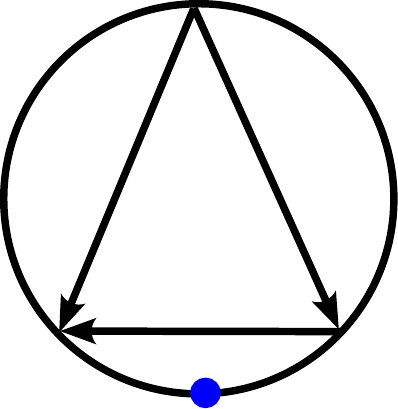}
      \caption{global type 2}
    \end{subfigure}
    \hspace{0.1\textwidth}
    \begin{subfigure}[t]{0.2\textwidth}
      \centering
      \includegraphics[height=3cm]{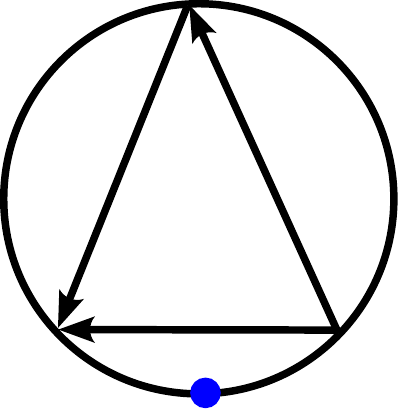}
      \caption{global type 3}
    \end{subfigure}
    \hspace{0.1\textwidth}
    \begin{subfigure}[t]{0.2\textwidth}
      \centering
      \includegraphics[height=3cm]{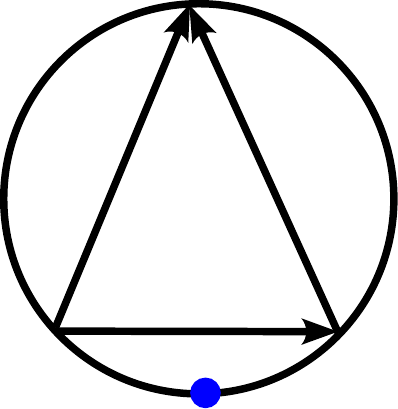}
      \caption{global type 4}
    \end{subfigure}
    \hspace{0.1\textwidth}
    \begin{subfigure}[t]{0.2\textwidth}
      \centering
      \includegraphics[height=3cm]{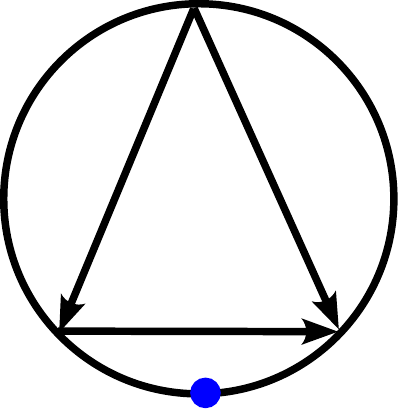}
      \caption{global type 5}
    \end{subfigure}
    \caption{Global types of R3 moves}
  \end{figure}
  We call global types 0, 1, and 2 the \textbf{left types of R3 moves} and global types 3, 4, and 5 the \textbf{right types of R3 moves}.
\end{definition}

\begin{definition}[sign of an R3 move]\label{def:sign_R3}
  We define the \textbf{sign of an R3 move} to be 1 if the corresponding Gauss diagram changes from the left side to the right side of \Cref{fig:sign_R3_moves}, and $-1$ otherwise.
  \begin{figure}[H]
\centering

\begin{tikzcd}[column sep=3cm]
{
\includegraphics[height=3cm]{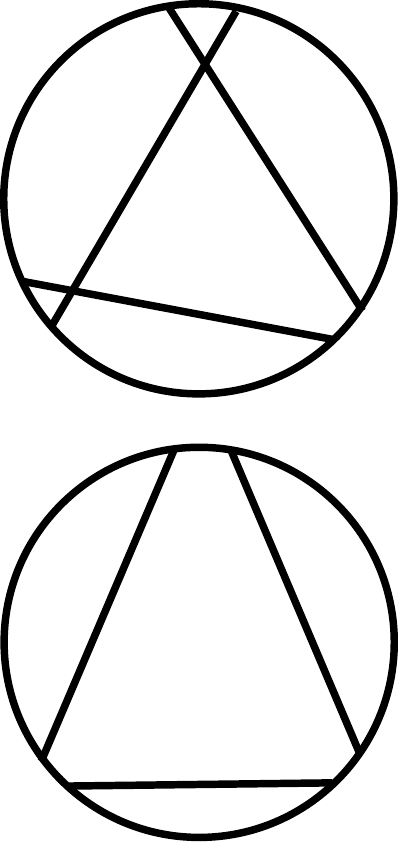}
}
\arrow[
    r,
    shift left=0.5ex,
    yshift=8mm,
    ->,
    >=latex,
    line width=1pt,
    "\scriptstyle +" above
]
&
{
\includegraphics[height=3cm]{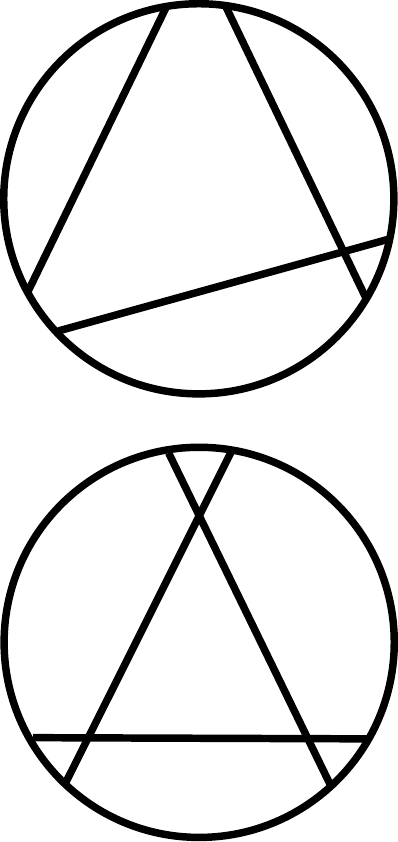}
}
\arrow[
    l,
    shift left=0.5ex,
    yshift=8mm,
    ->,
    >=latex,
    line width=1pt,
    "\scriptstyle -" below
]
\end{tikzcd}

\caption{Sign of R3 moves}
\label{fig:sign_R3_moves}
\end{figure}
\end{definition}


\subsection{Gauss diagram formulae}

Let $\mathscr{A}$ be the free $\mathbb{Z}$-module on the basis of all Gauss diagrams whose arrows are all signed. 
For any two Gauss diagrams $A, B$, we define their inner scalar product by \begin{equation}
 (A,B) \coloneqq \left\{
\begin{array}{rcl}
1 && {\text{$A=B$}}\\
0 && {\text{otherwise.}}
\end{array}
 \right.
\end{equation}
(Equality of two Gauss diagrams is considered up to orientation-preserving homeomorphism of the underlying based circle.)
We extend this scalar product to the whole $\mathscr{A}$ by linearity. 
Then we define $$\langle A,G\rangle \coloneqq (A,\sum\limits_{ C \subset G}C)$$ for any Gauss diagrams $A$ and $G$, where $C\subset G$ means that $C$ is a subdiagram of $G$.  
In words, this product counts the number of times $A$ appears in $G$ as a subdiagram. 
Let $F= \sum\limits_{i}c_iA_i$ be an element of $\mathscr{A}$. 
If 
\[
\langle F,\cdot \rangle : \{\text{Gauss diagrams of long knots}\} \rightarrow \mathbb{Z}
\]
is a long knot invariant, we say that $F$ is a \textbf{Gauss diagram formula} (\textbf{GDF}). 
\begin{remark}
  By \Cref{rem:closure_operation}, a long knot invariant induces a knot invariant. 
  Hence, a Gauss diagram formula $F$ for long knots can also be applied to the Gauss diagram $G$ of a knot by adding a base point on $G$.
  The resulting value is independent of the position of the added base point because of the existence of the rolling moves (see \Cref{def:rolling move}). 
\end{remark}
\begin{example}
  \[
    \left\langle 
      \raisebox{-.4\height}{\includegraphics[width=1.75cm]{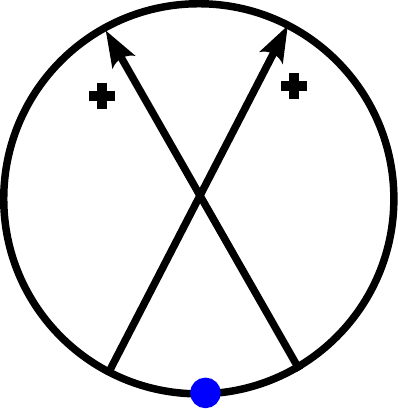}}  \ ,\ 
      \raisebox{-.4\height}{\includegraphics[width=1.75cm]{figures/Gauss_Diagrams/GD_trefoil_right_based.pdf}}
    \right\rangle = 1 , \quad 
    \left\langle 
      \raisebox{-.4\height}{\includegraphics[width=1.75cm]{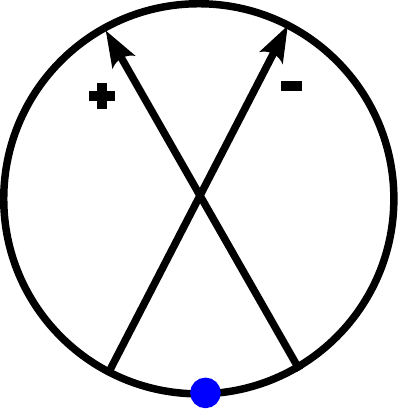}}  \ ,\ 
      \raisebox{-.4\height}{\includegraphics[width=1.75cm]{figures/Gauss_Diagrams/GD_trefoil_right_based.pdf}}
    \right\rangle = 0.
  \]

  \[
    \left\langle 
      \raisebox{-.4\height}{\includegraphics[width=1.75cm]{figures/Gauss_Diagrams/v2_1.pdf}} - 
      \raisebox{-.4\height}{\includegraphics[width=1.75cm]{figures/Gauss_Diagrams/v2_2.pdf}} - 
      \raisebox{-.4\height}{\includegraphics[width=1.75cm]{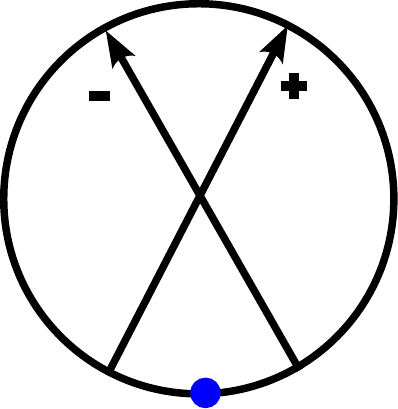}} + 
      \raisebox{-.4\height}{\includegraphics[width=1.75cm]{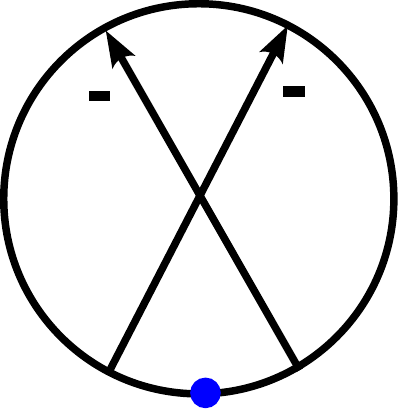}}  \ , \ 
      \raisebox{-.4\height}{\includegraphics[width=1.75cm]{figures/Gauss_Diagrams/GD_trefoil_right_based.pdf}}
    \right\rangle = 1 -0 -0 + 0 = 1.
  \]
\end{example}

We introduce the notion of \textbf{unsigned Gauss diagrams}. 
Let $A$ be an unsigned Gauss diagram, whose arrows are $\alpha_1, \alpha_2, \ldots, \alpha_m$. 
Let $A_{\epsilon_1\epsilon_{2}\cdots \epsilon_{m}}$ be the signed Gauss diagram obtained from $A$ by labelling each arrow $\alpha_i$ by $\epsilon_i$, where $\epsilon_i = \pm 1$. 
We identify the unsigned Gauss diagram $A$ with an element of $\mathscr{A}$ by the formula: 
\[
A \coloneqq \sum\limits_{(\epsilon_1,\epsilon_2,...,\epsilon_{m}) \in \{\pm 1\}^{m}}\epsilon_1\epsilon_{2}\cdots \epsilon_{m}A_{\epsilon_1\epsilon_{2}\cdots \epsilon_{m}},
\]
In other words, if $A$ is an unsigned Gauss diagram, and $G$ is a signed Gauss diagram,
then $\left\langle A, G \right\rangle$ computes the algebraic sum of over all occurrences of the shape $A$ as a subdiagram of G. 
The sign of each appearance of the subdiagram is equal to the product of the signs of the unlabelled arrows of the subdiagram.
\begin{remark}
  For an element $F \in \mathcal{A}$, we will abuse the notation and write $F$ for $\left\langle F, \cdot \right\rangle$.
\end{remark}
See \Cref{sec:GDF} for examples of Gauss diagram formulae. 

Gauss diagram formulae were introduced by Polyak, Viro and Fiedler in \cite{PolyakViro94,Fiedler93,Fiedler01}. 
They are closely related to Vassiliev invariants. 
\begin{theorem}[Theorem 3 in \cite{PolyakViro94}]\label{thm:GDF_finiteness}
  Let $F$ be a linear combination of Gauss diagrams with at most $n$ arrows. 
  If $F$ is a knot invariant, 
  then $F$ is a Vassiliev invariant of order at most $n$.
\end{theorem}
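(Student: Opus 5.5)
The plan is to verify the defining property of a Vassiliev invariant of order at most $n$ directly: the extension of $F$ to singular knots must vanish on every singular long knot with $n+1$ double points. Let $K$ be a singular long knot diagram with $n+1$ transverse double points $d_1,\dots,d_{n+1}$. The Vassiliev extension is
\[
F(K)=\sum_{\varepsilon\in\{\pm1\}^{n+1}} \varepsilon_1\cdots\varepsilon_{n+1}\, F(K_\varepsilon),
\]
where $K_\varepsilon$ resolves $d_i$ as a positive crossing if $\varepsilon_i=+1$ and a negative one if $\varepsilon_i=-1$. Since $F$ is a knot invariant, I may compute each $F(K_\varepsilon)$ on the diagram $K_\varepsilon$ itself. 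This is where invariance is used: the formula need not be invariant term by term, only its total value.

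First, I would compare the Gauss diagrams $G_\varepsilon$ of the $2^{n+1}$ resolutions. They all have the same underlying set of chords, with the same endpoints on the based circle. Changing the resolution of $d_i$ only alters the chord $c_i$ corresponding to $d_i$: its direction reverses (head and foot swap) and its sign changes. All other chords are literally identical in every $G_\varepsilon$. So I identify the arrow sets of all $G_\varepsilon$ with one fixed chord set $\mathcal{C}$. Then I expand using linearity and the definition of $\langle A,\cdot\rangle$:
\[
F(K_\varepsilon)=\sum_{S\subset\mathcal{C},\ |S|\le n} \bigl(F, (G_\varepsilon)|_S\bigr),
\]
where $(G_\varepsilon)|_S$ is the subdiagram on the chords in $S$. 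Only subsets with at most $n$ chords contribute, because every diagram in $F$ has at most $n$ arrows.

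The key step is a cancellation argument. Fix $S$ with $|S|\le n$. Some double-point chord $c_j$ is not in $S$, since there are $n+1$ of them. The term $\bigl(F,(G_\varepsilon)|_S\bigr)$ does not depend on $\varepsilon_j$, because the subdiagram never sees the chord $c_j$. Summing over $\varepsilon_j=\pm1$ with the factor $\varepsilon_j$ therefore gives zero. Exchanging the order of summation (over $S$, then over $\varepsilon$), every term vanishes, so $F(K)=0$.

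The only point needing care, and the one I expect to be the main obstacle, is justifying the identification of chords across resolutions. I must check that a crossing change leaves every other arrow, its sign, the base point, and the cyclic order of endpoints unchanged, so that restricting to $S$ really yields the same signed based Gauss diagram for both values of $\varepsilon_j$. This is immediate from the local nature of crossing changes in the diagram, and it holds equally with the paper's reversed arrow convention.
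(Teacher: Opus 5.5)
Your proposal is correct and follows essentially the same route as the paper, which deduces the theorem from \Cref{lem:GDF_finiteness} by this cancellation: any occurrence of a diagram with at most $n$ arrows misses some double-point chord, and summing over that chord's two resolutions with the factor $\varepsilon_j$ kills the contribution. Your bookkeeping by chord subsets $S$, together with the check that a crossing change alters only its own arrow, just makes explicit the exchange of summations that the paper's proof leaves implicit.
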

\begin{proof}
  This is a corollary of \Cref{lem:GDF_finiteness}. 
\end{proof}

\begin{remark}
    In fact, Goussarov has proved the important converse result \cite[Corollary 3.B]{GoussarovPolyakViro98}. 
    Any order-$n$ integer-valued Vassiliev invariant of long knots can be presented by a Gauss diagram formula with at most $n$ arrows.
\end{remark}

\begin{lemma}\label{lem:GDF_finiteness}
  For any singular long knot diagram $K$ with $(n + 1)$ singular crossings and a signed Gauss diagram $G$ with $n$ arrows $(n\in \mathbb{N})$, we have $\left\langle 
    G, K
  \right\rangle = 0$. 
\end{lemma}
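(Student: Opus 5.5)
We extend $\langle G,\cdot\rangle$ to singular long knot diagrams by the Vassiliev skein relation and then compute it by inclusion–exclusion. Let the singular crossings of $K$ be $d_1,\dots,d_{n+1}$. For $\epsilon=(\epsilon_1,\dots,\epsilon_{n+1})\in\{\pm1\}^{n+1}$, let $K_\epsilon$ be the resolution in which $d_i$ is replaced by a crossing of sign $\epsilon_i$. Then
\[
\langle G,K\rangle=\sum_{\epsilon}\epsilon_1\cdots\epsilon_{n+1}\,\langle G,K_\epsilon\rangle .
\]
All $K_\epsilon$ have the same underlying plane curve, and they share the same Gauss diagram except at the arrows $a_1,\dots,a_{n+1}$ corresponding to $d_1,\dots,d_{n+1}$. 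When $\epsilon_i$ is flipped, the arrow $a_i$ keeps its endpoints on the circle, but its direction reverses (over and under are exchanged) and its sign changes.

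Next, expand each $\langle G,K_\epsilon\rangle$ as a sum over subdiagrams $C\subset G_{K_\epsilon}$ with $C\cong G$. Index each such subdiagram by the set $S$ of underlying chords it uses, where a chord means the unordered pair of endpoints, which is independent of $\epsilon$. This gives
\[
\langle G,K\rangle=\sum_{S}\ \sum_{\epsilon}\epsilon_1\cdots\epsilon_{n+1}\,[\,C_S(\epsilon)\cong G\,],
\]
where $C_S(\epsilon)$ is the subdiagram of $G_{K_\epsilon}$ on the chords $S$, and $S$ ranges over $n$-element sets of chords. The point is to exchange the two sums, so that for each fixed $S$ we sum over $\epsilon$.

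Since $|S|=n<n+1$, for each $S$ there is some index $i$ with $a_i\notin S$. The subdiagram $C_S(\epsilon)$ does not depend on $\epsilon_i$, because changing $\epsilon_i$ alters only the arrow $a_i$. Pair each $\epsilon$ with the vector $\epsilon'$ obtained by flipping $\epsilon_i$. The two terms have equal indicator values and opposite products of signs, so they cancel. Hence the inner sum vanishes for every $S$, and therefore $\langle G,K\rangle=0$.

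The main obstacle is the bookkeeping needed to justify indexing subdiagrams by $\epsilon$-independent chord sets. One must check that the Gauss diagrams of the $K_\epsilon$ are canonically identified outside the arrows $a_j$, including the base point and the circle orientation. One must also check that subdiagrams are counted as subsets of arrows rather than up to automorphism, so that the count $\langle G,G_{K_\epsilon}\rangle$ really equals $\sum_S[C_S(\epsilon)\cong G]$. Both points follow directly from the definition of $\langle A,G\rangle$ as $(A,\sum_{C\subset G}C)$. \Cref{thm:GDF_finiteness} then follows, since an invariant vanishing on all knots with $n+1$ singular points has order at most $n$.
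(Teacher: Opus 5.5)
Your proof is correct and follows the same argument as the paper: each occurrence of $G$ misses at least one singular arrow, and summing over that arrow's resolution sign cancels the contribution. Indexing occurrences by $\epsilon$-independent chord sets $S$, and cancelling by flipping a single sign, makes explicit the exchange of sums that the paper's proof leaves implicit.
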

\begin{proof}
  Every time we find a subdiagram $D = G \subset D_{\epsilon_1, \epsilon_2, ..., \epsilon_{n + 1}}$, 
  there must be at least one arrow from a singular crossing which is not in the subdiagram $D$. 
  After resolving the singular crossings outside $D$, all the resulting resolved diagrams will have the contribution of the subdiagram $D$, 
  but they will cancel out automatically in the sum 
  \[
  \left\langle 
    G, K
  \right\rangle = 
  \sum_{(\epsilon_1, \epsilon_2, ..., \epsilon_{n + 1})\in\{\pm 1\}^{n+1}}
  \epsilon_1 \epsilon_2 ... \epsilon_{n + 1}
  \left\langle 
    G, D_{\epsilon_1, \epsilon_2, ..., \epsilon_{n + 1}}
  \right\rangle
  \]
  due to the algebraic fact 
  \[
  \sum_{(\epsilon_1,\ldots,\epsilon_{t})\in\{\pm 1\}^{t}}\epsilon_1\cdots\epsilon_{t} = 0, \; (t \ge 1).
  \]
\end{proof}

\subsection{Gauss diagrams for combinatorial 1-cocycles}
Recall that an R3 move corresponds to a triangle whose three sides are arrows in Gauss diagrams (see \Cref{sec:GD_Reidemeister}). 
Now we introduce a new kind of Gauss diagram with such a triangle, which will be used to define various weight functions in \Cref{eq:general_1cocycle} (see \Cref{sec:order4_1cocycles}).
A \textbf{Gauss diagram with a triangle} is a Gauss diagram $G$ with a triangle indicating the type of Reidemeister move. 
This type may be a global type, a local type or a full type of R3 moves. 
Notice that the global type can be indicated directly by the direction of arrows of the triangle. (See \Cref{def:R3_global_type}). 
In addition to the triangle there might be other arrows in the Gauss diagram. 
These arrows can be signed or unsigned. For example, see \Cref{fig:GDs_triangle}. 
These arrows outside the triangle are called \textbf{contributing arrows}. 

\begin{figure}[htbp]
  \centering
  \begin{subfigure}[t]{0.25\textwidth}
    \centering
    \includegraphics[height=2.5cm]{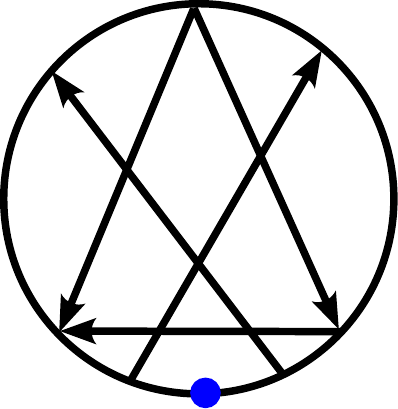}
  \end{subfigure}
  \hspace{0.3\textwidth}
  \begin{subfigure}[t]{0.25\textwidth}
    \centering
    \includegraphics[height = 2.5cm]{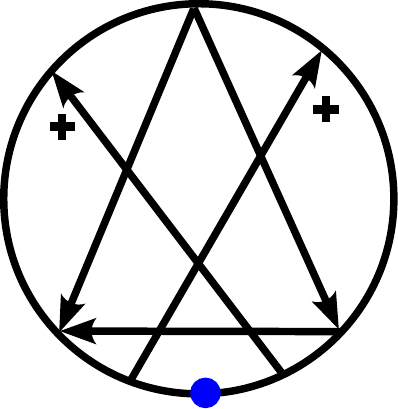}
  \end{subfigure}
  \caption{Two Gauss diagrams with a triangle. Each has 2 contributing arrows. }
  \label{fig:GDs_triangle}
\end{figure}

A Gauss diagram with a triangle is regarded as a map. 
It maps a Reidemeister move to an integer. 
Let $G$ be a Gauss diagram with a triangle. 
The value $\left\langle G, p \right\rangle$ of $G$ on a Reidemeister move $p$ is defined as follows:
\begin{itemize}
    \item If the Reidemeister move $p$ is not an R3 move of the indicated type on $G$,
        the value is $0$.
    \item If the move $p$ is such an R3 move:
        \begin{itemize}
            \item When $G$ is signed: the value is the number of times $G$ appears as a subdiagram in the Gauss diagram of the R3 move.
            \item When $G$ is unsigned: the value is the sum of the contributions of all such subdiagrams, 
                where the contribution of a subdiagram is the product of the signs of the arrows not belonging to the triangle. 
        \end{itemize}
\end{itemize}
From now on, we will also refer to Gauss diagrams with a triangle simply as \textbf{Gauss diagrams}.
By linearity, we also allow Gauss diagram expressions, i.e., formal linear combinations of Gauss diagrams.
Furthermore, we extend the valuation of a Gauss diagram expression to generic paths by requiring it to be additive with respect to concatenation of paths.
\begin{example} \ \\
    Let $p$ be the following R3 move:
    \begin{figure}[H]
    \centering
    \begin{tikzcd}[column sep=3cm]
      {\includegraphics[height=2cm]{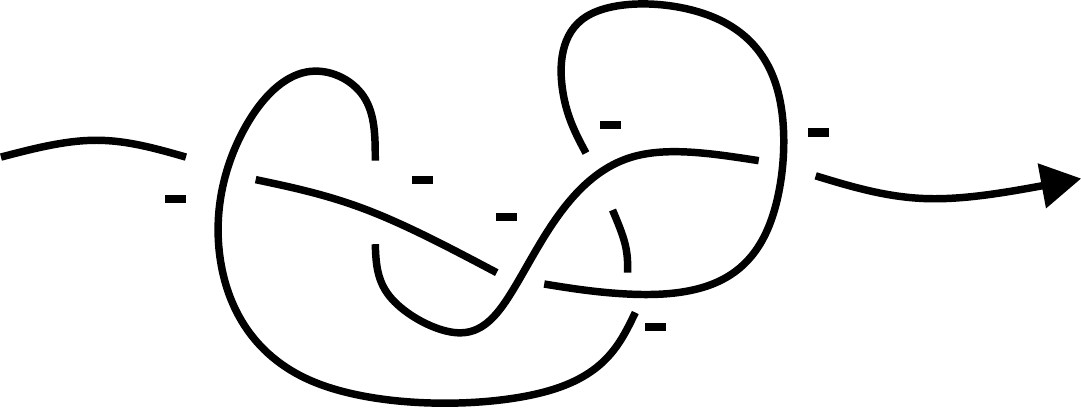}}
      \arrow[r, ->, >=latex, line width=1pt, "\text{R3 move}" above]
      &
      {\includegraphics[height=2cm]{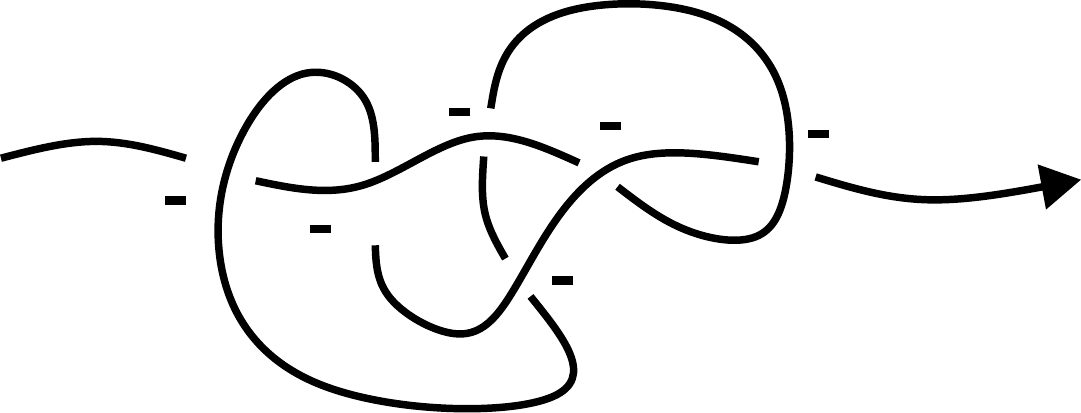}}
    \end{tikzcd}
    \caption{The R3 move $p$}
    \end{figure}
    The Gauss diagrams are as follows:
    \begin{center}
        \begin{tikzcd}[column sep=6cm]
            \includegraphics[height=3cm]{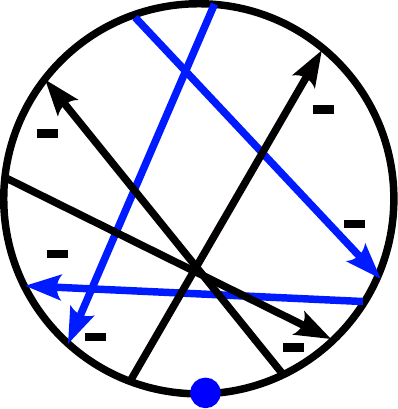}
            \arrow[r, ->, >=latex, line width=1pt,
                "{\includegraphics[height=2.5cm]{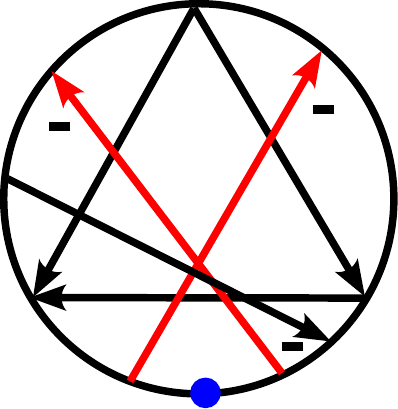}}", 
                "{\text{R3 move}}"']                                                   
            &
            \includegraphics[height=3cm]{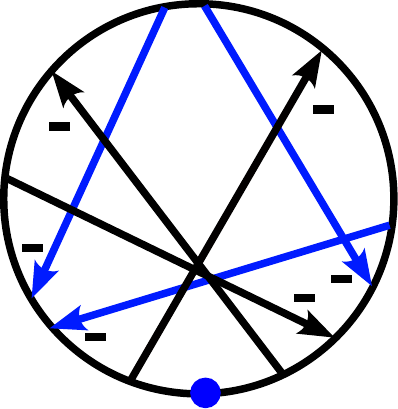}
        \end{tikzcd}
        \captionof{figure}{The Gauss diagram changes for the R3 move $p$}\label{fig:eg_R3_GDs}
    \end{center}

    The following are the values of some Gauss diagram expressions on this R3 move. 
    \[
        \left\langle 
        \raisebox{-.4\height}{\includegraphics[width=1.75cm]{figures/1cocycles/cfg_eg_unlabelled.pdf}} \ , \ 
        p
        \right\rangle = (-1)\times(-1) = 1
    \]
    \[
        \left\langle 
        \raisebox{-.4\height}{\includegraphics[width=1.75cm]{figures/1cocycles/cfg_eg_labelled.pdf}} \ , \ 
        p
        \right\rangle = 0, \ 
        \left\langle 
        \raisebox{-.4\height}{\includegraphics[width=1.75cm]{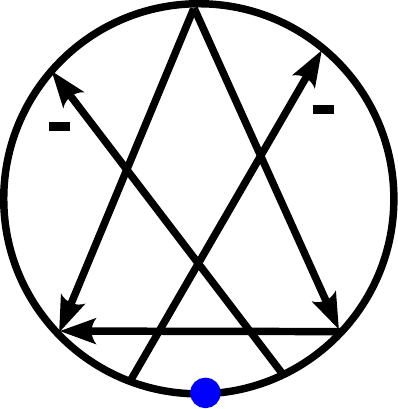}} \ , \ 
        p
        \right\rangle = 1, \ 
        \left\langle 
        \raisebox{-.4\height}{\includegraphics[width=1.75cm]{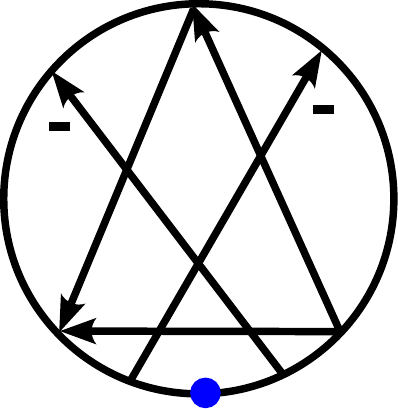}} \ , \ 
        p
        \right\rangle = 0, \ 
    \]

\end{example}

\section{1-cocycles of combinatorial order 4}
\label{sec:order4_1cocycles}


\begin{definition}\label{def:beta_1_2}
    Let $C_g(I, \mathcal{K}_{3,1})$ be the set of generic paths (\Cref{def:generic_path}) in the space of long knots. 
    Define two maps 
    \[
    \begin{aligned}
    \beta_{i} \colon C_g(I, \mathcal{K}_{3,1}) &\longrightarrow \mathbb{Z}, \\
    \gamma \;&\longmapsto\; \beta_{i}(\gamma) \coloneqq 
    \sum_{\text{R3 move } p \in \gamma} \mathrm{sign}(p) \cdot W_{\beta_{i}}(p), \quad i = 1, 2, 
    \end{aligned}
    \]
    where $W_{\beta_1}$ is given in \Cref{fig:beta_1}, $W_{\beta_2}$ is given in \Cref{fig:beta_2} and $sign(p)$ is defined in \Cref{def:sign_R3}. 
\end{definition}
\begin{figure}[H] 
    \centering
    \includegraphics[width=1.0\textwidth]{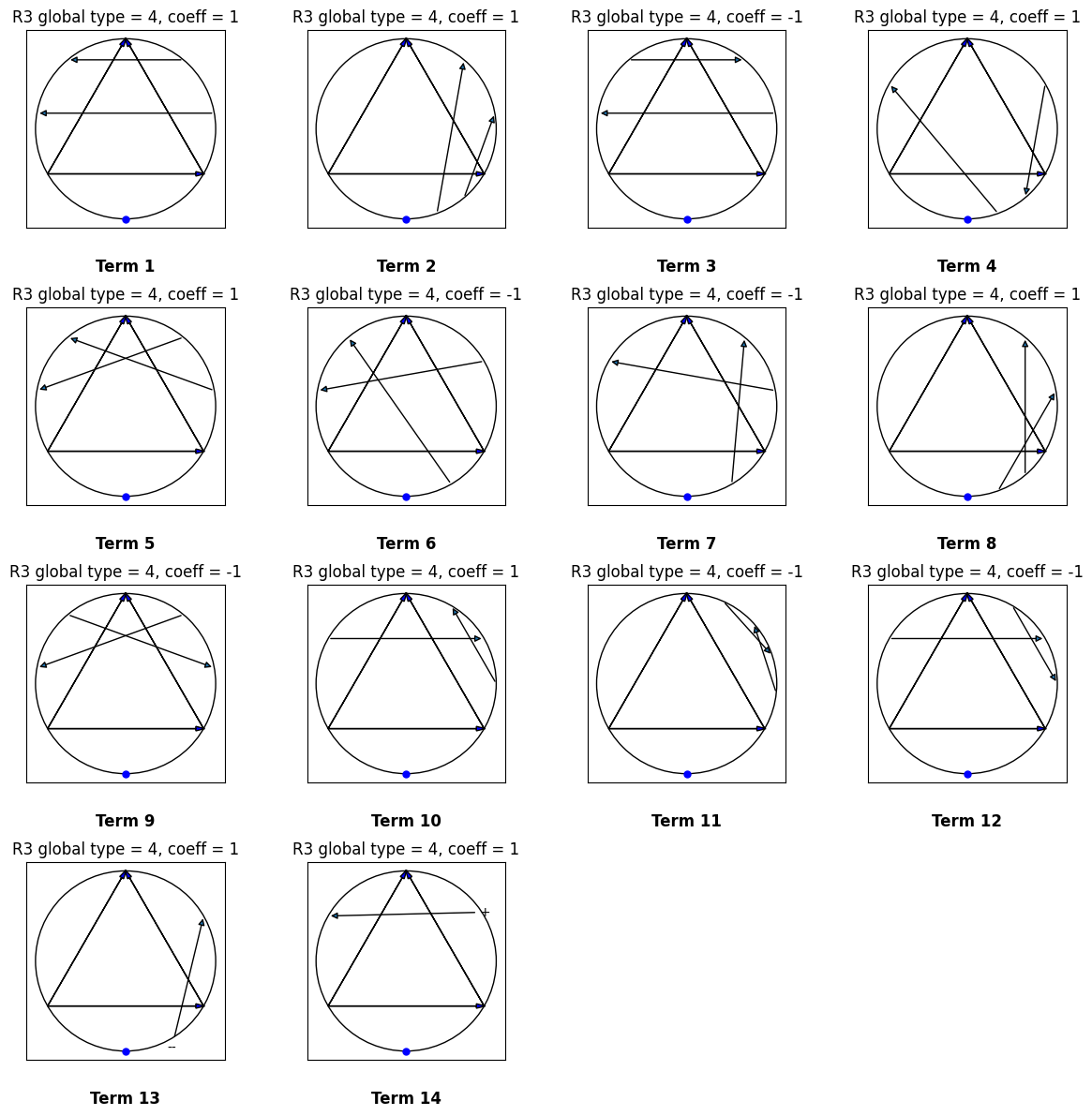}  
    \caption{The weight $W_{\beta_1}$ of $\beta_1$.}
    \label{fig:beta_1}
\end{figure}

\begin{figure}[H] 
    \centering
    \includegraphics[width=1\textwidth]{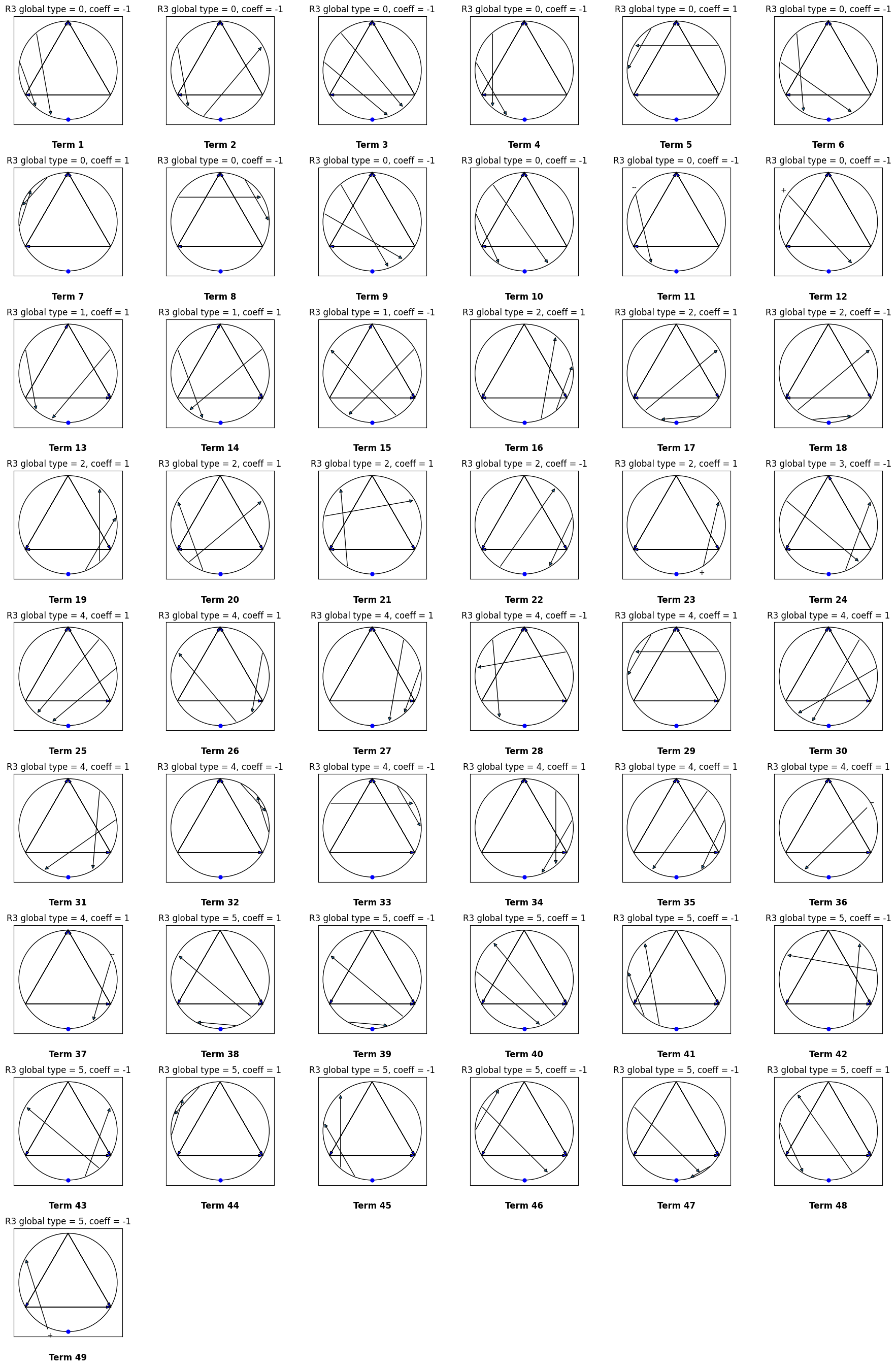}  
    \caption{The weight $W_{\beta_2}$ of $\beta_2$.}
    \label{fig:beta_2}
\end{figure}


\begin{definition}\label{def:beta_3}
    Let $C_g(I, \mathcal{K}_{3,1})$ be the set of generic paths (\Cref{def:generic_path}) in the space of long knots. 
    Define a map
    \[
    \begin{aligned}
    \beta_{3} \colon C_g(I, \mathcal{K}_{3,1}) &\longrightarrow \mathbb{Z}/2\mathbb{Z}, \\
    \gamma \;&\longmapsto\; \beta_{3}(\gamma) \coloneqq 
    \sum_{\text{R3 move } p \in \gamma} W_{\beta_{3}}(p), 
    \end{aligned}
    \]
    where $W_{\beta_3}$ is given in \Cref{fig:beta_3}. 
\end{definition}
\begin{figure}[H] 
    \centering
    \includegraphics[width=1.0\textwidth]{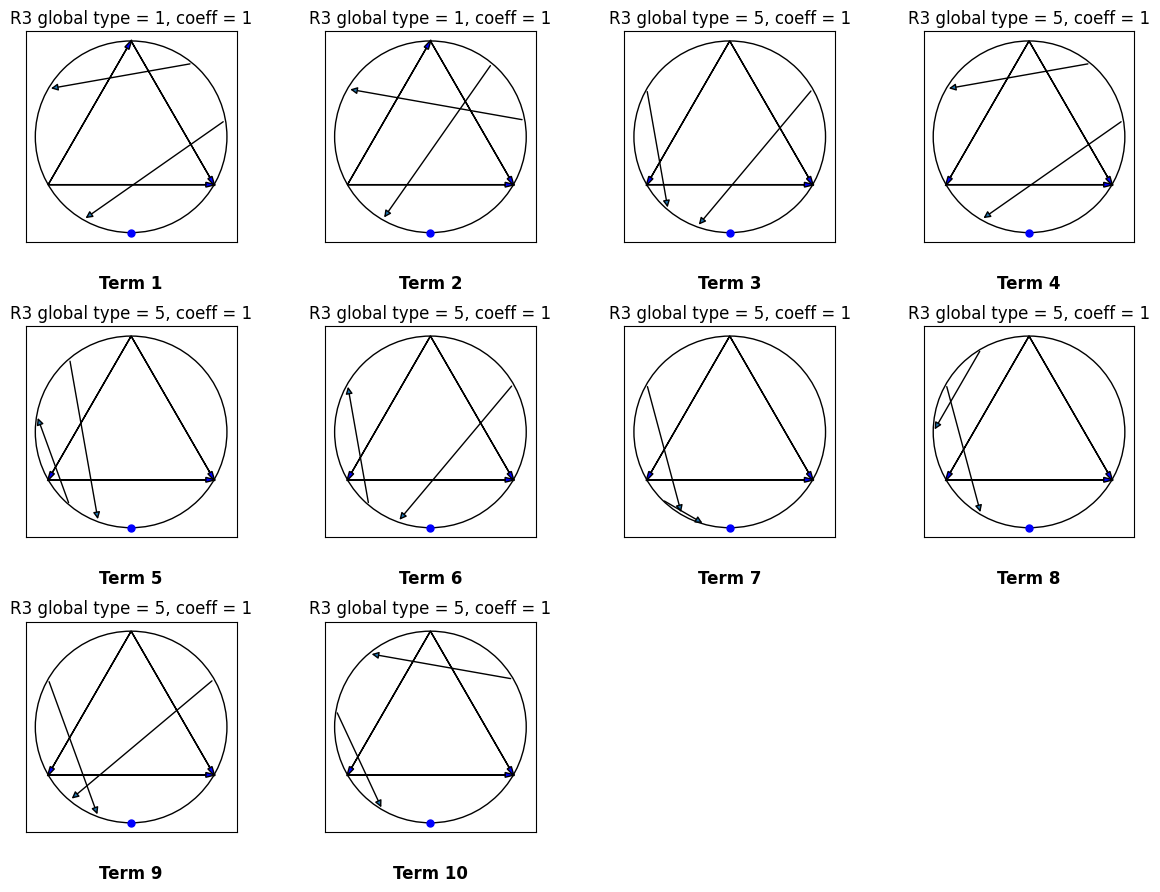}  
    \caption{The weight $W_{\beta_3}$ of $\beta_3$.}
    \label{fig:beta_3}
\end{figure}


\begin{theorem}[Cocyclicity]\label{thm:cocyclicity_beta}
    $\beta_1, \beta_2$ and $\beta_3$ are combinatorial 1-cocycles of combinatorial order 4. 
\end{theorem}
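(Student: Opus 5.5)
By \Cref{thm:generic_path_homotopy} and the discussion after it, \ref{it:1cocycle_condition1} holds automatically for maps of the form \Cref{eq:general_1cocycle}. So cocyclicity reduces to showing that $\beta_i$ vanishes on the meridian of each of the six types of codimension-two singularities. I would check the types one at a time, using the meridians listed in \Cref{sec:Cocyclicity}.

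The first four types are the easy ones.

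\emph{Cusps and cubic tangencies.} The ramphoidal cusp and the cubic tangency meridians contain only R1 and R2 moves. Since $W_{\beta_i}$ is supported on R3 moves, these meridians contribute nothing.

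\emph{Tangent triple points and intersected cusps.} A tangent triple point meridian contains two R3 moves and an R2 move. The two R3 moves have the same triangle and opposite signs, and the Gauss diagrams at the two moves differ only by two arrows forming an R2 bigon. Those two arrows have opposite signs and adjacent endpoints. I would argue that every contributing subdiagram uses at most one of them, and that swapping one for the other reverses the sign. Hence the two contributions cancel. The intersected cusp meridian contains only R1 and R2 moves apart from possibly one R3 move that reappears reversed, so it also contributes nothing.

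\emph{Two independent codimension-one singularities.} The meridian crosses each R3 stratum twice, in opposite directions. The weight of one R3 move can change after the other move happens, but only if the other move alters the configuration of arrows that the contributing subdiagram sees. If the two moves are far apart in the diagram, this does not happen. When an R2 or R1 move interacts with the arrows of the triangle, the new arrows again appear in cancelling pairs, as in the tangent triple point case.

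The main work is the \textbf{ordinary quadruple point}. Its meridian consists of eight R3 moves, each involving three of the four strands. The four crossings not in a given triangle sit in specific positions relative to it. The plan is as follows:
\begin{itemize}
\item Enumerate, for each of the possible quadruple-point configurations, all the relevant data. These configurations are determined by the order of the four strands along the long knot and by the crossing information. The data are the global type, the local type and the sign of each of the eight R3 moves, together with the positions of the three remaining arrows of the quadruple point relative to each triangle.
\item Write $W_{\beta_i}(p)$ as a fixed part, coming from contributing arrows outside the quadruple point, plus a local correction, coming from contributing arrows among the six crossings of the quadruple point.
\item Show that the fixed parts cancel pairwise, because each triangle occurs twice with opposite signs.
\item Show that the local corrections sum to zero. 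Because the weights use at most two contributing arrows, this reduces to a finite check of sign-weighted counts.
\end{itemize}

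I expect this finite check to be the main obstacle. As the introduction notes, some cancellations hold only for real knots, not for virtual knots. So at certain steps I would use the fact that a contributing arrow crossing a triangle side in a prescribed way must be paired with a complementary configuration. This is a planarity or Gauss-diagram identity, which I would deduce from the Jordan curve theorem applied to the sub-curve cut out by the triangle. For $\beta_3$, the same argument is carried out modulo $2$, where the signs drop out and only parities of counts matter.

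Finally, for combinatorial order exactly $4$: each weight visibly uses at most four arrows, counting the triangle as two, so the order is at most $4$. For the lower bound, I would use \Cref{mainthm:pairing}(iii), whose proof is independent of this theorem. If a class had a representative of combinatorial order at most $3$, its value on $\Roll_0$ would be a Vassiliev invariant of order at most $3$. But $\beta_1(\Roll_0(f))$ and $\beta_2(\Roll_0(f))$ involve the order-$4$ invariants $v_{4,2}$ and $v_{4,1}$, so no such representative exists. For $\beta_3$, all the canonical pairings vanish, so this argument gives nothing. Instead I would use its independence from $\alpha_3^1 \bmod 2$, shown by the half-rolling values in the table, together with Turchin's computation that the mod-$2$ classes of order at most $3$ are spanned by $\alpha_3^1$.
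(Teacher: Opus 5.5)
Your reduction to meridians matches the paper. So does your plan for the quadruple point: outside-only contributions cancel in pairs, and the remaining terms with at most one outside arrow are checked directly using real-knot identities. The gap is in the cases you call easy. Three of the four meridians containing R3 moves impose genuine conditions on the weights, and your cancellation arguments for them fail.

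\emph{Intersected cusp.} Take the versal unfolding with curve $(t^2,t^3-at)$ and transverse strand $x=b$. Its discriminant consists of three pieces: the R1 line $a=0$, the R2 line $b=0$, and the half-line $b=a>0$ of triple points, where the strand passes through the double point of the curl. A small loop around the origin crosses that half-line exactly once. So the meridian contains a single R3 move, and nothing "reappears reversed". Cocyclicity therefore requires $W_{\beta_i}(p)=0$ for every R3 move whose triangle is a curl crossed by a strand through its double point. This wandering-cusp equation is a real constraint, which the paper checks over $16$ types.

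\emph{Tangent triple point.} The meridian is R2, R3, R3, R2, and the two R3 moves do not have the same triangle. Both triangles contain the two crossings of the third strand, but they use different bigon crossings $c_1$ and $c_2$. These have opposite signs, so the two triangles have different local types. Moreover, at each R3 move the other bigon crossing is present and can serve as a contributing arrow. It sits on different sides of the triangle's endpoints in the two moves. Nothing forces term-by-term cancellation. The paper checks $24$ types and needs a Gauss diagram identity valid only for real knots for $\beta_1$, which a purely local swap argument cannot produce.

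\emph{Commutation.} Disjoint disks in the plane do not give disjoint configurations in the Gauss diagram. An R3 move $q$ elsewhere toggles whether pairs of its triangle arrows interleave. An R2 move creates two arrows that can both enter one subdiagram, with contribution $(+1)(-1)$ that does not cancel in pairs. Since the weights allow two contributing arrows, $W(p)\neq W(p')$ is possible. Indeed, the paper obtains $\beta_i(p)=\beta_i(p')$ in the R3--R3 case only for $i=1,3$, and has to prove separately that $\beta_2$ vanishes on the whole commutation meridian.

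Your lower bound on the order of $\beta_1,\beta_2$ via $\Roll_0$ and Theorem~\ref{mainthm:pairing}(iii) is the paper's argument. Your argument for $\beta_3$, however, does not go through. Turchin's computation concerns the Vassiliev filtration. To pass from ``combinatorial order at most $3$'' to ``Vassiliev order at most $3$'' you would need \Cref{conj:order}, which is open. The paper instead classifies, by computer, all combinatorial $1$-cocycles of combinatorial order at most $3$ over $\mathbb{Z}/2\mathbb{Z}$. It then checks that none of them agrees with $\beta_3$ on the half rolling loops.
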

The proof of \Cref{thm:cocyclicity_beta} is given in \Cref{sec:Cocyclicity}.

\section{Pairing with loops}
\label{sec:pairing}

\subsection{Algorithm}
In order to compute our 1-cocycles on the arcs or loops in the space of long knots, we need a systematic way to decompose a given path into Reidemeister moves. 
We introduce an algorithm using Reidemeister moves to realise the push arc for long knot diagrams. 
We use it to obtain the algorithms for the rotation loop and the bracket loops.
Then we adapt this algorithm to realise the rolling loop. 


Smoothing all the crossings of a long knot diagram (in the sense of Conway) yields a line extending to infinity and several circles, which are called \textbf{Seifert circles}. 
We say a Seifert circle \textbf{touches} another Seifert circle at a crossing if the two arcs obtained by smoothing the crossing belong to the two Seifert circles respectively. 

\begin{definition}[Layer of Seifert circles]
    We say that the line to infinity is of \textbf{layer 0}.  
    Inductively, a Seifert circle is said to be of \textbf{layer $n$}  
    if it touches at a crossing a Seifert circle of layer $(n-1)$,  
    and does not touch any circle of layer less than $(n-1)$.
\end{definition}
\begin{example}
    See \Cref{fig:Seifert_circles}. 
\end{example}
\begin{remark}
    The layer of a Seifert circle is well-defined for long knot diagrams. 
    This is because there are no isolated Seifert circles for long knot diagrams. 
    All the Seifert circles can be connected by the crossings of the diagram. 
\end{remark}

\subsubsection{Push arc}\label{algo:push}

\begin{definition}
    A \textbf{push move} is a move depicted in \Cref{fig:push_move}. 
    After performing a push move as in \Cref{fig:push_move}, 
    we say that the left crossing of the original diagram is \textbf{conquered} and the right crossing of the original diagram is \textbf{pushed}. 
\end{definition}
\begin{figure}[H]
    \center  
    \includegraphics[height = 3.2cm]{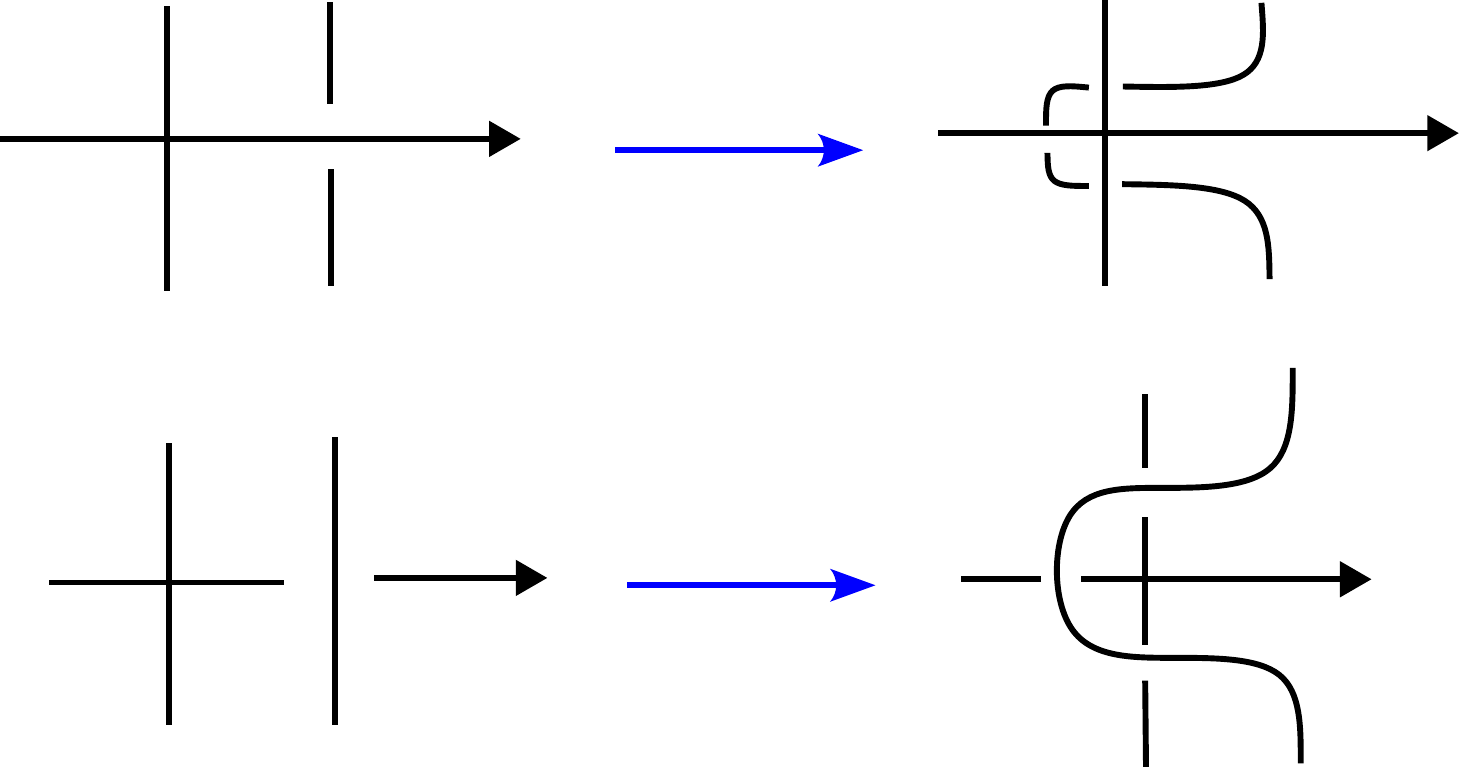}
    \caption{Two types of push moves. The intersection means that either strand may be chosen as the overcrossing or the undercrossing.}
    \label{fig:push_move}
\end{figure}
There are 16 types of push moves ($2^2$ types of crossings and $2^2$ types of choice of the orientations on the 2 strands). 
The push arc is realised by pushing crossings through a long knot diagram consecutively. 
The algorithm for pushing a crossing through a long knot diagram consists of 3 steps. 
\begin{enumerate}
    \item Obtain the Seifert circles of the long knot diagram (see \Cref{fig:Seifert_circles}). 
    \item \label{item:push_along} Perform push moves against the orientation of the Seifert circles from layer 0 to the maximal layer. 
    There might be several starting points on several Seifert circles. 
    We do the push moves from all the starting points at the same time.  
    If the push move meets a previously conquered crossing, this push move stops in this layer (see \Cref{fig:push_along_paths}). 
    \item Now all the crossings of the long knot are conquered. Do R2 moves to reduce the crossings in any possible order (see \Cref{fig:push_crossing_R2s}). The order of R2 moves is irrelevant for our purposes. 
\end{enumerate}

\begin{definition}\label{def:arrows_from_pushing}
    At Step \ref{item:push_along}, the push moves create many new crossings. 
    We collectively refer to these newly created crossings, together with the crossing being pushed, as the \textbf{crossings from pushing}.
    In terms of Gauss diagrams, the corresponding arrows are called the \textbf{arrows from pushing}.
\end{definition}
\begin{remark}\label{rem:cr_pushed_sign}
    The arrows being pushed in Step \ref{item:push_along} could also come from the newly created crossings. 
    But their signs are always the same (the original sign of the crossing in $G$ being pushed). 
\end{remark}

\begin{figure}[H]
  \centering
  \begin{subfigure}[t]{0.7\textwidth}
    \centering
    \includegraphics[height=2.5cm]{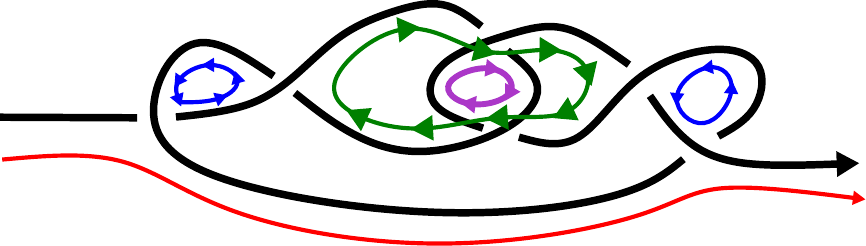}
    \caption{Seifert circles. 
    The red line is of layer 0, 
    the two blue circles are of layer 1, 
    the green circle is of layer 2, 
    and the purple circle is of layer 3.}
    \label{fig:Seifert_circles}
  \end{subfigure}
  \hfill
    \begin{subfigure}[t]{0.5\textwidth}
        \centering
        \includegraphics[height = 3cm]{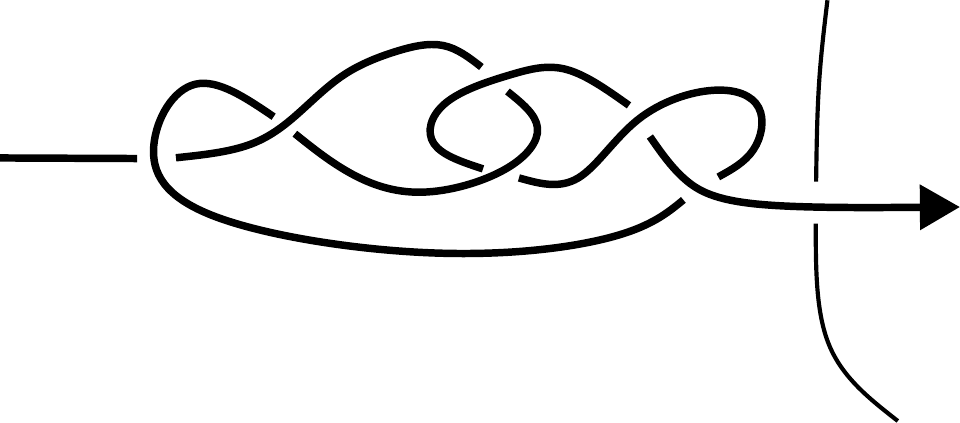}
        \caption{Crossing to be pushed}
        \label{fig:cr_to_be_pushed}
    \end{subfigure}
  \hfill
  \begin{subfigure}[t]{0.7\textwidth}
    \centering
    \includegraphics[height = 3cm]{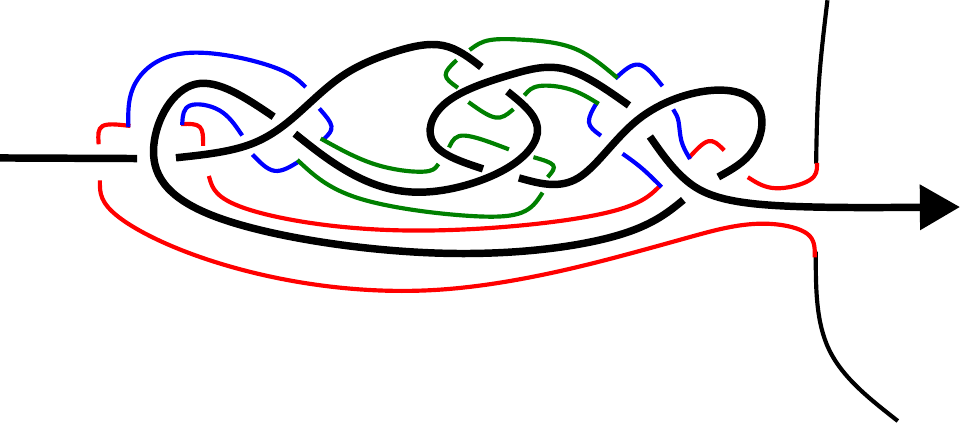}
    \caption{Push moves against the orientation of the Seifert circles in the order of layers}
    \label{fig:push_along_paths}
  \end{subfigure}
  \hfill
  \begin{subfigure}[t]{0.7\textwidth}
    \centering
    \includegraphics[height = 3cm]{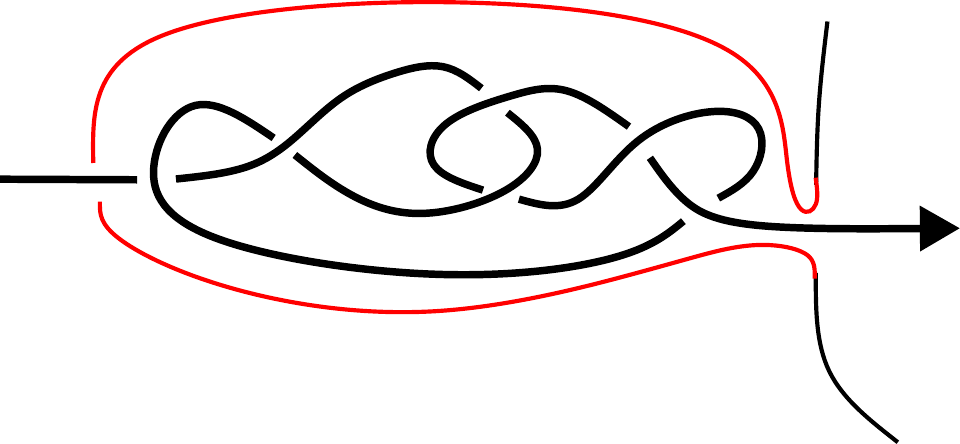}
    \caption{Do R2 moves to reduce the crossings}
    \label{fig:push_crossing_R2s}
  \end{subfigure}

  \caption{Push a crossing through a long knot diagram via push moves. All crossings are conquered after layer 2. }
  \label{fig:push_through_via_push_moves}
\end{figure}


As in \Cref{eg:rot_via_push} and \Cref{fig:rot}, 
the rotation loop can be realised by first adding a curl at the end of the long knot diagram, 
pushing the long knot diagram through the curl, 
and deleting the curl. 
The preceding algorithm here also applies to the rotation loop. 

\subsubsection{Rolling loop}
\label{algo:Rolling}
We present the algorithm for the reverse of the rolling loop, because it can be adapted directly from the algorithm for the push arc.
The algorithm for the rolling loop itself can then be obtained easily by comparing it with the reversed version.

A single rolling move is given by the algorithm below: 
\begin{enumerate}
    \item Obtain the Seifert circles of the long knot diagram. (See \Cref{fig:Seifert_circles}.)
    \item Add an auxiliary curl near the last crossing. (See \Cref{fig:6_1_FH_curl_preparation}, \Cref{fig:FH_auxiliary_curl}.)
    \item\label{item:push_along_FH} Do push moves against the orientation of the Seifert circles from layer 0 to the maximal layer as in the case of push arcs. (See \Cref{fig:6_1_FH_push}.)
    \item Do R2 moves to reduce the crossings. (See \Cref{fig:6_1_FH_R2R1}.)
    \item Delete the remaining curl. (See \Cref{fig:6_1_FH_R2R1}.)
\end{enumerate}

\begin{definition}\label{def:arrows_from_pushing_FH}
    At Step \ref{item:push_along_FH}, the push moves create many new crossings. 
    We collectively refer to these newly created crossings, together with the crossing being pushed (including those coming from the last crossing and the auxiliary curl), as the \textbf{crossings from pushing}.
    In terms of Gauss diagrams, the corresponding arrows are called the \textbf{arrows from pushing}.
\end{definition}

By the algorithm, if the long knot diagram $K$ has $n$ crossings, 
the whole reversed rolling loop consists of $2n$ rolling moves 
and each rolling move consists of $(n-1)$ crossing-pushing operations. 

\begin{figure}[H]
  \centering
  \begin{subfigure}[t]{0.7\textwidth}
    \centering
    \includegraphics[height=2.5cm]{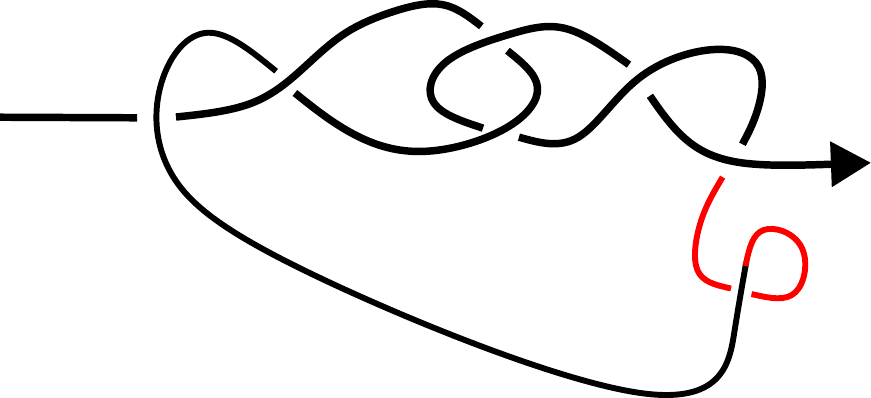}
    \caption{Add an auxiliary curl}
    \label{fig:6_1_FH_curl_preparation}
  \end{subfigure}
  \hfill
  \begin{subfigure}[t]{0.7\textwidth}
    \centering
    \includegraphics[height=3cm]{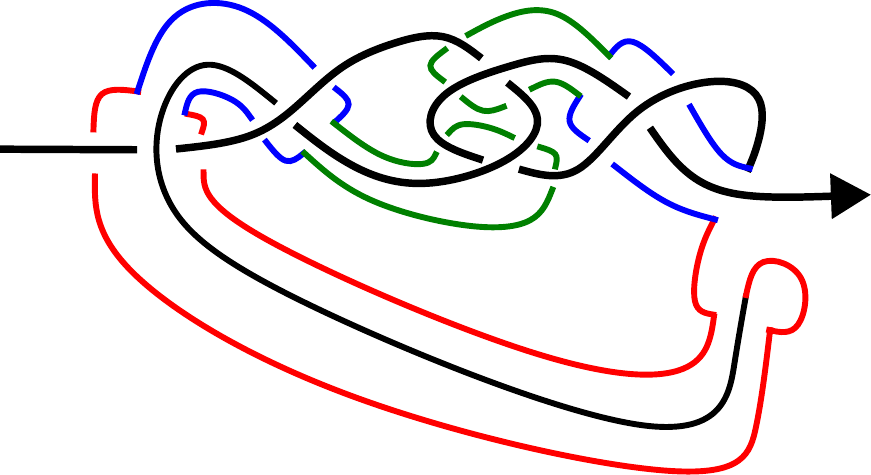}
    \caption{Push moves against the Seifert circles}
    \label{fig:6_1_FH_push}
  \end{subfigure}
  \hfill
  \begin{subfigure}[t]{0.7\textwidth}
    \centering
    \includegraphics[height=4.2cm]{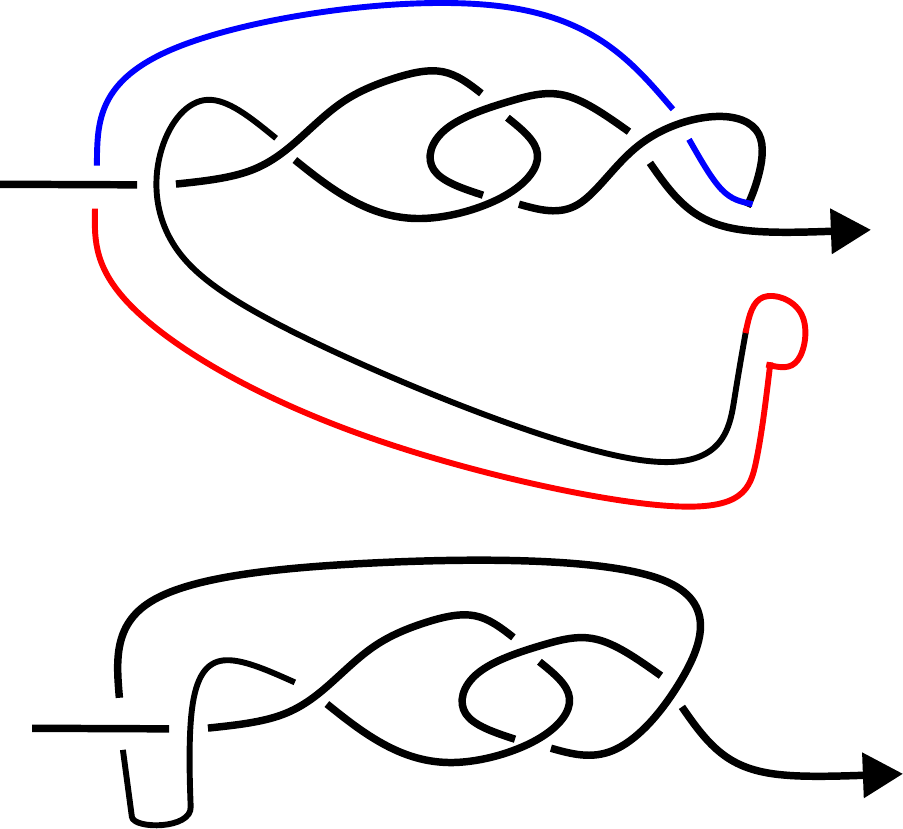}
    \caption{Do R2 and R1 moves to reduce the crossings}
    \label{fig:6_1_FH_R2R1}
  \end{subfigure}
  \hfill

  \caption{Rolling move via push moves. The auxiliary curl is added at the last crossing.}
  \label{fig:rolling_via_push_moves}
\end{figure}




\begin{figure}[H]
    \center  
    \includegraphics[width = 10cm]{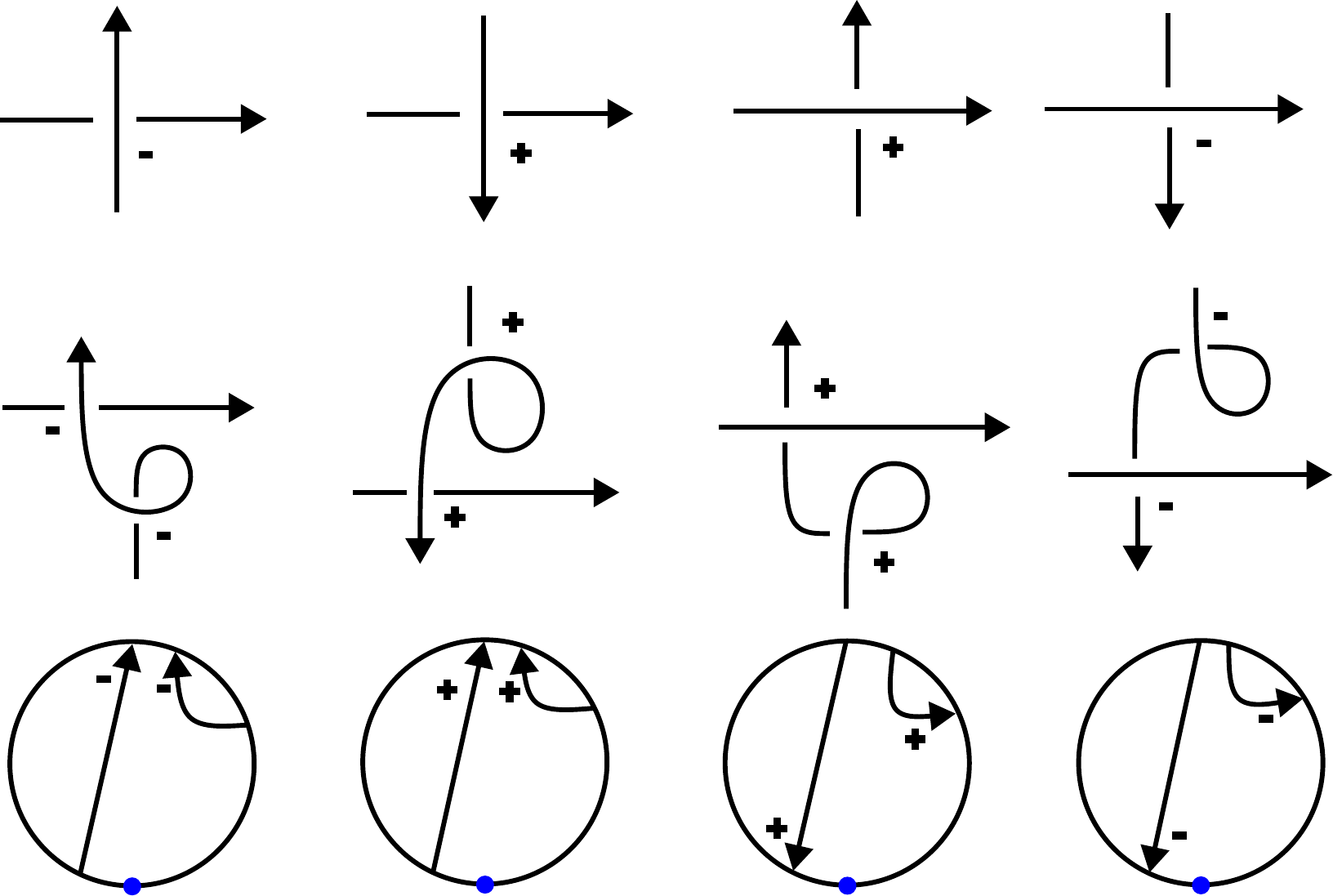}
    \caption{Rules for adding an auxiliary curl near the last crossing. 
    The horizontal strand represents the last part of the long knot.}
    \label{fig:FH_auxiliary_curl}
\end{figure}

\subsection{Pairing with the bracket loop}

\begin{lemma}\label{lem:mixed_order}
    Let $k, l$ be two natural numbers such that $k + l > n$ ($n \ge 2$).
    Let $K$ be a singular long knot diagram with $k$ singular crossings, whose resolutions are denoted by $K_{\epsilon_1\epsilon_2...\epsilon_k}$. 
    Let $G$ be a singular long knot diagram with $l$ singular crossings, whose resolutions are denoted by $G_{\delta_1\delta_2...\delta_l}$. 
    Let $F$ be a signed Gauss diagram with a triangle with at most $(n-2)$ contributing arrows. 
    Then 
    \[
    \sum_{(\epsilon_1, \epsilon_2, ..., \epsilon_k) \in \{\pm1\}^{k}}\sum_{(\delta_1, \delta_2, ..., \delta_l) \in \{\pm1\}^{l}}\epsilon_1\epsilon_2\cdots\epsilon_k \delta_1\delta_2\cdots\delta_l \left\langle F , \push(K_{\epsilon_1\epsilon_2...\epsilon_k}, G_{\delta_1\delta_2...\delta_l}) \right\rangle = 0. 
    \]
\end{lemma}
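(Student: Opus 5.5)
We prove the vanishing by mimicking the proof of \Cref{lem:GDF_finiteness}, after expanding the left-hand side as a sum over individual R3 moves and over occurrences of $F$. The key point is that the push arc $\push(K_\epsilon, G_\delta)$ is realised by the algorithm of \Cref{algo:push}, and its combinatorics (Seifert circles, layers, order of push moves, R2 reductions) depend only on the underlying projection. The projection is the same for all resolutions $K_\epsilon$ and $G_\delta$, since resolving a singular crossing only chooses over/under at a fixed double point. Hence all $2^{k+l}$ push arcs consist of the \emph{same} sequence of R3 moves at the level of planar curves, and these moves are indexed uniformly. What depends on $(\epsilon,\delta)$ is only the over/under data and the signs of the arrows. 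The first step is to check the following: for each R3 move, the global type, the sign, and the signs of all arrows either do not depend on the resolution, or depend on $(\epsilon,\delta)$ in a controlled way. In particular, by \Cref{rem:cr_pushed_sign}, every arrow from pushing carries the sign of the original crossing it descends from.

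The second step is to classify each arrow appearing in the Gauss diagram of an R3 move $p$ of the push arc by its \emph{origin}. Such an arrow is either an original crossing of $K$, an original crossing of $G$, or an arrow from pushing. An arrow from pushing is created when a crossing of $K$ is pushed through a crossing of $G$ (or vice versa), so it remembers a crossing $c$ of $K$ (whose sign it inherits) and a crossing $d$ of $G$ (whose over/under data determines its head/tail). An occurrence of $F$ in $p$ uses the triangle (2 arrows' worth, by \Cref{def:combinatorial_order}) plus at most $n-2$ contributing arrows. Let $S$ be the set of original crossings of $K$ and $G$ that the arrows of this occurrence depend on. We then bound $|S|$ by about the number of arrows of $F$ counted with the convention that the triangle counts as $2$, giving $|S| \le n < k+l$. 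Since the $k+l$ singular crossings are more than this, some singular crossing $x$ is not in $S$. This accounting must be set up carefully.

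The third step is the cancellation. Fix the combinatorial move $p$ and a combinatorial occurrence, meaning a choice of arrow positions. If the singular crossing $x$ is not in $S$, then changing its resolution changes neither the existence of the occurrence, nor its arrow signs and directions, nor the sign and type of the move $p$. Hence the terms pair up with opposite signs $\epsilon_x$ (or $\delta_x$) and cancel, exactly as in \Cref{lem:GDF_finiteness}.

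The main obstacle is the accounting in the second step. An arrow from pushing depends on two original crossings, namely one in $K$ for its sign and one in $G$ for its geometry. The three triangle arrows may also involve crossings of both knots. A naive count therefore gives up to $2(n+1)$ dependencies, not $n$. We plan to resolve this as follows. In a push move, the direction of the new arrows is determined by the over/under of the conquered crossing of $G$, but their \emph{existence and position} are resolution-independent. Meanwhile, the sign of $F$'s occurrence depends on $K$ only through products of pushed signs. We would then argue via the mixed bilinear structure: changing a crossing $d$ of $G$ modifies only those arrows whose geometry it governs, while changing $c$ of $K$ modifies only signs. So it suffices to find either a $K$-crossing unused for signs or a $G$-crossing unused for geometry, and the count $k+l>n$ guarantees one. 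Verifying that the triangle's R3 move consumes at most two such dependencies, consistent with the "triangle counts as 2" convention, is the delicate case check we expect to carry out using the explicit push-move pictures in \Cref{fig:push_move}.
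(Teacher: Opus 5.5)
Your outline matches the paper's: the projection family of the push arc is the same for every $(\epsilon,\delta)$, each occurrence of $F$ at an R3 move is governed by at most $n$ original crossings, and a free singular crossing makes the terms cancel as in \Cref{lem:GDF_finiteness}. The gap is the counting step, which is the whole content of the lemma. You leave it as a ``delicate case check'', and the model you base it on is wrong.

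You assume an arrow from pushing takes its sign from the pushed crossing $c$ but its head/tail from the conquered crossing $d$. That cannot hold. Once the oriented projection is fixed, a crossing's sign and its over/under data determine each other, and so does the arrow direction. So the remark you cite (\Cref{rem:cr_pushed_sign}) already forces the direction to be determined by $c$ too. The fact the paper uses is that every crossing from pushing is a copy of the pushed crossing. Its over/under and sign are fixed by that single crossing's resolution, and conquered crossings are left unchanged. Hence every arrow at every stage of the push arc depends on exactly one original crossing of $K$ or $G$.

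With this, the triangle needs no case analysis. In each R3 move of the algorithm, the three triangle crossings involve at most one resolved crossing of $K$ (the one being pushed, through its copies) and at most one of $G$ (the conquered one). This is why the triangle ``counts as 2''. The at most $n-2$ contributing arrows add at most $n-2$ further dependencies. That gives at most $n<k+l$, so some singular crossing is free, and flipping it changes neither the occurrence nor the type or sign of the move.

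Your proposed fallback, finding either a $K$-crossing unused for signs or a $G$-crossing unused for geometry, does not follow from $k+l>n$ under your own two-dependency model. Take $l=0$ and $k=n+1$. The up to $n+1$ arrows of an occurrence (three in the triangle plus $n-2$ contributing) could then draw their signs from all $n+1$ singular crossings of $K$, leaving none free. As written the argument does not close. The missing idea is that all crossings created by pushing are copies of the pushed crossing.
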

\begin{proof}
    The proof follows the spirit of \Cref{lem:GDF_finiteness}. 
    \begin{obs*}\label{obs:push_same_projection}
    Regardless of the choice of $(\epsilon_1, \epsilon_2, \ldots, \epsilon_k)$ and $(\delta_1, \delta_2, \ldots, \delta_l)$, 
    the 1-parameter family of the knot projection curves 
    (i.e., the knot diagrams forgetting the over/under information at the crossings) 
    in the push arc described by the algorithm in \Cref{algo:push} is identical. 
    \end{obs*}
    During the algorithm, if the pushed crossing is a resolved one, then all the crossings from pushing are determined by the resolution status of the pushed crossing. 
    Every time there is a contribution of $F$ during an R3 move, there must be at least one singular crossing that is not a contributing arrow and not a crossing from pushing. 
    This is because in the three crossings involved in the R3 move, there are at most 1 resolved crossing from $K$ and 1 resolved crossing from $G$. 
    And there are $(n-2)$ contributing arrows, so $k + l - 2 - (n-2) > 0$. 
    Resolving these singular crossings causes the contributions of $F$ in the sum in the statement to cancel by the algebraic identity 
    \[
    \sum_{(a_1,\ldots,a_{t})\in\{\pm 1\}^{t}}a_1\cdots a_{t} = 0, \; (t \ge 1).
    \]
\end{proof}
From \Cref{lem:mixed_order}, it is not hard to obtain the following theorem. 
\begin{theorem}\label{thm:mixed_order}
    Let $\phi$ be a combinatorial 1-cocycle defined by Gauss diagrams with a triangle of combinatorial order at most $n$ (i.e., at most (n-2) contributing arrows in each configuration, $n \ge 2$) over a field $\mathbb{K}$. 
    Then $\phi([f, g]_0)$ is a finite type invariant of the pair of long knots $(f, g)$ of total Vassiliev order $\le n$. 
    More precisely, let $\{v_{i,a}\}$ be a basis of Vassiliev invariants of order $i$ over $\mathbb{K}$. 
    Then, 
    \[
    \phi([f, g]_0) = \sum_{i + j \le n}\sum_{a,b}c_{i,a,j,b}(\phi)v_{i,a}(f)v_{j, b}(g), 
    \]
    for some coefficients $c_{i,a,j,b}(\phi) \in \mathbb{K}$. 
\end{theorem}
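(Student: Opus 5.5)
We plan to deduce the theorem from \Cref{lem:mixed_order} by a two-variable version of the standard argument that a function vanishing on sufficiently singular knots is a polynomial in Vassiliev invariants.

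First we reduce the bracket loop to push arcs. By definition $[f,g]_0=\push_0(f,g)*\push_0(g,f)$, so
\[
\phi([f,g]_0)=\phi(\push_0(f,g))+\phi(\push_0(g,f)).
\]
It therefore suffices to treat the push arcs, realised by the algorithm of \Cref{algo:push} on diagrams $K$ of $f$ and $G$ of $g$. The blackboard framing of $G$ differs from the zero framing by its writhe. By \Cref{eg:rot_via_push} this contributes a correction $\operatorname{writhe}(G)\,\phi(\Rot(K))$. We handle it as follows. The correction terms from the two push arcs combine into quantities which, a priori, depend only on the knot types, because $\phi([f,g]_0)$ is an invariant of the pair (commutative diagram in \Cref{subsec:push_arc_bracket}). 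Alternatively, we first choose diagrams with writhe zero, adding curls, so that the blackboard framing is already the zero framing. We take the latter route: we fix writhe-zero diagrams, and we note that crossing changes preserve the projection but alter the writhe. We compensate for this by pushing through extra curls, which is again a push arc with a fixed projection.

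The key step is to extend the function $\Psi(K,G)=\langle W_\phi,\push(K,G)\rangle$ to singular pairs by the alternating resolution sum. Write $W_\phi$ as a linear combination of signed Gauss diagrams with a triangle, each having at most $n-2$ contributing arrows; unsigned diagrams expand into signed ones with the same number of arrows. \Cref{lem:mixed_order} then says that $\Psi$ vanishes on any pair with $k$ and $l$ singular crossings whenever $k+l>n$. Since $\phi$ is a cocycle, the value on the push arc, after fixing framing, depends only on the knot types of the endpoints. The extension therefore defines a bi-invariant $\Phi(f,g)$ whose alternating extension vanishes once the total singularity exceeds $n$.

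The main obstacle is the algebraic step: a bi-invariant with this total-order vanishing property must lie in $\bigoplus_{i+j\le n}\mathcal V_i\otimes\mathcal V_j$. We would argue by induction on $n$. Fix $g$ singular with exactly $l$ double points. Then $f\mapsto\Phi(f,g)$, extended, is of order $\le n-l$ in $f$. Symmetrically, for fixed $f$ the function is of order $\le n$ in $g$. Since $\mathcal V_{\le n}$ is finite-dimensional over the field $\mathbb K$, we can write
\[
\Phi(f,g)=\sum_{i\le n}\sum_a v_{i,a}(f)\,h_{i,a}(g),
\]
where the $h_{i,a}$ are obtained by evaluating on dual singular knots, that is, by choosing singular $f$ realising a dual basis of the symbol spaces. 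Because $\Phi$ is of order $\le n$ in $g$, each $h_{i,a}$ is a Vassiliev invariant of order $\le n$ in $g$. To sharpen this to order $\le n-i$, we use the total-order vanishing. Evaluate on $f$ with exactly $i$ double points dual to $v_{i,a}$ and on $g$ with $l>n-i$ double points: the total singularity $i+l$ exceeds $n$, so the value is $0$. Proceeding by downward induction on $i$ to strip off top symbols, this shows $h_{i,a}\in\mathcal V_{n-i}$. Expanding each $h_{i,a}$ in the basis $\{v_{j,b}\}$ then yields the stated formula. The subtle point to check is that the framing and curl correction is compatible with the resolution sums. We expect this to hold because the projections of the push arcs are identical, by the Observation in the proof of \Cref{lem:mixed_order}.
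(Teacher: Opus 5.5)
Your overall route is the one the paper has in mind: it only says the theorem follows readily from \Cref{lem:mixed_order}. Your algebraic step is also sound. With $x_{i,a}$ taken in the $i$-th filtration piece and dual to an adapted basis, downward induction does give $h_{i,a}\in\mathcal V_{\le n-i}$.

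The gap is the framing correction, which you flag but justify with the wrong mechanism. \Cref{lem:mixed_order} is about blackboard-framed push arcs. For the resolutions of a singular diagram, $\writhe(G_\delta)=c+\sum_j\delta_j$ depends on $\delta$. So if you compensate with curls, their number depends on $\delta$, the projections are no longer identical, and the Observation says nothing about this part.

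Concretely, by $\push_m(f,g)=\push_0(f,g)*\Rot(f)^m$, the blackboard-framed bracket value is $\phi([f,g]_0)+\writhe(G)\,\phi(\Rot(f))+\writhe(K)\,\phi(\Rot(g))$, up to sign conventions. Because the writhe is affine in the signs, the alternating sum of $\writhe(G_\delta)\phi(\Rot(f_\epsilon))$ vanishes when $l\ge 2$. But for $(k,l)=(n,1)$ it leaves $2\sum_\epsilon\epsilon_1\cdots\epsilon_n\,\phi(\Rot(f_\epsilon))$. To kill that you need $\phi(\Rot)$ to have order $\le n-1$; an order-$n$ bound would only give total order $n+1$ for the correction. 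This is exactly the content of \Cref{thm:rotation_finiteness}: the curl crossing is never a resolved crossing, so the triangle uses one index fewer. You need it applied to the rotation of the $f$-bead inside $g\csum f$, viewed as a function of the pair. This is also how the paper handles the same correction for the rolling loop in \Cref{thm:FH_finiteness}.

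An alternative that matches your ``fixed projection'' intuition is to attach to each double point a curl resolved with the opposite sign. Then both projection and writhe are constant over all resolutions, and you rerun the index count of \Cref{lem:mixed_order} for this coupled resolution scheme (each curl depends on the same index as its partner). Either way, this step has to be supplied, not expected.

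A second, repairable error concerns bi-invariance. A push arc is a path, not a loop, so its $\phi$-value is not determined by the knot types of its endpoints. Changing the diagram of $f$ along an isotopy $\gamma$ changes $\phi(\push(K,G))$ by $\phi(\mathrm{id}\csum\gamma)-\phi(\gamma\csum\mathrm{id})$. This vanishes for connected cocycles, which is why the paper's proposition on push arcs assumes connectedness, but not in general. So your $\Phi$, defined from a single push arc, is not a bi-invariant.

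Define it instead from the sum of the two push arcs, i.e.\ from the blackboard-framed bracket loop. A cocycle value on a loop is invariant under conjugation, so it depends only on $(f,g)$ and the two writhes. Since the hypothesis $k+l>n$ is symmetric in $k$ and $l$, \Cref{lem:mixed_order} applies to each arc and hence to the sum.
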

Recall that the rotation loop can be realised by adding a curl, pushing the knot diagram through a curl and deleting the curl.
Using the fact that the crossing of the curl does not come from the resolution of any singular crossing, one can adapt the proof of \Cref{lem:mixed_order} to obtain the following theorem. 
\begin{theorem}\label{thm:rotation_finiteness}
    Let $\phi$ be a combinatorial 1-cocycle defined by Gauss diagrams with a triangle of combinatorial order at most $n$ (i.e., at most (n-2) contributing arrows in each configuration, $n \ge 2$). 
    Then $\phi(\Rot(f))$ is a Vassiliev invariant of order at most $(n-1)$.  
\end{theorem}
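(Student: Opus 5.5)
We prove \Cref{thm:rotation_finiteness} by adapting \Cref{lem:mixed_order} to the realisation of $\Rot(f)$ from \Cref{eg:rot_via_push} and \Cref{fig:rot}. Given a long knot diagram $K$, we add a curl at the end of the diagram by an R1 move. We then run the push-arc algorithm of \Cref{algo:push}, pushing the crossings of $K$ through the curl. Finally we delete the curl by an R1 move. The path $\Rot(f)$ is represented by this sequence of moves.

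Since $\phi$ only uses R3 moves, the R1 moves contribute nothing. The only contributions come from R3 moves during the push moves and the R2 reductions. By the Observation in the proof of \Cref{lem:mixed_order}, the one-parameter family of projection curves is independent of the over/under data of $K$. Hence for a singular diagram $K$ with $n$ singular crossings, all resolutions $K_{\epsilon_1\cdots\epsilon_n}$ give combinatorially parallel move sequences. It therefore suffices to show that, for each signed Gauss diagram $F$ with a triangle and at most $(n-2)$ contributing arrows,
\[
\sum_{\epsilon\in\{\pm1\}^{n}}\epsilon_1\cdots\epsilon_n\,\langle F,\Rot(K_{\epsilon})\rangle = 0 .
\]
Unsigned diagrams expand into signed ones, so treating signed $F$ is enough.

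The key counting step runs as follows. Fix an R3 move of the common projection family and an occurrence of $F$ in it. The three arrows of the triangle arise in one of two ways. An arrow may be a crossing from pushing, whose over/under data and sign are determined by the curl crossing and by a single crossing of $K$ being pushed (\Cref{rem:cr_pushed_sign}). Otherwise it is an original crossing of $K$.

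The curl crossing never comes from a singular crossing. An R3 triangle during pushing through the curl involves the curl strand, the currently pushed crossing's strand, and a crossing of $K$ being conquered. So the triangle depends on at most one resolved singular crossing of $K$ that is not a curl-derived crossing, plus possibly the pushed one. I expect to show that in total at most $2$ singular crossings are "used" by the triangle. The crossings from pushing inherit their signs from one pushed crossing, and one triangle side always descends from the curl alone, which is unresolved. This saving of one, compared with the bracket case, where both $K$ and $G$ carry singular crossings, is what lowers the bound from $n$ to $n-1$.

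Together with the at most $(n-2)$ contributing arrows, at most $n-1$ singular crossings are constrained. At least one singular crossing is therefore free: it is neither a contributing arrow nor a triangle arrow, and it is not the source of the crossings from pushing at that R3 move. Summing over its two resolutions then cancels the contribution, by $\sum_{a}a=0$, exactly as in \Cref{lem:GDF_finiteness}.

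Finally, $\phi(\Rot(f))$ is a knot invariant (the remark in \Cref{subsec:Gramain_cycle}). The alternating sum over resolutions of any $n$-singular diagram vanishes. Hence $\phi(\Rot(f))$ is of order at most $n-1$.

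The main obstacle is the precise accounting in the counting step above. I must verify case by case that, for every R3 move arising in the push-through-curl algorithm and in the final R2 reductions, the triangle and the crossings from pushing depend on at most one singular crossing of $K$, with the other relevant data coming from the unresolved curl. Only then does each contributing occurrence leave a free singular crossing. I would check this by classifying the triangles in the push moves of \Cref{fig:push_move} according to which strands belong to the curl.
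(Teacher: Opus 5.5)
Your overall strategy is the paper's: realise $\Rot(f)$ by adding a curl, pushing $K$ through it and deleting it. You then use that the family of projections does not depend on the resolutions, and rerun the cancellation of \Cref{lem:mixed_order} using that the curl crossing is not a resolution of any singular crossing. The wrap-up is also fine.

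The gap is in the one step that carries the content: how many singular crossings the triangle consumes. As you wrote it, this step does not close.
\begin{itemize}
\item You describe the triangle as the curl strand, the pushed crossing's strand, and a crossing of $K$ being conquered.
\item You then allow the triangle to use two singular crossings of $K$: the conquered one ``plus possibly the pushed one''.
\item You nevertheless conclude that, together with the $n-2$ contributing arrows, at most $n-1$ singular crossings are constrained.
\end{itemize}
But $2+(n-2)=n$. A diagram with exactly $n$ singular crossings would then have no free crossing. You would only recover the bracket-type bound (order at most $n$), not $n-1$.

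The bound you actually need, and which holds, is one.
\begin{itemize}
\item When $K$ is pushed through the curl, the crossing conquered in every push move is the curl crossing. No crossing of $K$ is ever conquered.
\item The other two sides of the triangle are the pushed crossing of $K$ or crossings from pushing. Their over/under data are all determined by that single pushed crossing (cf.\ \Cref{rem:cr_pushed_sign} and the proof of \Cref{lem:mixed_order}).
\item If you instead push the curl through $K$, the roles swap. The pushed side then descends from the unresolved curl, and only the conquered crossing of $K$ can be singular.
\end{itemize}
This is exactly the paper's one-line adaptation. In \Cref{lem:mixed_order} the triangle involves at most one resolved crossing from $K$ and one from $G$; with $G$ the curl, the second count is zero.

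Each contributing arrow is a crossing of $K$, the curl crossing, or a crossing from pushing, so it constrains at most one singular crossing. Hence at most $1+(n-2)=n-1<n$ singular crossings are constrained, and a free one remains whose two resolutions cancel.

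A minor point: step 3 of the push algorithm consists only of R2 moves, so the reductions contribute no R3 moves.

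With the count corrected to one, your argument coincides with the paper's.
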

\begin{definition}
    A Gauss diagram with a triangle is called \textbf{connected} if cutting at any two different points of the circle of the Gauss diagram, the resulting graph either possesses an arc with no arrows or remains connected as a graph. 
    A combinatorial 1-cocycle defined by Gauss diagrams with triangles is called \textbf{connected} if all of its configurations are connected. 
\end{definition}
Clearly, $\alpha_3^1, \beta_1, \beta_2$ and $\beta_3$ are all connected. 
\begin{proposition}
    Let $\phi$ be a connected combinatorial 1-cocycle defined by Gauss diagrams with triangles of combinatorial order $n$ ($n \ge 2$). 
    Let $K, G$ be two long knot diagrams. Then $\phi(\push(\cdot, G))$ is a finite type invariant of order at most $n$ and $\phi(\push(K, \cdot))$ is a framed finite type invariant of order at most $n$. 
\end{proposition}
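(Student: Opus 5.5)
The strategy mirrors \Cref{lem:mixed_order}, but now only one of the two knots is made singular and the other is held fixed.

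\emph{The first argument.} Fix the diagram $G$ and let $K$ be a singular long knot diagram with $k>n$ singular crossings, with resolutions $K_{\epsilon}$. We will show that
\[
\sum_{\epsilon}\epsilon_1\cdots\epsilon_k\,\langle F,\push(K_\epsilon,G)\rangle=0
\]
for every signed Gauss diagram $F$ with a triangle and at most $(n-2)$ contributing arrows. By linearity, this gives that $\phi(\push(\cdot,G))$ has order at most $n$.

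\begin{enumerate}
\item By the Observation in the proof of \Cref{lem:mixed_order}, the family of projection curves realising $\push(K_\epsilon,G)$ via the algorithm of \Cref{algo:push} does not depend on $\epsilon$. Only the over/under data vary, and the crossings from pushing inherit their sign and type from the pushed crossing (\Cref{rem:cr_pushed_sign}).
\item In each R3 move, the triangle together with the contributing arrows involves at most $n+1$ arrows in total.
\item Which R3 moves occur is independent of $\epsilon$. What depends on $\epsilon$ is the global type and sign of each move, and whether $F$ matches.
\end{enumerate}

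The key issue is that the triangle may involve crossings from pushing of several \emph{different} resolved crossings of $K$. This happens when a crossing of $K$ is pushed through a part of the diagram already containing pushed copies. It is the step where connectedness must enter, and I expect it to be the main obstacle.

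\emph{Handling the obstacle.} I would reorganise the push arc $\push(K,G)$ as pushing $G$ (shrunk) backward through $K$, i.e.\ as $\push(G,K)^{-1}$ up to homotopy with framing corrections from \Cref{eg:rot_via_push}. In that picture, the crossings from pushing come from the crossings of $G$, which are fixed. So every R3 triangle uses at most one crossing of $K$, and each crossing of $K$ appears, together with its own pushed copies, in a single triangle at a time.

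Then the count goes as follows. At most one singular crossing of $K$ is absorbed by the triangle, and at most $n-2$ by the contributing arrows. Since $k>n$, at least one singular crossing of $K$ is left untouched, and summing over its resolution cancels the contribution, exactly as in \Cref{lem:GDF_finiteness}.

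\emph{Where connectedness is used.} Connectedness is needed to rule out configurations in which the contributing arrows split into a part lying in $K$ and a part lying in the copies of $G$ that get separated. Such configurations would otherwise count subdiagrams of $K$ whose value is a product of an invariant of $K$ with something nonconstant. Cutting the Gauss circle at the two points bounding the $G$-block, connectedness forces each configuration either to avoid one side entirely or to link the two sides through the triangle. The first case reduces to a Gauss diagram formula in $K$ alone, whose order is bounded by its number of arrows, at most $n$, by \Cref{lem:GDF_finiteness}. The second case is covered by the cancellation above.

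\emph{The second argument.} For $\phi(\push(K,\cdot))$, I run the symmetric argument with $G$ singular, following \Cref{lem:mixed_order} directly. Here the pushed crossings come from $K$, which is fixed, so the same count applies.

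The only new point is the framing. The push arc depends on the framing number of $G$, through $\push_m=\push_0*\Rot^m$. So the resulting invariant is an invariant of the framed knot $G$ (the blackboard framing), and the argument shows it has framed finite type order at most $n$. The writhe enters only through curls, which contribute crossings that are not resolutions of singular crossings, as in \Cref{thm:rotation_finiteness}.
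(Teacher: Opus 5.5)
There is a genuine gap. The step you use to get around your ``obstacle'' is false, and the part of the argument that actually needs connectedness is missing.

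\textbf{The reorganisation of the push arc is false, and not needed.} You claim $\push(K,G)\simeq\push(G,K)^{-1}$ up to framing corrections. This does not hold. The concatenation $\push(K,G)*\push(G,K)$ is exactly the bracket loop $[K,G]$. The paper shows $\beta_1([f,g]_0)=-2v_2(f)v_2(g)$, while $\beta_1(\Rot(f))=0$. For $K=G=4_1$, your claim would make $\push(4_1,4_1)$ equal to its own inverse modulo rotation loops, which would force $\beta_1(\push(4_1,4_1))=0$; the table gives $-1$.

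The obstacle you were avoiding also does not arise. In the algorithm of \Cref{algo:push}, the crossings of $K$ are pushed through $G$ one at a time. Each push ends with the R2 clean-up, so no pushed copies of an earlier crossing remain when the next crossing is pushed. Hence every R3 triangle involves the crossings from pushing of a single crossing of $K$. The vanishing of the alternating sum over resolutions is then literally \Cref{lem:mixed_order} with $l=0$, and with $k=0$ for the second statement. Connectedness plays no role in that count.

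\textbf{What is actually missing is invariance.} Vanishing of alternating sums only yields a finite type invariant once you know that $\phi(\push(K,G))$ depends on the isotopy class of $K$ (respectively the framed class of $G$), not on the diagram. That is the whole content of the paper's proof, and it is where connectedness is used.

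From the commutative square at the end of \Cref{subsec:push_arc_bracket}, an isotopy $\gamma$ from $f_0$ to $f_1$ gives
\[
\phi(\push(f_0,g))-\phi(\push(f_1,g))=\phi(\gamma\csum\mathrm{id})-\phi(\mathrm{id}\csum\gamma).
\]
In a connected sum, no arrow of the Gauss diagram joins the two summands. So a connected configuration whose triangle lies in the $f$-summand must have all its arrows there; otherwise cutting the circle at the two junction points would leave a disconnected graph with arrows on both arcs. Such a configuration sees the same based subdiagram, with the same global type and sign, in $f\csum g$ and in $g\csum f$. Hence the two vertical values agree move by move, and the push value is an invariant of $K$.

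The same argument in the other slot gives invariance in $G$, with the blackboard framing tracked through $\push_m=\push_0*\Rot^m$; this is why the result is only a framed invariant of $G$.

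Your stated use of connectedness, reducing to a Gauss diagram formula ``in $K$ alone'' via \Cref{lem:GDF_finiteness}, does not make sense here. Triangle configurations are evaluated on the R3 moves of the push arc, not on the static diagram of $K$, and that reduction does not supply invariance.
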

\begin{proof}
    The values of a connected 1-cocycle on the vertical paths of the commutative diagram in \Cref{fig:push_commutative} are all the same. 
\end{proof}

For
\[
\phi\in\{\alpha_3^1,\beta_1,\beta_2,\beta_3\},
\]
we have
\[
\phi(\push(K,G))
 =a_{\phi}X_1(K,G)+b_{\phi}X_2(K,G)
 +c_{\phi}X_3(K,G)+d_{\phi}X_4(K,G),
\]
where
\[
\begin{aligned}
X_1(K,G)&=v_2(K)\writhe(G),&
X_2(K,G)&=v_2(K)\binom{\writhe(G)}{2},\\
X_3(K,G)&=v_2(K)v_2(G),&
X_4(K,G)&=v_3(K)\writhe(G).
\end{aligned}
\]

Let $u_{+m}$ denote the trivial long-knot diagram with $m$ positive
curls. The standard diagram of $4_1$ used below has writhe zero.
Direct calculation gives the following table.

\begin{table}[H]
\centering
\renewcommand{\arraystretch}{1.15}
\[
\begin{array}{cc|rrrr|rrrr}
K & G
& X_1 & X_2 & X_3 & X_4
& \alpha_3^1 & \beta_1 & \beta_2 & \beta_3 \bmod 2
\\ \hline
4_1     & u_{+1} & -1 &  0 & 0 & 0 & -1 &  0 &  0 & 0\\
4_1     & u_{+2} & -2 & -1 & 0 & 0 & -2 &  0 &  0 & 0\\
4_1     & 4_1    &  0 &  0 & 1 & 0 &  0 & -1 & -3 & 0\\
3_1^{+} & u_{+1} &  1 &  0 & 0 & 1 &  1 &  0 & -1 & 0
\end{array}
\]
\caption{Values on the push arcs used to determine the coefficients.
The last four columns denote the values of
$\alpha_3^1$, $\beta_1$, $\beta_2$, and $\beta_3$ on
$\push(K,G)$, respectively.}
\label{tab:push-arc-coefficients}
\end{table}

By the proposition and theorems above, together with the calculation results for the 1-cocycles on several example loops obtained by the computer program \cite{ZBT99_25_10}, we can find the values of the pairings of the 1-cocycles. 


\begin{corollary}\label{thm:calculation_pushes}
For any two long knot diagrams $K$ and $G$, we have
\[
\begin{aligned}
\alpha_3^1(\push(K,G))
    &=\writhe(G)v_2(K),\\
\beta_1(\push(K,G))
    &=-v_2(K)v_2(G),\\
\beta_2(\push(K,G))
    &=-3v_2(K)v_2(G)-\writhe(G)v_3(K),\\
\beta_3(\push(K,G))
    &=0 \pmod 2.
\end{aligned}
\]
\end{corollary}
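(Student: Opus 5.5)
The proof is to take the ansatz displayed just before \Cref{tab:push-arc-coefficients} as established and solve for its four coefficients using the table. The first task is to justify the ansatz. By the proposition on connected 1-cocycles, $\phi(\push(K,G))$ is a finite type invariant of $K$ of order at most $n=4$, and a framed finite type invariant of $G$ of order at most $4$, since the push arc depends on $G$ only through its blackboard framing. Refining \Cref{lem:mixed_order} to push arcs gives more. Each contribution of a configuration needs the three triangle arrows, at most two contributing arrows, and at least one arrow from pushing, which carries the resolution data of the pushed crossing of $K$. So the total mixed order in $(K,G)$ is at most $4$.

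Next I would reduce the possible terms. If $K$ is trivial there is nothing to push, so the value vanishes. If $G$ is the trivial diagram without curls, the push arc has no R3 moves, so the value again vanishes. This kills every term with $K$-order $0$, and every term with $G$-order $0$ and writhe degree $0$. Order-one invariants of long knots are constant, so the $K$-order is at least $2$. On the framed side, the order-$\le 2$ invariants are spanned by $1$, $\writhe(G)$, $\binom{\writhe(G)}{2}$ and $v_2(G)$. Combining these with total order $\le 4$ leaves exactly $X_1,\dots,X_4$. One point needs care: $\writhe(G)$ has framed order $1$, yet $v_2(K)\binom{\writhe(G)}{2}$ has mixed order $4$, so it is allowed. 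Over $\mathbb{Z}$ rather than a field, the same argument applies after choosing integral Gauss-diagram bases, using the Goussarov result quoted in the remark after \Cref{thm:GDF_finiteness}. For $\beta_3$ it is run mod $2$.

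With the ansatz in hand, the four rows of \Cref{tab:push-arc-coefficients} form a triangular linear system.
\begin{itemize}
\item Row $(4_1,u_{+1})$ fixes $a_\phi$, using $v_2(4_1)=-1$ and $v_3(4_1)=0$.
\item Row $(4_1,u_{+2})$ then fixes $b_\phi$.
\item Row $(4_1,4_1)$, where the writhe is $0$, fixes $c_\phi$.
\item Row $(3_1^+,u_{+1})$ fixes $d_\phi$.
\end{itemize}
For $\beta_1$ this gives $a=b=d=0$ and $c=-1$. For $\beta_2$ it gives $a=b=0$, $c=-3$ and $d=-1$. For $\alpha_3^1$ it gives $a=1$ and $b=c=d=0$. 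For $\beta_3$ all four coefficients are $0$ mod $2$. These are exactly the stated formulas.

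The main obstacle is justifying the ansatz rigorously. Specifically, I must show that the push-arc pairing depends on $G$ only through framed finite type invariants of bounded mixed order. The table values are computer computations, and I would simply cite them.
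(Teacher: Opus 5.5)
Your proposal is correct and follows the paper's own route. The four-term ansatz in $X_1,\dots,X_4$ comes from the proposition on connected 1-cocycles together with the mixed-order finiteness on push arcs, and the coefficients are then read off from the triangular system in \Cref{tab:push-arc-coefficients}, exactly as you solve it.

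One small correction: you need not re-derive the mixed-order bound, since \Cref{lem:mixed_order} is already stated for push arcs. Your count (``at least one arrow from pushing'') would not by itself reduce five relevant crossings to four; the fact actually used there is that the R3 triangle involves at most one resolved crossing of $K$ and one of $G$.
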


\begin{corollary}
For any two long knots $f$ and $g$, we have
\[
\alpha_3^1([f,g]_0)
=
\alpha_3^1\left(\tfrac12[f,f]_0\right)
=0.
\]
Moreover,
\[
\renewcommand{\arraystretch}{1.3}
\begin{array}{c|ccc}
 & \Rot(f) & [f,g]_0 & \tfrac12[f,f]_0 \\ \hline
\beta_1
 & 0
 & -2v_2(f)v_2(g)
 & -v_2(f)^2 \\

\beta_2
 & -v_3(f)
 & -6v_2(f)v_2(g)
 & -3v_2(f)^2 \\

\beta_3
 & 0
 & 0
 & 0
\end{array}
\]
where the last row is understood over $\mathbb{Z}/2\mathbb{Z}$.
\end{corollary}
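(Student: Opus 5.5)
We derive the table from \Cref{thm:calculation_pushes}, using the decompositions of the loops into push arcs. The bracket loop $[f,g]_0=\push_0(f,g)*\push_0(g,f)$ is a concatenation, so by \ref{it:1cocycle_condition1}
\[
\phi([f,g]_0)=\phi(\push_0(f,g))+\phi(\push_0(g,f)).
\]
\Cref{thm:calculation_pushes} is stated for blackboard framings, so we first pass to zero framings. Choose diagrams $K,G$ for $f,g$ and add curls so that $\writhe(K)=\writhe(G)=0$. Adding curls does not change the long knot type, and it does not change $v_2$ or $v_3$. With these diagrams the blackboard framing is the $0$-framing, and $\push(K,G)$ realises $\push_0(f,g)$.

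With writhe zero, the formulas of \Cref{thm:calculation_pushes} reduce as follows. For $\alpha_3^1$ we get $\writhe(G)v_2(K)+\writhe(K)v_2(G)=0$. For $\beta_1$ we get $-v_2v_2$ twice, so $-2v_2(f)v_2(g)$. For $\beta_2$ the writhe terms vanish, leaving $-6v_2(f)v_2(g)$. For $\beta_3$ we get $0$. The half bracket loop $\frac12[f,f]_0=\push_0(f,f)$ is a single push arc with $K=G$, which gives $0$, $-v_2(f)^2$, $-3v_2(f)^2$ and $0$.

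For the rotation loop we use \Cref{eg:rot_via_push}: $\Rot(f)\simeq\push_1(f,u)*\push_0(u,f)$. Since $\phi$ is a cocycle, homotopic paths have equal values. The trivial knot $u$ satisfies $v_2(u)=v_3(u)=0$, so every term of $\phi(\push_0(u,f))$ vanishes. For $\push_1(f,u)$ we realise $u$ by the diagram $u_{+1}$, which has writhe $1$, and take $K$ with writhe $0$; the pushed diagram $K$ itself carries no framing dependence. This gives
\[
\beta_1(\Rot(f))=-v_2(f)v_2(u)=0,\qquad \beta_2(\Rot(f))=0-1\cdot v_3(f)=-v_3(f),\qquad \beta_3(\Rot(f))=0.
\]
As a check, the same computation gives $\alpha_3^1(\Rot(f))=v_2(f)$, which agrees with the known value.

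The main obstacle is justifying that the path-homotopy classes used above are the ones actually computed. Two points need care. First, the relation $\push_m(f,g)=\push_0(f,g)*\Rot(f)^m$ must be consistent with the writhe-linear terms in \Cref{thm:calculation_pushes}; the $\alpha_3^1$ check above confirms the sign. Second, we must justify that adding curls to $K$ does not alter $\phi(\push(K,G))$. This follows because the proposition says $\phi(\push(\cdot,G))$ is a knot invariant of $K$, and the formula depends on $K$ only through $v_2$ and $v_3$.
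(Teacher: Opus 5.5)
Your proposal is correct and follows the route the paper implicitly takes. The corollary is read off from \Cref{thm:calculation_pushes} in three steps: write $[f,g]_0=\push_0(f,g)*\push_0(g,f)$ and $\tfrac12[f,f]_0=\push_0(f,f)$ using writhe-zero diagrams, so that the blackboard framing is the $0$-framing. Then realise $\Rot(f)$ by pushing $K$ through a positive curl as in \Cref{eg:rot_via_push}; the R1 moves that add and delete the curl, which your concatenation implicitly needs to match endpoints, contribute nothing because only R3 moves carry weight.
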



\subsection{Pairing with the rolling loop}


\begin{lemma}\label{lem:FH_finiteness}
    Let $K$ be a singular long knot diagram with $(n + 1)$ singular crossings, whose resolutions are denoted by $K_{\epsilon_1\epsilon_2...\epsilon_{n+1}}$ ($n \ge 2$).
    Let $G$ be a signed Gauss diagram with a triangle and $(n - 2)$ contributing arrows. 
    Then 
    \[
    \sum_{(\epsilon_1, \epsilon_2, ..., \epsilon_{n+1}) \in \{\pm1\}^{n+1}}\epsilon_1\epsilon_2\cdots\epsilon_{n+1}
    \left\langle G, \Roll^{-1}(K_{\epsilon_1\epsilon_2...\epsilon_{n+1}}) \right\rangle = 0. 
    \]
\end{lemma}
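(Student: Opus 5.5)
We follow the same strategy as \Cref{lem:GDF_finiteness} and \Cref{lem:mixed_order}, using the explicit realisation of $\Roll^{-1}$ from \Cref{algo:Rolling}.

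The first step is an analogue of the Observation in the proof of \Cref{lem:mixed_order}. All resolutions $K_{\epsilon_1\cdots\epsilon_{n+1}}$ share the same underlying projection curve. The algorithm of \Cref{algo:Rolling} consists of $2n'$ rolling moves, where $n'$ is the number of crossings. Each rolling move adds an auxiliary curl by the rule of \Cref{fig:FH_auxiliary_curl}, performs push moves along the Seifert circles layer by layer, performs R2 moves, and deletes the curl. Every step of this depends only on the projection and on the Seifert circles, and smoothing does not see crossing signs. The only place needing a check is whether the curl rule in \Cref{fig:FH_auxiliary_curl} depends on the over/under data of the last crossing. If it does, we fix the curl choice uniformly, possibly inserting a common path-homotopy. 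This is harmless, because $G$ is evaluated only on R3 moves and we only need the R3 sequence of projections to be identical across resolutions.

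Consequently every resolution yields the same sequence of R3 moves at the level of projections. The over/under information and signs of all crossings in the intermediate diagrams are determined as follows:
\begin{itemize}
\item each crossing of $K$ that has not been pushed keeps its resolution;
\item each crossing from pushing (\Cref{def:arrows_from_pushing_FH}) inherits its over/under data and sign from the single crossing being pushed, by \Cref{rem:cr_pushed_sign};
\item crossings coming from the auxiliary curl are independent of $\epsilon$.
\end{itemize}
Within one crossing-pushing operation, therefore, each arrow of an intermediate Gauss diagram is attached to exactly one original crossing of $K$, or to none. This needs care across successive rolling moves, since after a rolling move the first crossing is a former last crossing. We track a bijection between crossings at each stage. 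Each crossing at any stage descends from exactly one original crossing, either via the rolling move relabelling or via the crossings-from-pushing relation, with the curl crossing descending from none.

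Next we count the crossings that can be involved in a contribution. Fix an R3 move $p$ in the path and an occurrence of $G$ in it. The triangle has three arrows. In a pushing operation, an R3 move involves the crossing being pushed or its offspring, together with crossings it passes. As in \Cref{lem:mixed_order}, among the three triangle arrows at most two originate from distinct original singular crossings. Concretely, the triangle consists of pushed-family arrows, which all descend from one original crossing or from the curl, and one conquered crossing of $K$. The $n-2$ contributing arrows touch at most $n-2$ further original crossings. Altogether at most $n$ of the $n+1$ singular crossings are constrained. Hence some singular crossing $c_j$ is unconstrained: neither it nor any of its descendants appears in the occurrence.

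Finally, summing over $\epsilon_j=\pm1$ with the other signs fixed, the occurrence and its sign contribution are the same. The sign of the R3 move is also the same, since it is read off from the triangle's Gauss diagram. These contributions cancel by $\sum\epsilon_j=0$. Summing over all occurrences and all R3 moves gives the claim.

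The main obstacle is the bookkeeping in the third step. We must show that the triangle really uses at most two original crossings, which requires the curl to absorb one triangle arrow when needed. We must also show that descendants of the constrained crossings cannot reappear as other arrows across the $2n'$ successive rolling moves. We would handle this by inducting over the crossing-pushing operations, maintaining the descent map as an invariant.
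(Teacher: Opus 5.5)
Your strategy is the paper's. The projection family of $\Roll^{-1}(K_\epsilon)$ is the same for all resolutions, and crossings from pushing are controlled by the single crossing being pushed. At most two resolved crossings occur among the triangle arrows and at most $n-2$ among the contributing arrows. Since $(n+1)-2-(n-2)>0$, some singular crossing is free and its two resolutions cancel. Your descent map across the $2n'$ rolling moves makes explicit what the paper leaves implicit.

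The one step that does not work as written is your treatment of the auxiliary curl. You declare the curl crossings $\epsilon$-independent with no ancestor. In case the rule of \Cref{fig:FH_auxiliary_curl} depends on the over/under data at the last crossing, you propose to fix the curl uniformly and insert a common path-homotopy. That fallback is not harmless, for three reasons:
\begin{enumerate}
\item The lemma is about a single Gauss diagram $G$ with a triangle, and $\langle G,\cdot\rangle$ is not invariant under path-homotopy (only a full cocycle is). Changing the path therefore changes the quantity being computed.
\item The inserted homotopy must convert your fixed curl into the one the algorithm needs. It therefore differs between resolutions and may itself contain R3 moves you do not control.
\item With a non-prescribed curl, the final R2 and R1 steps need not be legal moves.
\end{enumerate}
Moreover, if the curl does depend on the last crossing $c$, your cancellation step can fail. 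An occurrence may use curl-derived arrows but no other descendant of $c$. Your count then allows $c$ to be chosen as the free crossing, and flipping $\epsilon_c$ changes those arrows.

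The repair is to keep the prescribed curl and, in your descent map, let the curl crossing and all its offspring descend from the current last crossing. This is how the paper counts: it includes the curl among the crossings from pushing (\Cref{def:arrows_from_pushing_FH}) and states that all of these are determined by the resolution of the last crossing. Then every arrow still depends on at most one singular crossing. The triangle involves only the last crossing's family and at most one conquered crossing. Your count and cancellation go through unchanged, and the curl never needs to ``absorb'' an arrow.

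Your stated main obstacle can also be discharged cheaply once every arrow depends on at most one singular crossing. Suppose the three triangle arrows depended on three distinct singular crossings. Then some choice of their signs would give a cyclic height order on the three strands, which is not an R3 configuration. That contradicts the fact that the common projection sequence underlies a valid generic path for every resolution.
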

\begin{proof}
    The proof is similar to that of \Cref{lem:mixed_order}. 
    \begin{obs*}\label{obs:FH_same_projection}
    Regardless of the choice of $(\epsilon_1, \epsilon_2, \ldots, \epsilon_{n+1})$, 
    the 1-parameter family of the knot projection curves 
    (i.e., the knot diagrams forgetting the over/under information at the crossings) 
    in the reversed rolling loop described by the algorithm in \Cref{algo:Rolling} is identical.
    \end{obs*}

    During the algorithm process, if the last crossing is a resolved one, then all the crossings from pushing are determined by the resolution status of the last crossing. 
    Every time there is a contribution of $F$ at some R3 move, there must be at least one singular crossing that is not a contributing arrow, not a crossing from pushing and not an R3 move crossing. 
    This is because in the 3 concerning crossings in the R3 move, there are at most 2 coming from the resolved crossings. 
    And there are $(n-2)$ contributing arrows, so $(n+1) - 2 - (n-2) > 0$. 
    Resolving these singular crossings causes the contributions of $F$ in the sum in the conclusion to cancel by the algebraic identity 
    \[
    \sum_{(a_1,\ldots,a_{t})\in\{\pm 1\}^{t}}a_1\cdots a_{t} = 0, \; (t \ge 1).
    \]

\end{proof}

\begin{theorem}\label{thm:FH_finiteness}
    Let $\phi$ be a combinatorial 1-cocycle defined by Gauss diagrams with a triangle of combinatorial order at most $n$ (i.e., at most $(n - 2)$ contributing arrows in each configuration, $n \ge 2$). 
    Let $f$ be a long knot. 
    Then \[
    \phi(\Roll_m(f)) = \phi(\Roll_0(f)) - m\cdot\phi(\Rot(f)),
    \]
    where $\phi(\Roll_0(f))$ is a finite type invariant of order at most $n$ for long knots. 
\end{theorem}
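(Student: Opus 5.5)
The proof has two independent parts: the framing relation, which is purely homotopical, and the finiteness of $\phi(\Roll_0(f))$, which comes from \Cref{lem:FH_finiteness}.

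\textbf{Framing relation.} By \Cref{prop:roll_framing}, $\Roll_{m+1}(f)$ is path-homotopic to $\Roll_m(f)*\Rot(f)^{-1}$. Since $\phi$ is a combinatorial 1-cocycle, Condition 1 gives additivity under concatenation and Condition 2 gives invariance under path homotopy. Hence
\[
\phi(\Roll_{m+1}(f)) = \phi(\Roll_m(f)) - \phi(\Rot(f)).
\]
Induction on $m \ge 0$ yields the formula for nonnegative $m$. For negative $m$, apply the same relation read backwards, $\Roll_{m}(f)\simeq \Roll_{m+1}(f)*\Rot(f)$. This step is routine.

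\textbf{Finiteness of $\phi(\Roll_0(f))$.} Represent $f$ by a long knot diagram $K$ with writhe $w$. The algorithm of \Cref{algo:Rolling} realises $\Roll^{-1}(K)$, the reverse of the rolling loop for the blackboard framing, whose framing number is $w$. So, up to orientation reversal, $\phi(\Roll(K))=\phi(\Roll_w(f))$. The framing relation then gives
\[
\phi(\Roll_0(f)) = \phi(\Roll(K)) + w\,\phi(\Rot(f)).
\]
Write $\phi$ as a linear combination of signed Gauss diagrams with a triangle, each with at most $n-2$ contributing arrows; unsigned diagrams expand into signed ones.

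The first term is handled by \Cref{lem:FH_finiteness}. The lemma is stated for exactly $n-2$ contributing arrows, but its counting argument works verbatim with fewer: the slack $(n+1)-2-k$ is still positive. So for any singular diagram with $n+1$ singular crossings, the alternating sum of $\phi(\Roll(K_\epsilon))$ over resolutions vanishes. Thus $K\mapsto\phi(\Roll(K))$ is a diagram-level quantity that satisfies the finite-type vanishing condition of order $n$.

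\textbf{Main obstacle: the writhe term.} The difficulty is that $\phi(\Roll(K))$ is not a knot invariant, only an invariant of framed knots. The correction $w\,\phi(\Rot(f))$ is not obviously of order $\le n$, because the writhe is not a Vassiliev invariant of unframed knots. I would handle this by working with framed finite type invariants.

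First, $w$ has framed order $1$. Second, by \Cref{thm:rotation_finiteness}, $\phi(\Rot(f))$ has order $\le n-1$. Their product therefore has framed order $\le n$. Combined with the previous step, $\phi(\Roll_0(f))$ is a framed finite type invariant of order $\le n$ that is independent of the framing, since by construction it depends only on $f$.

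It remains to show that a framing-independent framed invariant of order $\le n$ is an unframed Vassiliev invariant of order $\le n$. I would argue directly. Given a singular long knot with $n+1$ double points, choose its resolutions in diagrams with the same projection. Crossing changes alter the writhe, so compare each resolution with a fixed blackboard-framed version in which the writhe discrepancy is compensated by curls added far away. Adding curls does not affect the vanishing argument, because curl crossings are not resolutions of singular crossings; this is the same fact used in \Cref{thm:rotation_finiteness}.

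With this adjustment, the alternating sum of $\phi(\Roll_0)$ over the resolutions equals the alternating sum of $\phi(\Roll(K'_\epsilon))$ over the curl-adjusted diagrams $K'_\epsilon$. That sum vanishes by the argument of \Cref{lem:FH_finiteness}. I expect to spend the most care checking that the curl adjustment is compatible with the identical-projection Observation underlying that lemma.
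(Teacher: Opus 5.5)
Your argument is essentially the paper's proof. The paper also writes $\phi(\Roll_0(K_\epsilon))=\phi(\Roll(K_\epsilon))+\writhe(K_\epsilon)\,\phi(\Rot(K_\epsilon))$ for the resolutions $K_\epsilon$ of a diagram with $n+1$ singular crossings. It kills the first alternating sum with \Cref{lem:FH_finiteness}. It kills the second using $\writhe(K_\epsilon)=w_0+n+1+\sum_i\epsilon_i$ together with \Cref{thm:rotation_finiteness}. For each $i$, the sign $\epsilon_1\cdots\epsilon_{n+1}\epsilon_i$ alternates in only $n$ of the variables, so that sum vanishes because $\phi(\Rot)$ has order $\le n-1$. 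Your ``framed order'' product rule is exactly this computation.

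The step you say ``remains'' is empty, and you should drop the curl construction. All $K_\epsilon$ share one projection, and the displayed identity holds for each $\epsilon$ separately. Adding the two vanishing alternating sums therefore gives $\sum_\epsilon\epsilon_1\cdots\epsilon_{n+1}\,\phi(\Roll_0(K_\epsilon))=0$, which is already the unframed order-$n$ condition.

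The detour is also not sound as written. Adding an $\epsilon$-dependent number of curls destroys the common-projection Observation on which \Cref{lem:FH_finiteness} rests. Keeping the projection fixed instead forces the curl signs to depend on the $\epsilon_i$. Then your justification that curl crossings are not resolutions of singular crossings would be false, and the counting in the lemma would have to be redone.
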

\begin{proof}
    We aim to prove the finite type property of $\phi(\Roll_0(f))$.
    Let $K$ be a singular long knot diagram with $(n + 1)$ singular crossings, whose resolutions are denoted by $K_{\epsilon_1\epsilon_2...\epsilon_{n+1}}$ ($n \ge 2$). 
    By \Cref{prop:roll_framing}, we have 
    \[
    \phi(\Roll_0(K_{\epsilon_1\epsilon_2...\epsilon_{n+1}})) = \phi(\Roll_{m_{\epsilon_1\epsilon_2...\epsilon_{n+1}}}(K_{\epsilon_1\epsilon_2...\epsilon_{n+1}})) + m_{\epsilon_1\epsilon_2...\epsilon_{n+1}}\phi(\Rot(K_{\epsilon_1\epsilon_2...\epsilon_{n+1}})), 
    \]
    where we choose $m_{\epsilon_1\epsilon_2...\epsilon_{n+1}} = \writhe(K_{\epsilon_1\epsilon_2...\epsilon_{n+1}})$. 
    By the blackboard framing convention, 
    \[
    \Roll_{m_{\epsilon_1\epsilon_2...\epsilon_{n+1}}}(K_{\epsilon_1\epsilon_2...\epsilon_{n+1}}) = \Roll(K_{\epsilon_1\epsilon_2...\epsilon_{n+1}}). 
    \]
    Denote $w_0 = \writhe(K_{-1, -1, \cdots, -1})$. Then $m_{\epsilon_1\epsilon_2...\epsilon_{n+1}} = w_0 + (1 + \epsilon_1 + \cdots + 1 + \epsilon_{n+1}) = w_0 + n+1 + (\epsilon_1 + \cdots \epsilon_{n+1})$. 
    Therefore, by \Cref{thm:rotation_finiteness} and \Cref{lem:FH_finiteness}, we have
    \begin{align*}
        &\sum_{(\epsilon_1,\epsilon_2,\ldots,\epsilon_{n+1})\in\{\pm1\}^{n+1}}
        \epsilon_1\epsilon_2\cdots\epsilon_{n+1}
        \phi\!\left(
        \Roll_0\!\left(K_{\epsilon_1\epsilon_2\cdots\epsilon_{n+1}}\right)
        \right)
        \\
        ={}&
        \sum_{(\epsilon_1,\epsilon_2,\ldots,\epsilon_{n+1})\in\{\pm1\}^{n+1}}
        \epsilon_1\epsilon_2\cdots\epsilon_{n+1}
        \phi\!\left(
        \Roll\!\left(K_{\epsilon_1\epsilon_2\cdots\epsilon_{n+1}}\right)
        \right)
        \\
        &\quad+
        \sum_{(\epsilon_1,\epsilon_2,\ldots,\epsilon_{n+1})\in\{\pm1\}^{n+1}}
        \epsilon_1\epsilon_2\cdots\epsilon_{n+1}
        \bigl(w_0+n+1 +(\epsilon_1+\cdots+\epsilon_{n+1})\bigr) \cdot\phi\!\left(
        \Rot\!\left(K_{\epsilon_1\epsilon_2\cdots\epsilon_{n+1}}\right)
        \right)
        \\
        ={}& 0+0=0.
    \end{align*}
\end{proof}

By \Cref{thm:FH_finiteness}, we can calculate our 1-cocycles on several examples to obtain the general formulae. 
Let $v_2, v_3, v_{4,1}, v_{4,2}$ be the primitive Vassiliev invariants of order 2, 3, 4, 4 (see \Cref{sec:GDF}). 
Their values and the corresponding cocycle values are as follows \cite{ZBT99_25_10}. 
\[
\begin{array}{c|rrrrrrrrr}
    & \beta_1(\Roll_0) & \beta_2(\Roll_0) & \beta_3(\Roll_0) \bmod 2 & v_2 & v_3 & v_{4,1} & v_{4,2} & \frac{v_2(v_2 - 1)}{2} \\ \hline
    3_1^{+} & 0 & -6 & 0 & 1 & 1 & 1 & 0  & 0 \\
    4_1 & 2 & 2 & 0 & -1 & 0 & 0 & 0  & 1 \\
    5_1 & 0 & -50 & 0 & 3 & 5 & 5 & 3  & 3 \\
    5_2 & 0 & -26 & 0 & 2 & 3 & 3 & 1  & 1 \\
    6_1 & 8 & 10 & 0 & -2 & -1 & 0 & -1  & 3\\
\end{array}
\]
Notice that over $\mathbb{Q}$, $(1, v_2, v_3, v_{4,1}, v_{4,2}, \binom{v_2}{2})$ form a basis of Vassiliev invariants up to order 4.
And over $\mathbb{Z}/2\mathbb{Z}$, $(1, v_2, v_3, v_{4,1}, v_{4,2}, \binom{v_2}{2}) \mod 2$ form a basis as well. ($\binom{v_2}{2}$ is inspired from the work \cite{Stanford02}.) 
Therefore, we have 
\begin{theorem}\label{thm:calculation_FH}
    Given a long knot $f$, the values of the 1-cocycles $\beta_1$, $\beta_2$ and $\beta_3$
    on the rolling loop with 0-framing are as follows:
    \[
    \beta_1(\Roll_0(f)) = 2[\binom{v_2(f)}{2}-v_{4,2}(f)]
    \]
    \[
    \beta_2(\Roll_0(f)) = 2[3 \cdot \binom{v_2(f)}{2}  + 2v_2(f) + 5v_3(f)-10v_{4,1}(f)-5v_{4,2}(f)]
    \]
    \[
    \beta_3(\Roll_0(f)) = 0 \text{ over } \mathbb{Z}/2\mathbb{Z}. 
    \]
\end{theorem}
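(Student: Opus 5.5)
The plan is to reduce the statement to linear algebra over the finite-dimensional space of Vassiliev invariants of order at most $4$. By \Cref{thm:cocyclicity_beta}, each of $\beta_1,\beta_2$ is an integer-valued combinatorial 1-cocycle defined by Gauss diagrams with a triangle of combinatorial order at most $4$, i.e.\ with at most two contributing arrows. \Cref{thm:FH_finiteness} with $n=4$ then shows that $f\mapsto\beta_i(\Roll_0(f))$ is a Vassiliev invariant of order at most $4$. Since it is integer-valued, we may regard it over $\mathbb{Q}$. For $\beta_3$ we argue over the field $\mathbb{Z}/2\mathbb{Z}$: \Cref{lem:FH_finiteness} and the argument of \Cref{thm:FH_finiteness} go through verbatim, because the algebraic identity $\sum \epsilon_1\cdots\epsilon_t=0$ holds over any ring, and the rotation term is handled by \Cref{thm:rotation_finiteness}.

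Next we use the stated fact that $(1,v_2,v_3,v_{4,1},v_{4,2},\binom{v_2}{2})$ is a basis of the Vassiliev invariants of order at most $4$ over $\mathbb{Q}$, and that its reduction mod $2$ is a basis over $\mathbb{Z}/2\mathbb{Z}$. We therefore write
\[
\beta_i(\Roll_0(f)) = c_0 + c_1 v_2(f) + c_2 v_3(f) + c_3 v_{4,1}(f) + c_4 v_{4,2}(f) + c_5\binom{v_2(f)}{2}.
\]
The six unknowns are fixed by six evaluations. The trivial knot gives $c_0=0$: its rolling loop is realised by the constant loop, or more simply the component is contractible, so every 1-cocycle vanishes on it. The five tabulated knots $3_1^+,4_1,5_1,5_2,6_1$ supply the remaining five equations, with the values computed by the program \cite{ZBT99_25_10}.

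It remains to check that the $5\times5$ matrix formed by the rows $(v_2,v_3,v_{4,1},v_{4,2},\binom{v_2}{2})$ for these knots is invertible, both over $\mathbb{Q}$ and over $\mathbb{Z}/2\mathbb{Z}$. The rows are $(1,1,1,0,0)$, $(-1,0,0,0,1)$, $(3,5,5,3,3)$, $(2,3,3,1,1)$, $(-2,-1,0,-1,3)$. The columns for $v_3$ and $v_{4,1}$ differ only in the $6_1$ row, so the determinant can be computed quickly after subtracting these two columns. One must then confirm that it is odd, which gives invertibility mod $2$ as well.

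Solving the system yields the displayed formulas. For $\beta_1$ the solution is $c_4=-2$, $c_5=2$, all others zero. As a check against the table: $4_1$ gives $2\cdot1=2$, $6_1$ gives $2(3+1)=8$, and $3_1^+,5_1,5_2$ give $0$, $2(3-3)=0$, $2(1-1)=0$, all as tabulated. For $\beta_2$ we verify the proposed combination in the same way on each row. For $\beta_3$ all entries are $0$ mod $2$, so the unique solution is zero.

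The main obstacle is not the linear algebra but the justification that $\beta_3(\Roll_0(\cdot))$ is a finite type invariant over $\mathbb{Z}/2\mathbb{Z}$, together with the reliability of the computed values. The mod $2$ step also needs care: invertibility of the evaluation matrix mod $2$ depends on the determinant being odd, and this must be checked explicitly. If it turned out to be even, the fallback is to add a further knot with computed values, such as $6_2$ or $6_3$, to obtain an invertible mod $2$ system.
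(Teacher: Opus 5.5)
Your proposal is correct and is essentially the paper's argument: \Cref{thm:FH_finiteness} makes $\beta_i(\Roll_0(\cdot))$ a finite type invariant of order at most $4$, and the basis $(1,v_2,v_3,v_{4,1},v_{4,2},\binom{v_2}{2})$ together with the tabulated values on $3_1^+,4_1,5_1,5_2,6_1$ then determines it, with the unknot fixing the constant term (a step the paper leaves implicit). The determinant you left open is $-1$: after replacing the $v_{4,1}$ column by $v_{4,1}-v_3$ and expanding along the $6_1$ row, the remaining $4\times 4$ minor equals $-1$, so the system is uniquely solvable over both $\mathbb{Q}$ and $\mathbb{Z}/2\mathbb{Z}$ and no fallback knot is needed.
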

Recall that for a torus knot, 
the rolling loop and the Gramain cycle are collinear. 
Combining the results in \Cref{thm:calculation_pushes} and \Cref{thm:calculation_FH}, 
we have the following result, 
which is included in \cite{AlvarezLabastida96} 
if the primitive Vassiliev invariants of order 4 are well chosen. 
\begin{corollary}
    If $f$ is a torus knot, 
    then 
    \[
    \binom{v_2(f)}{2}-v_{4,2}(f) = 0
    \]
\end{corollary}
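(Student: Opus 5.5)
The plan is to use that for a torus knot $f$ the component $\LongKnotSpace(f)$ is homotopy equivalent to $S^1$, generated by the Gramain cycle. Then $\pi_1(\LongKnotSpace(f),f)\cong\mathbb{Z}$ is generated by $[\Rot(f)]$, so $[\Roll_0(f)]=[\Rot(f)]^{k}$ for some integer $k$ depending on $f$. Since $\beta_1$ induces a homomorphism $\pi_1(\LongKnotSpace(f),f)\to\mathbb{Z}$, we get
\[
\beta_1(\Roll_0(f)) = k\,\beta_1(\Rot(f)).
\]

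The key input is the corollary computing the pairings with the push arcs, which gives $\beta_1(\Rot(f))=0$ for every long knot $f$. Hence $\beta_1(\Roll_0(f))=0$ whenever $f$ is a torus knot. Comparing with \Cref{thm:calculation_FH}, which gives $\beta_1(\Roll_0(f))=2\bigl(\binom{v_2(f)}{2}-v_{4,2}(f)\bigr)$, we obtain $2\bigl(\binom{v_2(f)}{2}-v_{4,2}(f)\bigr)=0$ in $\mathbb{Z}$. Since $\mathbb{Z}$ is torsion-free, this yields $\binom{v_2(f)}{2}=v_{4,2}(f)$.

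The unknot case is trivial: its component is contractible, so the identity holds directly, and also $v_2=v_{4,2}=0$. We should also check that the statement is meant for long torus knots in the sense of \Cref{rem:closure_operation}, so that the Hatcher--Budney description applies.

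The main point requiring care is the claim that $[\Roll_0(f)]$ lies in the subgroup generated by $[\Rot(f)]$. This holds because the component is a $K(\mathbb{Z},1)$ whose fundamental group is generated by the Gramain cycle. Everything else follows from results already stated in the paper, namely the value $\beta_1(\Rot(f))=0$ and the formula in \Cref{thm:calculation_FH}. Those rest on the computer-aided determination of coefficients, which we take as given.
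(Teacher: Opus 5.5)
Your proposal is correct and is essentially the paper's argument: for a torus knot the rolling loop and the Gramain cycle are collinear in $\pi_1(\LongKnotSpace(f),f)\cong\mathbb{Z}$, so $\beta_1(\Roll_0(f))$ is an integer multiple of $\beta_1(\Rot(f))=0$. Comparing this with the formula $\beta_1(\Roll_0(f))=2\bigl(\binom{v_2(f)}{2}-v_{4,2}(f)\bigr)$ then gives the identity, and your separate treatment of the unknot and the torsion-freeness step are harmless additions.
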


Since our 1-cocycles are all connected, their values on the loops of each factor of a connected sum knot are the same as those on the loop of the factor knot. 
The Table \ref{tab:independence_1cocycles} demonstrates linear independence of the 1-cocycles on the component of $4_1 \csum 6_1$. 
\begin{table}[h!]
\centering
\[
\begin{array}{c|cccc}
    & \alpha_{3}^{1} & \beta_1 & \beta_2 & \beta_3 \\ \hline
    \Rot(4_1) & 1 & 0 & 0 & 0 \\ [4pt]
    \frac{1}{2}\Roll(4_1) & 0 & 1 & 1 & 0 \\ [4pt]
    \Rot(6_1) & 0 & 0 & 1 & 0 \\ [4pt]
    \frac{1}{2}\Roll(6_1) & 1 & 0 & 1 & 1 
\end{array}
\]
\caption{Values of the 1-cocycles on the half rolling loops and rotation loops over $\mathbb{Z}/2\mathbb{Z}$. }
\label{tab:independence_1cocycles}
\end{table}
\begin{remark}
    The pairings with the push arc and the rotation loop have been computed differently by diagrammatic method in \cite[Chapter 7]{Zhang25}. 
\end{remark}

\section{Parametrized and unparametrized closed knot spaces}
\label{sec:closed_knot_space}
In this section, we clarify the relationships among the loops in the space of long knots $\LongKnotSpace$, in the \textbf{parametrized closed knot space} $\Emb(S^1, S^3)$ and in the \textbf{oriented unparametrized closed knot space} $\Emb(S^1, S^3)/\Diff^+(S^1)$. 
Here $\Diff^+(S^1)$ is the group of orientation-preserving diffeomorphisms of $S^1$ and the quotient space $\Emb(S^1, S^3)/\Diff^+(S^1)$ is defined by the equivalence relation $f \sim g$ if and only if there exists $\phi \in \Diff^+(S^1)$ such that $f = g \circ \phi$.

We will give a criterion for a 1-cohomology class in $H^1(\LongKnotSpace; R)$ to descend to a class in $H^1(\Emb(S^1, S^3); R)$ or $H^1(\Emb(S^1, S^3)/\Diff^+(S^1); R)$, where $R$ is a commutative ring.

\subsection{Settings}
Let $S^1 = \{z\in \mathbb{C} | \; |z| = 1\}$ and $S^3 = \{(a,b,c,d)\in \mathbb{R}^4 | \; a^2 + b^2 + c^2 + d^2 = 1\}$ be the standard 1-sphere and 3-sphere, respectively. 
We identify $S^1$ with $\mathbb{R} \cup \{\infty\}$ via the stereographic projection $\sigma_1: S^1 \setminus \{1\} \to \mathbb{R}$ defined by $z\mapsto \frac{-\Im(z)}{1-\Re(z)}$, which extends to a diffeomorphism $ S^1 \to \mathbb{R} \cup \{\infty\}$.
We identify $S^3$ with $\mathbb{R}^3 \cup \{\infty\}$ via the stereographic projection $\sigma_3: S^3 \setminus \{(0,0,0,1)\} \to \mathbb{R}^3$ defined by $(a,b,c,d) \mapsto (\frac{a}{1-d}, \frac{b}{1-d}, \frac{c}{1-d})$, which extends to a diffeomorphism $ S^3 \to \mathbb{R}^3 \cup \{\infty\}$.
Via the subspace embedding $S^3 \subset \mathbb{R}^4$, we have $TS^3 = \{(x, v)\in S^3 \times \mathbb{R}^4 \; | \; <x, v> = 0\} \subset \mathbb{R}^4 \times \mathbb{R}^4$.
Define 
\[
\Emb_{*}(S^1, S^3) \coloneqq \left\{f\in \Emb(S^1, S^3) \; \middle|\; f(1) = (0,0,0,1)\right\}, 
\]

Given a long knot $f\in \LongKnotSpace$, we can extend it to a smooth embedding $\tilde{f}: S^1 \to S^3$ by defining \[
\widetilde f(z)=
\begin{cases}
\sigma_3^{-1}\bigl(f(\sigma_1(z))\bigr),&z\neq1,\\
(0, 0, 0, 1),&z=1,
\end{cases}
\]
which gives the compactification map 
\[
c:\LongKnotSpace\longrightarrow \Emb(S^1,S^3),
\qquad
f\longmapsto \widetilde f. 
\]
Let 
\[
q: \Emb(S^1, S^3) \to \Emb(S^1, S^3)/\Diff^+(S^1)
\]
be the quotient map.
From the sequence of maps: 
\[\LongKnotSpace \xrightarrow{c} \Emb(S^1, S^3) \xrightarrow{q} \Emb(S^1, S^3)/\Diff^+(S^1),\]
we have the following sequence of fundamental groups and cohomology groups:
\[\pi_1(\LongKnotSpace, f) \xrightarrow{c_*} \pi_1(\Emb(S^1, S^3), \tilde{f}) \xrightarrow{q_*} \pi_1(\Emb(S^1, S^3)/\Diff^+(S^1), \bar{f}),\]
\[H^1(\LongKnotSpace; R) \leftarrow H^1(\Emb(S^1, S^3); R) \leftarrow H^1(\Emb(S^1  , S^3)/\Diff^+(S^1); R),\]
where $\bar{f}$ is the image of $\tilde{f}$ in the quotient space $\Emb(S^1, S^3)/\Diff^+(S^1)$, and $R$ is a commutative ring.  

\subsection{Loops in the closed knot space}
First we define the corresponding versions of the rotation loop and the rolling loop in $\Emb(S^1, S^3)$.
\begin{definition}
    Let $f\in \Emb(S^1, S^3)$ be a closed knot, $T(z) = \frac{(df)_z(iz)}{\|(df)_z(iz)\|}$ be the tangent vector of $f$ at $z \in S^1$, and $z \mapsto N(z) \in T_{f(z)}S^3$ be a smooth unit normal vector field along $f$ such that $N(z) \perp T(z)$ and $N(z) \perp f(z) $ for all $z\in S^1$. 
    Let $B(z)$ be a unit vector in $\mathbb{R}^4$ uniquely determined by $T(z), N(z), f(z)$ such that $A(z) \coloneqq [T(z), N(z), B(z), f(z)]\in SO(4)$. 
    \begin{enumerate}
        \item The \textbf{rotation loop} $\Rot(f)$ is the loop in $\Emb(S^1, S^3)$ defined by 
        \[\Rot(f)(t)(z) = R_{2\pi t}f(z), \quad t\in [0,1], z\in S^1,\]
        where
        \[
        R_{2\pi t}
        =
        \begin{pmatrix}
        1 & 0 & 0 & 0 \\
        0 & \cos(2\pi t) & -\sin(2\pi t) & 0 \\
        0 & \sin(2\pi t) & \cos(2\pi t) & 0 \\
        0 & 0 & 0 & 1
        \end{pmatrix}
        \in SO(4).
        \]
        \item The \textbf{rolling loop} $\Roll_N(f)$ of $f$ along $N$ is the loop in $\Emb(S^1, S^3)$ defined by
        \[\Roll_N(f)(t)(z) \coloneqq A(1)A(e^{2\pi it})^{-1}f(e^{2\pi it}z), \quad t\in [0,1], z\in S^1.\]
        If $N$ has the \textbf{framing number} $n$, i.e., the linking number $\lk(f, \frac{f + \epsilon N}{\sqrt{1 + \epsilon^2}}) = n$ for sufficiently small $\epsilon > 0$, then the homotopy class of $\Roll_N(f)$ depends only on $n$. 
        We denote this class by
        \[
        [\Roll_n(f)]\in\pi_1(\Emb(S^1,S^3),f).
        \]
        \item The \textbf{reparametrization loop} $\Repar(f)$ of $f$ is the loop in $\Emb(S^1, S^3)$ defined by
        \[\Repar(f)(t)(z) = f(e^{2\pi it}z), \quad t\in [0,1], z\in S^1.\]
    \end{enumerate}
\end{definition}
\begin{remark}
    If $R(\cdot)\in SO(4)$ is a loop based at $I_4$ representing the generator in $\pi_1(SO(4)) = \mathbb{Z}/2\mathbb{Z}$, then $[\Rot(f)] = [R(\cdot)f] \in \pi_1(\Emb(S^1, S^3), f)$. 
\end{remark}



\begin{proposition}
    Let $f\in \LongKnotSpace$ be a long knot and $\tilde{f} \in \Emb(S^1, S^3)$ be the corresponding closed knot. 
    Then we have the following relations in $\pi_1(\Emb(S^1, S^3), \tilde{f})$:
    \[\Rot(\tilde{f}) = c(\Rot(f)), \quad [\Roll_n(\tilde{f})] = c_*([\Roll_n(f)]).\]
\end{proposition}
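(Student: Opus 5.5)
The plan is to verify both identities at the level of explicit loops. The compactification $c$ sends the long knot $f$ to $\tilde f$, with $\tilde f(1)=(0,0,0,1)=\infty$, and the idea is to check that the relevant motions of $\mathbb{R}^3$ correspond under $\sigma_3$ to the motions of $S^3$ used in the closed definitions.

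\textbf{Rotation loop.} Rotation about the $x$-axis in $\mathbb{R}^3$ by the matrix with $\cos(2\pi t),\sin(2\pi t)$ in the $(y,z)$-block fixes the first coordinate. Under $\sigma_3^{-1}$, $(x,y,z)$ corresponds to $(a,b,c,d)$ with $(a,b,c)$ proportional to $(x,y,z)$ and $d$ depending only on $x^2+y^2+z^2$. So the rotation conjugates to $R_{2\pi t}$, which acts on the $(b,c)$-coordinates and fixes $a$ and $d$.

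This means the diagram commutes pointwise:
\[
c(\Rot_f(\cdot,t))=R_{2\pi t}\,\tilde f=\Rot(\tilde f)(t)
\]
for all $t$. The parametrization also matches, since $\tilde f(z)=\sigma_3^{-1}f(\sigma_1(z))$ and $R_{2\pi t}$ acts only on the target. This yields the first equality as an identity of loops, not merely of homotopy classes.

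\textbf{Rolling loop: the two models.} The second identity needs care because the two loops are defined differently.
\begin{itemize}
\item The long knot version slides the knot through a small ball at $\infty$ along a framing $N$. The knot is kept equal to a diameter $D$, and the framing equal to $v_\infty$, inside that ball.
\item The closed version is $\Roll_N(\tilde f)(t)(z)=A(1)A(e^{2\pi i t})^{-1}\tilde f(e^{2\pi i t}z)$.
\end{itemize}
In the closed version, the frame $A(e^{2\pi it})$ carries the point $\tilde f(e^{2\pi it})$ and its frame $(T,N,B)$ to the fixed point $\tilde f(1)=\infty$ with frame $A(1)$. So at each time the knot passes through $\infty$ with a prescribed tangent and normal. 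The main difficulty is that this loop does not lie in $c(\LongKnotSpace)$: it fixes the point and frame at $\infty$ only to first order, not a whole neighbourhood. Note that the $z$-reparametrization still makes $z=1$ map to $\infty$.

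\textbf{Rolling loop: the homotopy.} To compare the two models, I would use the map $\Emb_*(S^1,S^3)\to$ (frames at $\infty$). The subspace of embeddings that agree with the standard germ near $z=1$ (the long knots) includes into the subspace with a fixed $1$-jet at $z=1$, and this inclusion is a homotopy equivalence. The standard argument is to straighten near the basepoint by a canonical isotopy depending continuously on the embedding; I would take this as known, or as \Cref{prop:compactification-pi1}. Then:
\begin{enumerate}
\item Identify $A(1)A(e^{2\pi it})^{-1}\tilde f(e^{2\pi it}\cdot)$ as a loop in the fixed-$1$-jet space. Since the rotation $A(1)A(e^{2\pi it})^{-1}\in SO(4)$ fixes $\infty$ and the frame there, its normal component is transported according to $N$.
\item Deform it, via the straightening homotopy, to a loop of long knots.
\item Check that this straightened loop is exactly the slide-through-$\infty$ description of $\Roll_{(f,N)}$. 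Both move the basepoint once around the knot while keeping the framing at $\infty$ aligned with $N$.
\end{enumerate}

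\textbf{Framing numbers.} Finally, I would check that the framing numbers agree. The linking number $\lk(\tilde f,\tilde f+\epsilon N)$ in $S^3$ equals the framing number of the long knot framing, because $\sigma_3$ is orientation preserving and the framings coincide near $\infty$. I expect the main obstacle to be making the jet-straightening homotopy explicit enough to see that it carries the closed rolling loop onto the long rolling loop with the same framing number.
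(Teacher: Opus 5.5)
Your treatment of the rotation loop is correct and complete. Since $\sigma_3^{-1}(x,y,z)=\bigl(2x,2y,2z,x^2+y^2+z^2-1\bigr)/(x^2+y^2+z^2+1)$, rotation about the $x$-axis is conjugate to $R_{2\pi t}$, so $c(\Rot_f(\cdot,t))=R_{2\pi t}\tilde f$ holds exactly.

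The gap is in the rolling part, at your step~3. That step carries the entire content of the second identity, and you support it only with the remark that both loops move the basepoint once around the knot while keeping the framing at $\infty$ aligned with $N$. That description does not determine a homotopy class:
\begin{itemize}
\item An embedding carries no normal vector at $\infty$, so what matters is how $N$ is used along the whole knot.
\item Changing $N$ by one twist changes the loop by $\Rot^{\pm1}$.
\item For nontrivial $f$, $c_*[\Rot(f)]$ is nontrivial (of order $2$, by \Cref{prop:compactification-pi1} and Gramain's theorem). So even the parity of the framing correspondence must be checked, not read off.
\end{itemize}
The homotopy equivalence between long knots and embeddings with prescribed point and tangent at $z=1$ does not settle this by itself. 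It produces \emph{some} loop of long knots whose compactification is homotopic to $\Roll_N(\tilde f)$. Identifying that loop with $\Roll_{(f,N)}$ is exactly the step you defer.

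Two smaller slips:
\begin{itemize}
\item In step~1, $\Phi_t:=A(1)A(e^{2\pi it})^{-1}$ does not fix $\infty$. It carries $\tilde f(e^{2\pi it})$ and its frame $(T,N,B)$ to $p_\infty$ and the frame $A(1)$, as you said earlier.
\item The loop does not have a fixed $1$-jet at $z=1$, because the speed $\|(d\tilde f)_w(iw)\|$ varies with $w=e^{2\pi it}$. Only the tangent direction is fixed.
\end{itemize}

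The straightening idea is the right tool, but use it concretely rather than through an abstract homotopy inverse.
\begin{enumerate}
\item Observe that $t\mapsto\Phi_t(K)$ is itself a realization of sliding $K$ along $N$ through a ball at $\infty$. The point of $K$ at $\infty$ at time $t$ is $\tilde f(e^{2\pi it})$, which travels once around $K$ along its orientation. This also fixes the direction convention that the long-knot definition leaves implicit. The transported framing satisfies $\Phi_tN(e^{2\pi it})=N(1)$ for all $t$.
\item By compactness, choose $\epsilon>0$ so that for every $t$ the part of $\Roll_N(\tilde f)(t)$ in $B_\epsilon(p_\infty)$ is a single arc. This arc should be a $C^1$-small graph over the great circle through $p_\infty$ tangent to $u_\infty$. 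Straighten this graph, the framing, and the parametrization near $z=1$ with a cutoff function. Make the straightening continuous in $t$ and trivial on germs that are already standard, so it is trivial at $t=0,1$ when $N$ is the compactified long framing. This gives a based homotopy in $\Emb(S^1,S^3)$ from $\Roll_N(\tilde f)$ to $c(\ell)$ for some loop $\ell$ in $\LongKnotSpace$.
\item The loop $\ell$ is then the slide-through-$\infty$ loop of the paper's definition, with your straightening playing the role of its ``slight perturbation''. By the paper's assertion that this class depends only on the framing number, $[\ell]=[\Roll_n(f)]$. Your linking-number argument then matches $n$ on both sides.
\end{enumerate}
Until these steps are made explicit, the second identity is assumed rather than proved.
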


\subsection{Relations among the fundamental groups}
The purpose of this subsection is to state some direct consequences of the literature \cite{Gramain77,BudneyCohen05,BinzFischer81} in the form needed for the descending criteria later. 
The relation of fundamental groups of $\LongKnotSpace$ and $\Emb(S^1, S^3)$ goes back to the work of Gramain \cite[Lemma 2]{Gramain77}. 
Here we use the associated bundle description of Budney and Cohen \cite[Corollary~4.2]{BudneyCohen05} to display it. 

Set
\[
p_{\infty}=(0,0,0,1)\in S^3, \quad u_{\infty}=(-1,0,0,0)\in S(T_{p_{\infty}}S^3) = S^2,
\]
where we identify the unit sphere in \(T_{p_{\infty}}S^3\) with \(S^2\).
Let
\[
\nu\colon \Emb_*(S^1,S^3)\longrightarrow S^2,
\qquad
\nu(g)=\frac{(dg)_1(i)}{\lVert(dg)_1(i)\rVert},
\]
where \(i\in T_1S^1\) is the positively oriented unit tangent vector.

\begin{lemma}[{\cite[Corollary~4.2]{BudneyCohen05}}]\label{lem:long-closed-bundle}
There is a homeomorphism
\[
\Emb(S^1,S^3)
\cong
S^3\times\Emb_*(S^1,S^3).
\]
Moreover, the bundle
\[
\LongKnotSpace
\longrightarrow
\Emb_*(S^1,S^3)
\xrightarrow{\;\nu\;}
S^2
\]
is fiberwise homotopy equivalent to the associated bundle
\[
\LongKnotSpace
\longrightarrow
SO(3)\times_{SO(2)}\LongKnotSpace
\xrightarrow{\;[A,g]\mapsto A u_{\infty}\;}
S^2.
\]
where \(SO(2)\subset SO(3)\) is the stabilizer of \(u_{\infty}\) and acts on \(\LongKnotSpace\) by rotation about the long axis.  
In the standard trivializations over the two hemispheres, this
associated bundle is obtained by gluing two copies of
\(D^2\times\LongKnotSpace\) along their common boundary by
\[
S^1\times\LongKnotSpace
\longrightarrow
S^1\times\LongKnotSpace,
\qquad
(z,g)\longmapsto(z,z^2\cdot g),
\]
where \(S^1\) is identified with \(SO(2)\).  
\end{lemma}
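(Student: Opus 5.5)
The plan is to build both identifications from the frame-bundle picture of embeddings, since the lemma is essentially a repackaging of standard facts.

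\emph{Product splitting.} I would use that $SO(4)$ acts transitively on $S^3$. Choose a continuous section $s\colon S^3\to SO(4)$ of the evaluation $A\mapsto A p_\infty$; one can take $s(x)$ to be the unit quaternion $x\bar p_\infty$ acting by left multiplication, using $S^3\cong Sp(1)$. The map
\[
\Emb(S^1,S^3)\to S^3\times\Emb_*(S^1,S^3),\qquad f\mapsto \bigl(f(1),\,s(f(1))^{-1}f\bigr),
\]
is then a homeomorphism, with inverse $(x,g)\mapsto s(x)g$. This settles the first claim.

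\emph{Fibration $\nu$.} Next I would consider $\nu\colon\Emb_*(S^1,S^3)\to S^2$. The group $SO(3)=\mathrm{Stab}_{SO(4)}(p_\infty)$ acts on $\Emb_*(S^1,S^3)$ by post-composition, and $\nu$ is $SO(3)$-equivariant, with $SO(3)$ acting on $S(T_{p_\infty}S^3)=S^2$ transitively. Hence $\nu$ is the bundle associated to the principal $SO(2)$-bundle $SO(3)\to S^2$ with fiber $F=\nu^{-1}(u_\infty)$. In other words, the map $SO(3)\times_{SO(2)}F\to\Emb_*(S^1,S^3)$, $[A,g]\mapsto Ag$, is a homeomorphism over $S^2$. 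It remains to compare $F$ with $\LongKnotSpace$ $SO(2)$-equivariantly. Under stereographic projection, $F$ is the space of long embeddings $\mathbb R\to\mathbb R^3$ that are asymptotically the $x$-axis to first order at infinity. The map $c\colon\LongKnotSpace\to F$ is $SO(2)$-equivariant, since rotation about the long axis corresponds to rotations fixing $p_\infty$ and $u_\infty$. I would show $c$ is a homotopy equivalence by an equivariant straightening argument: using a radial isotopy of $S^3$ fixing $p_\infty$ with derivative the identity there, shrink knots in $F$ so that they agree with the diameter near $p_\infty$. This requires a 1-jet (and then $\infty$-jet) matching step, done by an equivariant tubular-neighbourhood argument near $p_\infty$. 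An equivariant homotopy equivalence of fibers induces a fiberwise homotopy equivalence of associated bundles, which gives the second claim.

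\emph{Clutching.} The clutching map of $SO(3)\to S^2$ as a principal $SO(2)$-bundle has degree $2$, because it is the unit tangent bundle of $S^2$ with Euler number $2$. Trivializing over the two hemispheres therefore gives the gluing $(z,g)\mapsto(z,z^2\cdot g)$, where $z\in SO(2)$ acts by rotation about the long axis.

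\emph{Main obstacle.} I expect the hardest step to be the equivariant homotopy equivalence $\LongKnotSpace\simeq F$. The issue is controlling the $C^\infty$ behaviour at $p_\infty$ while preserving the $SO(2)$-action. I would first try to cite Budney--Cohen for exactly this step rather than reprove it.
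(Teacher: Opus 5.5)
Your outline is correct, and it is more explicit than the paper. The paper gives no argument of its own: the whole lemma, including the hemisphere gluing $(z,g)\mapsto(z,z^2\cdot g)$ that \Cref{prop:compactification-pi1} relies on, is quoted from Budney--Cohen. You derive the formal parts yourself:
\begin{itemize}
\item the honest homeomorphism $\Emb(S^1,S^3)\cong S^3\times\Emb_*(S^1,S^3)$, from the quaternionic section of $SO(4)\to S^3$;
\item the identification $\Emb_*(S^1,S^3)\cong SO(3)\times_{SO(2)}F$ over $S^2$, from the $SO(3)$-equivariance of $\nu$ and transitivity on $S^2$;
\item the clutching map, from the Euler number $2$ of the frame bundle $SO(3)\to S^2$ of $TS^2$. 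The sign ambiguity $z^{\pm 2}$ is exactly what ``suitably oriented'' absorbs later.
\end{itemize}
This isolates the single non-formal input, namely that $c\colon\LongKnotSpace\to F$ is a homotopy equivalence. It also makes clear why the gluing has degree $2$, which is the fact the paper actually uses.

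On that remaining step, two remarks.

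First, you do not need the straightening to be equivariant. The map $c$ is automatically $SO(2)$-equivariant, so it induces a fibrewise map $SO(3)\times_{SO(2)}\LongKnotSpace\to SO(3)\times_{SO(2)}F$ of locally trivial bundles over $S^2$. By Dold's theorem, a fibrewise map that is a homotopy equivalence on fibres is a fibre homotopy equivalence. So your ``main obstacle'' reduces to the classical non-equivariant statement.

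Second, your sketch of that statement is imprecise in two places.
\begin{itemize}
\item A knot-independent radial isotopy that fixes $p_\infty$ with identity derivative cannot make knots agree with the diameter near $p_\infty$. An injective map cannot send distinct germs to the same arc.
\item Elements of $F$ are not asymptotic to the $x$-axis. They are asymptotic to a parallel line, offset by the normal part of the $2$-jet at $p_\infty$.
\end{itemize}
The straightening must be a knot-dependent local modification. For example, interpolate $g$ with its tangent line using a bump function on a neighbourhood of $1\in S^1$ whose size is chosen continuously in $g$. Then normalise the speed and enlarge the straight region to match $f(x)=(x,0,0)$ for $|x|\ge 1$. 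This is precisely the part the Budney--Cohen citation covers, so deferring it there is consistent with the paper.
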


\begin{proposition}\label{prop:compactification-pi1}
Let \(f\in\LongKnotSpace\), and let
\(\widetilde f\in\Emb(S^1,S^3)\) be its compactification.
Then there is an exact sequence
\[
\mathbb Z
\xrightarrow{\;\partial_f\;}
\pi_1(\LongKnotSpace,f)
\xrightarrow{\;c_*\;}
\pi_1\bigl(\Emb(S^1,S^3),\widetilde f\bigr)
\longrightarrow 1,
\]
where the generator of
\(\mathbb Z\cong\pi_2(S^2,u_{\infty})\) may be chosen so that
\[
\partial_f(1)=[\Rot(f)]^2.
\]
Consequently, the cyclic subgroup generated by
\([\Rot(f)]^2\) is normal, and
\[
\pi_1\bigl(\Emb(S^1,S^3),\widetilde f\bigr)
\cong
\pi_1(\LongKnotSpace,f)\big/
\bigl\langle[\Rot(f)]^2\bigr\rangle.
\]
\end{proposition}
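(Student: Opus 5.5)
The plan is to combine the product splitting of \Cref{lem:long-closed-bundle} with the long exact homotopy sequence of the fibration \(\LongKnotSpace\to\Emb_*(S^1,S^3)\xrightarrow{\nu}S^2\), and then identify the boundary map explicitly from the clutching description.

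\textbf{Step 1: reduce to the based space.} Since \(\Emb(S^1,S^3)\cong S^3\times\Emb_*(S^1,S^3)\) and \(S^3\) is simply connected, \(\pi_1(\Emb(S^1,S^3),\widetilde f)\cong\pi_1(\Emb_*(S^1,S^3),\widetilde f)\). I will check that under this identification \(c_*\) corresponds to the map induced by the fibre inclusion \(\LongKnotSpace\hookrightarrow\Emb_*(S^1,S^3)\). This holds because \(c\) lands in \(\Emb_*\) with \(\nu(\widetilde f)=u_\infty\): the long knots all point in the direction \(u_\infty\) at \(\infty\) after stereographic projection, which is what fixes the sign in the definition of \(u_\infty\).

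\textbf{Step 2: the exact sequence.} The long exact sequence of the fibration gives
\[
\pi_2(S^2)\xrightarrow{\partial}\pi_1(\LongKnotSpace,f)\to\pi_1(\Emb_*(S^1,S^3),\widetilde f)\to\pi_1(S^2)=1.
\]
So \(c_*\) is surjective with kernel \(\operatorname{im}\partial_f\). Since fibrewise homotopy equivalence preserves the long exact sequence, I may compute \(\partial_f\) in the associated bundle \(SO(3)\times_{SO(2)}\LongKnotSpace\).

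\textbf{Step 3: compute \(\partial_f\) (the main step).} Use the clutching description: two copies of \(D^2\times\LongKnotSpace\) glued along \(S^1\times\LongKnotSpace\) via \((z,g)\mapsto(z,z^2\cdot g)\).

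Take the generator of \(\pi_2(S^2)\) represented by the identity of \(S^2=D^2_+\cup D^2_-\). Lift it over the upper disc as the constant section \(x\mapsto(x,f)\), which is legitimate since the upper trivialisation is a product. On the boundary circle, in the lower trivialisation this section reads \(z\mapsto(z,z^2\cdot f)\).

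Now contract the lower disc radially to its centre. The lower trivialisation is also a product, so the boundary loop in the fibre over the centre becomes the loop \(z\mapsto z^2\cdot f\). Because \(SO(2)\) acts by rotation about the long axis, this loop is \(\Rot(f)\) traversed twice, i.e. \([\Rot(f)]^2\).

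Hence \(\partial_f(\pm1)=[\Rot(f)]^2\), and I choose the orientation of the generator so that the sign is \(+\). I will also verify that the loop is based at \(f\), since \(1^2\cdot f=f\).

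The main obstacle is the bookkeeping here: checking that the standard recipe for \(\partial\) (lift a disc, restrict to the boundary, transport into the fibre) really produces \(z^2\cdot f\) and not a conjugate. The precise conjugate does not matter, but I will avoid conjugation entirely by keeping the lift basepoint at \(f\) throughout. I will also check that the sign convention for the generator is harmless.

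\textbf{Step 4: conclusion.} Normality of \(\langle[\Rot(f)]^2\rangle\) is automatic, since it equals \(\ker c_*\) by exactness. Surjectivity of \(c_*\) then gives the isomorphism with the quotient \(\pi_1(\LongKnotSpace,f)/\langle[\Rot(f)]^2\rangle\).

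As a consistency check, \([\Rot(f)]\) is central in \(\pi_1(\LongKnotSpace,f)\). This follows from the path-homotopy \(\gamma*\Rot*\gamma^{-1}\simeq\Rot\) established in \Cref{subsec:Gramain_cycle}, applied to loops \(\gamma\). So normality of the subgroup is also clear directly.
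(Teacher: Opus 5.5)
Your proposal is correct and follows essentially the same route as the paper's proof. In both, you identify $c$ with the fibre inclusion of the Budney--Cohen bundle, use $\pi_1(S^3)=1$ to pass from $\Emb_*$ to $\Emb$, read off the long exact sequence, and obtain $\partial_f$ from the clutching map $(z,g)\mapsto(z,z^2\cdot g)$ as the doubled rotation loop; your version just spells out the lifting and basepoint/sign bookkeeping that the paper leaves implicit, and your centrality remark is a valid independent check of normality.
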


\begin{proof}
Since \(\nu(\widetilde g)=u_{\infty}\) for every \(g\in\LongKnotSpace\), the compactification map corresponds, under the bundle equivalence of Lemma~\ref{lem:long-closed-bundle}, to the inclusion of the fiber over \(u_{\infty}\).
By \Cref{lem:long-closed-bundle}, we also have 
\[
\pi_1(\Emb_*(S^1, S^3), \tilde{f}) \cong \pi_1(\Emb(S^1, S^3), \tilde{f}). 
\]
The homotopy exact sequence therefore gives
\[
\pi_2(S^2,u_{\infty})
\xrightarrow{\;\partial_f\;}
\pi_1(\LongKnotSpace,f)
\xrightarrow{\;c_*\;}
\pi_1\bigl(\Emb(S^1,S^3),\widetilde f\bigr)
\longrightarrow
\pi_1(S^2,u_{\infty})=0.
\]

By the gluing description in Lemma~\ref{lem:long-closed-bundle}, the
boundary of a suitably oriented generator
of \(\pi_2(S^2,u_{\infty})\) is represented by
\[
S^1\longrightarrow\LongKnotSpace,
\qquad
z\longmapsto z^2\cdot f.
\]
This is the rotation loop of \(f\) traversed twice, and hence
\[
\partial_f(1)=[\Rot(f)]^2.
\]
The quotient description follows from exactness.
\end{proof}

Another ingredient needed is that 
\[
\Emb(S^1, S^3) \to \Emb(S^1, S^3)/\Diff^{+}(S^1)
\] 
is a $\Diff^{+}(S^1)$-bundle \cite{BinzFischer81}.
Therefore, we have a homotopy fiber sequence 
\[
\Diff^{+}(S^1) \to \Emb(S^1, S^3) \to \Emb(S^1, S^3)/\Diff^+(S^1),
\]
inducing the exact sequence of fundamental groups
\[
\pi_1(\Diff^{+}(S^1)) \to \pi_1(\Emb(S^1, S^3), f) \xrightarrow{q_{*}} \pi_1(\Emb(S^1, S^3)/\Diff^+(S^1), \bar{f}) \to 1,
\]
where $\bar{f}$ is the image of $f$ in the quotient space $\Emb(S^1, S^3)/\Diff^+(S^1)$.
So $q_{*}$ is surjective. 
Notice that $\Diff^{+}(S^1) \simeq SO(2) \cong S^1$.
The image of the first map $\pi_1(\Diff^{+}(S^1)) \to \pi_1(\Emb(S^1, S^3), f)$ is generated by the reparametrization loop $\Repar(f)$.
Thus, we have the following proposition.
\begin{proposition}
    Let $f \in \Emb(S^1, S^3)$ be a closed knot, and let $\bar{f}$ denote its image in $\Emb(S^1, S^3)/\Diff^+(S^1)$. Then we have 
    \[\pi_1(\Emb(S^1, S^3)/\Diff^+(S^1), \bar{f}) \cong \pi_1(\Emb(S^1, S^3), f)/ \langle [\Repar(f)] \rangle .\]
\end{proposition}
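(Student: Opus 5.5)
The plan is to read the isomorphism off the homotopy exact sequence of the principal bundle $\Diff^+(S^1)\to\Emb(S^1,S^3)\to\Emb(S^1,S^3)/\Diff^+(S^1)$, which is already available from \cite{BinzFischer81}. The steps, in order, are as follows.

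First, since the action of $\Diff^+(S^1)$ by precomposition is free, I write down the long exact sequence of homotopy groups based at $f$:
\[
\pi_1(\Diff^+(S^1),\mathrm{id})\xrightarrow{\;j_*\;}\pi_1(\Emb(S^1,S^3),f)\xrightarrow{\;q_*\;}\pi_1(\Emb(S^1,S^3)/\Diff^+(S^1),\bar f)\longrightarrow\pi_0(\Diff^+(S^1)).
\]
Here $j$ is the orbit map $\varphi\mapsto f\circ\varphi$. Because $\Diff^+(S^1)$ is path-connected, the last term is trivial, so $q_*$ is surjective.

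Second, I identify the image of $j_*$. The inclusion of rotations $SO(2)\hookrightarrow\Diff^+(S^1)$ is a homotopy equivalence; one proof lifts a diffeomorphism to $\mathbb R$ and deformation-retracts linearly. Hence $\pi_1(\Diff^+(S^1))\cong\mathbb Z$, generated by the loop $t\mapsto(z\mapsto e^{2\pi i t}z)$. Applying $j$ to this generator gives $t\mapsto f(e^{2\pi i t}z)$, which is exactly $\Repar(f)$. So $\operatorname{im} j_*=\langle[\Repar(f)]\rangle$.

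Third, exactness at the middle term gives $\ker q_*=\langle[\Repar(f)]\rangle$. The kernel of a homomorphism is automatically normal, so this cyclic subgroup is normal. Together with surjectivity, the first isomorphism theorem yields the stated isomorphism.

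The only delicate point is justifying that the quotient map is a genuine (locally trivial) principal bundle, or at least a Serre fibration, so that the long exact sequence applies. This is where \cite{BinzFischer81} is invoked, using the free smooth action and the existence of slices. I will also note that the result does not depend on which point of the fibre is used as basepoint.
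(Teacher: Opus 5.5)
Your proposal is correct and follows the paper's own argument: the paper also uses the homotopy exact sequence of the $\Diff^+(S^1)$-bundle $\Emb(S^1,S^3)\to\Emb(S^1,S^3)/\Diff^+(S^1)$ from \cite{BinzFischer81}, gets surjectivity of $q_*$, uses $\Diff^+(S^1)\simeq SO(2)$, and identifies the image of the generator with $[\Repar(f)]$. Your extra details, namely the lifting argument for $\Diff^+(S^1)\simeq SO(2)$ and the remark that the kernel is automatically normal, fill in steps the paper leaves implicit.
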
 

\subsection{Descending criteria}
\begin{theorem}\label{thm:repar}
    Let $f \in \Emb(S^1, S^3)$ be a closed knot. Then in the fundamental group $\pi_1(\Emb(S^1, S^3), f)$, we have
    \[[\Repar(f)] = [\Roll_0(f)]*[\Rot(f)] = [\Rot(f)]*[\Roll_0(f)]. \]
\end{theorem}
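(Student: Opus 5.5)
We compare the two loops directly from their definitions. By definition,
\[
\Roll_N(f)(t)(z) = A(1)A(e^{2\pi i t})^{-1}\,\Repar(f)(t)(z),
\]
so the rolling loop is the reparametrization loop acted on pointwise by the loop of matrices
\[
M(t)\coloneqq A(1)A(e^{2\pi i t})^{-1}\in SO(4),
\]
which is based at $I_4$. The plan is to show that the class of $M$ in $\pi_1(SO(4))\cong\mathbb Z/2\mathbb Z$ is the generator exactly when the framing number of $N$ is even (in particular for $N$ of framing $0$). We then use the action of $SO(4)$ on $\Emb(S^1,S^3)$ together with the Remark identifying $[R(\cdot)f]$ with $[\Rot(f)]$ for a generating loop $R$.

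First we separate the two factors. Consider the map $SO(4)\times\Emb(S^1,S^3)\to\Emb(S^1,S^3)$, $(B,g)\mapsto Bg$. For a loop $\mu$ in $SO(4)$ based at $I_4$ and a loop $\gamma$ in $\Emb(S^1,S^3)$ based at $f$, the pointwise product $t\mapsto \mu(t)\gamma(t)$ is the image of the loop $(\mu,\gamma)$ in the product. In $\pi_1$ of the product, $(\mu,\gamma)$ is homotopic both to $(\mu,\mathrm{const})*(\mathrm{const},\gamma)$ and to the reverse concatenation. Hence
\[
[\mu\cdot\gamma]=[\mu f]*[\gamma]=[\gamma]*[\mu f].
\]
Applying this with $\mu=M$ and $\gamma=\Repar(f)$ gives
\[
[\Roll_N(f)]=[Mf]*[\Repar(f)]=[\Repar(f)]*[Mf].
\]
If $[M]$ is the generator, then $[Mf]=[\Rot(f)]$ by the Remark. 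Since $[\Rot(f)]^2=[R^2f]$ and $R^2$ is null-homotopic in $SO(4)$, the class $[\Rot(f)]$ is an involution. Therefore
\[
[\Repar(f)]=[\Roll_0(f)]*[\Rot(f)]^{-1}=[\Roll_0(f)]*[\Rot(f)],
\]
and the same commutation argument gives the other order.

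The main step is computing $[M]\in\pi_1(SO(4))$. Since $t\mapsto A(1)A(e^{2\pi it})^{-1}$ is the inverse of the loop $A(e^{2\pi it})$ translated to $I_4$, we need the class of the frame loop $z\mapsto A(z)=[T,N,B,f]$. I would compute this through the framing via the stabilization $SO(3)\to SO(4)$ and the fibration $SO(3)\to SO(4)\to S^3$, or equivalently by viewing $(T,N,B)$ as a trivialization of $f^*TS^3$. Changing $N$ by one full twist multiplies $A$ by a loop in the $(N,B)$-plane rotation subgroup, which generates $\pi_1(SO(4))$. So the parity of $[A]$ changes with the framing number, consistent with \Cref{prop:roll_framing}.

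It therefore suffices to evaluate one example: the round unknot $f(z)=(\Re z,\Im z,0,0)$ with $N=(0,0,1,0)$ constant, which has framing $0$. Here $T(z)=iz$, so
\[
A(z)=\begin{pmatrix} \text{rotation by }\arg z \text{ in the }(e_1,e_2)\text{-plane on } T,f \end{pmatrix}
\]
with $N$ and $B$ fixed. This is a single full rotation in one plane, hence nontrivial in $\pi_1(SO(4))$. By invariance of the parity of $[A]$ under isotopy of framed knots, with isotopy preserving the framing number, $[A]$ is the generator for all $0$-framed knots.

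The step I expect to be the obstacle is justifying that the parity of $[A]$ depends only on the framing number and not on the knot type. My first attempt would be to deform along a framed isotopy to the unknot through crossing changes, arguing that a crossing change is a homotopy of maps $S^1\to S^3$ with frames, so the frame loop class is unaffected. Equivalently, I would identify $[A]$ with the framing number plus $1$ mod $2$ via the self-linking interpretation of the Pontryagin/spin structure on $f^*TS^3$.
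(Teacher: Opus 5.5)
Your reduction is the same as the paper's. You write $\Roll_0(f)$ as the pointwise product of $M(t)=A(1)A(e^{2\pi i t})^{-1}$ with $\Repar(f)$ (the paper uses $Q=M^{-1}$ pointwise, which changes nothing). You split the product loop in $\pi_1(SO(4)\times\Emb(S^1,S^3))$ in either order, and you use $[\Rot(f)]^2=1$. All of that is fine, and so is your round-unknot computation.

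The gap is in the only nontrivial step: showing that $[A]$ is the generator of $\pi_1(SO(4))$ for \emph{every} $0$-framed knot.
\begin{itemize}
\item The sentence ``by invariance of the parity of $[A]$ under isotopy of framed knots \dots\ $[A]$ is the generator for all $0$-framed knots'' does not follow. Isotopy never changes the knot type, so it cannot carry the unknot computation over to, say, a trefoil.
\item Your proposed fix via crossing changes is the right idea, but it is misstated. Realize a crossing change by a regular homotopy and carry $N$ along. Then $T,N,B$ and the loop $A$ vary continuously, so $[A]$ is constant. But the framing number does not stay fixed: when strand $a$ passes through strand $b$, the push-off $a'$ passes through $b$ and $a$ passes through $b'$. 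Hence $\lk(f,f+\epsilon N)$ jumps by $\pm2$; for the blackboard framing, this is just the writhe changing by $\pm 2$. So there is no ``framed isotopy to the unknot through crossing changes'' that keeps the framing number $0$.
\item What you must track is $[A]+n \bmod 2$, where $n$ is the framing number. This quantity is invariant under framed isotopy. It is invariant under crossing changes, since $[A]$ is fixed and $n$ changes by $\pm2$. It is also invariant under adding a full twist to $N$, since both $[A]$ and $n$ change by $1$.
\item Evaluating on the $0$-framed round unknot then gives $[A]\equiv n+1$ for every framed knot, hence the generator when $n=0$.
\end{itemize}
With this bookkeeping your argument closes. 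Without it, the key claim is only asserted; your alternative ``spin structure'' identification simply restates what has to be proved.

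For comparison, the paper never reduces to the unknot. It takes $N$ tangent to a Seifert surface $\Sigma$ and pointing inward, and extends $-N$ to a vector field $X$ on $\Sigma$ with nondegenerate zeros. It then writes $A$ as the frame $[X,JX,\nu,\iota]$ restricted to $\partial\Sigma$, times a constant matrix. The relation in $\pi_1$ of $\Sigma$ minus small disks around the zeros gives $[Q]\equiv\sum_i \mathrm{ind}_{p_i}X \pmod 2$. Poincar\'e--Hopf then gives $\sum_i \mathrm{ind}_{p_i}X=\chi(\Sigma)=1-2g$, which is odd. Your repaired route instead gives the formula $[A]\equiv n+1$ for all framings at once, at the cost of a regular-homotopy and linking-number argument in place of the Seifert surface.
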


\begin{proof}
    By definition, we have 
    \[
    \Repar(f)(t)(z) = A(e^{2\pi it})A(1)^{-1}\Roll_0(f)(t)(z). 
    \]
    Consider the action 
    \begin{align*}
        \mu \colon SO(4) \times \Emb(S^1, S^3) &\longrightarrow \Emb(S^1, S^3) \\
        (R, g) &\longmapsto Rg,
    \end{align*}
    which induces maps on the fundamental groups
    \[ 
    \pi_1(SO(4), I_4) \times \pi_1(\Emb(S^1, S^3), f) \xrightarrow[\cong]{\eta} \pi_1(SO(4) \times \Emb(S^1, S^3), (I_4, f)) \xrightarrow{\mu_*} \pi_1(\Emb(S^1, S^3), f).
    \]
    \[
    \mu_{*}^{f}: \pi_1(SO(4), I_4) \longrightarrow \pi_1(\Emb(S^1, S^3), f), \quad [R(\cdot)] \longmapsto [\mu(R(\cdot), f)].
    \]
    Let $Q(t) \coloneqq A(e^{2\pi it})A(1)^{-1} \; (t\in [0,1])$ be the loop in $SO(4)$ based at $I_4$. 
    Since 
    \[
    \Repar(f)(\cdot) = \mu(Q(\cdot), \Roll_0(f)(\cdot)),
    \]
    and 
    \begin{align*}
        [(Q(\cdot),\Roll_0(f)(\cdot))] &= \eta(([Q], Const_f) * (Const_{I_4}, [\Roll_0(f)])) \\
        &= \eta((Const_{I_4}, [\Roll_0(f)]) * ([Q], Const_f)), 
    \end{align*}
    we obtain
    \[
    [\Repar(f)] = \mu_*^f([Q])*[\Roll_0(f)] = [\Roll_0(f)] * \mu_*^f([Q]), 
    \]
    where $\mu_*^f([Q])$ is either trivial or $[\Rot(f)]$ in $\pi_1(\Emb(S^1, S^3), f)$ depending on whether $[Q]$ is trivial or nontrivial in $\pi_1(SO(4), I_4) \cong \mathbb{Z}_2$.

    Let $\Sigma$ be a genus $g$ Seifert surface of $f$. 
    Since $N$ is zero-framed and $[\Roll_0]$ depends only on the framing number, we may assume that $N(\cdot) \in T\Sigma$ points inward along the boundary of $\Sigma$. 
    Let $\iota: \Sigma \to \mathbb{R}^4$ be the inclusion vector field.
    Choose a smooth vector field $X$ on $\Sigma$ possessing only non-degenerate zeros $\{p_1, \ldots, p_m\}$ and a unit normal vector field $\nu(\cdot) \in TS^3$ on $\Sigma$ such that 
    \[
    X(f(z)) = -N(z), \nu(f(z)) = B(z), \quad \forall z\in S^1. 
    \]
    Let $JX$ be the smooth vector field on $\Sigma$ obtained by rotating $X$ by $\frac{\pi}{2}$ such that $JX(f(z)) = T(z), \; (\forall z\in S^1)$. 
    Let $F(q) \coloneqq [X(q), JX(q), \nu(q), \iota(q)], \; q \in \Sigma$.
    Then we have 
    \begin{align*}
        A(z) &= [T(z), N(z), B(z), f(z)]\\
        &= F(f(z)) \begin{pmatrix}
        0 & -1 & 0 & 0 \\
        1 & 0 & 0 & 0 \\
        0 & 0 & 1 & 0 \\
        0 & 0 & 0 & 1
        \end{pmatrix}. \\
    \end{align*}
    Since $\pi_1(SO(4), I_4) = \mathbb{Z}/2\mathbb{Z}$ is abelian, we will no longer consider the base point for simplicity.  
    Let $C_0$ be the image of $f$. 
    Because $Q(z) = F(f(z))F(f(1))^{-1}$, we have $[Q(\cdot)] = [F|_{C_0})] \in \pi_1(SO(4))$. 
    Let $D_i$ be a small closed disk neighborhood of $p_i$ in $\Sigma$ such that $D_i \cap D_j = \emptyset$ for $i\neq j$. 
    Let $C_i$ be the boundary of $D_i$.
    On $\Sigma - \cup_i \{p_i\}$, we define 
    \[
    \tilde{F}(q) \coloneqq \left[\dfrac{X(q)}{\|X(q)\|}, \dfrac{JX(q)}{\|JX(q)\|}, \nu(q), \iota(q)\right] \in SO(4), \; q\in \Sigma - \cup_i \{p_i\}.
    \]
    Then $\tilde{F} = F $ on $C_0$. 
    Since 
    \[
    \pi_1(\Sigma-\cup_i D_i) = \langle c_0, c_1, \cdots, c_m, a_1, b_1, \cdots, a_g, b_g \mid [a_1,b_1] \cdots [a_g,b_g] c_0 c_1 \cdots c_m = 1 \rangle, 
    \]
    we have 
    \[
    [\title{F}|_{C_0}] + \sum_{i>0} [\title{F}|_{C_i}] + \sum_{j>0}\title{F}_*([a_j,b_j]) = [\title{F}|_{C_0}] + \sum_{i>0} [\title{F}|_{C_i}] = 0 \in \pi_1(SO(4)) \cong \mathbb{Z}_2.
    \]
    For each $D_i$, choose a smooth orthogonal unit frame $\{u_i^1(q), u_i^2(q)\} \subset T_qD_i, \; q \in D_i$ such that 
    \[
    \tilde{F}(q) = (u_i^1(q), u_i^2(q), \nu(q), \iota(q))M_i(q), \; q\in D_i - \{p_i\}, 
    \]
    where 
    \begin{align*}
        M_i(q) &= (u_i^1(q), u_i^2(q), \nu(q), \iota(q))^T(\dfrac{X(q)}{\|X(q)\|}, \dfrac{JX(q)}{\|JX(q)\|}, \nu(q), \iota(q)) \\
        &= \begin{pmatrix}
        G_i(q) & 0 \\
        0 & I_2 \\
        \end{pmatrix} \in SO(4),
    \end{align*}
    and 
    \[
    G_i(q) = (u_i^1(q), u_i^2(q))^T(\dfrac{X(q)}{\|X(q)\|}, \dfrac{JX(q)}{\|JX(q)\|}) \in SO(2). 
    \]
    Notice that 
    \begin{align*}
    T_qD_i &\longrightarrow \mathbb{R}^2 \\
    u &\mapsto (u_i^1(q), u_i^2(q))^Tu,
    \end{align*}
    gives a trivialization of $TD_i$ with respect to the orientation. 
    Define 
    \[
    g_i(q) \coloneqq (u_i^1(q), u_i^2(q))^T\frac{X(q)}{\|X(q)\|} \in S^1\subset \mathbb{R}^2, \; q\in C_i. 
    \]
    By the definition of the index of a vector field, we have 
    \[
    ind_{p_i}(X) = deg(g_i : C_i \to S^1). 
    \]
    Define 
    \begin{align*}
        \theta : S^1 &\longrightarrow SO(2) \\
        y &\mapsto \left[y, \begin{pmatrix}
                        0 & -1 \\
                        1 & 0
                    \end{pmatrix}y\right]. 
    \end{align*}
    Then $\theta$ is a diffeomorphism and $G_i = \theta \circ g_i$, which induces 
    \[
    (G_i)_*: H_1(C_i) \xrightarrow{(g_i)_*} H_1(S^1) \xrightarrow[\cong]{\theta_*} H_1(SO(2)) = \pi_1(SO(2)). 
    \]
    Therefore, $[G_i] = ind_{p_i}(X) \in \pi_1(SO(2)) = \mathbb{Z}.$ 
    The inclusion 
    \[
    SO(2) \hookrightarrow SO(4), R \mapsto \begin{pmatrix}
                                                R & 0 \\
                                                0 & I_2
                                            \end{pmatrix}
    \]
    induces 
    \begin{align*}
        \pi_1(SO(2)) \cong \mathbb{Z} &\longrightarrow \pi_1(SO(4)) \cong \mathbb{Z}/2\mathbb{Z} \\
        n &\mapsto n \mod 2. 
    \end{align*}
    We obtain 
    \[
    [M_i] = ind_{p_i}(X) \mod 2 \in \pi_1(SO(4)) = \mathbb{Z}/2\mathbb{Z}.
    \]
    Since $q \mapsto (u_i^1(q), u_i^2(q), \nu(q), \iota(q))\in SO(4)$ is defined on the disk $D_i$,
    its restriction to $C_i$ gives a trivial element in $\pi_1(SO(4))$. 
    Hence, we have 
    \[
    [\tilde{F}|_{C_i}] = ind_{p_i}(X) \mod 2 \in \pi_1(SO(4)) = \mathbb{Z}/2\mathbb{Z},\; (i >0). 
    \]
    So 
    \[
    [F|_{C_0}] = [\tilde{F}|_{C_0}] = \sum_{i>0}[\tilde{F}|_{C_i}] = \sum_{i>0}ind_{p_i}X \mod 2 \in \pi_1(SO(4)) = \mathbb{Z}/2\mathbb{Z}. 
    \]
    By the Poincaré--Hopf theorem (the version with boundary), we have 
    \[
    \sum_{i>0}ind_{p_i}(X) = \chi(\Sigma) = 2 - 2g - 1 = 1-2g. 
    \]
    Therefore, 
    \[
    [Q] = [F|_{C_0}] = \bar{1} \in \pi_1(SO(4)) = \mathbb{Z}/2\mathbb{Z},
    \]
    and $\mu_*^f([Q]) = [\Rot(f)] \in \pi_1(\Emb(S^1, S^3), f)$. 
    We are done. 
\end{proof}

Combining the two quotient descriptions with the identity
\(
[\Repar]=[\Roll_0][\Rot],
\)
we obtain the following criterion.
\begin{corollary}[Descending criterion]
    Let $R$ be a commutative ring with unit and $\xi \in H^{1}(\LongKnotSpace; R)$ be a 1-cohomology class. 
    Then we have 
    \begin{enumerate}
        \item $\xi$ descends to a 1-cohomology class in $H^{1}(\Emb(S^1, S^3); R)$ if and only if $2\xi([\Rot(f)]) = 0$ for all $f\in \LongKnotSpace$.
        \item $\xi$ descends to a 1-cohomology class in $H^{1}(\Emb(S^1, S^3)/\Diff^+(S^1); R)$ if and only if $2\xi([\Rot(f)]) = 0$ and $\xi([\Roll_0(f)]) + \xi([\Rot(f)]) = 0$ for all $f\in \LongKnotSpace$.
    \end{enumerate}
\end{corollary}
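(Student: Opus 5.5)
The two parts follow from the group-theoretic descriptions already established, together with \Cref{thm:repar}. Throughout I work componentwise. Since each component of $\LongKnotSpace$ is path-connected, a class $\xi\in H^1(\LongKnotSpace;R)$ is the same as a family of homomorphisms $\xi_f\colon\pi_1(\LongKnotSpace,f)\to R$, one per knot type. Here $R$ is abelian, so $H^1(X;R)\cong\operatorname{Hom}(\pi_1(X),R)$ for each path-connected space $X$.

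"Descending" means that $\xi$ lies in the image of $c^*$ (respectively $(q\circ c)^*$). I must also check that the components match up. The compactification $c$ induces a bijection on path components by \Cref{rem:closure_operation}. So does $q$, since $\Diff^+(S^1)$ is connected: reparametrizing by an element isotopic to the identity stays in the same component. Hence it suffices to work one component at a time, with basepoints $f$, $\widetilde f$ and $\bar f$.

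\textbf{Part (1).} By \Cref{prop:compactification-pi1}, $c_*$ is surjective and its kernel is the normal closure of $[\Rot(f)]^2$; in fact the cyclic subgroup is already normal. A homomorphism $\xi_f\colon\pi_1(\LongKnotSpace,f)\to R$ factors through $c_*$ if and only if it kills this kernel, i.e. $\xi_f([\Rot(f)]^2)=2\xi_f([\Rot(f)])=0$. Surjectivity of $c_*$ makes the factorization unique. The factored homomorphism gives a class $\eta\in H^1(\Emb(S^1,S^3);R)$ with $c^*\eta=\xi$. Conversely, if $\xi=c^*\eta$, then $\xi$ vanishes on $\ker c_*$. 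Running over all $f$ gives (1).

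\textbf{Part (2).} The map $q_*\circ c_*$ is surjective, being a composite of surjections. Its kernel is generated, as a normal subgroup, by $[\Rot(f)]^2$ together with any preimage under $c_*$ of $[\Repar(\widetilde f)]$. Here I use the proposition identifying $\pi_1(\Emb(S^1,S^3)/\Diff^+(S^1),\bar f)$ with $\pi_1(\Emb(S^1,S^3),\widetilde f)/\langle[\Repar]\rangle$, read as a quotient by the normal closure, which is harmless since $R$ is abelian.

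By \Cref{thm:repar} and the proposition relating the loops, $[\Repar(\widetilde f)]=c_*([\Roll_0(f)]*[\Rot(f)])$. So $[\Roll_0(f)]*[\Rot(f)]$ is a valid preimage. A homomorphism to the abelian group $R$ kills a normal closure if and only if it kills the generators. Hence $\xi_f$ factors through $q_*c_*$ if and only if
\[
2\xi([\Rot(f)])=0\quad\text{and}\quad \xi([\Roll_0(f)])+\xi([\Rot(f)])=0 .
\]
The choice of preimage does not matter once the first condition holds, because two preimages differ by an element of $\ker c_*$.

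\textbf{Main obstacle.} The only delicate points are bookkeeping. First, one must handle basepoints and conjugation when passing to normal closures; this is harmless because $R$ is abelian. Second, one must confirm that $c^*$ on $H^1$ corresponds to precomposition with $c_*$ under the Hom identification. This holds by naturality of the isomorphism $H^1(X;R)\cong\operatorname{Hom}(H_1(X),R)\cong\operatorname{Hom}(\pi_1(X),R)$.
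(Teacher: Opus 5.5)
Your proposal is correct and follows the paper's route. The paper obtains the criterion in one line by combining the quotient descriptions $\pi_1(\Emb(S^1,S^3),\widetilde f)\cong\pi_1(\LongKnotSpace,f)/\langle[\Rot(f)]^2\rangle$ and $\pi_1(\Emb(S^1,S^3)/\Diff^+(S^1),\bar f)\cong\pi_1(\Emb(S^1,S^3),\widetilde f)/\langle[\Repar(\widetilde f)]\rangle$ with the identity $[\Repar]=[\Roll_0]*[\Rot]$. You make explicit the bookkeeping the paper leaves implicit: matching path components, the identification $H^1=\operatorname{Hom}(\pi_1,R)$, and the fact that $\ker(q_*c_*)$ is normally generated by $[\Rot(f)]^2$ together with a lift of $[\Repar(\widetilde f)]$.
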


\begin{remark}
    The descending criterion also works componentwise. 
\end{remark}


\begin{corollary}
    $\beta_1$ descends to a nontrivial 1-cohomology class in $H^{1}(\Emb(S^1, S^3); \mathbb{Z})$. 
    $\beta_1 \bmod2$ and $\beta_3$ descend to independent nontrivial 1-cohomology classes in $H^{1}(\Emb(S^1, S^3)/\Diff^+(S^1); \mathbb{Z}/2\mathbb{Z})$. 
\end{corollary}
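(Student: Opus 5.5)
The plan is to apply the descending criterion (\Cref{mainthm:descending}, restated as the Corollary just above) and to feed it the pairings already computed for $\beta_1$ and $\beta_3$. Nontriviality and independence downstairs will be deduced from nontriviality and independence in $H^1(\LongKnotSpace)$. A preliminary observation is that the pullbacks
\[
H^1(\Emb(S^1,S^3);R)\to H^1(\LongKnotSpace;R),\qquad H^1(\Emb(S^1,S^3)/\Diff^+(S^1);R)\to H^1(\Emb(S^1,S^3);R)
\]
are injective. This holds on each component because $H^1(-;R)=\operatorname{Hom}(\pi_1,R)$, and both $c_*$ and $q_*$ are surjective on $\pi_1$ (\Cref{prop:compactification-pi1} and the $\Diff^+(S^1)$-bundle exact sequence). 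Hence a descended class is unique. It is nonzero, and a family of descended classes is linearly independent, as soon as the same is true of the original classes on $\LongKnotSpace$.

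For $\beta_1$ over $\mathbb{Z}$, I will use \Cref{mainthm:beta12}, which gives $\beta_1(\Rot(f))=0$ for every $f$. Then $2\beta_1([\Rot(f)])=0$, and part (1) of the criterion shows that $\beta_1$ descends to $H^1(\Emb(S^1,S^3);\mathbb{Z})$. For nontriviality it is enough to exhibit one loop in $\LongKnotSpace$ on which $\beta_1$ is nonzero. By \Cref{thm:calculation_FH}, $\beta_1(\Roll_0(4_1))=2\bigl(\binom{-1}{2}-0\bigr)=2\neq 0$. Therefore the descended class is nonzero on $c_*[\Roll_0(4_1)]$.

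For the unparametrized space over $\mathbb{Z}/2\mathbb{Z}$, I will check both conditions of part (2). For $\beta_1 \bmod 2$, the criterion requires $\beta_1(\Rot(f))+\beta_1(\Roll_0(f))=0$. The rotation term is $0$. The rolling term is $2\bigl(\binom{v_2(f)}{2}-v_{4,2}(f)\bigr)\equiv 0 \pmod 2$. For $\beta_3$, \Cref{mainthm:beta3} gives $\beta_3(\Rot(f))=\beta_3(\Roll_0(f))=0$. In both cases the condition $2\xi([\Rot(f)])=0$ is automatic over $\mathbb{Z}/2\mathbb{Z}$. So both classes descend. Since descent is componentwise, I will check the conditions for every $f$, which the closed formulae provide.

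For independence, suppose $a\,\overline{\beta_1}+b\,\overline{\beta_3}=0$ downstairs with $a,b\in\mathbb{Z}/2\mathbb{Z}$. Pulling back gives $a\beta_1+b\beta_3=0$ in $H^1(\LongKnotSpace;\mathbb{Z}/2\mathbb{Z})$. I will evaluate this class on the component of $4_1\csum 6_1$ using Table~\ref{tab:independence_1cocycles}. On $\frac12\Roll(4_1)$ the value is $a\cdot 1+b\cdot 0$, which forces $a=0$. On $\frac12\Roll(6_1)$ the value is $a\cdot 0+b\cdot 1$, which forces $b=0$. The same two evaluations show that each class is individually nontrivial.

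The only delicate point is justifying that the half-rolling loops are genuine loops in $\LongKnotSpace$ on that component. This follows from the symmetry of $4_1$ and $6_1$ and the connectedness of the cocycles. It is used as given in the table.
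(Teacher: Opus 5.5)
Your proposal is correct and follows the argument the paper intends: apply the descending criterion using $\beta_1(\Rot(f))=0$, the evenness of $\beta_1(\Roll_0(f))$, and the vanishing of $\beta_3$ on $\Rot(f)$ and $\Roll_0(f)$, then detect nontriviality and independence by pulling back to $\LongKnotSpace$ and evaluating on $\Roll_0(4_1)$ and on the half rolling loops of Table~\ref{tab:independence_1cocycles}. Your injectivity observation is only needed for uniqueness of the descended classes; nonvanishing and linear independence already pass downstairs because pullback is a homomorphism.
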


\appendix
\section{Gauss diagram formulae of low orders}
\label{sec:GDF}

The Gauss diagram formulae used in this paper are listed below.  
$v_2$ uses the formula from \cite{PolyakViro94}. 
$v_3$ uses the formula from \cite{Zhang23}. 
$v_{4,1}$ and $v_{4,2}$ listed here are new expressions. 

\newcommand{\gdfterm}[1]{%
  \raisebox{-.42\height}{%
    \includegraphics[height=1.5cm]{figures/GDF/#1.pdf}}}

\begin{equation*}
  v_2=\gdfterm{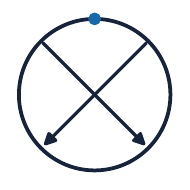}.
  \label{eq:gdf-v2}
\end{equation*}
 
\begin{equation*}
  \begin{aligned}
  v_3={}&
  \gdfterm{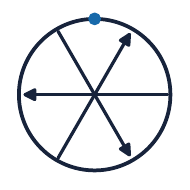}
  +\gdfterm{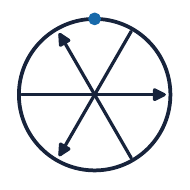}
  +\gdfterm{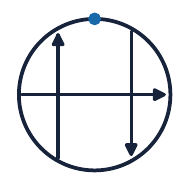}
  +\gdfterm{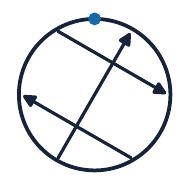}
  +\gdfterm{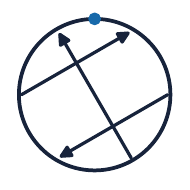}.
  \end{aligned}
  \label{eq:gdf-v3}
\end{equation*}

\begin{equation*}
  \begin{aligned}
  v_{4,1}={}&
  \gdfterm{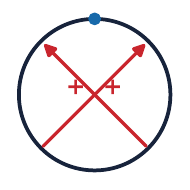}
  +\gdfterm{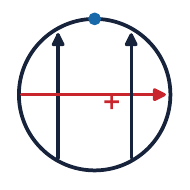}
  +\gdfterm{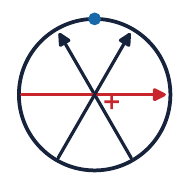}
  +\gdfterm{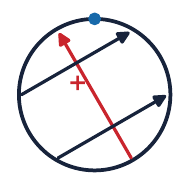}
  +\gdfterm{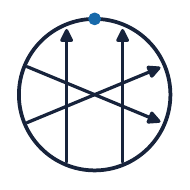}
  +\gdfterm{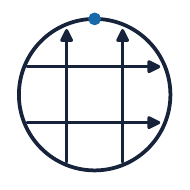}
  +\gdfterm{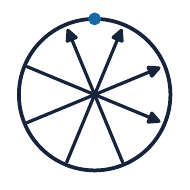}
  +\gdfterm{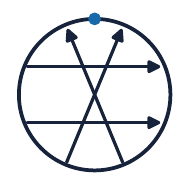}
  \\[.4em] 
  &+\gdfterm{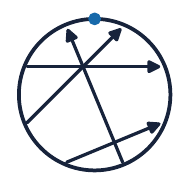}
  +\gdfterm{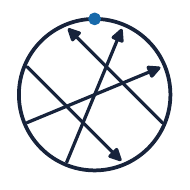}
  -\gdfterm{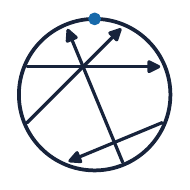}
  +\gdfterm{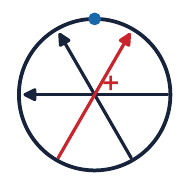}
  +\gdfterm{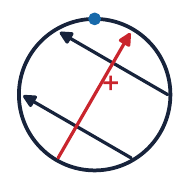}
  +\gdfterm{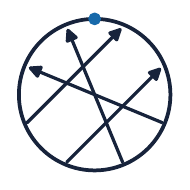}
  +\gdfterm{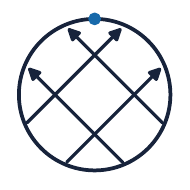}.
  \end{aligned}
  \label{eq:gdf-J}
\end{equation*}

\begin{equation*}
  \begin{aligned}
  v_{4,2}={}&
  2\,\gdfterm{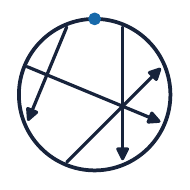}
  +2\,\gdfterm{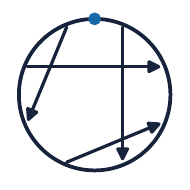}
  +\gdfterm{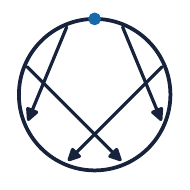}
  -\gdfterm{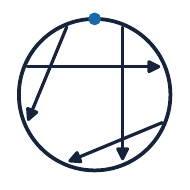}
  +2\,\gdfterm{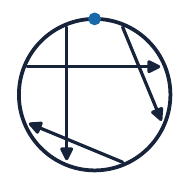}
  +2\,\gdfterm{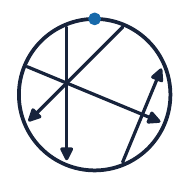}
  -\gdfterm{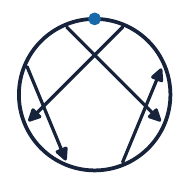}
  -\gdfterm{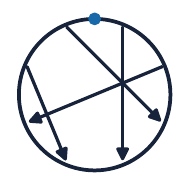}
  \\[.4em]
  &+3\,\gdfterm{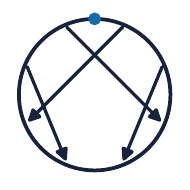}
  +\gdfterm{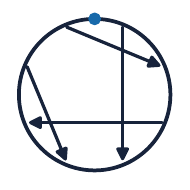}
  -\gdfterm{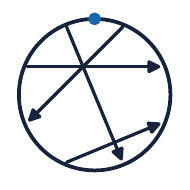}
  +\gdfterm{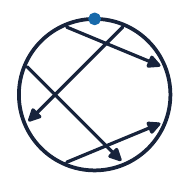}
  +\gdfterm{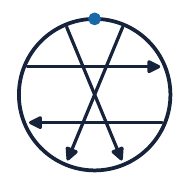}
  +3\,\gdfterm{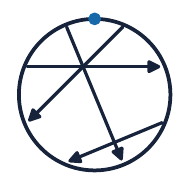}
  +\gdfterm{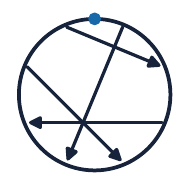}
  +\gdfterm{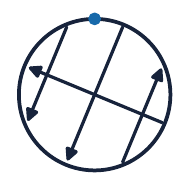}
  \\[.4em]
  &+2\,\gdfterm{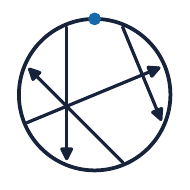}
  -\gdfterm{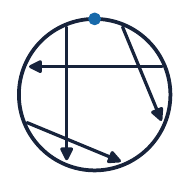}
  +\gdfterm{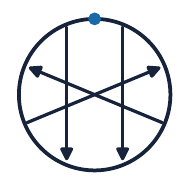}
  +\gdfterm{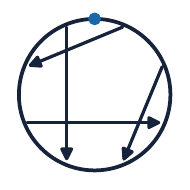}
  +\gdfterm{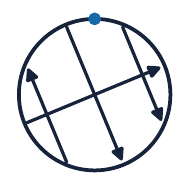}
  +3\,\gdfterm{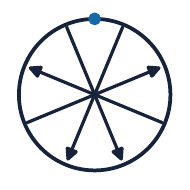}
  +\gdfterm{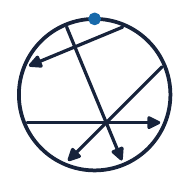}
  +\gdfterm{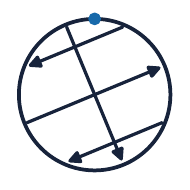}
  \\[.4em]
  &+\gdfterm{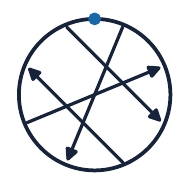}
  +\gdfterm{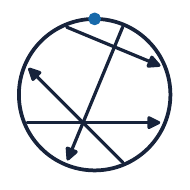}
  +3\,\gdfterm{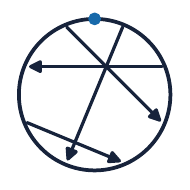}
  +\gdfterm{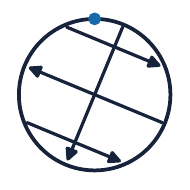}
  +3\,\gdfterm{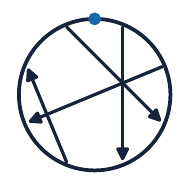}
  -\gdfterm{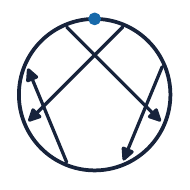}
  \\[.4em]
  &+\gdfterm{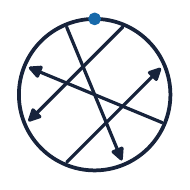}
  +\gdfterm{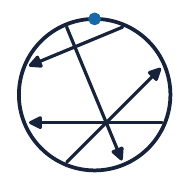}
  -\gdfterm{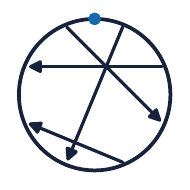}
  +\gdfterm{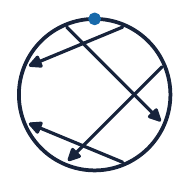}.
  \end{aligned}
  \label{eq:gdf-E}
\end{equation*}

\section{Cocyclicity}
\label{sec:Cocyclicity}
We explain the outline proof of cocyclicity of the 1-cocycles $\beta_1, \beta_2$ and $\beta_3$. 
We refer the reader to \cite[Chapter 6]{Zhang25} and \cite{ZBT99_25_10} for the complete proof and all the details. 
The proof of cocyclicity consists of verifying the vanishing of the combinatorial 1-cocycles on the meridians of the six types of codimension-two singularities of plane curves (see \Cref{fig:codimension_two_meridians}). 
\begin{figure}[H]
    \centering

    \begin{subfigure}[t]{0.48\textwidth}
        \centering
        \includegraphics[width=\linewidth]{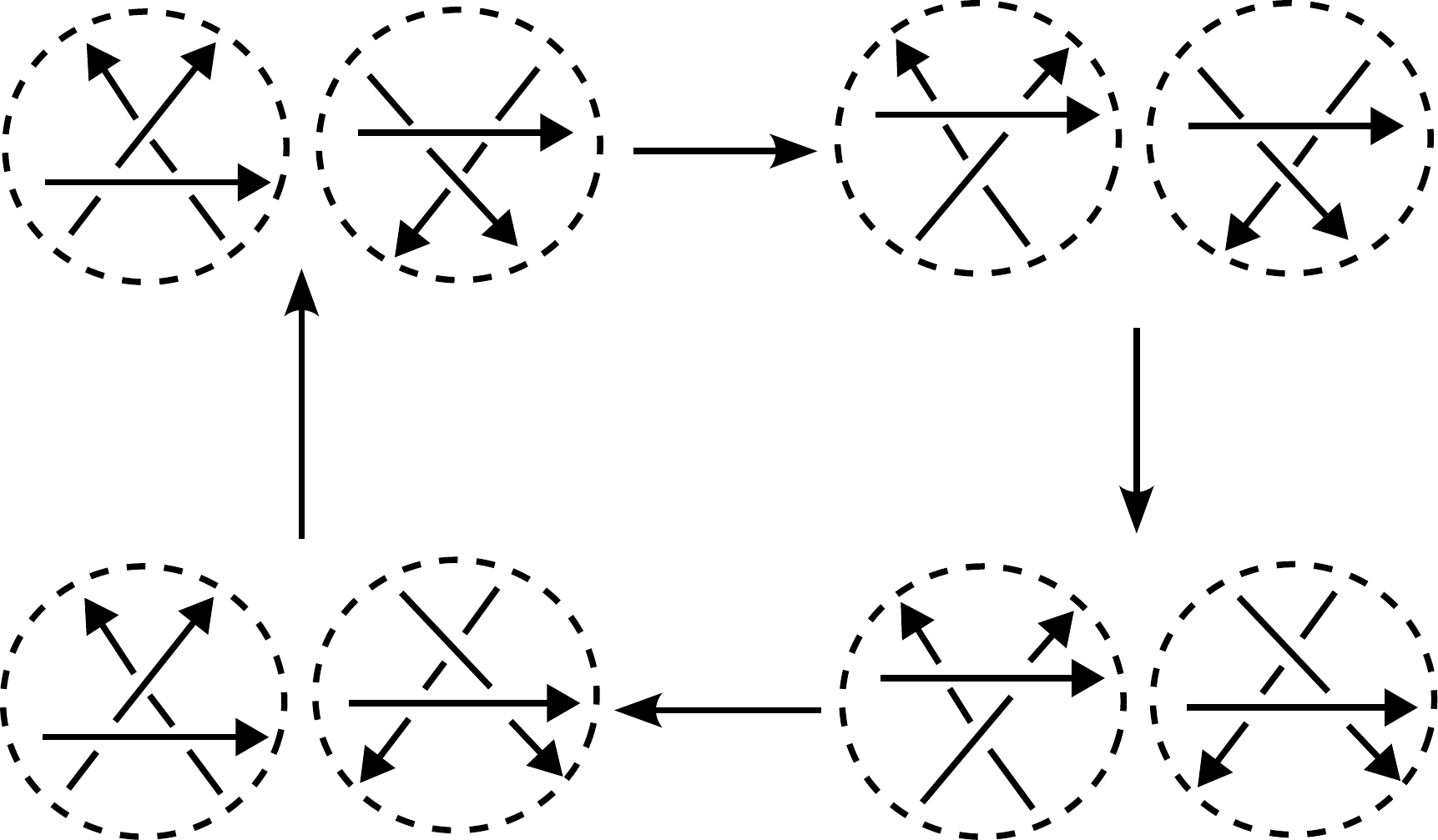}
        \caption{The meridian around the singularity of two triple points}
        \label{fig:meridian_commutation}
    \end{subfigure}
    \hfill
    \begin{subfigure}[t]{0.48\textwidth}
        \centering
        \includegraphics[width=\linewidth]{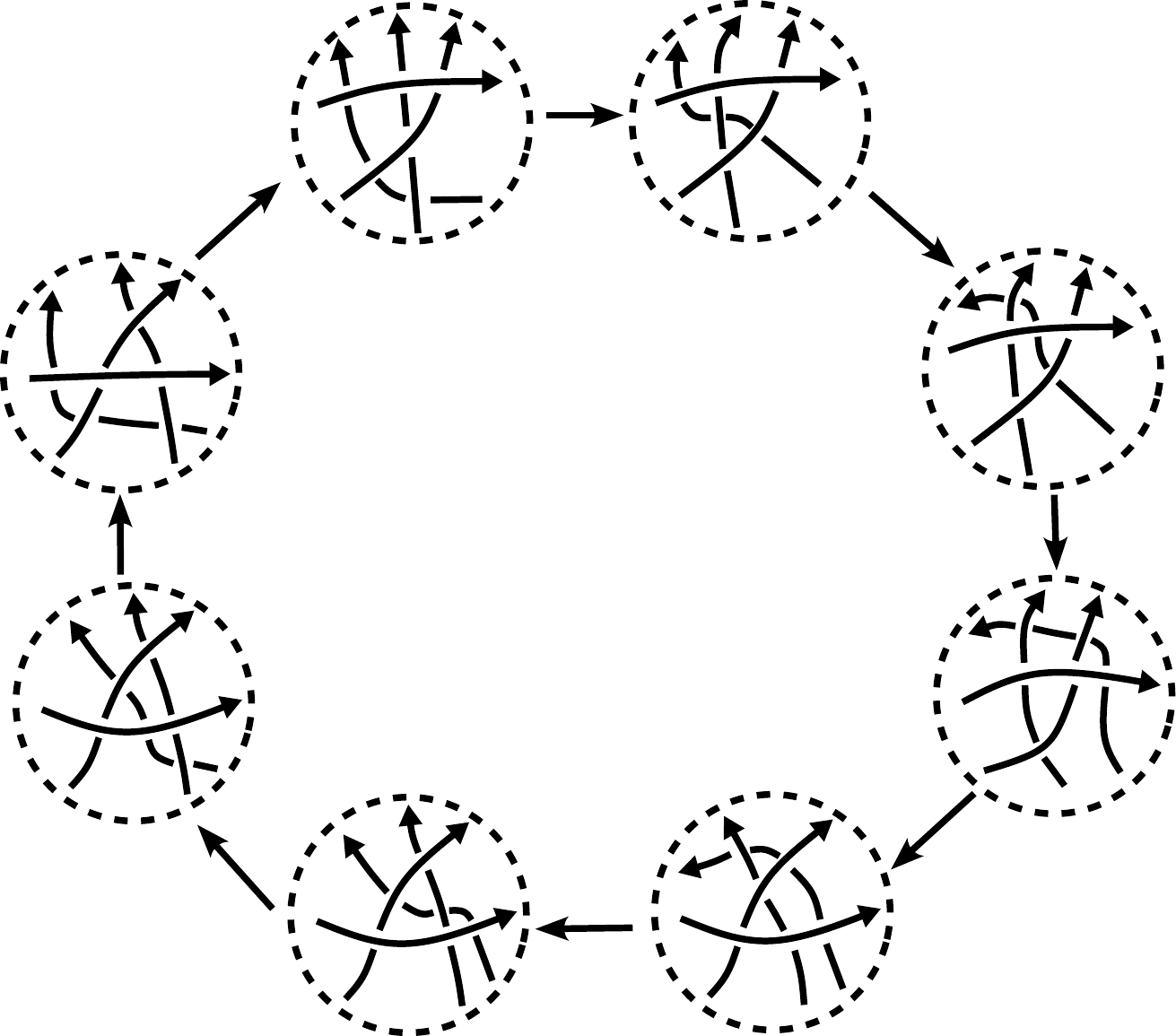}
        \caption{The meridian around a quadruple point}
        \label{fig:meridian_tetra}
    \end{subfigure}

    \vspace{0.5cm}

    \begin{subfigure}[t]{0.48\textwidth}
        \centering
        \includegraphics[width=\linewidth]{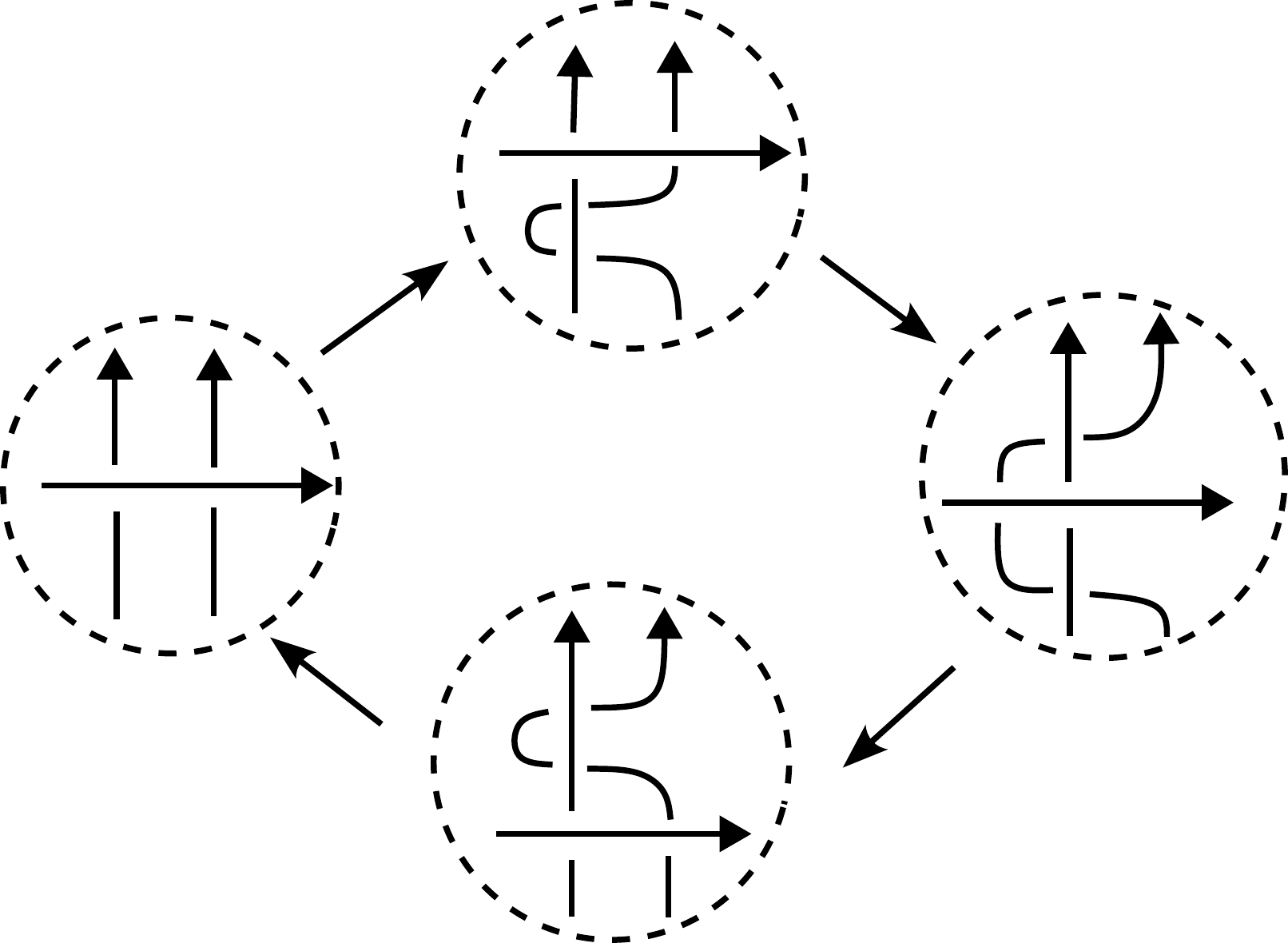}
        \caption{The meridian around a tangent triple point}
        \label{fig:meridian_cube}
    \end{subfigure}
    \hfill
    \begin{subfigure}[t]{0.48\textwidth}
        \centering
        \includegraphics[width=\linewidth]{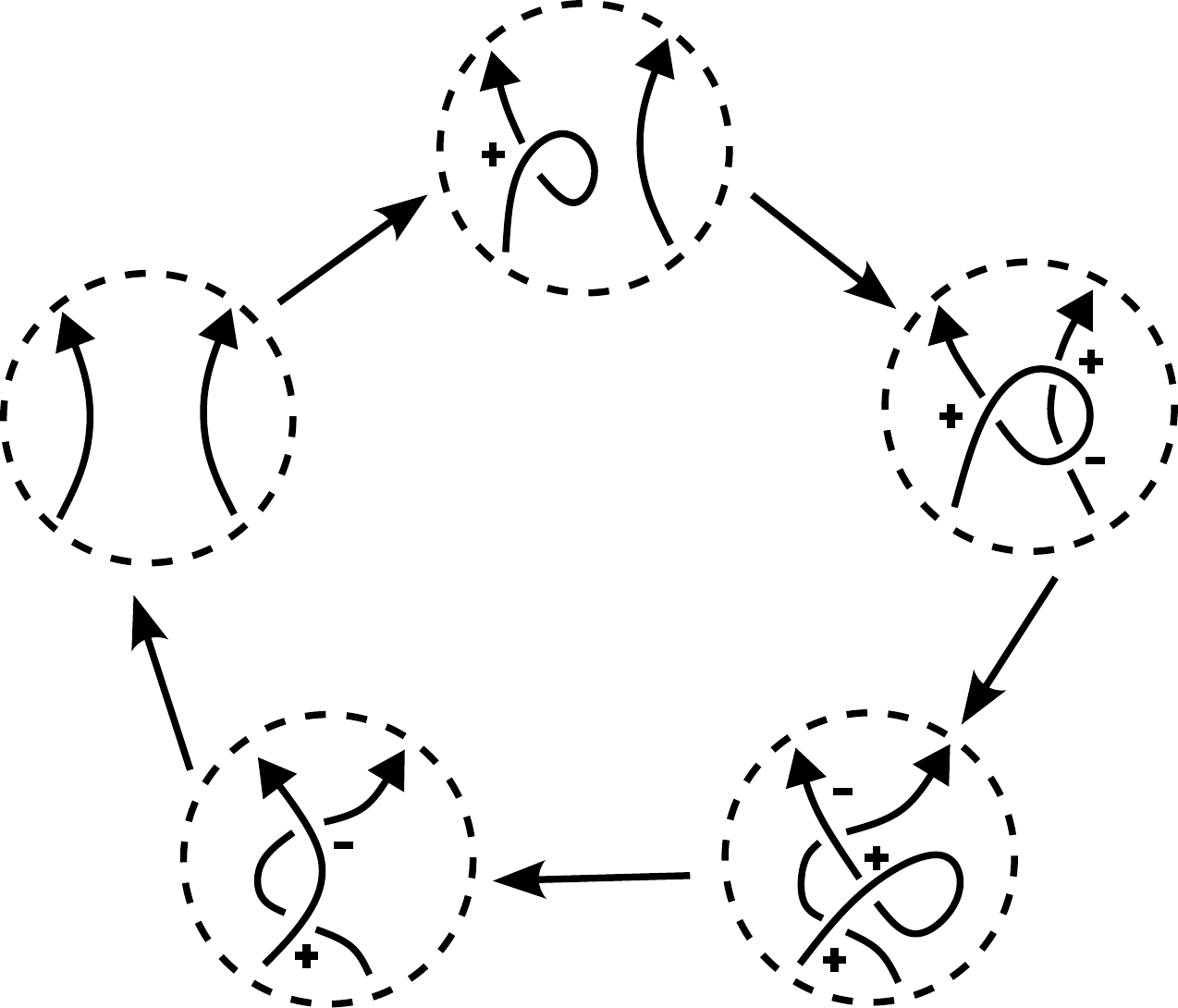}
        \caption{The meridian around an intersected cusp}
        \label{fig:meridian_cusp_0}
    \end{subfigure}

    \vspace{0.5cm}

    \begin{subfigure}[t]{0.48\textwidth}
        \centering
        \includegraphics[width=\linewidth]{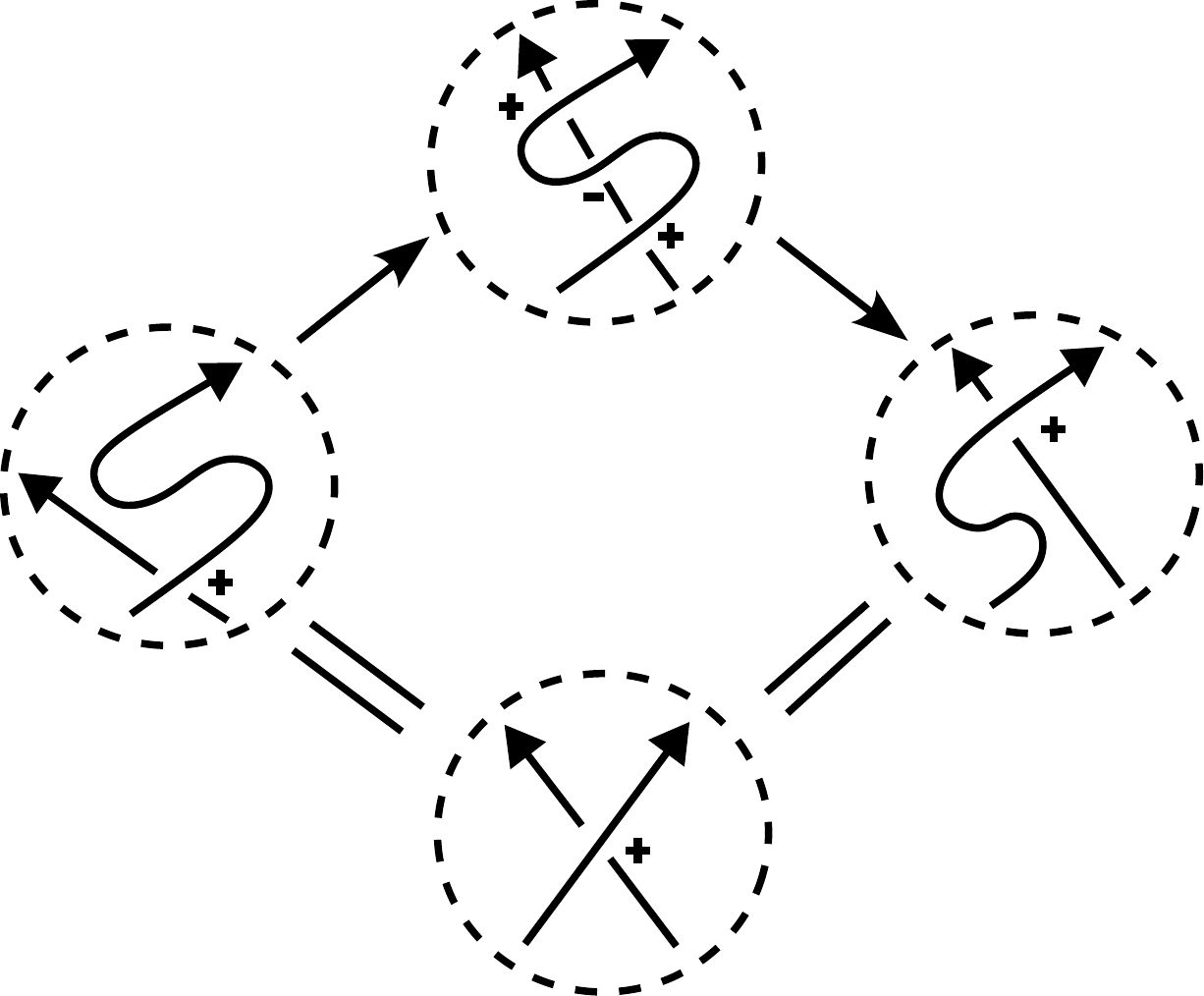}
        \caption{The meridian around a cubic tangency}
        \label{fig:meridian_cubic_tangency}
    \end{subfigure}
    \hfill
    \begin{subfigure}[t]{0.48\textwidth}
        \centering
        \includegraphics[width=0.8\linewidth]{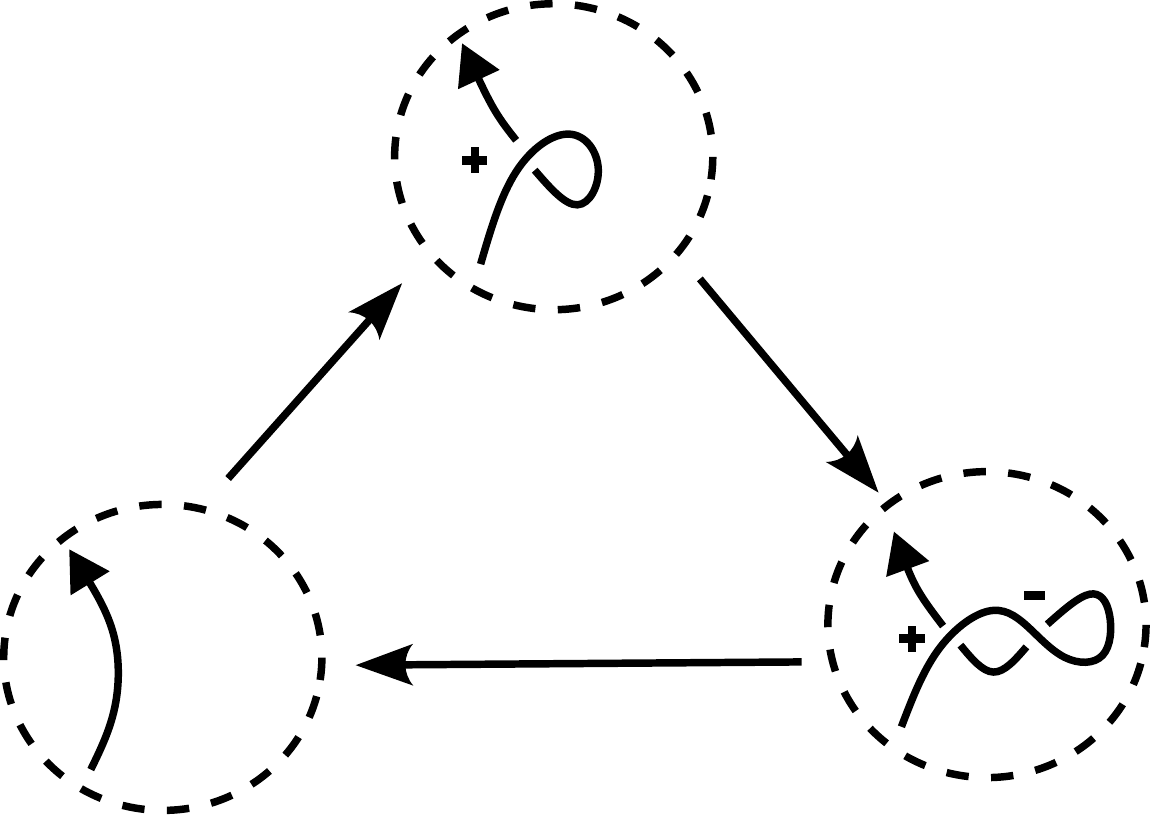}
        \caption{The meridian around a ramphoidal cusp}
        \label{fig:meridian_ramphoidal_cusp}
    \end{subfigure}

    \caption{Meridians of six types of codimension-two singularities}
    \label{fig:codimension_two_meridians}
\end{figure}
Since we only use R3 moves in the definitions of $\beta_1, \beta_2$ and $\beta_3$, we only need to prove that they vanish on the meridians around 
\begin{enumerate}
    \item two singularities of codimension 1; (This is called the commutation equation.)
    \item ordinary quadruple points; (This is called the tetrahedron equation.)
    \item tangent triple points; (This is called cube equation.)
    \item intersected cusps. (This is called the wandering cusp equation.)
\end{enumerate}

\subsection{Commutation equations}
Let $D_1$, $D_2$ be two subdiagrams in two disjoint disks of a long knot diagram, denoted by $(D_1, D_2)$, to perform R3 and R1 moves respectively. 
The meridian around the singularity (ordinary triple point, ordinary cusp) is the following (analogous to \Cref{fig:meridian_commutation}): 
\begin{equation}\label{diag:commutation}
\begin{tikzcd}[column sep=3.5cm, row sep=1.5cm]
    (D_1, D_2)
    \arrow[r, "R3", "p"']
    &
    (D_1', D_2)
    \arrow[d, "R1/R2/R3"']
    \\
    (D_1, D_2')
    \arrow[u, "R1/R3/R3"']
    &
    (D_1', D_2')
    \arrow[l, "R3"', "{p'}"]
\end{tikzcd}
\end{equation}

\begin{lemma}
    For R1-R3, and R2-R3 commutation cases in \Cref{diag:commutation}, we have $\beta_{i}(p) = \beta_{i}(p'), \; i = 1, 2, 3$. 
    For R3 commutation cases, we have $\beta_{i}(p) = \beta_{i}(p'), \; i = 1, 3$. 
\end{lemma}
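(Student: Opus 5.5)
The plan is to compare, configuration by configuration, the weights $W_{\beta_i}(p)$ and $W_{\beta_i}(p')$. The moves $p$ and $p'$ are the same R3 move in $D_1$: they have the same triangle, the same global type and the same sign, once the orientation of the square in \Cref{diag:commutation} is accounted for. They are performed on diagrams that differ only inside the disk $D_2$, by the move between $D_2$ and $D_2'$. Because $\beta_i(p)$ and $\beta_i(p')$ enter the meridian with opposite orientation, vanishing on the meridian is equivalent to $W_{\beta_i}(p)=W_{\beta_i}(p')$. The approach is to classify the subdiagrams counted by each Gauss diagram with a triangle according to which of their contributing arrows come from crossings in $D_2$.

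First, the \emph{R1 case}. The move between $D_2$ and $D_2'$ creates or deletes a single crossing. Its arrow is isolated: its two endpoints are adjacent on the circle, with no other arrow endpoint between them. A Gauss diagram with a triangle that uses such an arrow as a contributing arrow would therefore have an isolated chord. I will check that none of the configurations in \Cref{fig:beta_1,fig:beta_2,fig:beta_3} contain an isolated chord, relying on the connectedness remarked earlier. With that, the subdiagram counts agree, the configuration counts are equal, and $W(p)=W(p')$.

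Second, the \emph{R2 case}. The two new crossings $a,b$ have opposite signs. Their arrows are parallel, or antiparallel depending on orientation, and their endpoints are adjacent in pairs. Any subdiagram using exactly one of $a,b$ appears once with $a$ and once with $b$, in the same position on the circle. For unsigned contributing arrows these two occurrences cancel because the signs are opposite. For signed contributing arrows I must verify that each configuration with a given sign at that arrow comes paired with the corresponding configuration, or that the two occurrences are excluded. Subdiagrams using both $a$ and $b$ need the configuration to contain a pair of adjacent parallel or antiparallel arrows. I will inspect the configurations to see that any such terms occur in canceling pairs, or that they do not occur at all.

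Third, the \emph{R3--R3 case} for $\beta_1,\beta_3$. The second move only permutes the endpoints of three arrows inside $D_2$, leaving every sign unchanged. A subdiagram counted at $p$ whose contributing arrows include arrows of $D_2$ may change shape after the permutation. The main obstacle is this step. The shapes are unchanged when at most one arrow of the $D_2$-triangle is used, because the relative order of endpoints with respect to $D_1$ is preserved. When two or three of those arrows are used, I will group the configurations into orbits under the six local reorderings and check that the signed count is preserved, presumably with mod 2 cancellation for $\beta_3$. Only $\beta_1$ and $\beta_3$ are claimed here because for $\beta_2$ this check fails, which is consistent with the statement's restriction. I would carry out this orbit bookkeeping, most efficiently by computer, as in \cite{ZBT99_25_10}.
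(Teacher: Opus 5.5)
Your plan is correct and follows essentially the paper's route. The paper gives no separate argument for this lemma and defers the commutation cases to a configuration-by-configuration check (in the cited thesis and computer program), and your reductions are exactly what that check amounts to: R1 arrows cannot match a contributing arrow because the configurations are connected; R2 pairs cancel by their opposite signs once the signed configurations are paired; and in the R3--R3 case only subdiagrams using two arrows of the $D_2$-triangle can change shape (never three, since there are at most two contributing arrows). One small slip: for the R3--R3 square, vanishing on the meridian only requires $W(p)-W(p')$ to cancel against the corresponding difference for the move in $D_2$, not $W(p)=W(p')$ itself, which is precisely why $\beta_2$ is treated in a separate lemma.
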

\begin{lemma}
    $\beta_2$ vanishes on the whole meridian \Cref{diag:commutation}. 
\end{lemma}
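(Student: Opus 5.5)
The plan is to show that $\beta_2$ vanishes on the full meridian in \Cref{diag:commutation} by adding up all four sides. By the general form \Cref{eq:general_1cocycle}, $\beta_2$ only counts R3 moves. So the sum is
\[
\beta_2(p) - \beta_2(p') + \beta_2(\text{right side}) - \beta_2(\text{left side}).
\]
Here the right and left vertical sides are the moves performed in the disk of $D_2$, at the moments when $D_1$ is, respectively is not, in its post-$p$ state. We split into three cases by the type of the move in $D_2$: R1, R2, or R3. The preceding lemma already gives $\beta_2(p)=\beta_2(p')$ in the R1-R3 and R2-R3 cases. In those cases the vertical sides carry no R3 moves, so they contribute nothing, and the meridian value is $0$. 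The only real case is therefore the R3-R3 commutation.

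In the R3-R3 case, let $p$ and $q$ denote the R3 moves in $D_1$ and $D_2$, with triangles $T_p$ and $T_q$ on disjoint sets of arrows. The meridian value is
\[
\bigl(W(p) - W(p')\bigr) - \bigl(W(q) - W(q')\bigr),
\]
each term with the appropriate R3 signs. Here $p'$ is the move $p$ performed after $q$, and $q'$ is the move $q$ performed after $p$. The difference $W(p)-W(p')$ comes only from those configurations of $W_{\beta_2}$ in which some contributing arrow belongs to $T_q$. This is because $q$ changes the Gauss diagram only by permuting the endpoints of the three arrows of $T_q$ along three small arcs. For each configuration of $W_{\beta_2}$ (triangle plus at most two contributing arrows), we list the ways one or two contributing arrows can lie in $T_q$. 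We then compute how the count changes when the endpoints of $T_q$ are permuted by the R3 move, sorted by global type and local sign. The result is a signed sum of Gauss diagrams with two triangles, one marked as the moving triangle of $p$ and one as $T_q$. We do the same for $W(q)-W(q')$ with the roles of the triangles swapped. The claim reduces to an identity between these two two-triangle expressions.

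The main obstacle is this identity. Unlike for $\beta_1$ and $\beta_3$, it does not hold termwise, and the preceding lemma shows $\beta_2(p)\neq\beta_2(p')$ in general. The plan is to pair each two-triangle diagram arising from $W(p)-W(p')$ with the one from $W(q)-W(q')$ that has the same underlying arrow configuration. The R3 sign convention of \Cref{def:sign_R3} together with the global-type classification should make the paired terms equal, via the symmetry obtained by exchanging the roles of the two triangles. The residual terms that do not pair this way I expect to cancel in groups within $W_{\beta_2}$ itself; presumably this is why $W_{\beta_2}$ has its specific coefficients. I would check these group cancellations by an exhaustive enumeration of the relative positions of the six endpoints of each triangle on the based circle (the base point cuts the circle, giving finitely many interleavings), organised by computer as in \cite{ZBT99_25_10}. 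Finally, I would confirm that no configuration needs a contributing arrow shared between $T_p$ and $T_q$, since the two disks are disjoint.
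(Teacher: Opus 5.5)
Your plan is correct and follows the paper's route. The R1--R3 and R2--R3 cases follow from the preceding lemma because $\beta_2$ only counts R3 moves, and the R3--R3 case reduces to a finite identity between two-triangle expressions, which the paper, like you, leaves to the case-by-case verification in \cite[Chapter 6]{Zhang25} and \cite{ZBT99_25_10}.

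Your observation that $q$ only permutes endpoints within three small arcs gives more than you use. Any two arrows of $T_q$ share exactly one small arc, so a configuration of $W_{\beta_2}$ changes under $q$ only if both contributing arrows lie in $T_q$. Hence the differences depend on $T_p\cup T_q$ alone, which is exactly what justifies enumerating only the two triangles, and your symmetry-pairing heuristic is not needed.
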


\subsection{Tetrahedron equations}
There are 48 local types of quadruple points. 
We call the quadruple point whose meridian contains only positive crossings, the \textbf{positive quadruple point}. 
\begin{lemma}[{\cite[Lemma 12]{Fiedler07}}]
    All types of meridians around the quadruple points can be generated by 
    the meridians around the positive ordinary quadruple points together 
    with the meridians around 2 singularities of codimension 1, 
    tangent triple points and cubic tangencies. 
\end{lemma}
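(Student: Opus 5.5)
The plan is a reduction in two stages: first to positive quadruple points up to the three auxiliary meridian types, then to a small, computer-checkable family. Work in the group generated by generic loops modulo concatenation and path-reversal cancellation. In that group, a meridian around an arbitrary quadruple point should be shown to equal a product of conjugates of three kinds of loops: the meridian around a positive quadruple point, meridians around pairs of codimension-one singularities (commutation squares), and meridians around tangent triple points and cubic tangencies.

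The key move is a single crossing change at one of the six crossings of the quadruple point. Fix a quadruple point with four local strands and its $3$-dimensional stratification. Consider the one-parameter family in which one of the six crossings among the four strands is changed: its two strands are pushed apart in the vertical direction, through a configuration where they cross in space. Generically this does not happen along the path. Instead, do it diagrammatically.

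Replace the over-strand by a strand that goes around the others via two extra crossings, obtained by an R2 move with a fifth auxiliary strand. Equivalently, deform one strand so that, near the quadruple point, it is pushed through a tangency with a neighbouring strand. This decomposes the meridian of the given quadruple point into three parts: a meridian of a quadruple point with one crossing sign flipped, meridians of tangent triple points (created where the deformed strand becomes tangent to the others at a triple point), possibly cubic tangencies, and commutation squares (where the independent R2 moves slide past the R3 moves). Concretely, I would follow Fiedler's 2-parameter family method: take a small disk in the $2$-parameter space containing the original quadruple point, the auxiliary tangent triple points, and the new quadruple point. The boundary of the disk is then null-homotopic, which expresses one meridian in terms of the others.

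Iterating the crossing-change step over the non-positive crossings reduces every local type to one of the positive types. Here "positive" means every crossing in the meridian is positive. Since the $48$ local types are indexed by the height order of the strands together with orientations, changing the orientation of a strand is also needed. I would handle this by rotating the picture, i.e. using the symmetry of the projection plane, combined with the same crossing-change argument. Each elementary step changes one sign, and at most six steps are needed.

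The main obstacle is the bookkeeping in that elementary step. One has to check that the auxiliary singularities appearing in the $2$-parameter family are only of the allowed kinds (tangent triple points, cubic tangencies, pairs of codimension-one singularities), with no ramphoidal or intersected cusps. One also has to check that the orientations and local types come out right, so that after all steps the resulting quadruple point really is positive rather than some other type. I would first verify the argument for one representative crossing change with explicit pictures of the bifurcation diagram, and then argue that the other cases follow by symmetry of the figure.
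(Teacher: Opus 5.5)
\textbf{Verdict: there is a genuine gap.} The paper gives no argument for this lemma; it imports it from Fiedler's Lemma~12. Your outline has the right overall shape: relations between quadruple points of different local types should come from a degeneration of one codimension higher. But the central step does not exist as you describe it.

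\emph{The elementary step.} A crossing change at the quadruple point passes through a self-intersection, so it leaves the space of embeddings. The substitute ``R2 move with a fifth auxiliary strand'' does not produce a quadruple point of another local type. The heights of the four branches cannot change; only their slopes can.

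The correct step is the codimension-three stratum of quadruple points at which two branches $i,j$ are simply tangent, with the other two passing transversally through the tangency point. Take a versal unfolding: branches $y=0$ and $y=x^2+a$, plus two lines of fixed slopes with intercept parameters $p_3,p_4$.
\begin{itemize}
\item The quadruple points form the curve $a=-t^2$, $p_3=p_4=t$.
\item Its halves $t>0$ and $t<0$ are quadruple points differing exactly in the sign of the $ij$ crossing; heights and orientations are unchanged.
\item Otherwise, a small $2$-sphere around the origin meets only four tangent triple points ($a=0$ with $p_3=0$ or $p_4=0$) and points where two codimension-one events occur simultaneously. It meets no cusps and no cubic tangencies.
\end{itemize}
Removing a small disc around one quadruple point from this sphere gives the relation.

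Your ``small disc whose boundary is null-homotopic'' does not do this job. By the theorem of David and Fiedler--Kurlin, null-homotopy in $\mathcal{K}_{3,1}$ only says the boundary is trivial modulo \emph{all} higher moves. That includes the very quadruple-point meridians you want to eliminate, so the argument is circular. You need a second capping disc containing only admissible singularities, and the linking sphere of the codimension-three stratum supplies it.

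\emph{The reduction scheme.} The tangency step flips only the sign of a pair of branches that are adjacent in the cyclic order of slopes. So the induction ``flip the non-positive crossings one by one, at most six steps'' can get stuck. Let $\theta_k$ be the direction of branch $k$, with branch $1$ on top, in the sign convention where the positive type is $\theta_1<\theta_2<\theta_3<\theta_4<\theta_1+\pi$. Take $(\theta_1,\theta_2,\theta_3,\theta_4)=(0^\circ,90^\circ,240^\circ,30^\circ)$.
\begin{itemize}
\item The slopes are in cyclic order $1,4,3,2$, and all four adjacent pairs cross positively.
\item The only negative crossings are the non-adjacent pairs $\{1,3\}$ and $\{2,4\}$.
\item Hence every available move first creates a third negative crossing.
\end{itemize}

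Nor can ``rotating the picture'' reverse one branch. Rotation by $\pi$ reverses all four branches at once, and it is exactly the identification that reduces $6\cdot 16=96$ to $48$. The $48$ types are cyclic slope orders of the height-labelled branches times orientations, not height orders times orientations.

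What works instead is connectivity. A local type is a generic configuration of four directions on the circle. The walls $\theta_i=\theta_j$ and $\theta_i=\theta_j+\pi$ are precisely the tangency strata above, and the configuration space is connected. In particular, turning one branch through $\pi$ past the other three reverses its orientation, so no separate orientation argument is needed. These moves preserve the order of the branches along the knot. Therefore every quadruple point reduces to the positive quadruple point of the same global type, which is why the generating set consists of positive quadruple points of all global types.
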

Therefore, we only need to prove for the positive quadruple point. 

\begin{lemma}
    $\beta_i, \; (i = 1, 2, 3)$ vanishes on the meridian around the positive quadruple point. 
\end{lemma}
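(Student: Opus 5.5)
The plan is to verify the tetrahedron equation directly on the meridian of a positive ordinary quadruple point. Following Fiedler's description \cite{Fiedler07, FiedlerKurlin10}, this meridian is a sequence of eight R3 moves. Label the four strands through the quadruple point $1,2,3,4$. Each of the four triples $\{i,j,k\}$ gives a triangle. Along the meridian every triangle performs exactly two R3 moves, and these two moves have opposite local crossing configurations. The global position of the fourth strand differs between them: on one side of the quadruple point the fourth strand's crossings with the triangle lie in one position, and on the other side they lie in the other.

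Since all eight moves share the same four arrows of the quadruple point, plus an arbitrary fixed remainder of the Gauss diagram, I will proceed as follows.

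First, I fix the positive quadruple point. I list the eight moves with their signs (\Cref{def:sign_R3}) and their global types (\Cref{def:R3_global_type}). The global types are determined by the cyclic order, relative to the base point, of the six endpoints of the four branches on the circle. So there is a finite list of cases: the orderings of four pairs of points on the based circle that are compatible with a real diagram, each with the arrows oriented according to the heights of the strands.

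Second, for each weight $W_{\beta_i}$ I evaluate it on each of the eight moves. I split the contributions according to how many of the contributing arrows (at most two, since the combinatorial order is $4$) are among the three arrows of the quadruple point that are not in the triangle of the move.

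\begin{itemize}
\item If no contributing arrow involves the quadruple point, the subdiagram matches occur identically in the two moves of the same triangle. They then cancel because those moves have opposite signs, provided the global type agrees.
\item The remaining contributions involve one or two of the three extra arrows. These are the only terms whose presence changes between the two moves of a triangle. They must be collected across all four triangles and shown to sum to zero; for $\beta_3$ this sum only needs to vanish mod $2$.
\end{itemize}

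Third, I handle the case in which the global type itself changes between the two moves of a triangle. This happens when the fourth strand passes through the base point region. Here I expect to need the extra terms built into $W_{\beta_i}$ for the different global types.

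The main obstacle is the bookkeeping in the second step. The contributing arrows can have their endpoints interleaved with the quadruple point's six endpoints in many ways, and each such configuration must cancel. I would organise this by the position of the free endpoints of the contributing arrows relative to the six quadruple-point endpoints. For each position I check the cancellation algebraically, using the specific coefficients in \Cref{fig:beta_1,fig:beta_2,fig:beta_3}. This is a large but finite check, and in practice I would carry it out by the computer verification of \cite{ZBT99_25_10}, as in \cite[Chapter 6]{Zhang25}.

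If some configurations fail to cancel purely on the virtual level, I would invoke the real-knot Gauss diagram identities mentioned in the introduction, namely the planarity constraints on which interleavings are realisable, to discard them.
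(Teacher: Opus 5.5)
Your reduction matches the paper's. Terms whose contributing arrows avoid the quadruple point cancel between the two oppositely signed moves of the same triangle. Everything then comes down to a finite computer check on the quadruple point together with at most one outside arrow.

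\textbf{The gap.} It lies in how you treat configurations that fail on the virtual level. This case is not hypothetical: the paper reports that the check does fail for some configurations. Your plan to discard them as non-realisable interleavings cannot work, because no such configuration is excluded for real knots. Every Gauss diagram is a subdiagram of the Gauss diagram of a classical diagram: draw it with virtual crossings and make those crossings real. So a single outside arrow in any position relative to the four branches of the quadruple point does occur in real knots.

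What holds for real knots, and fails for virtual ones, is a \emph{linear} relation among the signed counts of outside arrows in different positions. Smooth a crossing $c$, for instance one of the six crossings at the quadruple point; this gives a two-component link. Compute its linking number once from the crossings where the first component is over, and once from those where the second is over. This shows that the signed number of arrows crossing the chord $c$ with head on one side equals the signed number with head on the other side. The failing contributions cancel only in aggregate, after being rewritten with this identity. This is exactly the step the paper performs (see \cite[Lemma 6.2.2]{Zhang25}), and it is the ingredient your proposal lacks.

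\textbf{A smaller point.} Your third step addresses a case that never occurs. The global type of an R3 move is determined by the order in which the knot passes the three branches of the triangle, counted from the base point, together with their heights. Both moves of a given triangle share these data, and the base point stays far from the small disc containing the quadruple point. The terms of $W_{\beta_i}$ for different global types matter for a different reason. The four triangles of the meridian generally have different global types, fixed by the order along the knot of the four branches, and the surviving contributions must cancel across triangles.
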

\begin{proof}
    The verification is performed by a computer program \cite{ZBT99_25_10} case by case. 
    We generate the Gauss diagrams arising in all the cases of the tetrahedron equations (4 arrows from the quadruple point) with 1 outside arrow that is not from the quadruple point. 
    We intended to verify that $\beta_i$ vanishes on all these loops. 
    However, this is not true for some cases. 
    We should notice that not all the cases generated here can be realised by a real knot. 
    Some Gauss diagram identities (which hold only for real knots) must also be satisfied. 
    The failed cases vanish due to these identities (see \cite[Lemma 6.2.2]{Zhang25} for details). 
    In the program, these cases have been treated using the Gauss diagram identity obtained by counting linking numbers in two ways. 
\end{proof}

\subsection{Cube equations}
Locally, there are 24 types of tangent triple points. 
For the meridians around each type of tangent triple point, 
we use one diagram to represent it (see \Cref{fig:cube_meridians}). 

\begin{figure}[H]
    \center 
    \includegraphics[width = 10cm]{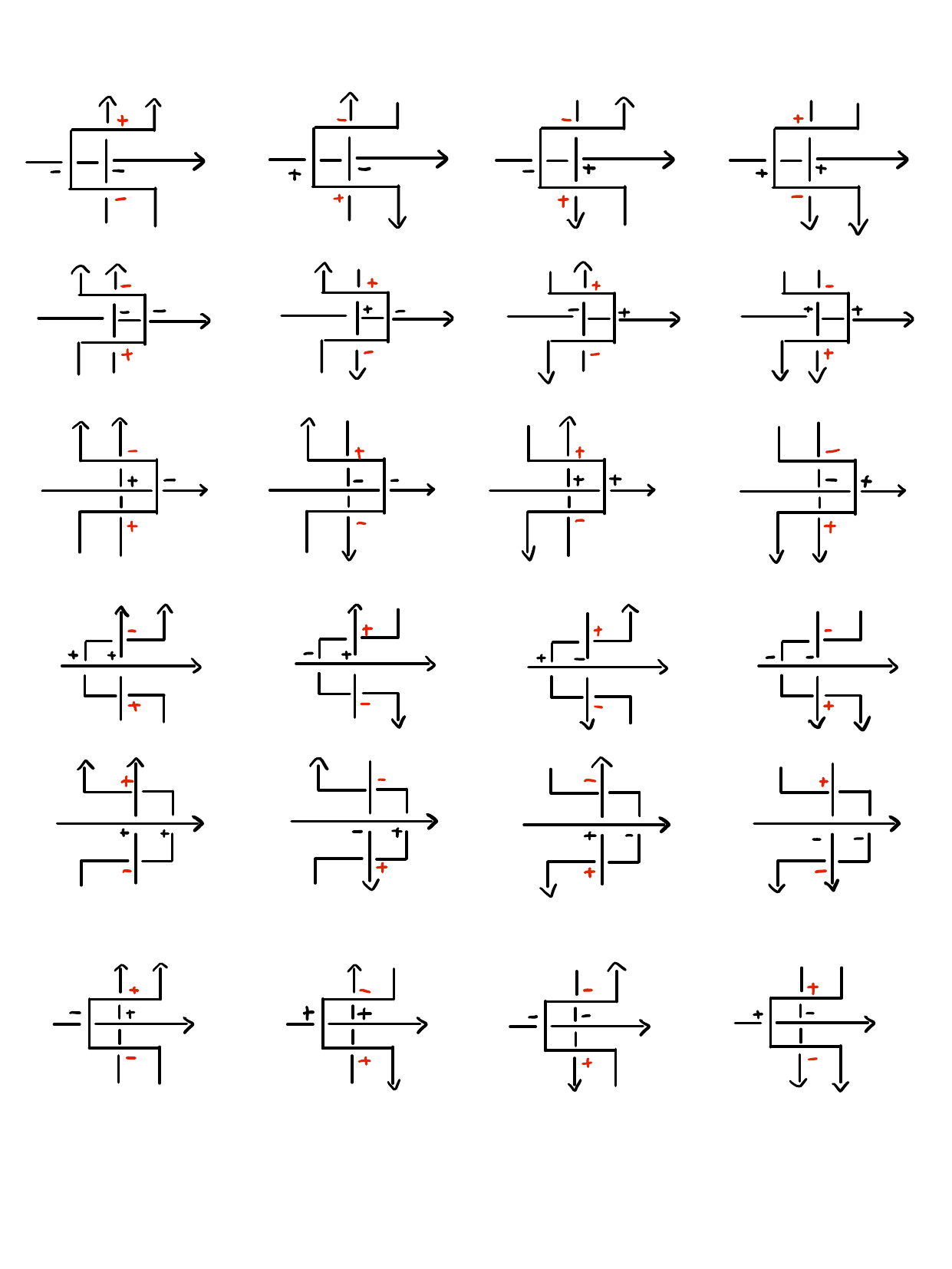}
    \caption{The meridians around all types of tangent triple points}
    \label{fig:cube_meridians}
\end{figure}


\begin{lemma}
    $\beta_i, \; i = 1, 2, 3$ vanishes on the meridian around the tangent triple points. 
\end{lemma}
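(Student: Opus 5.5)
The lemma asks that $\beta_1,\beta_2,\beta_3$ vanish on every meridian around a tangent triple point, i.e.\ on the cube equations. The plan follows the proof of the tetrahedron equations and is largely combinatorial. A meridian around a tangent triple point (\Cref{fig:cube_meridians}) consists of R2 moves, which carry no weight for our cocycles, together with R3 moves. In the standard description of the bifurcation diagram there are four R3 moves: two on each side of the tangency, and they involve the same three branches. For each of the 24 local types we first record these R3 moves: their global types (\Cref{def:R3_global_type}), their signs (\Cref{def:sign_R3}), and the positions of the triangle feet and heads on the Gauss circle. Because the two tangent branches form a bigon, the four triangles pair off. Within each pair, the two R3 moves share the same third crossing and have ``parallel'' crossings from the two branches, one on each side of the bigon.

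\textbf{Step 1 (reduction to few outside arrows).} Every configuration of $W_{\beta_i}$ has at most $2$ contributing arrows. Hence the total value on the meridian splits as a sum over choices of at most two contributing arrows from the remaining diagram. For each choice, only the relative position of those arrows' endpoints with respect to the six triangle endpoints matters. We therefore enumerate all based Gauss diagrams with the triangle data of the meridian plus $0$, $1$ or $2$ outside arrows. We also place the base point in every arc, since the base point affects the global type. For each such diagram we evaluate $\sum \operatorname{sign}(p)\,W_{\beta_i}(p)$ over the four R3 moves, using the computer program \cite{ZBT99_25_10}.

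\textbf{Step 2 (pairwise cancellation).} The expected mechanism is as follows. Passing from one R3 move to its partner across the bigon swaps one triangle arrow for a parallel arrow with the same endpoint ordering relative to every outside arrow, except possibly where an outside endpoint lies between the two parallel endpoints. The signs of the partners are opposite and their global types agree, so generic configurations cancel in pairs. What survives are the configurations in which an outside arrow endpoint separates the two tangent branches' endpoints.

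\textbf{Step 3 (the main obstacle).} The hard part is the residual configurations from Step~2. These are exactly the sort of cases that failed in the tetrahedron verification. I would first check whether they cancel among the different weight diagrams of $W_{\beta_i}$ directly. Where they do not, I would invoke Gauss diagram identities valid only for real knots, such as the linking-number identity of \cite[Lemma 6.2.2]{Zhang25} obtained by counting linking numbers in two ways. These show that the leftover sum is zero on any realisable diagram. For $\beta_3$ the same check is carried out mod $2$, where the leftover terms should pair up.
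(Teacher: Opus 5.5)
Your overall plan matches the paper's own proof: an exhaustive, computer-assisted check over the $24$ local types with at most two contributing arrows, falling back on a real-knot Gauss diagram identity for the leftover cases. The paper only states that the cube equations were verified case by case \cite{ZBT99_25_10}, \cite[Chapter 6]{Zhang25}, and that an identity is needed for $\beta_1$. However, the loop you set up is not the meridian. Take the versal unfolding $a\colon y=0$, $b\colon y=x^2+\lambda$, $c\colon x=\mu$. Its bifurcation set is the R2 line $\lambda=0$ together with the R3 parabola $\lambda=-\mu^2$, which touches the line at the origin and lies entirely in $\lambda\le 0$. A small loop around the origin therefore contains exactly two R2 moves, both between $a$ and $b$. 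It also contains exactly two R3 moves, consecutive and both on the side where the bigon exists: $c$ sweeps across one vertex $r$ of the bigon and then across the other vertex $l$. There are no R3 moves on the other side of the tangency, and there are no four triangles pairing off. A program run over your four R3 moves would be evaluating $\beta_i$ on a loop that is not the meridian.

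This error also misplaces your Steps~2--3. Write $ca$, $cb$ for the crossings of $c$ with $a$, $b$. The two triangles are $\{r,ca,cb\}$ and $\{l,ca,cb\}$, so the two moves have the same global type. The arrows $r$ and $l$ have the same direction and opposite signs. Moreover, $r$ and $l$ are adjacent on both tangent branches: only the endpoints of $ca$ and $cb$ ever pass between them. So no outside endpoint can separate them, and the residual class you describe is empty. Set aside any dependence of the weights on the local type, which does distinguish the two moves since $\epsilon(r)=-\epsilon(l)$. The terms that then fail to match automatically are those in which the other vertex of the bigon is itself a contributing arrow, alone or together with one outside arrow: $l$ at the first move, $r$ at the second. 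Its endpoints lie on opposite sides of the triangle's endpoints at the two moves, so these contributions do not cancel termwise. Your enumeration must therefore include all four local arrows, not only the triangle. These configurations have to be handled either by cancellation among the weight diagrams or by a real-knot identity, which the paper needs for $\beta_1$. With the meridian corrected, your plan coincides with the paper's verification.
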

In the verification for $\beta_1$, a Gauss-diagram identity is needed. 
    
    

\subsection{Wandering cusp equations}
We use just one diagram in the meridian around an ordinary cusp to represent the whole meridian.
There are 16 types of cusps. Their diagrams and Gauss diagrams are shown in \Cref{fig:Cusp_meridians}.  
\begin{figure}[htpb]
    \center 
    \includegraphics[width = 11.5cm]{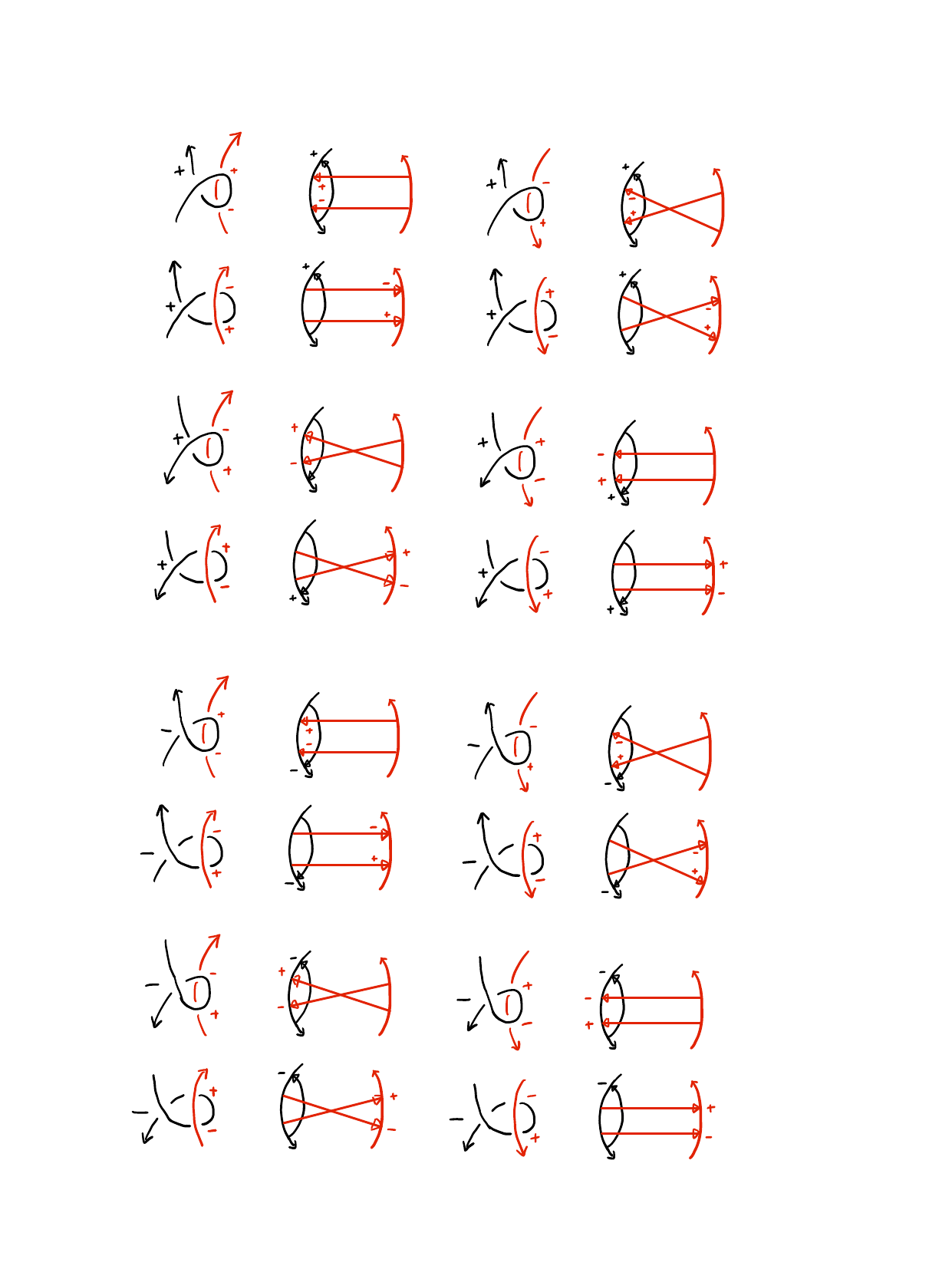}
    \caption{The meridians around intersected cusps}
    \label{fig:Cusp_meridians}
\end{figure}
\begin{lemma}
    $\beta_i, \; i = 1, 2, 3$ vanishes on the meridian around the intersected cusps. 
\end{lemma}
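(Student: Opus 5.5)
The plan is to reduce the statement to a comparison of exactly two R3 moves, and then check the resulting finite list of local cases, as the paper does for the tetrahedron and cube equations. Since $W_{\beta_1}$, $W_{\beta_2}$ and $W_{\beta_3}$ charge only R3 moves, R1 and R2 moves on the meridian contribute nothing. So the first step is to read off, from \Cref{fig:meridian_cusp_0} and the 16 local types in \Cref{fig:Cusp_meridians}, which R3 moves occur on the meridian around an intersected cusp.

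On one side of the cusp a small curl exists; on the other side it has been removed by an R1 move. A third strand $s$ sweeps across the cusp region. While the curl exists, $s$ crosses it by an R2 move followed by an R3 move, and then an R3 move followed by an R2 move. The R3 moves are exactly these two, $p$ and $p'$. Each triangle consists of the curl arrow $c$ (an arrow with adjacent endpoints, apart from the endpoints of the two arrows $s$ creates with the curl) together with the two arrows $a,b$ between $s$ and the two branches of the curl.

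The key step is to show that $p$ and $p'$ contribute with opposite signs and equal weights. For the signs, $p$ and $p'$ traverse the same triangle configuration in opposite directions relative to \Cref{fig:sign_R3_moves}, so $\mathrm{sign}(p)=-\mathrm{sign}(p')$. Their global types agree, because the directions of $c,a,b$ are the same in both moves. For the weights, the Gauss diagrams of the long knot at $p$ and at $p'$ coincide outside a small arc. On that arc the endpoints of $c,a,b$ appear in the order fixed by the local type, and $p'$ is obtained from $p$ by cyclically moving the head or foot of $a$ and $b$ past the short arc of $c$. Any contributing arrow of a configuration has its endpoints away from this tiny arc, since no other crossing lies there. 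Its position relative to the triangle therefore reads the same in both diagrams. This gives a bijection between occurrences of each configuration at $p$ and at $p'$ that preserves the signs of the contributing arrows, so $\langle W_{\beta_i},p\rangle=\langle W_{\beta_i},p'\rangle$.

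The main obstacle is that this identification need not be literally the identity on configurations. In several local types, the triangle at $p$ and the triangle at $p'$ sit differently with respect to the base point and with respect to the positions of the three corners. A configuration may then match $p$ as one diagram of $W_{\beta_i}$ and $p'$ as a different one. For those cases I would go through the 16 types of \Cref{fig:Cusp_meridians} one by one, using the explicit formulas in \Cref{fig:beta_1,fig:beta_2,fig:beta_3}. The first thing I would try is to group the diagrams of each weight into pairs exchanged by this change of triangle position and check that their coefficients agree; for $\beta_3$ this only needs to hold mod $2$. If some pairs fail to match, I would check those types with the program \cite{ZBT99_25_10}, as for the tetrahedron equations, without using any Gauss diagram identity, since the cusp meridian is purely local.
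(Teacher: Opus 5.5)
\textbf{There is a genuine gap: the meridian of an intersected cusp has only one R3 move.} Your reduction rests on an incorrect description of the meridian. Take the versal model $t\mapsto(t^2,t^3+\lambda t)$ with transverse branch $s=\{x=\mu\}$:
\begin{itemize}
\item the curl exists only for $\lambda<0$, and it has a single double point, at $x=-\lambda$;
\item $s$ is tangent to the curve along $\mu=0$;
\item a triple point occurs only on the half-line $\mu=-\lambda$, $\lambda<0$.
\end{itemize}
So the bifurcation diagram has five strata, and the meridian is R2, R1, R3, R2, R1. In words: $s$ meets the smooth arc twice; the curl is born on the far side of $s$, so $s$ meets the two tails; $s$ passes once through the double point; $s$ leaves through the tip of the loop; the curl dies.

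There is exactly one R3 move, not two, so the pair $p,p'$ you want to cancel does not exist. Your mechanism would also prove too much. It would make every weight supported on R3 moves satisfy the cusp equation, including a bare triangle of the matching global type, whose value on this meridian is $\pm1$.

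\textbf{What actually has to be shown.} Since $\beta_i$ charges only R3 moves, its value on the meridian is $\pm W_{\beta_i}(p)$ for this single move $p$. You must prove $W_{\beta_i}(p)=0$. The triangle of $p$ is degenerate in two ways:
\begin{itemize}
\item on one side of the move, the curl arrow $c$ has adjacent endpoints; on the other side they enclose only the endpoints of $a$ and $b$ on the curl strand;
\item $a$ and $b$ have adjacent endpoints on $s$.
\end{itemize}
The paper's case check over the 16 local types amounts to this vanishing, consistent with its remark that one diagram represents the whole meridian.

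For each type, you need to show that every configuration of $W_{\beta_i}$ either cannot match such a triangle, or that its matches cancel against those of other configurations (mod $2$ for $\beta_3$). A configuration fails to match if it has the wrong global type, or if it requires a contributing-arrow endpoint in one of the empty arcs. Your observation that contributing arrows cannot have endpoints in the tiny arcs is the right tool. It has to be applied to the vanishing of this one weight, not to a comparison between two moves.
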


\subsection{Combinatorial order 4}
\begin{proposition}
    $\beta_i, \; (i = 1, 2, 3)$ has combinatorial order exactly 4. 
\end{proposition}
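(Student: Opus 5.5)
The statement has two halves, and I would treat them separately. The upper bound (combinatorial order at most $4$) is immediate from the definitions: I would inspect the configurations in \Cref{fig:beta_1}, \Cref{fig:beta_2} and \Cref{fig:beta_3} and check that each Gauss diagram with a triangle has at most two contributing arrows. Since an R3 triangle counts as $2$ arrows, this gives order at most $4$ in the sense of \Cref{def:combinatorial_order}. The substance is the lower bound: no 1-cocycle of combinatorial order at most $3$ (at most one contributing arrow) represents the same class as $\beta_i$.

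For the lower bound I would not classify order-$3$ cocycles. Instead I would use pairing invariants that any order-$\le 3$ cocycle must satisfy, via \Cref{mainthm:pairing}. Suppose $\psi$ has combinatorial order at most $3$ and is cohomologous to $\beta_i$. Then $\psi$ and $\beta_i$ take the same values on every loop, in particular on $\Rot$, $\Roll_0$ and the bracket loops. The constraints on $\psi$ are:
\begin{itemize}
\item by \Cref{thm:mixed_order}, $\psi([f,g]_0)$ is a sum of products $v_i(f)v_j(g)$ with $i+j\le 3$;
\item by \Cref{thm:rotation_finiteness}, $\psi(\Rot(f))$ has Vassiliev order at most $2$;
\item by \Cref{thm:FH_finiteness}, $\psi(\Roll_0(f))$ has Vassiliev order at most $3$.
\end{itemize}

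I would then compare these constraints with the computed values of each $\beta_i$.
\begin{itemize}
\item \textbf{$\beta_1$ and $\beta_2$.} We have $\beta_1([f,g]_0)=-2v_2(f)v_2(g)$ and $\beta_2([f,g]_0)=-6v_2(f)v_2(g)$, both over $\mathbb{Q}$. I would argue that $v_2(f)v_2(g)$ has total order $4$ as a function of the pair. Concretely, take $f$ and $g$ to be resolutions of singular knots with $2$ singular points each. The alternating sum over all $4$ resolutions gives $v_2$'s top symbol squared, which is nonzero, whereas every term with $i+j\le 3$ vanishes on it. A second, independent obstruction for $\beta_2$ is that $\beta_2(\Rot(f))=-v_3(f)$ has order $3>2$. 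Either obstruction suffices.
\item \textbf{$\beta_1 \bmod 2$, if needed.} Here $v_2(f)v_2(g)$ has even coefficient, so I would instead use $\beta_1(\Roll_0(f))=2(\binom{v_2}{2}-v_{4,2})$. Over $\mathbb{Z}$ or $\mathbb{Q}$ this has genuine order $4$, which is already enough for the integral statement.
\item \textbf{$\beta_3$.} This is where I expect the main obstacle, since $\beta_3$ vanishes on all canonical loops. I would use the half rolling loop of a symmetric knot instead: $\beta_3(\tfrac12\Roll(6_1))=1$ from the table.
\end{itemize}

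Finishing the $\beta_3$ case requires showing that every mod-$2$ cocycle with at most one contributing arrow, once its class is matched against $\alpha_3^1\bmod 2$, gives a value on $\tfrac12\Roll(6_1)$ and on $\Rot(4_1),\Rot(6_1),\tfrac12\Roll(4_1)$ inconsistent with $\beta_3$. The plan is to enumerate the finitely many order-$\le 3$ weight configurations over $\mathbb{Z}/2$. I would use the computer program \cite{ZBT99_25_10}, analogous to the cocyclicity check, solve the linear system of tetrahedron, cube, commutation and cusp equations, and show that the solution space modulo coboundaries is spanned by $\alpha_3^1 \bmod 2$ (together with cocycles having zero pairings). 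Then I would evaluate on the loops of $4_1\csum 6_1$ in \Cref{tab:independence_1cocycles}. The row for $\tfrac12\Roll(6_1)$ and $\Rot(4_1)$ separates $\beta_3$ from $\alpha_3^1$: $\alpha_3^1$ pairs to $1$ with $\Rot(4_1)$ while $\beta_3$ pairs to $0$, yet both are $1$ on $\tfrac12\Roll(6_1)$. So $\beta_3$ is not in the span, which proves order exactly $4$.
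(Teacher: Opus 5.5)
Your proposal is correct. For $\beta_3$ it is the paper's argument: a computer enumeration of all mod-$2$ combinatorial 1-cocycles of combinatorial order at most $3$, followed by a check that none of them reproduces $\beta_3$ on the half rolling loops.

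\textbf{$\beta_1$ and $\beta_2$.} The strategy is the same as the paper's: compare the finiteness bounds of \Cref{mainthm:pairing} for an order-$\le 3$ representative with the computed pairings. The difference is the test loop. The paper uses $\Roll_0$ for both cocycles. By \Cref{thm:calculation_FH}, $\beta_1(\Roll_0)$ and $\beta_2(\Roll_0)$ contain $v_{4,2}$ (and $v_{4,1}$, $\binom{v_2}{2}$) with nonzero coefficients, so they exceed the order-$3$ bound of part (iii). You use the bracket loop with part (i), and for $\beta_2$ also $\Rot$ with part (ii).
\begin{itemize}
\item Your route's advantage: the order-$4$ nature of $v_2(f)v_2(g)$ is elementary. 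The double alternating sum factorizes, and every term with $i+j\le 3$ has a factor of order $\le 1$ evaluated on a $2$-singular knot. The sum runs over $4\times 4=16$ pairs of resolutions, not $4$.
\item Similarly, $\beta_2(\Rot)=-v_3$ only needs that $v_3$ is not of order $\le 2$.
\item The paper's route instead needs that $\binom{v_2}{2}-v_{4,2}$ is not of order $\le 3$, i.e.\ the basis statement for order-$4$ invariants. In return it handles $\beta_1$ and $\beta_2$ uniformly with one computation.
\end{itemize}

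\textbf{The $\beta_1 \bmod 2$ aside.} It does not do what you claim. $\beta_1(\Roll_0)=2\bigl(\binom{v_2}{2}-v_{4,2}\bigr)$ is also even, so it gives no mod-$2$ obstruction either. This is harmless only because the statement concerns the integral $\beta_1$.

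\textbf{$\beta_3$.} You do not need to identify the solution space ``modulo coboundaries'', which the linear system itself does not tell you. It suffices that $\beta_3$'s vector of values on finitely many test loops lies outside the image of the evaluation map on the finite-dimensional space of order-$\le 3$ solutions.

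That space must, however, be computed from \emph{necessary} conditions for cocyclicity on real knots. If you require each configuration's contribution to a meridian to vanish for all, possibly virtual, Gauss diagrams, you only obtain a subspace of the genuine order-$\le 3$ cocycles. The paper's own tetrahedron check already needs real-knot Gauss diagram identities. A non-existence conclusion drawn from such a subspace would not be valid.

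Finally, your predicted outcome (classes spanned by $\alpha_3^1 \bmod 2$) is a guess about the computation. The paper asserts only the weaker fact that no order-$\le 3$ cocycle matches $\beta_3$ on the half rolling loops.
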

\begin{proof}
    The exact order of $\beta_1$ and $\beta_2$ is a direct corollary of \Cref{mainthm:pairing,thm:calculation_FH}. 
    For $\beta_3$, we have solved all the combinatorial 1-cocycles defined by Gauss diagrams with a triangle of order at most 3 over $\mathbb{Z}/2\mathbb{Z}$. 
    No combinatorial 1-cocycle of order at most 3 behaves the same as $\beta_3$ on the half rolling loops \cite{ZBT99_25_10}. 
\end{proof}

\bibliographystyle{alpha}
\bibliography{bibliography/references}

\end{document}